\newtheorem{assumption}{Assumption}
\begin{document}
\raggedbottom  
\title{On the Optimality of Misspecified Spectral Algorithms}

\author{\name Haobo Zhang \email zhang-hb21@mails.tsinghua.edu.cn \\ 
\AND
\name Yicheng Li \thanks{Haobo Zhang and Yicheng Li contributed equally to this work.} \email liyc22@mails.tsinghua.edu.cn\\ 
\AND
\name Qian Lin \thanks{Corresponding author} \email qianlin@tsinghua.edu.cn \\ 
      \addr Center for Statistical Science, Department of Industrial Engineering\\
      Tsinghua University\\
      Beijing, China
      }

\editor{Kenji Fukumizu}

\maketitle

\begin{abstract}
In the misspecified spectral algorithms problem, researchers usually assume the underground true function $f_{\rho}^{*} \in [\mathcal{H}]^{s}$, a less-smooth interpolation space of a reproducing kernel Hilbert space (RKHS) $\mathcal{H}$ for some $s\in (0,1)$. The existing minimax optimal results require $\|f_{\rho}^{*}\|_{L^{\infty}}<\infty$ which implicitly requires  $s > \alpha_{0}$ where $\alpha_{0}\in (0,1)$ is the embedding index, a constant depending on $\mathcal{H}$. Whether the spectral algorithms are optimal for all $s\in (0,1)$ is an outstanding problem lasting for years. In this paper, we show that spectral algorithms are minimax optimal for any $\alpha_{0}-\frac{1}{\beta} < s < 1$, where $\beta$ is the eigenvalue decay rate of $\mathcal{H}$. We also give several classes of RKHSs whose embedding index satisfies $ \alpha_0 = \frac{1}{\beta} $. Thus, the spectral algorithms are minimax optimal for all $s\in (0,1)$ on these RKHSs.
\end{abstract}

\begin{keywords}
  kernel methods, spectral algorithms, misspecified, reproducing kernel Hilbert space, minimax optimality, 
\end{keywords}

\section{Introduction}
Suppose that the samples $\{ (x_{i}, y_{i}) \}_{i=1}^{n}$ are i.i.d. sampled from an unknown distribution $\rho$ on $\mathcal{X} \times \mathcal{Y}$, where $\mathcal{X} \subseteq \mathbb{R}^{d}$ and $\mathcal{Y} \subseteq \mathbb{R}$. One of the goals of non-parametric least-squares regression is to find a function $\hat{f}$ based on the $n$ samples such that the risk
\begin{equation}\label{def of risk}
  \mathcal{E}(\hat{f}) = \mathbb{E}_{(x,y) \sim \rho} \left[ \left( \hat{f}(x) - y \right)^{2} \right]
\end{equation}
is relatively small. It is well known that the conditional mean function given by $f_{\rho}^*(x) \coloneqq \mathbb{E}_{\rho}[~y \;|\; x~] = \int_{\mathcal{Y}} y \mathrm{d} \rho(y|x)$ minimizes the risk $\mathcal{E}(f) $. 
Therefore, 
we may focus on establishing the convergence rate (either in expectation or in probability) for the excess risk ($L^{2}$-norm generalization error)
\begin{equation}\label{def of gen}
    \mathbb{E}_{x \sim \mu} \left[ \left( \hat{f}(x) - f_{\rho}^{*}(x) \right)^{2} \right],
\end{equation}
where $ \mu $ is the marginal distribution of $\rho$ on $\mathcal{X}$. 

In the non-parametric regression settings, researchers often assume that $f_{\rho}^*(x)$ falls into a class of functions with a certain structure and develop non-parametric methods to obtain the estimator $\hat{f}$. One of the most popular non-parametric regression methods, the kernel method, aims to estimate $f_{\rho}^*$ using candidate functions from a reproducing kernel Hilbert space (RKHS) $\mathcal{H}$, a separable Hilbert space associated with a kernel function $k$ defined on $\mathcal{X}$, e.g., \citet{Kohler2001NonparametricRE, Cucker2001OnTM, Steinwart2008SupportVM}. This paper focuses on a class of kernel methods called the \textit{spectral algorithms} to construct the estimator of $ f_{\rho}^{*}$.

Since the minimax optimality of spectral algorithms has been proved for the attainable case $\left( f_{\rho}^{*} \in \mathcal{H} \right)$ \citep[etc.]{caponnetto2006optimal,Caponnetto2007OptimalRF}, a large body of literature has studied the convergence rate of the generalization error of misspecified spectral algorithms ($ f_{\rho}^{*} \notin \mathcal{H}$) and whether the rate is optimal in the minimax sense. It turns out that the qualification of the algorithm ($\tau >0$), the eigenvalue decay rate ($\beta > 1$), the source condition ($s>0$) and the embedding index ($\alpha_{0} < 1$) of the RKHS jointly determine the convergence behaviors of the spectral algorithms (see Section \ref{section assumption} for definitions). If we only assume that $f_{\rho}^{*}$ belongs to an interpolation space $[\mathcal{H}]^{s}$ of the RKHS $\mathcal{H}$ for some $s > 0$, the well known information-theoretic lower bound shows that the minimax lower bound (with respect to the $L^{2}$-norm generalization error) is $n^{-\frac{s \beta}{s \beta + 1}} $. The state-of-the-art result shows that when $ \alpha_{0} < s \le 2 \tau$, the upper bound of the convergence rate (with respect to the $L^{2}$-norm generalization error) is $ n^{-\frac{s \beta}{s \beta + 1}}$ and hence is optimal (\citealt{fischer2020_SobolevNorm} for kernel ridge regression and \citealt{PillaudVivien2018StatisticalOO} for gradient methods). However, when $ f_{\rho}^{*} \in [\mathcal{H}]^{s}$ for some $0 < s \le \alpha_{0}$, all the existing works need an additional boundedness assumption of $f_{\rho}^{*}$ to prove the same upper bound $ n^{-\frac{s \beta}{s \beta + 1}}$. The boundedness assumption will result in a smaller function space, i.e., $ [\mathcal{H}]^{s} \cap L^{\infty}(\mathcal{X,\mu}) \subsetneqq [\mathcal{H}]^{s}$ when $ s \le \alpha_{0}$. \citet{fischer2020_SobolevNorm} further reveals that the minimax rate associated with the smaller function space is larger than  $ n^{-\frac{\alpha \beta}{ \alpha \beta + 1}}$ for any $\alpha > \alpha_{0}$. This minimax lower bound is smaller than the upper bound of the convergence rate and hence they can not prove the minimax optimality of spectral algorithms when $s \le \alpha_{0}$. 

It has been an outstanding problem for years whether the spectral algorithms are minimax optimal for all $s\in (0,1)$, either with respect to the $L^{2}$-norm or the $[\mathcal{H}]^{\gamma}$-norm introduced later~\citep{PillaudVivien2018StatisticalOO, fischer2020_SobolevNorm, Liu2022StatisticalOO}.
To this end, this paper has three contributions.
\begin{itemize}[leftmargin = 18pt]
    \item Using the tools from real interpolation theory, we analyze the $L^{q}$-embedding property of $[\mathcal{H}]^{s}$, an interpolation space of the RKHS. Specifically, assume that $\mathcal{H} $ has embedding index $\alpha_{0}$. When $s \le \alpha_{0}, $ Theorem \ref{integrability of Hs} proves that $[\mathcal{H}]^{s}$ is continuously embedded into $ L^{q}(\mathcal{X},\mu)$, for $ q = \frac{2 \alpha}{\alpha - s}, \forall \alpha > \alpha_{0}$. 
    
    \item Based on the $ L^{q}$-embedding property of $[\mathcal{H}]^{s}$, the refined proof in this paper removes the boundedness assumption in previous literature and obtains the same upper bound of the convergence rate as the state-of-the-art upper bound. As a result, we prove the minimax optimality of spectral algorithms for $\alpha_0 - \frac{1}{\beta} < s \le 2 \tau$, which can only be proved for $\alpha_0 < s \le 2 \tau$ before. We also recover the upper bound in previous literature when $0 < s \le \alpha_0 - \frac{1}{\beta} $ (if exists) though the optimality does not hold. Note that in this paper, we present the results in terms of $[\mathcal{H}]^{\gamma}$-norm generalization error, where the $L^{2}$-norm \eqref{def of gen} is a special case when $ \gamma = 0$.
    
    \item We give several examples of RKHSs whose embedding index satisfies $ \alpha_{0} = \frac{1}{\beta}$. Besides RKHS with uniformly bounded eigenfunctions and the Sobolev RKHS \citep{fischer2020_SobolevNorm}, we first show that RKHS with shift-invariant kernels and RKHS with dot-product kernels on the sphere satisfy that $ \alpha_{0} = \frac{1}{\beta}$. Therefore, for these RKHSs, this paper proves the optimality of spectral algorithms for all $ 0 < s \le 2 \tau$.
\end{itemize}

The outline of the rest of the paper is as follows. In Section \ref{section pre}, we introduce basic concepts including priori knowledge of RKHS, integral operators and the definition of the interpolation space. In addition, we formally define the spectral algorithm, which is the main interest of this paper, and provide three examples of common spectral algorithms. In Section \ref{section main}, we present our main results of the convergence rates and discuss the minimax optimality. Theorem \ref{main theorem} and Theorem \ref{prop information lower bound} show the upper bound and the minimax lower bound, respectively. In Section \ref{section examples}, we further show four kinds of commonly used RKHSs with embedding index $\alpha_{0} = \frac{1}{\beta}$. This is the ideal case where the minimax optimality can be proved for all $ 0 < s \le 2 \tau$. We verify our results through experiments in Section \ref{section experiments}. In Section \ref{section discussion}, we make a comparison with previous literature and discuss other applications of our techniques. All the proofs can be found in Section \ref{section proofs}. In the appendix, we provide supplementary materials including extended proof, details of the experiments and a table of important notations frequently used throughout the main text.


\subsection{Related work}

General spectral algorithms in the setting of kernel methods were first proposed and studied by \citet{ rosasco2005_SpectralMethods,caponnetto2006optimal, bauer2007_RegularizationAlgorithms, gerfo2008_SpectralAlgorithms}. A large class of regularization methods are introduced collectively as spectral algorithms and are characterized through the corresponding filter functions. The qualification $\tau$ of a spectral algorithm and a prior assumption on $f_{\rho}^{*}$ characterizing the relative smoothness (source condition $s$) are also introduced for the problem setting. In this setting, \citet{bauer2007_RegularizationAlgorithms} proves the upper bound of the convergence rate with respect to the $L^{2}$-norm generalization error. \citet{caponnetto2006optimal} proves the `capacity-dependent' upper bound, i.e., considering the eigenvalue decay rate $\beta$ of the RKHS, which has been adopted by most of the later researchers. Note that these works focus on the well specified case ($f_{\rho}^{*} \in \mathcal{H}$) or assume that $\mathcal{H}$ is dense in $L^{2}(\mathcal{X},\mu)$. There are also other related works studying the well specified case, e.g., \citet{blanchard2018_OptimalRates, dicker2017_KernelRidge, rastogi2017_OptimalRates} for general spectral algorithms, \citet{Caponnetto2007OptimalRF, Smale2007LearningTE} for kernel ridge regression and \citet{Yao2007OnES} for gradient methods.

Since the convergence rates and the minimax optimality of spectral algorithms in the well specified case are clear, a large amount of literature studied the misspecified spectral algorithms. Among these work, \cite{steinwart2009_OptimalRates,dicker2017_KernelRidge,PillaudVivien2018StatisticalOO,fischer2020_SobolevNorm, Celisse2020AnalyzingTD,Li2022OptimalRF,  Talwai2022OptimalLR} consider the $L^{\infty}$-embedding property, while \citet{dieuleveut2016nonparametric,lin2018_OptimalRates,lin2020_OptimalConvergence,JMLR:v23:21-0570} do not. Note that considering the $L^{\infty}$-embedding property is equivalent to introducing the embedding index $\alpha_{0}$ in this paper. It has been shown that this will lead to faster convergence rates for certain embedding indexes (see Section \ref{section discussion} for detailed comparison). In addition, as mentioned in \cite{fischer2020_SobolevNorm}, the convergence rates with respect to the $L^{2}$-norm can be easily extended to the more general $ [\mathcal{H}]^{\gamma} $-norm if one uses the integral operator technique. Up to now, we have introduced five indexes $ \tau, s, \beta, \alpha_{0}$ and $ \gamma$ that we know as a priori to study the convergence rates of the spectral algorithms. To our knowledge, the state-of-the-art results on the convergence rates and the minimax optimality are \citet{fischer2020_SobolevNorm} for kernel ridge regression and \citet{PillaudVivien2018StatisticalOO} for gradient methods. 

But the spectral algorithms in the misspecified case have not been totally solved. When $f_{\rho}^{*}$ falls into a less-smooth interpolation space which does not imply the boundedness of functions therein, all existing works (either considering embedding index or not) require an additional boundedness assumption, i.e., $ \| f_{\rho}^{*} \|_{L^{\infty}(\mathcal{X},\mu)} \le B_{\infty} < \infty$ to prove the desired upper bound. As discussed in the introduction, this will lead to the suboptimality in the $s \le \alpha_{0}$ regime. As far as we know, the $L^{q}$-embedding property of $[\mathcal{H}]^{s}$ has not been discussed in related literature. This paper shows that it turns out to be a crucial property to remove the boundedness assumption and extend the minimax optimality to a broader regime.

\section{Preliminaries}\label{section pre}

\subsection{Basic concepts}\label{section basic concept}
Let a compact set $\mathcal{X} \subseteq \mathbb{R}^{d}$ be the input space and $ \mathcal{Y} \subseteq \mathbb{R}$ be the output space. Let $ \rho $ be an unknown probability distribution on $\mathcal{X} \times \mathcal{Y}$ satisfying $ \int_{\mathcal{X} \times \mathcal{Y}} y^{2} \mathrm{d}\rho(x,y) <\infty$ and denote the corresponding marginal distribution on $ \mathcal{X} $ as $\mu$. We use $L^{p}(\mathcal{X},\mu)$ (in short $L^{p}$) to represent the $L^{p}$-spaces. Denote 
\begin{displaymath}
  f_{\rho}^*(x) \coloneqq \mathbb{E}_{\rho}[~y \;|\; x~] = \int_{\mathcal{Y}} y ~ \mathrm{d} \rho(y|x)
\end{displaymath}
as the conditional mean. Throughout the paper, we denote $\mathcal{H}$ as a separable RKHS on $\mathcal{X}$ with respect to a continuous kernel function $k$ and satisfying
\begin{displaymath}
  \sup\limits_{x \in \mathcal{X}} k(x,x) \le \kappa^{2}.
\end{displaymath}

Denote the natural embedding inclusion operator as $S_{k}: \mathcal{H} \to L^{2}(\mathcal{X},\mu)$. Moreover, the adjoint operator $S_{k}^{*}: L^{2}(\mathcal{X},\mu) \to \mathcal{H}  $ is an integral operator, i.e., for $f \in L^{2}(\mathcal{X},\mu)$ and $x \in \mathcal{X}$, we have 
\begin{displaymath}
\left(S_{k}^{*} f\right)(x)=\int_\mathcal{X} k\left(x, x^{\prime}\right) f\left(x^{\prime}\right) \mathrm{d} \mu\left(x^{\prime}\right).
\end{displaymath}
Then $S_{k}$ and $S_{k}^{*}$ are Hilbert-Schmidt operators (thus compact) and the HS norms (denoted as $\left\| \cdot \right\|_{2}$) satisfy that
    \begin{displaymath}
        \left\| S_{k}^{*} \right\|_{2} = \left\| S_{k} \right\|_{2} = \|k\|_{L^{2}(\mathcal{X},\mu)}:=\left(\int_\mathcal{X} k(x, x) \mathrm{d} \mu(x)\right)^{1 / 2} \le \kappa.
    \end{displaymath}
Next, we can define two integral operators: 
\begin{equation}
    L_{k}:=S_{k} S_{k}^{*}: L^{2}(\mathcal{X},\mu) \rightarrow L^{2}(\mathcal{X},\mu), \quad  \quad T:=S_{k}^{*} S_{k}: \mathcal{H} \rightarrow \mathcal{H}.
\end{equation}
$L_{k}$ and $T$ are self-adjoint, positive-definite and trace class (thus Hilbert-Schmidt and compact) and the trace norms (denoted as $\left\| \cdot \right\|_{1}$) satisfy that
    \begin{displaymath}
      \left\|L_{k}\right\|_{1}=\left\|T\right\|_{1}=\left\|S_{k}\right\|_{2}^2=\left\|S_{k}^{*}\right\|_{2}^2.
    \end{displaymath}
The spectral theorem for self-adjoint compact operators yields that there is an at most countable index set $N$, a non-increasing summable sequence $\{ \lambda_{i} \}_{i \in N} \subseteq (0,\infty)$ and a family $\{ e_{i} \}_{i \in N} \subseteq \mathcal{H} $, such that $\{ e_{i} \}_{i \in N}$ is an orthonormal basis (ONB) of $\overline{\operatorname{ran} S_{k}} \subseteq L^{2}(\mathcal{X},\mu)$ and $\{ \lambda_{i}^{1/2} e_{i} \}_{i \in N}$ is an ONB of $\mathcal{H}$. Further, the integral operators can be written as 
    \begin{equation}\label{spectral representation}
       L_{k}=\sum_{i \in N} \lambda_i\left\langle\cdot,e_{i} \right\rangle_{L^{2}}e_{i} \quad \text { and } \quad T=\sum_{i \in N} \lambda_i\left\langle\cdot, \lambda_i^{1/2} e_i\right\rangle_{\mathcal{H}} \lambda_i^{1/2} e_i.
    \end{equation}
    We refer to $\{ e_{i} \}_{i \in N}$ and $\{ \lambda_{i} \}_{i \in N}$ as the eigenfunctions and eigenvalues. We will also call them the eigenfunctions and eigenvalues of the RKHS $\mathcal{H}$ and the kernel function $k$. (Note that $\{ e_{i} \}_{i \in N}$ and $\{ \lambda_{i} \}_{i \in N}$ also depend on the marginal distribution $\mu$ on $\mathcal{X}$.)  The celebrated Mercer's theorem (see, e.g., \citealt[Theorem 4.49]{Steinwart2008SupportVM}) shows that 
    \begin{displaymath}\label{Mercer decomposition}
      k\left(x, x^{\prime}\right)=\sum_{i \in N} \lambda_i e_i(x) e_i\left(x^{\prime}\right), \quad x, x^{\prime} \in \mathcal{X},
    \end{displaymath}
    where the convergence is absolute and uniform.

We also need to introduce the interpolation spaces (power spaces) of RKHS. For any $ s \ge 0$, the fractional power integral operator $L_{k}^{s}: L^{2}(\mathcal{X},\mu) \to L^{2}(\mathcal{X},\mu)$ is defined as 
\begin{displaymath}
  L_{k}^{s}(f)=\sum_{i \in N} \lambda_i^{s} \left\langle f, e_i\right\rangle_{L^2} e_i.
\end{displaymath}
Then the interpolation space (power space) $[\mathcal{H}]^s $ is defined as
\begin{equation}\label{def interpolation space}
  [\mathcal{H}]^s := \operatorname{Ran} L_{k}^{s/2} = \left\{\sum_{i \in N} a_i \lambda_i^{s / 2}e_{i}: \left(a_i\right)_{i \in N} \in \ell_2(N)\right\} \subseteq L^{2}(\mathcal{X},\mu),
\end{equation}
equipped with the inner product
\begin{equation}\label{def of interpolation norm}
    \langle f, g\rangle_{[\mathcal{H}]^s}=\left\langle L_{k}^{-\frac{s}{2}} f, L_{k}^{-\frac{s}{2}} g\right\rangle_{L^2} .
\end{equation}
It is easy to show that $[\mathcal{H}]^s $ is also a separable Hilbert space with orthogonal basis $ \{ \lambda_{i}^{s/2} e_{i}\}_{i \in N}$. Specially, we have $[\mathcal{H}]^0 \subseteq L^{2}(\mathcal{X},\mu) $ and $[\mathcal{H}]^1 = \mathcal{H}$. For $0 < s_{1} < s_{2}$, the embeddings $ [\mathcal{H}]^{s_{2}} \hookrightarrow[\mathcal{H}]^{s_{1}} \hookrightarrow[\mathcal{H}]^0 $ exist and are compact \citep{fischer2020_SobolevNorm}. For the functions in $[\mathcal{H}]^{s}$ with larger $s$, we say they have higher regularity (smoothness) with respect to the RKHS.


It is worth pointing out the relation between the definition \eqref{def interpolation space} and the interpolation space defined through the real method (real interpolation). For details of interpolation of Banach spaces through the real method, we refer to \citet[Chapter 4.2.2]{sawano2018theory}. Specifically, \citet[Theorem 4.6]{steinwart2012_MercerTheorem} reveals that for $0 < s < 1$,
\begin{equation}\label{inter relation}
  [\mathcal{H}]^{s} \cong \left(L^{2}(\mathcal{X},\mu), [\mathcal{H}]^{1} \right)_{s,2},
\end{equation}
where $(\cdot, \cdot)_{s,2}$ denotes the real interpolation of two normed spaces (please refer to Definition \ref{def real interpolation}). As an example, the Sobolev space $H^{m}(\mathcal{X})$ is an RKHS if $m > \frac{d}{2}$ and its interpolation space is still a Sobolev space given by $ [H^{m}(\mathcal{X})]^s \cong H^{m s}(\mathcal{X}), \forall s>0 $. See Section \ref{section examples sobolev} for detailed discussions.

\subsection{Spectral algorithms}\label{section spectral algorithms}

Suppose that we observed the given samples $Z = \{ (x_{i}, y_{i}) \}_{i=1}^{n}$ and denote $X=\left( x_{1},\cdots,x_{n} \right)$. Define the sampling operator $ K_{x}: \mathbb{R} \rightarrow \mathcal{H}, ~ y \mapsto y k(x,\cdot) $ and its adjoint operator $K_{x}^{*}: \mathcal{H} \rightarrow \mathbb{R},~ f \mapsto f(x)$. Then we can define $T_{x} = K_{x} K_{x}^{*}$. Further, we define the sample covariance operator $T_{X}: \mathcal{H} \to \mathcal{H}$ as
\begin{equation}\label{def of TX}
    T_X:=\frac{1}{n} \sum_{i=1}^n K_{x_i} K_{x_i}^*.
\end{equation}
Then we know that $ \| T_{X} \| \le \left\| T_{X} \right\|_{1} \le \kappa^{2}$, where $\| \cdot \| $ denotes the operator norm and $\| \cdot \|_{1} $ denotes the trace norm. Further, define the sample basis function
\begin{displaymath}
  g_Z:=\frac{1}{n} \sum_{i=1}^n K_{x_i} y_i \in \mathcal{H}.
\end{displaymath}
Based on the $n$ samples, the kernel method aims to choose a function $ \hat{f} \in \mathcal{H}$ such that the risk given by \eqref{def of risk} is small. A direct estimator is $ \hat{f} \in \mathcal{H}$ that minimizing the empirical risk
\begin{displaymath}
  \hat{\mathcal{E}}(f) = \frac{1}{n}\sum\limits_{i=1}^{n} \left( f(x_i) - y_{i}  \right)^{2},
\end{displaymath}
which leads to an equation
\begin{displaymath}
  T_{X} \hat{f} = g_{Z}.
\end{displaymath}
However, on the one hand, minimizing the empirical risk may lead to overfitting. On the other hand, the inverse of the sample covariance operator $T_X$ does not exist in general. The spectral algorithms \citep[etc.]{rosasco2005_SpectralMethods,caponnetto2006optimal,bauer2007_RegularizationAlgorithms,  gerfo2008_SpectralAlgorithms} handle these issues by introducing the regularization and generate estimators through the filter functions. Now, we first define the filter function. 
\begin{definition}[Filter function]\label{def filter}
  Let $\left\{\varphi_\nu:\left[0, \kappa^2\right] \rightarrow \mathbb{R}^{+} \mid \nu \in \Gamma \subseteq \mathbb{R}^{+}\right\}$ be a class of functions and $ \psi_\nu(z)=1-z \varphi_\nu(z) $. If $\varphi_{\nu} ~\text{and}~ \psi_{\nu}$ satisfy:
  \begin{itemize}
      \item $\forall \alpha \in [0,1],$ we have
      \begin{equation}\label{prop1}
          \sup _{z \in\left[0, \kappa^2\right]} z^\alpha \varphi_\nu(z) \leq E \nu^{1-\alpha}, \quad \forall \nu \in \Gamma;
      \end{equation}
      \item $\exists \tau \ge 1 $ s.t. $\forall \alpha \in [0,\tau] $, we have
      \begin{equation}\label{prop2}
          \sup _{z \in\left[0, \kappa^2\right]}\left|\psi_\nu(z)\right| z^\alpha \leq F_\tau \nu^{-\alpha}, \quad \forall \nu \in \Gamma ,
      \end{equation}
  \end{itemize}
  where $ E, F_{\tau}$ are absolute constants, then we call $ \varphi_{\nu} $ a filter function. We refer to $\nu$ as the regularization parameter and $\tau$ as the qualification. 
\end{definition}

Given a filter function $ \varphi_{\nu} $, we can define the corresponding spectral algorithm \footnote{Let $L$ be a self-adjoint, compact operator over a separable Hilbert space $H$ with eigenvalues $\{\sigma_{i}\}_{i=1}^{\infty}$ and eigenfunctions (also an orthonormal basis of $H$) $\{\psi_{i}\}_{i=1}^{\infty}$. For a function $f: \mathbb{R} \to \mathbb{R}$, $f(L)$ is an operator defined by spectral calculus: $ f(L) = \sum_{i=1}^{\infty} f(\sigma_{i}) \psi_{i} \otimes \psi_{i}$.}
\begin{definition}[spectral algorithm]
   Let $\varphi_{\nu}$ be a filter function index with $ \nu >0$. Given the samples  $Z$, the spectral algorithm produces an estimator of $f_{\rho}^{*}$  given by  
   \begin{align}\label{SA estimator}
      \hat{f}_{\nu} = \varphi_{\nu}(T_{X}) g_{Z}.
   \end{align}
\end{definition}

Here we list three kinds of spectral algorithms that are commonly used. 
\begin{example}[Kernel ridge regression] Let the filter function $\varphi_{\nu} $ be defined as
\begin{displaymath}
    \varphi_{\nu}^{\mathrm{krr}}(z) = \frac{\nu}{\nu z + 1}.  
\end{displaymath}
Then the corresponding spectral algorithm is kernel ridge regression (Tikhonov regularization). The qualification $\tau = 1$ and $ E = F_{\tau} = 1$.
\end{example}

\begin{example}[Gradient flow] Let the filter function $\varphi_{\nu} $ be defined as
\begin{displaymath}
    \varphi_{\nu}^{\mathrm{gf}}(z) = \frac{1 - e^{-\nu z}}{z}.
\end{displaymath}
Then the corresponding spectral algorithm is gradient flow. The qualification $\tau$ could be any positive number, $ E = 1$ and $ F_{\tau} = \left( \tau / e \right)^{\tau}$.
\end{example}

\begin{example}[Spectral cut-off] Let the filter function $\varphi_{\nu} $ be defined as
\begin{displaymath}
    \varphi_\nu^{\mathrm{cut}}(z) = \begin{cases}z^{-1}, & z^{-1} \leq \nu, \\ 0, & z^{-1}>\nu. \end{cases}
\end{displaymath}
Then the corresponding spectral algorithm is Spectral cut-off (truncated singular value decomposition). The qualification $\tau$ could be any positive number and $ E = F_{\tau} = 1$.
\end{example}
For other examples of spectral algorithms (e.g., iterated Tikhonov, gradient methods, Landweber iteration, etc.), we refer to \cite{gerfo2008_SpectralAlgorithms}.

\section{Main results}\label{section main}

\subsection{Assumptions}\label{section assumption}

This subsection lists the standard assumptions that frequently appear in related literature.
\begin{assumption}[Eigenvalue decay rate (EDR)]\label{ass EDR}
 Suppose that the eigenvalue decay rate (EDR) of $\mathcal{H}$ is $\beta > 1$, i.e, there are positive constants $c$ and $C$ such that 
 \begin{displaymath}
   c i^{- \beta} \le \lambda_{i} \le C i^{-\beta}, \quad  \forall i \in N.
 \end{displaymath}
\end{assumption}
Note that the eigenvalues $\lambda_{i}$ and EDR are only determined by the marginal distribution $\mu$ and the RKHS $\mathcal{H}$. The polynomial eigenvalue decay rate assumption is standard in related literature and is also referred to as the capacity condition or effective dimension
condition \citep[etc.]{caponnetto2006optimal,Caponnetto2007OptimalRF}.
\newline

We say that $\mathcal{H}$ has the embedding property of order $\alpha\in [\frac{1}{\beta},1]$, if there is a constant $0 < A < \infty$ such that
\begin{equation}\label{embedding property 1.12}
    \left\|[\mathcal{H}]^\alpha \hookrightarrow L^{\infty}(\mathcal{X},\mu)\right\| \leq A,
\end{equation}
where $\|\cdot\| $ denotes the operator norm of the embedding.

In fact, for any $\alpha > 0$, we can define $M_{\alpha} $ as the smallest constant $A > 0$ such that 
\begin{equation}
    \label{eq:EMB_Eigenvalues}
  \sum_{i \in N} \lambda_i^\alpha e_i^2(x) \leq A^2, \quad \mu \text {-a.e. } x \in \mathcal{X},
\end{equation}
if there is no such constant, set $ M_{\alpha} = \infty$. Then \citet[Theorem 9]{fischer2020_SobolevNorm} shows that for $ \alpha > 0$,
\begin{displaymath}
  \left\|[\mathcal{H}]^\alpha \hookrightarrow L^{\infty}(\mathcal{X},\mu)\right\|=M_{\alpha}.
\end{displaymath}

Note that since $ \sup_{x \in \mathcal{X}} k(x,x) \le \kappa^{2} $, $M_{\alpha} \le \kappa < \infty$ is always true for $\alpha \ge 1$. In addition, \citet[Lemma 10]{fischer2020_SobolevNorm} also shows that $\alpha$ can not be less than $\frac{1}{\beta}$. By the inclusion relation of interpolation spaces, it is clear that if $\mathcal{H}$ has the embedding property of order $\alpha$, then it has the embedding property of order $\alpha^{\prime}$ for any $\alpha^{\prime} \geq \alpha$. Thus, we may introduce the following assumption: 

\begin{assumption}[Embedding index]\label{assumption embedding}
Suppose that there exists $\alpha_{0} > 0$, such that
\begin{displaymath}
  \alpha_{0} = \inf\left\{ \alpha \in [\frac{1}{\beta},1] :  \left\|[\mathcal{H}]^\alpha \hookrightarrow L^{\infty}(\mathcal{X},\mu)\right\| < \infty  \right\},
\end{displaymath}
and we refer to $\alpha_{0}$ as the \textit{embedding index} of an RKHS $\mathcal{H}$.
\end{assumption}

 Note that $ \mathcal{H}$ has the embedding property of order $\alpha$ for any $\alpha > \alpha_{0}$. This directly implies that all the functions in $[\mathcal{H}]^\alpha$ are $\mu \text {-a.e.}$ bounded, $\alpha > \alpha_{0}$. However, the embedding property may not hold for $\alpha = \alpha_{0}$.
 
\begin{assumption}[Source condition]\label{ass source condition}
  For $s > 0 $, there is a constant $R > 0 $ such that $f_{\rho}^{*} \in [\mathcal{H}]^{s}$ and
  \begin{displaymath}
    \| f_{\rho}^{*} \|_{[\mathcal{H}]^{s}} \le R.
  \end{displaymath}
\end{assumption}
Functions in $[\mathcal{H}]^{s}$ with smaller $s$ are less smooth, which will be harder for an algorithm to estimate.
\begin{assumption}[Moment of error]\label{ass mom of error}
  The noise $ \epsilon \coloneqq y - f_{\rho}^{*}(x)$ satisfies that there are constants $ \sigma, L > 0$ such that for any $ m \ge 2$,
  \begin{displaymath}
      \mathbb{E}\left(|\epsilon|^m \mid x\right) \leq \frac{1}{2} m ! \sigma^2 L^{m-2}, \quad \mu \text {-a.e. } x \in \mathcal{X}.
  \end{displaymath}
\end{assumption}
This is a standard assumption to control the noise such that the tail probability decays fast \citep{lin2020_OptimalConvergence,fischer2020_SobolevNorm}. It is satisfied for, for instance, the Gaussian noise with bounded variance or sub-Gaussian noise. Some literature (e.g., \citealt{steinwart2009_OptimalRates,PillaudVivien2018StatisticalOO,jun2019kernel}, etc) also uses a stronger assumption $y \in [-L_{0},L_{0}]$ which implies both Assumption \ref{ass mom of error} and the boundedness of $f_{\rho}^{*}$.

\subsection{Convergence results}
Now we are ready to state our main results. Though this paper focuses on the misspecified case, i.e., $ 0 < s < 1$, we state the theorems including those $ s \ge 1$ for completeness.
\setcounter{theorem}{0}
\begin{theorem}[Upper bound]\label{main theorem}
  Suppose that Assumption \ref{ass EDR},\ref{assumption embedding}, \ref{ass source condition} and \ref{ass mom of error} hold for $ 0 < s \le 2 \tau$ and $\frac{1}{\beta} \le \alpha_{0} < 1$. Let $\hat{f}_{\nu}$ be the estimator defined by \eqref{SA estimator}. Then for $0 \le \gamma \le 1$ with $\gamma \le s$:
  \begin{itemize}[leftmargin = 18pt]
      \item In the case of $s + \frac{1}{\beta} > \alpha_{0} $, by choosing $\nu \asymp n^{\frac{\beta }{s \beta + 1}}$, for any fixed $\delta \in (0,1)$, there exists a constant $N$ such that when $n \ge N$, with probability at least $1 - \delta$, we have
      \begin{equation}
          \left\|\hat{f}_{\nu}-f_{\rho}^*\right\|_{[\mathcal{H}]^{\gamma}}^2 \leq\left(\ln \frac{6}{\delta}\right)^2 C n^{-\frac{(s-\gamma) \beta}{s \beta+1}},
      \end{equation}
      where $C$ is a constant independent of $n$ and $\delta$. The constant $N$ only depends on the parameters and constants from Assumption \ref{ass EDR}, \ref{assumption embedding}, \ref{ass source condition} and \ref{ass mom of error}, on $\delta$, on the operator norm $\| T \|$ and on the constants in the scaling of $\nu $ with respect to $n$.
      
      \item In the case of $s + \frac{1}{\beta} \le \alpha_{0} $, for any $ \alpha > \alpha_{0}$, by choosing $\nu \asymp (\frac{n}{\ln^{r}(n)})^{\frac{1}{\alpha}}$ for some $r > 1$, for any fixed $\delta \in (0,1)$, there exists a constant $N$ such that when $n \ge N$, with probability at least $1 - \delta$, we have
      \begin{equation}
          \left\|\hat{f}_\nu-f_{\rho}^*\right\|_{[\mathcal{H}]^{\gamma}}^2 \leq\left(\ln \frac{6}{\delta}\right)^2 C \left(\frac{n}{\ln ^r(n)}\right)^{-\frac{s-\gamma}{\alpha}},
      \end{equation}
      where $C$ is a constant independent of $n$ and $\delta$. The constant $N$ only depends on the parameters and constants from Assumption \ref{ass EDR}, \ref{assumption embedding}, \ref{ass source condition} and \ref{ass mom of error}, on $\alpha$, on $\delta$, on the operator norm $\| T \|$ and on the constants in the scaling of $\nu $ with respect to $n$.
  \end{itemize}
\end{theorem}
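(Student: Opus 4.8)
The plan is to work in the integral‑operator picture and use the decomposition
\[
\begin{aligned}
\hat f_\nu-f_\rho^* = {}& \underbrace{\varphi_\nu(T_X)\bigl(g_Z-\tfrac1n\textstyle\sum_i K_{x_i}f_\rho^*(x_i)\bigr)}_{\text{(I) noise}}\;+\;\underbrace{\varphi_\nu(T_X)\bigl(\tfrac1n\textstyle\sum_i K_{x_i}f_\rho^*(x_i)-S_k^*f_\rho^*\bigr)}_{\text{(II) }f_\rho^*\text{-sampling}}\\[2pt]
&{}+\;\underbrace{\bigl(\varphi_\nu(T_X)-\varphi_\nu(T)\bigr)S_k^*f_\rho^*}_{\text{(III) operator perturbation}}\;+\;\underbrace{\bigl(\varphi_\nu(T)S_k^*f_\rho^*-f_\rho^*\bigr)}_{\text{(IV) bias}},
\end{aligned}
\]
where $f_\rho^*(x_i)$ is the value at $x_i$ of the ($\mu$-a.e.\ defined) $L^2$-representative of $f_\rho^*$, so that $\tfrac1n\sum_i K_{x_i}f_\rho^*(x_i)$ is an honest element of $\mathcal H$ with mean $S_k^*f_\rho^*$, and $g_Z-\tfrac1n\sum_i K_{x_i}f_\rho^*(x_i)=\tfrac1n\sum_i K_{x_i}\varepsilon_i$ with $\varepsilon_i=y_i-f_\rho^*(x_i)$. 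Under the usual identification of $\mathcal H$ with its image in $L^2$, one has $\|u\|_{[\mathcal H]^{\gamma}}=\|T^{(1-\gamma)/2}u\|_{\mathcal H}$ for $u\in\mathcal H$ and $0\le\gamma\le1$, so (I)–(III) will be estimated after applying $T^{(1-\gamma)/2}$. Term (IV) is handled in $L^2$: the identity $S_k\varphi_\nu(T)S_k^*=\varphi_\nu(L_k)L_k$ rewrites it as $-\psi_\nu(L_k)f_\rho^*$; writing $f_\rho^*=L_k^{s/2}h_0$ with $\|h_0\|_{L^2}\le R$ (Assumption \ref{ass source condition}) and using the filter qualification \eqref{prop2} with $\alpha=(s-\gamma)/2\in[0,\tau]$ gives $\|(\mathrm{IV})\|_{[\mathcal H]^{\gamma}}\le F_\tau R\,\nu^{-(s-\gamma)/2}$, which squared is exactly the claimed rate. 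It remains to show that (I)–(III) are of this order or smaller.

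\textbf{Step 1–2: controlling the stochastic terms.} Introduce $T_\nu:=T+\nu^{-1}I$ and $T_{X,\nu}:=T_X+\nu^{-1}I$ and insert these regularizers between the filter operator and the random vectors. The whole analysis then rests on two ingredients. First, the operator ratio $B:=\|T_\nu^{1/2}T_{X,\nu}^{-1/2}\|$ is bounded by an absolute constant with probability at least $1-\delta/3$ for $n$ large; this is the standard operator Bernstein bound for $T_X-T$, which uses the effective dimension $\mathcal N(\nu^{-1}):=\sum_{i\in N}\lambda_i/(\lambda_i+\nu^{-1})\asymp\nu^{1/\beta}$ (Assumption \ref{ass EDR}) together with $\sup_{x}\|T_\nu^{-1/2}k(x,\cdot)\|_{\mathcal H}^2=\sup_x\sum_{i\in N}\tfrac{\lambda_i e_i^2(x)}{\lambda_i+\nu^{-1}}\le 2M_\alpha^2\nu^{\alpha}$, valid for every $\alpha>\alpha_0$ (Assumption \ref{assumption embedding}), and imposes a budget of the form $\nu^{\alpha}\ln(n/\delta)\lesssim n$. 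Second, the $T_\nu^{-1/2}$-weighted concentration of the random vectors in (I)–(III): (III) is bounded by a resolvent identity plus $B$ and the filter property \eqref{prop1}; (I) by a Bernstein inequality using Assumption \ref{ass mom of error} and $\mathbb E\|T_\nu^{-1/2}K_x\varepsilon\|_{\mathcal H}^2=\mathbb E[\varepsilon^2\|T_\nu^{-1/2}k(x,\cdot)\|_{\mathcal H}^2]\le\sigma^2\mathcal N(\nu^{-1})$. The new ingredient is (II): the classical proofs bound its second moment by $\|f_\rho^*\|_{L^\infty}^2\mathcal N(\nu^{-1})$, which is unavailable when $s\le\alpha_0$. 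Instead one invokes Theorem \ref{integrability of Hs}: for every $\alpha>\alpha_0$, $f_\rho^*\in L^q$ with $q=\tfrac{2\alpha}{\alpha-s}$ and $\|f_\rho^*\|_{L^q}\le C_\alpha R$, and estimates, by Hölder's inequality with exponents $(q/2,(q/2)')$,
\[
\mathbb E\bigl[f_\rho^*(x)^2\,\|T_\nu^{-1/2}k(x,\cdot)\|_{\mathcal H}^2\bigr]\;\le\;\|f_\rho^*\|_{L^q}^2\,\Bigl\|\,\|T_\nu^{-1/2}k(\cdot,\cdot)\|_{\mathcal H}^2\,\Bigr\|_{L^{(q/2)'}},
\]
bounding the last factor by log‑convexity between its $L^1$-norm $\mathcal N(\nu^{-1})\asymp\nu^{1/\beta}$ and its $L^\infty$-norm $\lesssim\nu^{\alpha}$; the higher moments needed by the Bernstein/Rosenthal bound for $\tfrac1n\sum_i(K_{x_i}f_\rho^*(x_i)-S_k^*f_\rho^*)$ are treated identically. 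Tracking the resulting powers of $\nu$ and letting $\alpha\downarrow\alpha_0$, the contribution of (II) does not exceed the bias order $\nu^{-(s-\gamma)/2}$ precisely when $s+\tfrac1\beta>\alpha_0$.

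\textbf{Step 3: choice of $\nu$, the two cases, assembly.} If $s+\tfrac1\beta>\alpha_0$, pick $\alpha\in(\alpha_0,s+\tfrac1\beta)$; the balanced choice $\nu\asymp n^{\beta/(s\beta+1)}$ then satisfies $\nu^{\alpha}\ln(n/\delta)=o(n)$, every piece above stays within budget, and the triangle inequality over (I)–(IV) on the intersection of the three high‑probability events yields the first displayed bound, with the bias and the noise term both at rate $n^{-(s-\gamma)\beta/(s\beta+1)}$ and (II), (III) of smaller order. If $s+\tfrac1\beta\le\alpha_0$, every admissible $\alpha>\alpha_0$ has $\alpha\ge s+\tfrac1\beta$, so $\nu\asymp n^{\beta/(s\beta+1)}$ would violate $\nu^{\alpha}\ln(n/\delta)\lesssim n$; one therefore undersmooths, $\nu\asymp(n/\ln^{r}n)^{1/\alpha}$ with $r>1$, which keeps $\nu^{\alpha}\ln n/n\asymp\ln^{1-r}n\to0$ while the bias becomes $\nu^{-(s-\gamma)/2}\asymp(n/\ln^{r}n)^{-(s-\gamma)/(2\alpha)}$, giving the second displayed bound. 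In both cases $N$ is chosen large enough that $\nu^{\alpha}\ln(n/\delta)\lesssim n$ and the relevant concentration events hold, so it depends only on the constants of Assumptions \ref{ass EDR}, \ref{assumption embedding}, \ref{ass source condition}, \ref{ass mom of error}, on $\delta$, on $\|T\|$, on $\alpha$ (in the second case), and on the scaling constants of $\nu$.

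\textbf{Expected main obstacle.} The bias bound and terms (I), (III) follow the classical route; the crux is term (II). One must choose the Hölder exponent $q$ and the interpolation parameter $\alpha$ so that the power of $\nu$ produced in Step 1–2 never exceeds $\nu^{-(s-\gamma)/2}$ on the whole range $\alpha_0-\tfrac1\beta<s\le2\tau$ — it is this bookkeeping that forces the dichotomy at $s+\tfrac1\beta=\alpha_0$ — and one must upgrade the moment estimates into a sub‑exponential‑type tail so as to retain the $(\ln\tfrac6\delta)^2$ dependence even though $f_\rho^*(x)$ has only $q<\infty$ finite moments. I expect this last point to be the real difficulty, to be dealt with either by a carefully tuned Rosenthal‑type moment inequality together with the observation that (II) is of strictly smaller order than the noise term (I), or by truncating the heavy tail of $f_\rho^*$ and absorbing it into the effective dimension.
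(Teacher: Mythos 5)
There is a genuine gap, and it sits exactly where the theorem's novelty lies: your term (II). First, even at the level of second moments your estimate is not strong enough in the regime the theorem is about. With the centering at $f_\rho^*$ that your decomposition uses, the per-sample variance you propose is
\[
\mathbb{E}\bigl[f_\rho^*(x)^2\,\|T_\nu^{-1/2}k(x,\cdot)\|_{\mathcal H}^2\bigr]\;\lesssim\;\|f_\rho^*\|_{L^q}^2\,\mathcal N(\nu)^{s/\alpha}\,\bigl(M_\alpha^2\nu^{\alpha}\bigr)^{1-s/\alpha}\;\asymp\;\nu^{\frac{s}{\alpha\beta}+\alpha-s},
\]
and requiring $\nu^{\gamma/2}\sqrt{V/n}\lesssim\nu^{-(s-\gamma)/2}$ with $\nu\asymp n^{\beta/(s\beta+1)}$ reduces to $(\alpha-s)(\alpha\beta-1)\le 0$, i.e.\ $\alpha\le s$; since every admissible $\alpha$ satisfies $\alpha>\alpha_0\ge 1/\beta$ and $\alpha>\alpha_0\ge s$ in the hard regime $s\le\alpha_0$, this bound can never deliver the claimed rate there (the same failure occurs if you decouple the $q$ used in H\"older from the $\alpha$ used for the sup-bound), so your assertion that the bookkeeping closes ``precisely when $s+\frac1\beta>\alpha_0$'' does not follow from the estimates you propose. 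The paper's way out is a different centering: it writes $\hat f_\nu-f_\nu=\varphi_\nu(T_X)(g_Z-T_Xf_\nu)-\psi_\nu(T_X)f_\nu$ and concentrates $\xi_i=T_\nu^{-\frac12}(K_{x_i}y_i-T_{x_i}f_\nu)$, so the sampled signal is weighted by $f_\rho^*-f_\nu$ (small: $\|f_\rho^*-f_\nu\|_{L^2}\lesssim\nu^{-s/2}$ by Theorem \ref{theorem of approximation error}, and $\|f_\nu\|_{L^\infty}\lesssim\nu^{(\alpha-s)/2}$ by Lemma \ref{prop infty norm}), giving variance $\nu^{\alpha-s}$, which lies below the noise level $\nu^{1/\beta}$ exactly when $\alpha<s+\frac1\beta$ — this is where the dichotomy at $s+\frac1\beta=\alpha_0$ actually comes from. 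Second, the tail problem you flag at the end is not a side issue but the decisive step, and it is left unresolved: Lemma \ref{bernstein} needs all moments $m\ge2$ with sub-exponential growth, which $f_\rho^*\in L^q\setminus L^\infty$ does not provide, and a Rosenthal-type bound only yields polynomial dependence on $1/\delta$, not the claimed $(\ln\frac6\delta)^2$. The paper resolves this by truncation: it sets $\Omega_1=\{|f_\rho^*|\le t\}$, applies Bernstein only to $\xi_iI_{x_i\in\Omega_1}$ (Theorem \ref{theorem 4.9 boundness}), shows via Markov and the $L^{q_s}$-embedding of Theorem \ref{integrability of Hs} that with probability tending to one no sample falls in $\Omega_2=\mathcal X\setminus\Omega_1$, and separately bounds the shift $\|\mathbb E\,\xi_xI_{x\in\Omega_2}\|_{\mathcal H}$; the truncation level must satisfy both $t\lesssim n^{\frac12(1+\frac{1-\alpha\beta}{s\beta+1})}$ and $t\gg n^{1/q}$, compatible iff $q>\frac{2(s\beta+1)}{2+(s-\alpha)\beta}$, which is precisely what $q_s=\frac{2\alpha'}{\alpha'-s}$ supplies when $s+\frac1\beta>\alpha_0$. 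None of this bookkeeping appears in your proposal.

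A secondary problem: your term (III) asks for a bound on $\bigl(\varphi_\nu(T_X)-\varphi_\nu(T)\bigr)S_k^*f_\rho^*$ ``by a resolvent identity,'' but Definition \ref{def filter} covers general filters (gradient flow, spectral cut-off) that are not resolvents, and no Lipschitz-type property of $\varphi_\nu$ is assumed, so this step does not go through as stated for general spectral algorithms. The paper's decomposition avoids differences of filter operators entirely: only $\varphi_\nu(T_X)$ and $\psi_\nu(T_X)$ appear, controlled through \eqref{prop1}--\eqref{prop2}, the operator ratios of Lemma \ref{lemma 4.7}, and the case analysis $0<s<1$, $1\le s\le2$, $s>2$ in Theorem \ref{estimation error thm}. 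Your bias term (IV) and noise term (I) do match the paper's treatment.
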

Compared with the state-of-the-art results (\citealt{fischer2020_SobolevNorm,PillaudVivien2018StatisticalOO}), Theorem \ref{main theorem} removes the boundedness assumption $ \| f_{\rho}^{*} \|_{L^{\infty}} \le B_{\infty} < \infty$ and prove the same upper bound for general spectral algorithms. This improvement is nontrivial for $ s \le \alpha_{0}$, since $ [\mathcal{H}]^{s} \cap L^{\infty}(\mathcal{X,\mu}) \subsetneqq [\mathcal{H}]^{s}$ when $ s \le \alpha_{0}$. As we will see in Section \ref{section proofs}, the proof of Theorem \ref{main theorem} removes the boundedness assumption by analyzing the $L^{q}$-embedding property of $[\mathcal{H}]^{s} $. With the $L^{q}$-integrability of the functions in $ [\mathcal{H}]^{s} $, although the true function $f_{\rho}^{*}$ may not fall into $ L^{\infty}(\mathcal{X,\mu})$, the tail probability can be controlled appropriately. We present the convergence results for $ [\mathcal{H}]^{\gamma}$-norm generalization error, where the $L^{2}$-norm \eqref{def of gen} is a special case when $ \gamma = 0$. 

Now we are going to state the minimax lower bound, which is often referred to as the information-theoretic lower bound. 
\begin{theorem}[Lower bound]\label{prop information lower bound}
    Let $\mu$ be a probability distribution on $\mathcal{X}$ such that Assumption \ref{ass EDR} is satisfied. Let $\mathcal{P}$ consist of all the distributions on $\mathcal{X}\times \mathcal{Y}$ satisfying \ref{ass source condition}, \ref{ass mom of error} for $ s > 0$ and with marginal distribution $\mu$. Then for $0 \le \gamma \le 1$ with $\gamma \le s$, there exists a constant $C$, for all learning methods, for any fixed $\delta \in (0,1)$, when $n$ is sufficiently large, there is a distribution $\rho \in \mathcal{P}$ such that, with probability at least $1 - \delta$, we have 
   \begin{equation}
       \left\|\hat{f}-f_\rho^*\right\|_{[\mathcal{H}]^{\gamma}}^2 \ge C \delta n^{-\frac{(s-\gamma)\beta}{s \beta +1}}.
   \end{equation}
\end{theorem}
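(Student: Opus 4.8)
\textbf{Proof proposal for Theorem \ref{prop information lower bound}.} The plan is a Fano-type (information-theoretic) argument over a sign-packing in a single high-frequency band of eigenfunctions, with one extra ingredient — rescaling the perturbation amplitude — to obtain the linear dependence on $\delta$. First I would fix a frequency scale $N_0 \asymp (nR^2/\sigma^2)^{1/(s\beta+1)}$ and work with the eigenfunctions indexed by $I=\{N_0+1,\dots,2N_0\}$, on which Assumption \ref{ass EDR} gives $\lambda_i\asymp N_0^{-\beta}$. For a parameter $\eta\in(0,1]$ to be chosen at the end and each $\omega\in\{-1,1\}^{|I|}$, set $f_\omega=\eta\sum_{i\in I}\omega_i a_i e_i$ with $a_i^2=R^2\lambda_i^{s}/|I|$, so that $\|f_\omega\|_{[\mathcal{H}]^{s}}^2=\eta^2\sum_{i\in I}a_i^2\lambda_i^{-s}\le R^2$; let $\rho_\omega$ be the distribution with marginal $\mu$, regression function $f_\omega$, and independent Gaussian noise of a fixed variance compatible with Assumption \ref{ass mom of error}. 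Each $\rho_\omega$ then lies in $\mathcal{P}$ (Assumption \ref{ass source condition} by construction, Assumption \ref{ass mom of error} for Gaussian noise with a suitable $L$). I would then apply the Varshamov--Gilbert bound to extract $\Omega\subseteq\{-1,1\}^{|I|}$ with $|\Omega|\ge 2^{|I|/8}$ and pairwise Hamming distance at least $|I|/8$.

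Next I would estimate the two quantities that drive Fano's inequality. Using $L^2$-orthonormality of $\{e_i\}$ and $a_i^2\lambda_i^{-\gamma}\asymp R^2 N_0^{(\gamma-s)\beta-1}$ (here $\gamma\le s$ is used), the packing separation in the target norm satisfies, for $\omega\ne\omega'$ in $\Omega$,
\[
\|f_\omega-f_{\omega'}\|_{[\mathcal{H}]^{\gamma}}^2=\eta^2\!\!\sum_{i\in I}(\omega_i-\omega_i')^2 a_i^2\lambda_i^{-\gamma}\;\gtrsim\;\eta^2 R^2 N_0^{(\gamma-s)\beta}\;\asymp\;\eta^2 R^2\, n^{-\frac{(s-\gamma)\beta}{s\beta+1}}\;=:\;4\epsilon_n^2,
\]
while the Kullback--Leibler divergence between the sample distributions, which for shared marginal and Gaussian noise equals $\tfrac{n}{2\sigma^2}\|f_\omega-f_{\omega'}\|_{L^2(\mu)}^2$, obeys
\[
\mathrm{KL}\big(\rho_\omega^{\otimes n}\,\big\|\,\rho_{\omega'}^{\otimes n}\big)\;=\;\frac{n}{2\sigma^2}\,\eta^2\!\!\sum_{i\in I}(\omega_i-\omega_i')^2 a_i^2\;\lesssim\;\frac{nR^2\eta^2}{\sigma^2}\,N_0^{-s\beta}\;\lesssim\;\eta^2 N_0,
\]
the last step using the choice of $N_0$. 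Since $\log|\Omega|\gtrsim N_0$, reducing estimation to the $|\Omega|$-ary hypothesis test (triangle inequality in $[\mathcal{H}]^{\gamma}$ together with the separation $4\epsilon_n^2$) and invoking Fano's inequality gives, for every estimator $\hat f$,
\[
\max_{\omega\in\Omega}\mathbb{P}_{\rho_\omega}\!\left(\|\hat f-f_{\rho_\omega}^*\|_{[\mathcal{H}]^{\gamma}}^2\ge\epsilon_n^2\right)\;\ge\;1-\frac{c_1\eta^2 N_0+\log 2}{c_2 N_0}\;=\;1-\frac{c_1}{c_2}\eta^2-\frac{\log 2}{c_2 N_0}.
\]

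Finally I would choose $\eta$ as a function of $\delta$: take $\eta^2=\min\{1,\tfrac{c_2}{2c_1}\delta\}\asymp\delta$. For $n$ large enough that $\tfrac{\log 2}{c_2 N_0}\le\delta/2$ (and $|I|$ large enough for the Varshamov--Gilbert bound), the right-hand side is at least $1-\delta$, while $\epsilon_n^2\gtrsim\eta^2 R^2\, n^{-\frac{(s-\gamma)\beta}{s\beta+1}}\gtrsim\delta\, n^{-\frac{(s-\gamma)\beta}{s\beta+1}}$, so some $\rho=\rho_\omega\in\mathcal{P}$ witnesses the claim with $C$ collecting $R,\sigma,\beta,s,\gamma$ and the absolute constants. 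The only step that is not mechanical is this last rescaling: plain Fano delivers a lower bound that fails with some fixed constant probability, and shrinking the amplitude $\eta$ trades confidence (pushing the failure probability down to $\delta$) against the guaranteed error magnitude (scaling it by $\eta^2\asymp\delta$), which is exactly what produces the stated $C\delta$-form. Verifying $\rho_\omega\in\mathcal{P}$ and the routine Varshamov--Gilbert/Fano bookkeeping are the remaining, standard, details.
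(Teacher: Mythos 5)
Your proposal is correct and follows essentially the same route as the paper's proof: a hypercube of perturbations supported on the eigenfunction band $\{m+1,\dots,2m\}$ with $m\asymp n^{1/(s\beta+1)}$, Varshamov--Gilbert packing, Gaussian noise so the $n$-sample KL is $\frac{n}{2\sigma^2}\|f_\omega-f_{\omega'}\|_{L^2}^2$, a Fano/Tsybakov-type reduction, and the amplitude scaled proportionally to $\delta$ to produce the $C\delta$ factor (the paper does this via $C_0=c'a$, $a=\delta/3$ in Tsybakov's Theorem 2.5). The remaining differences ($\pm 1$ signs versus $\{0,1\}$ coordinates, weights $\propto\lambda_i^{s/2}$ versus $\lambda_i^{\gamma/2}$ with a separate amplitude) are cosmetic.
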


The main difference between our Theorem \ref{prop information lower bound} and the lower rate in \citealt{fischer2020_SobolevNorm} is that we consider the whole space $[\mathcal{H}]^{s}$ instead of $[\mathcal{H}]^{s} \cap L^{\infty} $, thus the rate will be different with theirs when $ 0 < s < \alpha_{0}$.

\begin{remark}[Optimality]
A direct result from Theorem \ref{main theorem} and Theorem \ref{prop information lower bound} is that for $s \in \left(\alpha_0 - \frac{1}{\beta}, 2 \tau \right]$, the upper bound matches the minimax lower bound. Therefore, we prove the minimax optimality of spectral algorithms for $ s \in \left( \alpha_0 - \frac{1}{\beta}, 2 \tau \right]$ with respect to $ [\mathcal{H}]^{\gamma}$-norm ( $ 0 \le \gamma \le s$) generalization error.
\end{remark}

\section{Examples: RKHS with embedding index \texorpdfstring{$\alpha_{0} = \frac{1}{\beta}$ }{2} }\label{section examples}
We prove the minimax optimality of spectral algorithms for $\alpha_0 - \frac{1}{\beta} < s \le 2 \tau$ in the last section. Therefore the embedding index $\alpha_{0}$ of an RKHS is crucial when analyzing the optimality of the spectral algorithms. In the best case of $\alpha_{0} = \frac{1}{\beta}$, only the first situation in Theorem \ref{main theorem} exists and we obtain the optimality for all $ 0 < s \le 2 \tau$. In this section, we give several examples of RKHSs with embedding index $\alpha_{0} = \frac{1}{\beta}$.

\subsection{RKHS with uniformly bounded eigenfunctions}\label{section examples ui}
RKHS with uniformly bounded eigenfunctions, i.e., $\sup_{i \in N}  \| e_{i} \|_{L^{\infty}} < \infty$, are frequently considered \citep{mendelson2010_RegularizationKernel, steinwart2009_OptimalRates, PillaudVivien2018StatisticalOO}. \citet[Lemma 10]{fischer2020_SobolevNorm} has proved that this kind of RKHS satisfies $\alpha_0 = \frac{1}{\beta}$.

\subsection{Sobolev RKHS}\label{section examples sobolev}

Let us first introduce some concepts of (fractional) Sobolev space (see, e.g., \citealt{adams2003_SobolevSpaces}). In this section, we assume that $\mathcal{X} \subseteq \mathbb{R}^{d}$ is a bounded domain with smooth boundary and Lebesgue measure $\nu$. Denote $L^{2}(\mathcal{X}) \coloneqq L^{2}(\mathcal{X},\nu)$ as the corresponding $L^{2}$ space. For $m \in \mathbb{N}$, we denote the usual Sobolev space $W^{m,2}(\mathcal{X})$ by $H^{m}(\mathcal{X})$ and $L^{2}(\mathcal{X})$ by $H^{0}(\mathcal{X})$. Then the (fractional) Sobolev space for any real number $r >0 $ can be defined through the \textit{real interpolation}:
\begin{displaymath}
      H^{r}(\mathcal{X}) := \left(L^{2}(\mathcal{X}), H^{m}(\mathcal{X})\right)_{\frac{r}{m},2},
  \end{displaymath}
where $m:=\min \{k \in \mathbb{N}: k > r\}$. (We refer to Appendix A for the definition of real interpolation and \citealt[Chapter 4.2.2]{sawano2018theory} for more details). It is well known that when $r > \frac{d}{2}$, $H^{r}(\mathcal{X})$ is a separable RKHS with respect to a bounded kernel and the corresponding EDR is (see, e.g., \citealt{edmunds_triebel_1996}) 
\begin{displaymath}
  \beta = \frac{2 r}{d}.
\end{displaymath}
Furthermore, for the interpolation space of $H^{r}(\mathcal{X}) $ under Lebesgue measure defined by \eqref{def interpolation space}, \eqref{inter relation} shows that for $ s > 0$,
\begin{displaymath}
  [H^{r}(\mathcal{X})]^{s} = H^{rs}(\mathcal{X}).
\end{displaymath}

The embedding theorem of (fractional) Sobolev space (see, e.g., 7.57 of \citealt{adams1975sobolev}) shows that if $ d < 2(r-j)$ for some nonnegative integer $j$, then 
\begin{displaymath}
    H^{r}(\mathcal{X}) \hookrightarrow C^{j,\theta}(\mathcal{X}), ~~ \theta= r-j-\frac{d}{2},
\end{displaymath}
where $C^{j,\gamma}(\mathcal{X}) $ denotes the Hölder space and $\hookrightarrow $ denotes the continuous embedding. Therefore for a Sobolev RKHS $ \mathcal{H} = H^{r}(\mathcal{X}), r > \frac{d}{2}$ and any $\alpha > \frac{1}{\beta} = \frac{d}{2r}$,
\begin{displaymath}
    [H^{r}(\mathcal{X})]^{\alpha} = H^{r\alpha}(\mathcal{X}) \hookrightarrow C^{0,\theta}(\mathcal{X}) \hookrightarrow L^{\infty}(\mathcal{X}),
\end{displaymath}
where $\theta > 0$. So the embedding index of a Sobolev RKHS is $ \alpha_{0} = \frac{1}{\beta}$.

Furthermore, if we suppose that $\mathcal{H}$ is a Sobolev RKHS, i.e., $\mathcal{H} = H^{r}(\mathcal{X})$ for some $r > d/2 $ and the distribution $\rho$ satisfies that the marginal distribution $\mu$ on $\mathcal{X}$ has Lebesgue density $ 0 < c \le p(x) \le C$ for two constants $c$ and $C$. Then we also know that the embedding index is $\alpha_{0} = \frac{1}{\beta} $. Note that we say that the distribution $\mu$ has Lebesgue density $0 < c \le p(x) \le C $, if $\mu$ is equivalent to the Lebesgue measure $\nu$, i.e., $ \mu \ll \nu, \nu \ll \mu$ and there exist constants $c, C > 0$ such that $c \leq \frac{\mathrm{d} \mu}{\mathrm{d} \nu} \leq C$.

\subsection{RKHS with shift-invariant periodic kernels}\label{section examples invar}
Let us consider a kernel on $\mathcal{X} = [-\pi,\pi)^d$ satisfying 
\begin{align*}
    k(x,y) = g\big( (x - y) \bmod [-\pi,\pi)^d\big),
\end{align*}
where we denote
\begin{align*}
    a \bmod [-\pi,\pi) = \left[(a+\pi)\bmod 2\pi \right] - \pi~ \in [-\pi,\pi),
\end{align*}
and
\begin{align*}
    (a_1,\dots,a_d)\bmod [-\pi,\pi)^d = \big(a_1 \bmod [-\pi,\pi),\dots, a_d \bmod [-\pi,\pi)\big).
\end{align*}
We further assume that $\mu$ is the uniform distribution on $[-\pi,\pi)^d$.
Then, it is shown in \citet{beaglehole2022_KernelRidgeless} that the Fourier basis $\phi_{\bm{k}}(x) = \exp(i \langle \bm{k},x \rangle)$, $\bm{k} \in \mathbb{Z}^d$ are eigenfunctions of the integral operator $T$.
Since $|{\phi_{\bm{k}}(x)}| \leq 1$, that is, the eigenfunctions are uniformly bounded, we conclude that the embedding index $\alpha_0 = \frac{1}{\beta}$. We refer to Section \ref{section proof invariant} for more details.

\subsection{RKHS with dot-product kernels}\label{section examples sphere}
Dot-product kernels, which satisfy $k(x,y) = f(\langle x,y \rangle)$, have also raised researchers' interest in recent years for its nice property~\citep{smola2000_RegularizationDotproduct,cho2009_KernelMethods,bach2017_BreakingCurse,jacot2018_NeuralTangent}.
Let $k$ be a dot-product kernel on $\mathcal{X} = \mathbb{S}^{d}$, 
the unit sphere in $\mathbb{R}^{d+1}$, and $\mu = \sigma$ be the uniform measure on $\mathbb{S}^{d}$.
Then, it is well-known that $k$ can be decomposed as
\begin{displaymath}
  k(x,y) = \sum_{n=0}^{\infty} \mu_n \sum_{l=1}^{a_n} Y_{n,l}(x)Y_{n,l}(y),
\end{displaymath}
where $\{Y_{n,l}\}$ is a set of orthonormal basis of $L^2(\mathbb{S}^{d},\sigma)$ called the spherical harmonics.
If polynomial decay condition $\mu_n \asymp n^{-d \beta}$ is satisfied (which is equivalent to assume the eigenvalue decay rate is $\beta$), 
 Proposition \ref{prop:EMBIdx_Sphere} shows that the embedding index $\alpha_0 = \frac{1}{\beta}$ for the corresponding RKHS. We refer to Section \ref{section proof sphere} for more details.



\section{Experiments}\label{section experiments}
In this section, we aim to verify through experiments that when $ \alpha_{0}-\frac{1}{\beta} < s < \alpha_{0}$, for those functions $f_{\rho}^{*} $ in $ [\mathcal{H}]^{s}$ but not in $L^{\infty}$, the spectral algorithms can still achieve the optimal convergence rate. We show the $L^{2}$-norm convergence results for two kinds of RKHSs and the three kinds of spectral algorithms mentioned in Section \ref{section spectral algorithms}.

Suppose that $\mathcal{X} = [0,1]$ and the marginal distribution $\mu$ is the uniform distribution on $[0,1]$. The first considered RKHS is $\mathcal{H} = H^{1}(\mathcal{X})$, the Sobolev space with smoothness 1. Section \ref{section examples sobolev} shows that the EDR is $\beta = 2$ and embedding index is $\alpha_{0} = \frac{1}{\beta}$. We construct a function in $[\mathcal{H}]^{s} \backslash L^{\infty}$ by 
\begin{equation}\label{series of sobolev}
    f^{*}(x) =  \sum\limits_{k=1}^{\infty} \frac{1}{k^{s+0.5}} \left( \sin\left( 2 k \pi x\right) + \cos\left( 2 k \pi x\right) \right),
\end{equation}
for some $0 < s < \frac{1}{\beta} = 0.5$. We will show in Appendix C that the series in \eqref{series of sobolev} converges on $(0,1)$. In addition, since $ \sin 2 k \pi + \cos 2 k \pi \equiv 1$, we also have $f^{*} \notin L^{\infty}(\mathcal{X})$. The explicit formula of the kernel associated to $H^{1}(\mathcal{X})$ is given by \citet[Corollary 2]{ThomasAgnan1996ComputingAF}, i.e., $ k(x,y) = \frac{1}{\sinh{1}} \cosh{(1- \max(x,y)) \cosh{(1- \min(x,y))}}$. 

For the second kind of RKHS, it is well known that the following RKHS 
\begin{align}
    \mathcal{H} = \mathcal{H}_{\text{min}}(\mathcal{X}):=\Big\{f:[0,1] \rightarrow \mathbb{R} \mid f &\text { is A.C., } f(0)=0, \int_0^1\left(f^{\prime}(x)\right)^2 \mathrm{~d} x<\infty\Big\}. \notag
\end{align}
is associated with the kernel $k(x,y) = \min(x,y)$ \citep{wainwright2019_HighdimensionalStatistics}. Further, its eigenvalues and eigenfunctions can be written as
\begin{displaymath}
    \lambda_n=\left(\frac{2 n-1}{2} \pi\right)^{-2}, \quad n=1,2, \cdots
\end{displaymath}
and
\begin{displaymath}
    e_n(x)=\sqrt{2} \sin \left(\frac{2 n-1}{2} \pi x\right), \quad n=1,2, \cdots
\end{displaymath}
It is easy to see that the EDR is $\beta = 2$ and the eigenfunctions are uniformly bounded. Section \ref{section examples ui} shows that the embedding index is $\alpha_{0} = \frac{1}{\beta}$. We construct a function in $[\mathcal{H}]^{s} \backslash L^{\infty}$ by
\begin{equation}\label{series of min}
    f^{*}(x) = \sum\limits_{k = 1}^{\infty} \frac{1}{k^{s+0.5}} e_{2k-1}(x),
\end{equation}
for some $0< s < \frac{1}{\beta} = 0.5$. We will show in Appendix C that the series in \eqref{series of min} converges on $(0,1)$. Since $e_{2k-1}(1) \equiv 1$, we also have $f^{*} \notin L^{\infty}(\mathcal{X})$. 

We consider the following data generation procedure:
\begin{displaymath}
    y = f^{*}(x) + \epsilon, 
\end{displaymath}
where $f^{*}$ is numerically approximated by the first 3000 terms in \eqref{series of sobolev} or \eqref{series of min} with $s=0.4$, $x \sim \mathcal{U}[0,1]$ and $\epsilon \sim \mathcal{N}(0,1)$. Three kinds of spectral algorithms (kernel ridge regression, gradient flow and spectral cut-off) are used to construct estimators $\hat{f}$ for each RKHS, where we choose the regularization parameter as $\nu = c n^{\frac{\beta}{s \beta + 1}} = c n^{\frac{10}{9}}$ for a fixed $c$. The sample size $n$ is chosen from 1000 to 5000, with intervals of 100. We numerically compute the generalization error $ \|\hat{f} - f^{*}\|_{L^{2}}$ by Simpson's formula with $N \gg n$ testing points. For each $n$, we repeat the experiments 50 times and present the average generalization error as well as the region within one standard deviation. To visualize the convergence rate $r$, we perform logarithmic least-squares $\log \text{err} = r \log n + b$ to fit the generalization error with respect to the sample size and display the value of $r$.

\begin{figure}[ht]
\vskip 0.05in
\centering
\subfigure[]{\includegraphics[width=0.3\columnwidth]{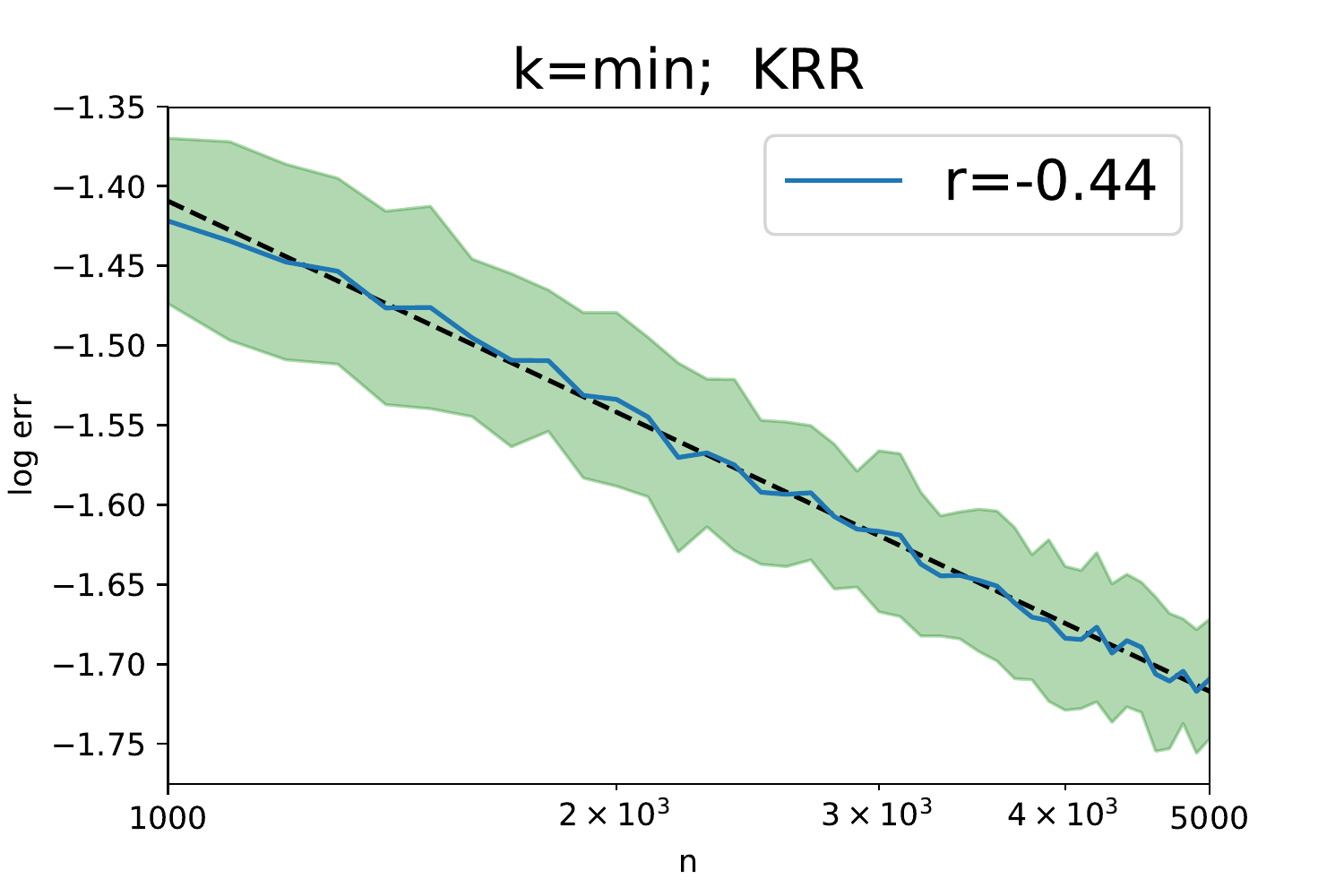}}
\subfigure[]{\includegraphics[width=0.3\columnwidth]{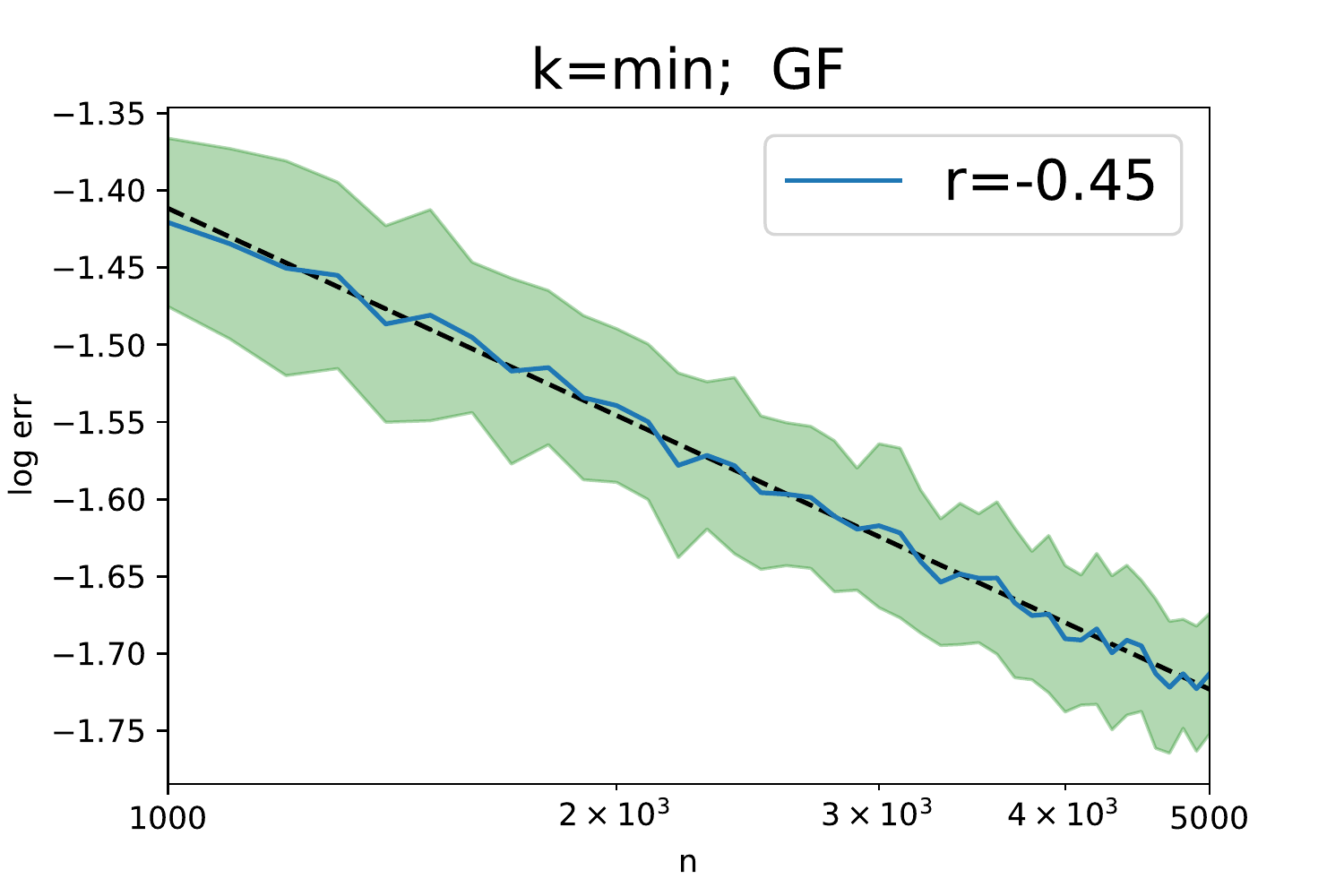}}
\subfigure[]{\includegraphics[width=0.3\columnwidth]{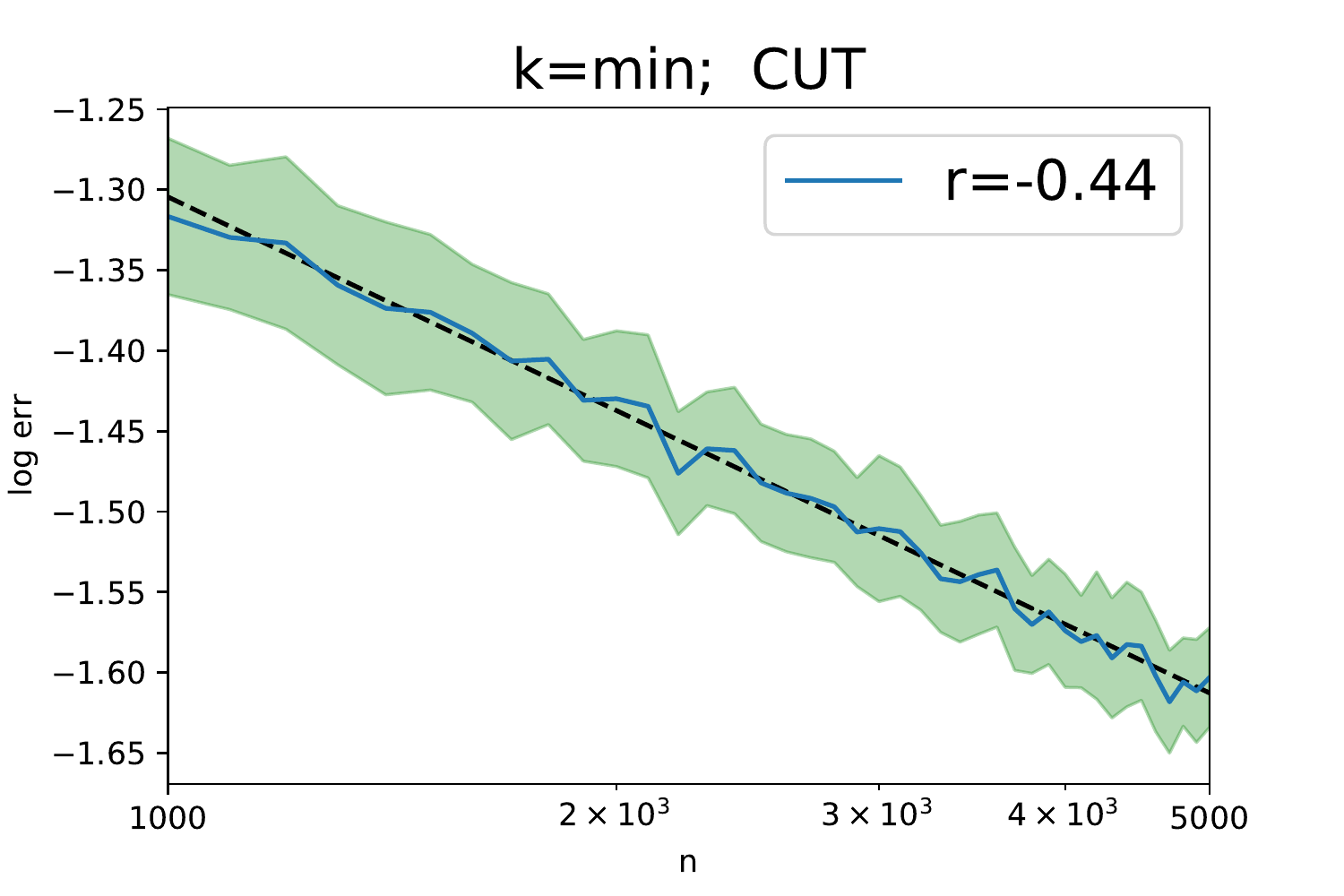}}

\subfigure[]{\includegraphics[width=0.3\columnwidth]{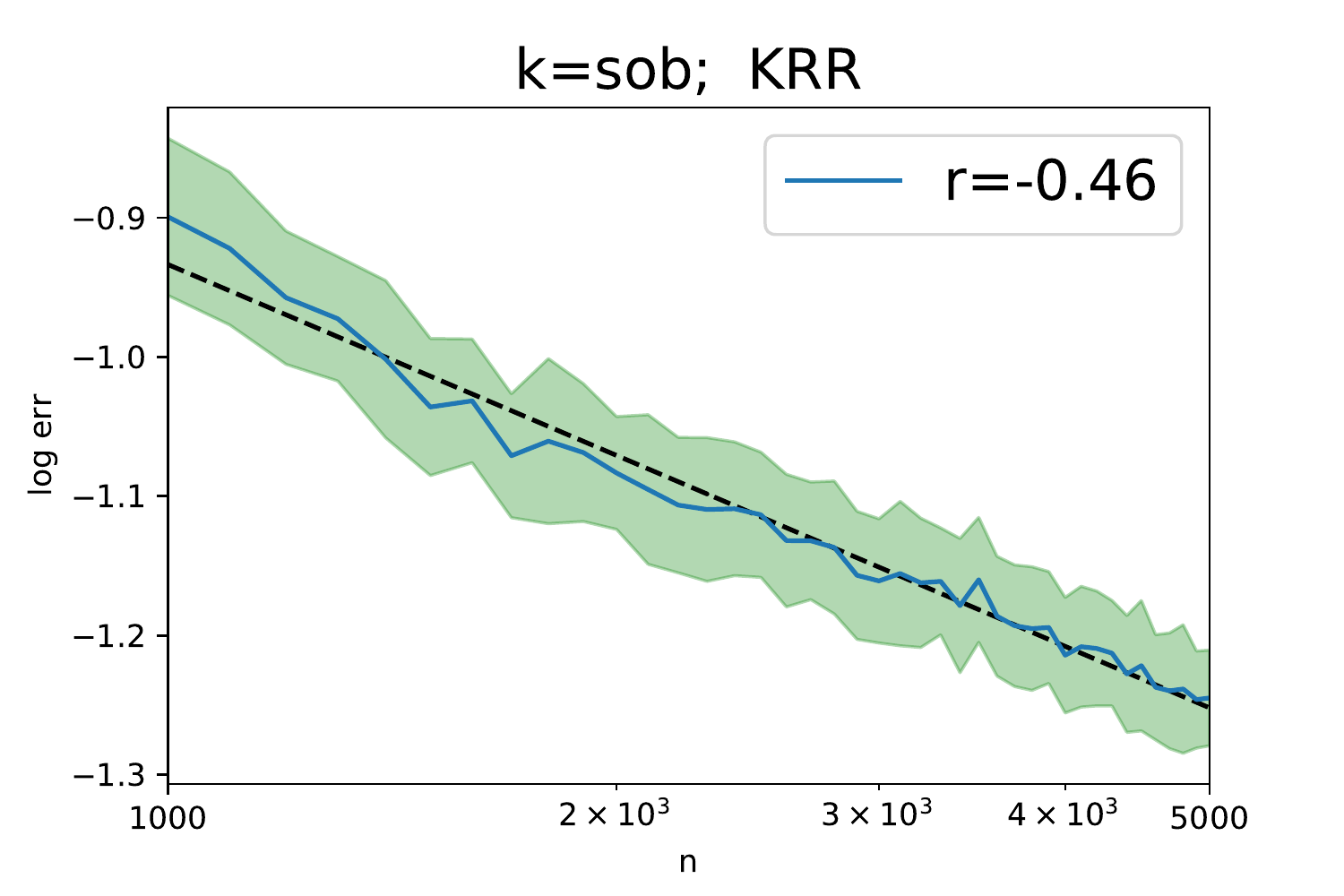}}
\subfigure[]{\includegraphics[width=0.3\columnwidth]{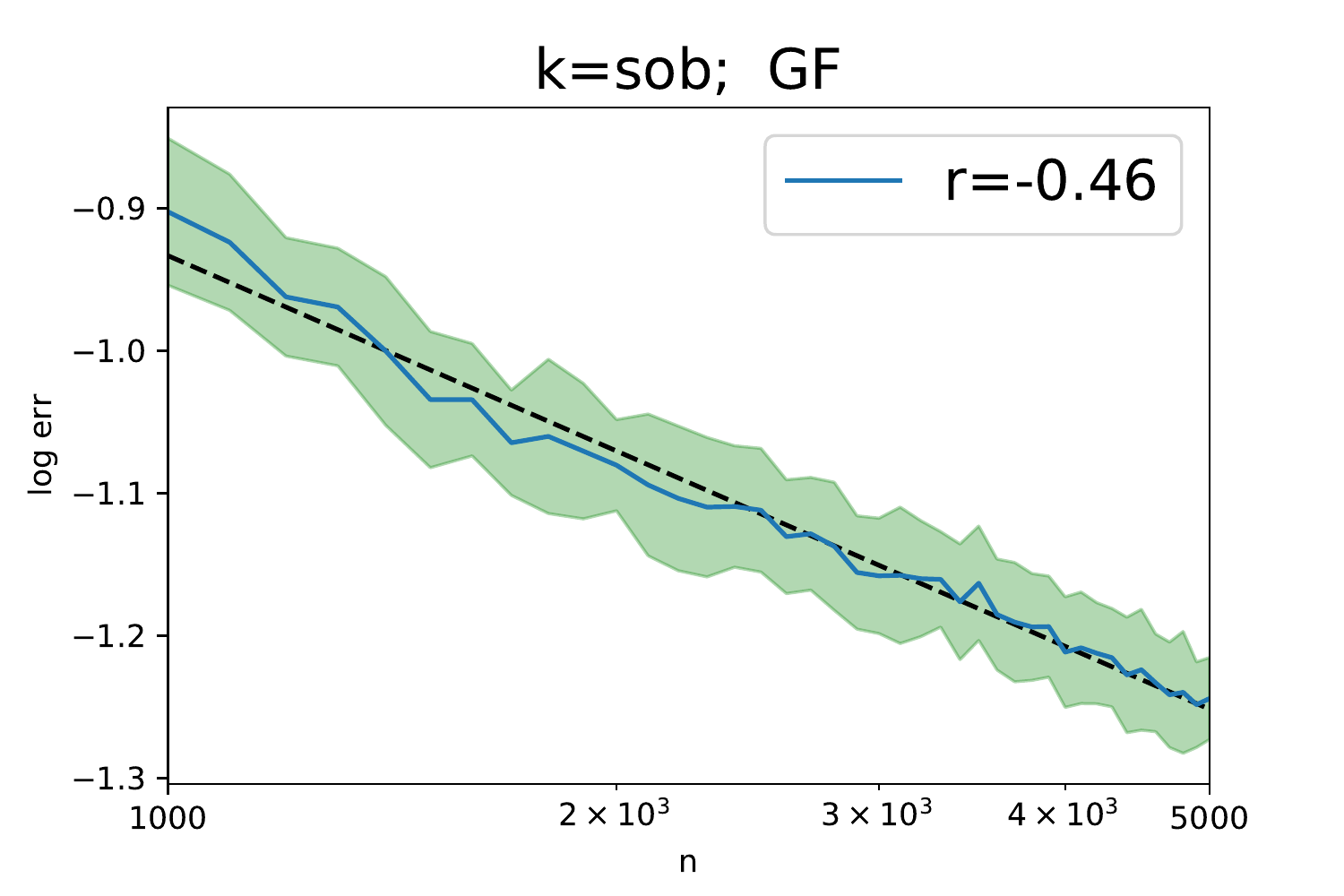}}
\subfigure[]{\includegraphics[width=0.3\columnwidth]{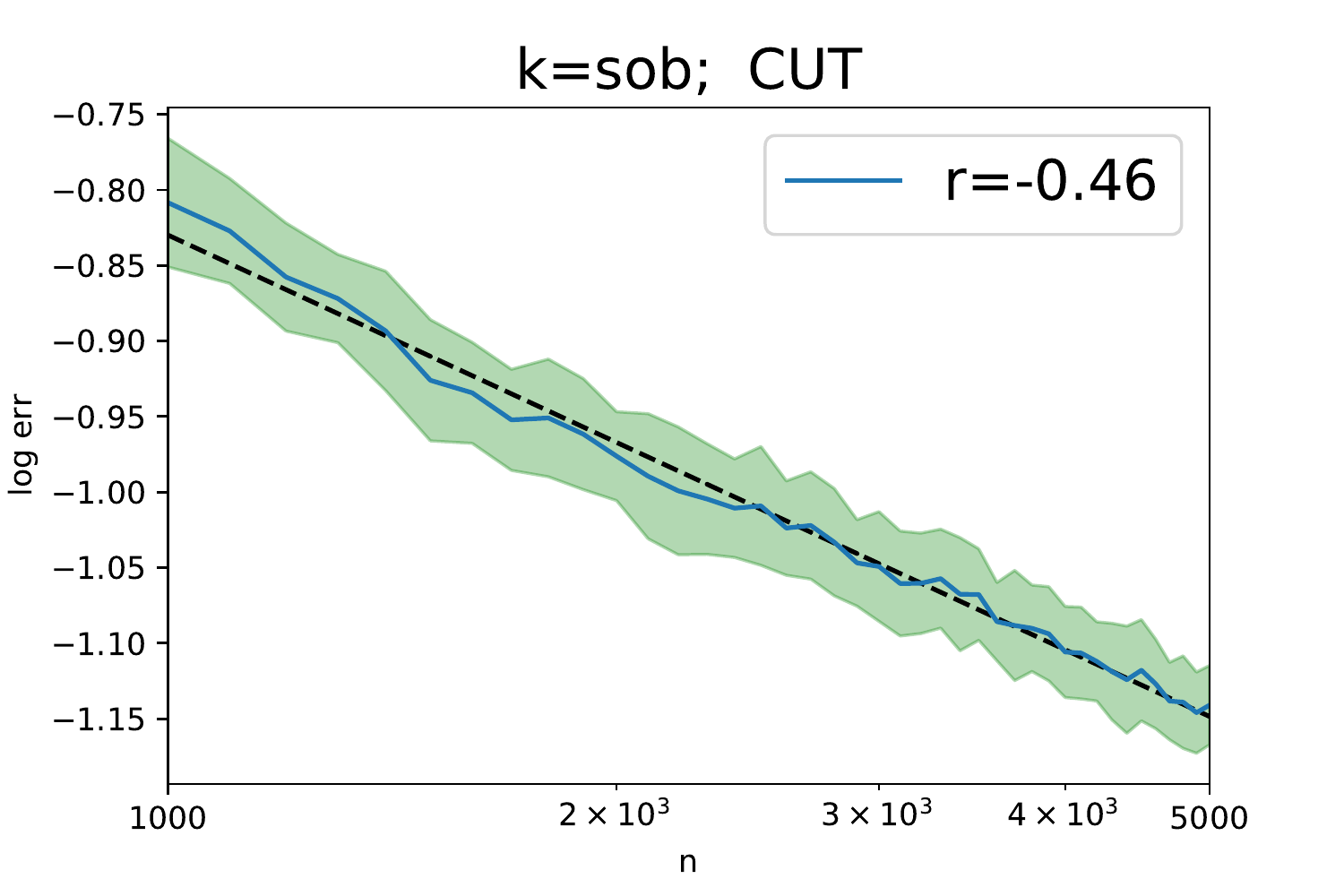}}

\caption{ Error decay curves of two kinds of RKHSs and three kinds of spectral algorithms with the best choice of $c$. Both axes are are scaled logarithmically. The curves show the average generalization errors over 50 trials; the regions within one standard deviation are shown in green. The dashed black lines are computed using logarithmic least-squares and the slopes represent the convergence rates $r$. Figures in the first row correspond to the Sobolev RKHS $\mathcal{H} = H^{1}(\mathcal{X})$ and the second correspond to the $\mathcal{H} = \mathcal{H}_{\text{min}}(\mathcal{X})$. } 
\label{figure bestc}
\vskip 0.05in
\end{figure}

We try different values of $c$, Figure \ref{figure bestc} presents the convergence curves under the best choice of $c$. For each setting, it can be concluded that the convergence rates of the $L^{2}$-norm generalization errors of spectral algorithms are indeed approximately equal to $ n^{-\frac{s \beta}{s \beta + 1}} = n^{-\frac{4}{9}}$, without the boundedness assumption of the true function $f^{*}$. We refer to Appendix C for more details on the experiments.

\section{Discussion}\label{section discussion}
\subsection{Comparison with related results}
In this subsection, we compare this paper's convergence rates and minimax optimality with the results in previous literature. Ignoring the log-term and the constants, Theorem \ref{main theorem} gives the upper bound of the convergence rates of spectral algorithms (with high probability)
\begin{equation}\label{discussion upper bound}
    \left\|\hat{f}_\nu-f_{\rho}^*\right\|_{[\mathcal{H}]^{\gamma}}^2 \le \begin{cases} n^{-\frac{(s-\gamma) \beta} {s \beta + 1}},\quad & \alpha_{0} - \frac{1}{\beta} < s \le 2 \tau , \\ n^{-\frac{s-\gamma } {\alpha_{0} + \epsilon}},\quad & 0 < s  \le \alpha_{0} - \frac{1}{\beta},~~ \forall \epsilon > 0. \end{cases}
\end{equation}
This $ [\mathcal{H}]^{\gamma} $-norm upper bound depends on $ \tau, \beta, s $ and $ \alpha_{0}$, among which $ \beta, \alpha_{0} $ characterize the information of the RKHS; $ s $ characterizes the relative `smoothness' of the true function; and $\tau$ characterizes the spectral algorithm. To our knowledge, this is the most general setting among related literature and will give the most refined analysis. In the well-specified case ($1 \le s \le 2\tau$ or $ f_{\rho}^{*} \in \mathcal{H}$), we recover the well-known minimax optimal rates from a lot of literature \citep[etc.]{Caponnetto2007OptimalRF,caponnetto2006optimal,dicker2017_KernelRidge, blanchard2018_OptimalRates,lin2018_OptimalRates,fischer2020_SobolevNorm} (for either general spectral algorithms or a specific kind).

The improvement in the misspecified case ($ 0 < s < 1$ or $ f_{\rho}^{*} \notin \mathcal{H}$) of this paper is partly due to the advantage of considering the embedding index $ \alpha_{0}$ of the RKHS. The best upper bound without considering the embedding index is (see, e.g., \citealt{dieuleveut2016nonparametric,lin2017optimal,lin2018_OptimalRates,lin2020_OptimalConvergence})
\begin{equation}\label{discussion upper bound no emb}
    \left\|\hat{f}_\nu-f_{\rho}^*\right\|_{[\mathcal{H}]^{\gamma}}^2 \le \begin{cases} n^{-\frac{(s-\gamma) \beta} {s \beta + 1}},\quad & 1 - \frac{1}{\beta} < s \le 2 \tau , \\ n^{-(s-\gamma) },\quad & 0 < s  \le 1 - \frac{1}{\beta}. \end{cases}
\end{equation}
This rate coincides with our upper bound \eqref{discussion upper bound} if the embedding index $\alpha_{0} = 1$. For those RKHSs with $\alpha_{0} < 1 $, \eqref{discussion upper bound} gives refined upper bound for all $ 0 < s \le 2 \tau$. As shown in Section \ref{section examples}, this is the case for many kinds of RKHSs. This is also why we assume $\alpha_{0} \in (0,1)$ throughout our paper.

Compared with the line of work which considers the embedding index \citep[etc.]{Steinwart2008SupportVM, PillaudVivien2018StatisticalOO,fischer2020_SobolevNorm}, this paper removes the boundedness assumption, i.e., $ \| f_{\rho}^{*} \|_{L^{\infty}(\mathcal{X},\mu)} \le B_{\infty} < \infty$. The upper bound in these works is the same as \eqref{discussion upper bound}. But due to the boundedness assumption, \citet{fischer2020_SobolevNorm} reveals that the minimax lower rate associated to the
smaller function space $[\mathcal{H}]^{s} \cap L^{\infty}(\mathcal{X,\mu})$ is larger than
\begin{displaymath}
  n^{-\frac{\max{(s,\alpha)}\beta}{\max{(s,\alpha)}\beta + 1}}, \quad \forall \alpha > \alpha_{0}.
\end{displaymath}
Therefore, they only prove the minimax optimality in the regime
\begin{displaymath}
  \alpha_{0} < s \le 2 \tau.
\end{displaymath}
Combining Theorem \ref{main theorem} and Theorem \ref{prop information lower bound}, this paper extends the minimax optimality of the spectral algorithms to the regime
\begin{displaymath}
  \alpha_{0} - \frac{1}{\beta} < s \le 2 \tau.
\end{displaymath}
This improvement is mainly due to the $ L^{q}$-embedding property of the interpolation space $[\mathcal{H}]^{s}$ proved in Theorem \ref{integrability of Hs} and a truncation method in the proof. Note that only the $ L^{\infty}$-embedding property has been considered before this paper. This new regime of minimax optimality means a lot. Since we have proved that the embedding index $\alpha_{0}$ equals $\frac{1}{\beta}$ for many kinds of RKHSs, the optimality in the misspecified case is well understood for these RKHSs. 

\subsection{Other discussions}
As mentioned in \cite{fischer2020_SobolevNorm}, the empirical process and the integral operator techniques are the two main techniques used to derive the learning rates of kernel methods. \cite{steinwart2009_OptimalRates} firstly introduced the embedding property of RKHS for the empirical process technique. \cite{fischer2020_SobolevNorm} further combined the embedding property of RKHS with the integral operator technique. Our paper further strengthens the integral operator technique, making it the most powerful technique for establishing learning rates of kernel methods (or spectral algorithms). We believe that our technical tools can be used for more related topics. For instance, some literature considers the general source condition, i.e.,
\begin{displaymath}
  f_{\rho}^{*} = \phi(L_{k}) g_{0}, \quad \text{with} ~~ \|g_{0}\|_{L^{2}} \le R,
\end{displaymath}
where $ \phi:\left[0, \kappa^2\right] \rightarrow \mathbb{R}^{+} $ is a non-decreasing index function such that $\phi(0) = 0$ and $\phi(\kappa^{2}) < \infty $ \citep{bauer2007_RegularizationAlgorithms,rastogi2017_OptimalRates,lin2018_OptimalRates,Talwai2022OptimalLR}. The source condition in Assumption \ref{ass source condition} corresponds to a special choice of $ \phi(x) = x^{\frac{s}{2}}$, which is often referred to as the Hölder source condition. Another interesting topic is the distributed version of spectral algorithms \citep{Zhang2013DivideAC,Lin2016DistributedLW,Guo2017LearningTO,Mcke2018ParallelizingSR,lin2020_OptimalConvergence}. It aims to reduce the computation complexity of the original spectral algorithms while maintaining the estimation efficiency. Last but not least, our techniques can also be applied to study other variants of gradient methods as \cite{lin2017optimal,lin2020convergences}. It would be interesting to apply this paper's tools and try refining the convergence rates or optimality in these scenarios.

In addition, we also notice a line of work which studies the learning curves of kernel ridge regression \citep{Spigler2020AsymptoticLC,Bordelon2020SpectrumDL,Cui2021GeneralizationER} and crossovers between different noise magnitudes. At present, their results all rely on a Gaussian design assumption (or some variation), which is a very strong assumption. We believe that studying the misspecified case in our paper is a crucial step to remove the Gaussian design assumption and draw complete conclusions about the learning curves of kernel ridge regression (or further, general spectral algorithms).

The eigenvalue decay rate (also known as the capacity condition or effective dimension condition) and source condition are mentioned in almost all related literature studying the convergence behaviors of kernel methods but are denoted as various kinds of notations. At the end of this section, we list a dictionary of nations in related literature. Recall that in this paper we denote the eigenvalue decay rate as $\beta$ and denote the source condition as $s$. Table \ref{table1} summarizes the notations used in some of the references.

\begin{table}[ht]
\centering
\begin{tabular}{ccc} 
\toprule
$\beta$  &   $s$ & Reference \\
\midrule
 $1/p$       &  $\beta$   & \citet{steinwart2009_OptimalRates,fischer2020_SobolevNorm,Li2022OptimalRF}  \\
 \midrule
 $1/\gamma$  &  $2\zeta$ & \citet{lin2017optimal,lin2018_OptimalRates,lin2020_OptimalConvergence}  \\
 \midrule
 $b$         &  $c$      & \citet{caponnetto2006optimal,Caponnetto2007OptimalRF}  \\
 \midrule
 $-$         &  $2 r$    & \citet{bauer2007_RegularizationAlgorithms,Smale2007LearningTE,gerfo2008_SpectralAlgorithms}  \\
 \midrule
 $2 \nu$     &  $\zeta+1$ & \citet{dicker2017_KernelRidge}  \\
 \midrule
 $b$        &  $2r+1$ & \citet{rastogi2017_OptimalRates,blanchard2018_OptimalRates}  \\
 \midrule
 $1/b$  &  $2\beta$ & \citet{jun2019kernel} \\
 \midrule
  
 \multirow{2}{*}{$\alpha$} & \multirow{2}{*}{$2r$} & \citet{dieuleveut2016nonparametric,PillaudVivien2018StatisticalOO} \\
  & & \citet{Celisse2020AnalyzingTD} \\
\bottomrule
\end{tabular}
\caption{A dictionary of notations in related literature.}
\label{table1}
\end{table}



\section{Proofs}\label{section proofs}
\subsection{\texorpdfstring{$L^{q}-$}~embedding property of the interpolation space}

Before introducing the $L^{q}$-embedding property of the interpolation space $[\mathcal{H}]^{s}$, we first prove the following lemma, which characterizes the real interpolation between two $ L^{p}$ spaces with Lorentz space $L^{p,q}(\mathcal{X}, \mu)$. We refer to Appendix A for details of \textit{real interpolation} and \textit{Lorentz spaces}.
\begin{lemma}\label{Lp interpolation}
  For $ 1 < p_{1} \neq p_{2} < \infty$, $ 1 \le q \le \infty$ and $ 0 < \theta < 1$, we have
  \begin{displaymath}
      \left( L^{p_{1}}(\mathcal{X},\mu), L^{p_{2}}(\mathcal{X},\mu) \right)_{\theta, q} = L^{p_{\theta},q}(\mathcal{X},\mu) ,\quad  \frac{1}{p_{\theta}} = \frac{1-\theta}{p_{1}} + \frac{\theta}{p_{2}},
  \end{displaymath}
  where $L^{p_{\theta},q}(\mathcal{X}, \mu)$ is the Lorentz space.
\end{lemma}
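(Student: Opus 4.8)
\emph{Proof idea.} Assume without loss of generality $p_1<p_2$, so that (as $\mu$ is finite) the couple is nested, $L^{p_2}(\mathcal{X},\mu)\hookrightarrow L^{p_1}(\mathcal{X},\mu)$. The plan is to compute the $K$-functional of $(L^{p_1},L^{p_2})$ precisely enough to read off the interpolation norm. Recall $K(t,f)=\inf_{f=g+h}\big(\|g\|_{L^{p_1}}+t\,\|h\|_{L^{p_2}}\big)$ and, by definition of the real method, $\|f\|_{(L^{p_1},L^{p_2})_{\theta,q}}=\|\,t^{-\theta}K(t,f)\,\|_{L^q((0,\infty),\,\mathrm dt/t)}$. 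Since the $K$-functional of an $L^p$-couple, the interpolation norm, and the Lorentz norm are all invariant under measure-preserving rearrangements, I would first pass to the non-increasing rearrangement $f^{*}$ of $|f|$ on $(0,\mu(\mathcal{X}))$; the hypothesis $p_\theta>1$ (which follows from $p_1>1$) then makes the $f^{*}$- and $f^{**}$-definitions of $\|\cdot\|_{L^{p_\theta,q}}$ equivalent, so it suffices to match $\|f\|_{\theta,q}$ with $\|\,u^{1/p_\theta}f^{*}(u)\,\|_{L^q(\mathrm du/u)}$.

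The first and main step is to establish Holmstedt's two-sided estimate: with $\tfrac1a=\tfrac1{p_1}-\tfrac1{p_2}>0$,
\[
  K\!\left(t,f;L^{p_1},L^{p_2}\right)\;\asymp\;\Big(\int_0^{t^{a}}f^{*}(u)^{p_1}\,\mathrm du\Big)^{1/p_1}+t\Big(\int_{t^{a}}^{\infty}f^{*}(u)^{p_2}\,\mathrm du\Big)^{1/p_2}.
\]
For "$\lesssim$" I would use the truncation decomposition $f=g_c+h_c$, $h_c=\operatorname{sgn}(f)\min(|f|,c)$, $g_c=f-h_c$, with the level $c$ chosen so that the rearrangement is cut at $u=t^{a}$, and bound $\|g_c\|_{L^{p_1}}$, $\|h_c\|_{L^{p_2}}$ by the corresponding truncated integrals of $f^{*}$. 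For "$\gtrsim$" I would show that for \emph{every} splitting $f=g+h$ each of the two integrals is $\lesssim\|g\|_{L^{p_1}}+t\|h\|_{L^{p_2}}$, using subadditivity of the rearrangement, $f^{*}(u)\le g^{*}(u/2)+h^{*}(u/2)$, together with monotonicity and Hölder's inequality on finite intervals, and then take the infimum. This classical but slightly fiddly computation — and the bookkeeping of the $p_i$-th roots it entails — is where I expect the bulk of the effort to go; it can also be extracted from the standard treatment of $K$-functionals for $L^p$-couples (cf.\ \citealt{sawano2018theory}).

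The second step is routine. Insert the estimate into $\|f\|_{\theta,q}=\|t^{-\theta}K(t,f)\|_{L^q(\mathrm dt/t)}$, split the $L^q(\mathrm dt/t)$-norm across the two summands, and substitute $u=t^{a}$ (so $\mathrm dt/t=a^{-1}\mathrm du/u$). The summands become, up to constants, $\big\|\,u^{-\theta/a}\big(\int_0^{u}(f^{*})^{p_1}\big)^{1/p_1}\,\big\|_{L^q(\mathrm du/u)}$ and $\big\|\,u^{(1-\theta)/a}\big(\int_u^{\infty}(f^{*})^{p_2}\big)^{1/p_2}\,\big\|_{L^q(\mathrm du/u)}$, and a direct check of exponents gives $\tfrac1{p_1}-\tfrac\theta a=\tfrac1{p_2}+\tfrac{1-\theta}a=\tfrac1{p_\theta}$, where $\tfrac1{p_\theta}=\tfrac{1-\theta}{p_1}+\tfrac\theta{p_2}$. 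The pointwise lower bounds $\big(\int_0^u(f^{*})^{p_1}\big)^{1/p_1}\ge u^{1/p_1}f^{*}(u)$ and $\big(\int_u^{\infty}(f^{*})^{p_2}\big)^{1/p_2}\ge u^{1/p_2}f^{*}(2u)$ then yield $\|f\|_{L^{p_\theta,q}}\lesssim\|f\|_{\theta,q}$ (absorbing a harmless dilation), and the reverse bound follows from the two forms of Hardy's inequality on $L^q(\mathrm du/u)$, valid for every $1\le q\le\infty$, which remove the averaging integrals, the exponents matching as above. The case $q=\infty$ is identical with the supremum replacing the integral. Collecting constants — depending only on $p_1,p_2,\theta,q$ — gives equivalence of norms, hence the claimed identification of spaces.
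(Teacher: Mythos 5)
Your proposal is correct in outline, but it takes a genuinely different and much heavier route than the paper. The paper's own proof is a three-line soft argument: it writes $L^{p_i}\cong L^{p_i,p_i}=(L^1,L^\infty)_{1/p_i',p_i}$ (Lemma \ref{cong of lorentz}), sandwiches $L^{p_i}$ between $(L^1,L^\infty)_{1/p_i',1}$ and $(L^1,L^\infty)_{1/p_i',\infty}$ via Lemma \ref{mono of RI}, and then invokes the Reiteration theorem (Theorem \ref{reiteration theorem}) to get $(L^{p_1},L^{p_2})_{\theta,q}=(L^1,L^\infty)_{\eta,q}$ with $1-\eta=1/p_\theta$, which is $L^{p_\theta,q}$ \emph{by the paper's definition} of the Lorentz space as an interpolation space of the couple $(L^1,L^\infty)$. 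You instead compute the $K$-functional of the couple $(L^{p_1},L^{p_2})$ directly via a Holmstedt-type two-sided estimate and then identify the resulting norm with the rearrangement form $\|u^{1/p_\theta}f^*(u)\|_{L^q(\mathrm du/u)}$ by change of variables and Hardy's inequalities; your exponent bookkeeping ($1/p_1-\theta/a=1/p_2+(1-\theta)/a=1/p_\theta$) is right, and the truncation decomposition and the subadditivity argument for the lower bound are the standard way to prove Holmstedt's estimate. Two caveats: first, since the paper defines $L^{p_\theta,q}$ as $(L^1,L^\infty)_{1-1/p_\theta,q}$ rather than via $f^*$, your argument additionally needs the (classical, valid for $p_\theta>1$) equivalence of the rearrangement and interpolation definitions, which you only allude to through the $f^*$/$f^{**}$ remark; second, the "routine" Hardy step is applied to the $p_i$-th root of an averaged quantity, so for small $q$ (e.g.\ $q<p_1$) it requires either exploiting the monotonicity of $f^*$ or a power-theorem reduction — standard, but not free. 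What each approach buys: the paper's proof is short because all the work is delegated to the reiteration machinery it already sets up in Appendix A and to its definition of Lorentz spaces; your proof is self-contained at the level of $K$-functionals and yields strictly more (the quantitative Holmstedt formula and the explicit rearrangement description of the interpolation space), at the cost of the fiddly two-sided estimate.
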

\begin{proof}
  Denote $L^{p}(\mathcal{X},\mu), L^{p,q}(\mathcal{X},\mu)$ as $L^{p}, L^{p,q}$ for brevity. Using Lemma \ref{cong of lorentz}, we know that $ L^{p_{i}} \cong L^{p_{i},p_{i}} = \left(L^{1}, L^{\infty}\right)_{\frac{1}{p_{i}^{\prime}}, p_{i}}$, where $\frac{1}{p_{i}^{\prime}} + \frac{1}{p_{i}} = 1, i=1,2.$ Since $1 < p_{i} <\infty, i=1,2$, Lemma \ref{mono of RI} implies that
  \begin{align}
      \left(L^{1}, L^{\infty}\right)_{\frac{1}{p_{1}^{\prime}}, 1} &\subset L^{p_{1}} \subset \left(L^{1}, L^{\infty}\right)_{\frac{1}{p_{1}^{\prime}}, \infty}; \notag \\
      \left(L^{1}, L^{\infty}\right)_{\frac{1}{p_{2}^{\prime}}, 1} &\subset L^{p_{2}} \subset \left(L^{1}, L^{\infty}\right)_{\frac{1}{p_{2}^{\prime}}, \infty}. \notag
  \end{align}
  Using the Reiteration theorem (Theorem \ref{reiteration theorem}), we have
  \begin{equation}\label{1.10-1}
      \left( L^{p_{1}}, L^{p_{2}} \right)_{\theta, q} = \left(L^{1}, L^{\infty}\right)_{\eta, q},
  \end{equation}
  where $ \eta = \frac{1 - \theta}{p_{1}^{\prime}} + \frac{\theta}{p_{2}^{\prime}}$. Simple calculations show that 
  \begin{align}
      1 - \eta = \frac{1 - \theta}{p_{1}} + \frac{\theta}{p_{2}} := \frac{1}{p_{\theta}}. \notag
  \end{align}
  So by the definition of Lorentz space, we have 
  \begin{displaymath}
      \left(L^{1}, L^{\infty}\right)_{\eta, q} = L^{\frac{1}{1-\eta},q} = L^{p_{\theta},q}.
  \end{displaymath}
  Together with \eqref{1.10-1}, we finish the proof.
\end{proof}

Based on Lemma \ref{Lp interpolation}, the following theorem gives the $L^{q}$-embedding property of the interpolation space of an RKHS $\mathcal{H}$, which is crucial for proving the upper bound. 
\begin{theorem}[$L^{q}$-embedding property]\label{integrability of Hs}
  Suppose that the RKHS $\mathcal{H}$ has embedding index $\alpha_{0}$, then for any $0 < s \le \alpha_{0}$, we have
  \begin{align}
      [\mathcal{H}]^{s} \hookrightarrow L^{q_{s}}(\mathcal{X}, \mu),\quad q_{s} = \frac{2 \alpha}{\alpha - s}, \quad \forall \alpha > \alpha_{0}. \notag
  \end{align}
  where $\hookrightarrow $ denotes the continuous embedding.
\end{theorem}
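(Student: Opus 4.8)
The plan is to realise $[\mathcal{H}]^{s}$ as a real interpolation space between $[\mathcal{H}]^{0}$ and a power space of order slightly above $\alpha_{0}$, push the two endpoint embeddings through the interpolation functor, and identify the target with a Lorentz space via Lemma~\ref{Lp interpolation}.

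Fix $0<s\le\alpha_{0}$ and $\alpha>\alpha_{0}$; the goal is $[\mathcal{H}]^{s}\hookrightarrow L^{q_{s}}(\mathcal{X},\mu)$ with $q_{s}=\tfrac{2\alpha}{\alpha-s}$. First I would choose an auxiliary exponent $\alpha'$ with $\alpha_{0}<\alpha'<\alpha$ (possible since $\alpha>\alpha_{0}$) and set $\theta=s/\alpha'\in(0,1)$, which is legitimate because $s\le\alpha_{0}<\alpha'$. The three ingredients are: (i) the embedding property of order $\alpha'$, namely $[\mathcal{H}]^{\alpha'}\hookrightarrow L^{\infty}(\mathcal{X},\mu)$ with finite norm $M_{\alpha'}$, which holds since $\alpha'>\alpha_{0}$; (ii) the trivial isometric inclusion $[\mathcal{H}]^{0}\hookrightarrow L^{2}(\mathcal{X},\mu)$, as $[\mathcal{H}]^{0}=\overline{\operatorname{ran}S_{k}}$ is a closed subspace of $L^{2}$; and (iii) the power-scale interpolation identity $[\mathcal{H}]^{s}\cong\bigl([\mathcal{H}]^{0},[\mathcal{H}]^{\alpha'}\bigr)_{\theta,2}$, which is the extension of \eqref{inter relation} to a second exponent different from $1$ and follows from the Mercer-type characterisation of power spaces in \citet[Theorem 4.6]{steinwart2012_MercerTheorem}.

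Next I would concatenate the embeddings. Since $\mu$ is a probability measure, $L^{\infty}\hookrightarrow L^{p_{2}}$ for the finite exponent $p_{2}:=\tfrac{2\alpha}{\alpha-\alpha'}$ (note $p_{2}>2$), so (i) upgrades to $[\mathcal{H}]^{\alpha'}\hookrightarrow L^{p_{2}}$. Applying the interpolation property of the real method $(\,\cdot\,,\cdot\,)_{\theta,2}$ (see \citealt[Chapter 4.2.2]{sawano2018theory}) to the inclusion of the couple $([\mathcal{H}]^{0},[\mathcal{H}]^{\alpha'})$ into $(L^{2},L^{p_{2}})$, together with (ii) and (iii), I obtain $[\mathcal{H}]^{s}\cong\bigl([\mathcal{H}]^{0},[\mathcal{H}]^{\alpha'}\bigr)_{\theta,2}\hookrightarrow\bigl(L^{2},L^{p_{2}}\bigr)_{\theta,2}$, with embedding norm bounded by $C\,M_{\alpha'}^{\theta}$ for a constant $C=C(\theta)$. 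Now Lemma~\ref{Lp interpolation} (with $p_{1}=2\neq p_{2}$, $q=2$) identifies $\bigl(L^{2},L^{p_{2}}\bigr)_{\theta,2}=L^{p_{\theta},2}$ where $\tfrac{1}{p_{\theta}}=\tfrac{1-\theta}{2}+\tfrac{\theta}{p_{2}}$; substituting $\theta=s/\alpha'$ and $p_{2}=\tfrac{2\alpha}{\alpha-\alpha'}$ gives, after a one-line simplification, $\tfrac{1}{p_{\theta}}=\tfrac{\alpha-s}{2\alpha}$, i.e.\ $p_{\theta}=q_{s}$. Finally the elementary Lorentz inclusion $L^{q_{s},2}\hookrightarrow L^{q_{s},q_{s}}=L^{q_{s}}$ (valid because $q_{s}>2$) closes the chain, yielding $[\mathcal{H}]^{s}\hookrightarrow L^{q_{s}}$ with the desired continuity.

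The main obstacle is the endpoint $p_{2}=\infty$. Conceptually one would like to interpolate $[\mathcal{H}]^{0}\hookrightarrow L^{2}$ directly with $[\mathcal{H}]^{\alpha}\hookrightarrow L^{\infty}$ and read off $(L^{2},L^{\infty})_{s/\alpha,2}=L^{q_{s},2}$, but Lemma~\ref{Lp interpolation} is stated only for finite exponents, and interpolating against a finite $L^{p_{2}}$ at the exponent $\alpha$ itself produces only $L^{p}$ for $p<q_{s}$, missing the claimed integrability by an arbitrarily small margin. Inserting the auxiliary index $\alpha'\in(\alpha_{0},\alpha)$ — which is available precisely because $\mathcal{H}$ has the embedding property of \emph{every} order strictly above $\alpha_{0}$, and which helps because $t\mapsto\tfrac{2t}{t-s}$ is strictly decreasing — provides exactly the slack needed to reach $q_{s}$ with a finite second exponent. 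The only other step deserving care is the interpolation identity (iii) for a second exponent $\neq1$; apart from that the argument is routine functoriality of the real method plus standard Lorentz-space inclusions.
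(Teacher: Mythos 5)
Your proposal is correct and follows essentially the same route as the paper's proof: an auxiliary index $\alpha'\in(\alpha_{0},\alpha)$, the interpolation identity $[\mathcal{H}]^{s}\cong(L^{2},[\mathcal{H}]^{\alpha'})_{s/\alpha',2}$, replacement of the $L^{\infty}$ endpoint by a finite $L^{p}$ (using that $\mu$ is finite), Lemma~\ref{Lp interpolation}, and the standard Lorentz inclusions. The only difference is cosmetic — you pick the specific exponent $p_{2}=\tfrac{2\alpha}{\alpha-\alpha'}$ so the interpolated index equals $q_{s}$ exactly, whereas the paper takes $M$ large enough to land at some $q_{s}'>q_{s}$ and then uses $L^{q_{s}'}\hookrightarrow L^{q_{s}}$.
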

\begin{proof}

  Since the embedding index is $ \alpha_{0}$, we know that $ [\mathcal{H}]^{\alpha^{\prime}} \hookrightarrow L^{\infty}(\mathcal{X}, \mu), \forall \alpha^{\prime} > \alpha_{0}$. In addition, \eqref{inter relation} shows that
  \begin{align}
      [\mathcal{H}]^{s} = \big[ \left[ \mathcal{H} \right]^{\alpha^{\prime}}\big]^{\frac{s}{\alpha^{\prime}}} \cong \left(L^{2}(\mathcal{X}, \mu),\left[ \mathcal{H} \right]^{\alpha^{\prime}} \right)_{\frac{s}{\alpha^{\prime}},2}. \notag
  \end{align}
  
  So using Lemma \ref{Lp interpolation}, for any $ 0 < M < \infty$, we have
  \begin{align}
       [\mathcal{H}]^{s} \hookrightarrow \left(L^{2}(\mathcal{X}, \mu), L^{M}(\mathcal{X}, \mu) \right)_{\frac{s}{\alpha^{\prime}},2} \cong L^{q_{s}^{\prime},2}(\mathcal{X}, \mu), \notag
  \end{align}
  where $ \frac{1}{q_{s}^{\prime}} = \frac{1- \frac{s}{\alpha^{\prime}}}{2} + \frac{\frac{s}{\alpha^{\prime}}}{M} = \frac{\alpha^{\prime}-s}{2\alpha^{\prime}} + \frac{s}{\alpha^{\prime} M}$. 
  
  For any $ \alpha > \alpha_0$, we can choose the above $ \alpha^{\prime} \in \left( \alpha_0, \alpha \right)$ and $M$ large enough such that $ \frac{\alpha^{\prime}-s}{2\alpha^{\prime}} + \frac{s}{\alpha^{\prime} M} < \frac{\alpha-s}{2 \alpha} $. Letting $ q_s = \frac{2 \alpha}{\alpha - s}$, we have $ q_{s}^{\prime} > q_{s}$. Further, since $0 < s\le \alpha_{0} < \alpha$ thus $ q_{s}^{\prime} > q_{s} > 2$, using Lemma \ref{mono of RI} and Lemma \ref{cong of lorentz}, we have
  \begin{displaymath}
      L^{q_{s}^{\prime},2}(\mathcal{X}, \mu) \hookrightarrow L^{q_{s}^{\prime},q_{s}^{\prime}}(\mathcal{X}, \mu) \cong L^{q_{s}^{\prime}}(\mathcal{X}, \mu) \hookrightarrow L^{q_{s}}(\mathcal{X}, \mu).
  \end{displaymath}
  We finish the proof.

\end{proof}

\subsection{Some bounds}\label{section some bounds}
Throughout the proof, we denote 
\begin{displaymath}
   T_{\nu} = T + \nu^{-1};~~ T_{X \nu} = T_{X} + \nu^{-1},
\end{displaymath}
where $\nu$ is the regularization parameter. We use $\| \cdot \|_{\mathscr{B}(B_1,B_2)}$ to denote the operator norm of a bounded linear operator from a Banach space $B_1$ to $B_2$, i.e., $ \| A \|_{\mathscr{B}(B_1,B_2)} = \sup\limits_{\|f\|_{B_1}=1} \|A f\|_{B_2} $. Without bringing ambiguity, we will briefly denote the operator norm as $\| \cdot \|$. In addition, we use $\text{tr}A$ and $\| A \|_{1}$ to denote the trace and the trace norm of an operator. We use $\| A \|_{2}$ to denote the Hilbert-Schmidt norm. In addition, we denote $ L^{2}(\mathcal{X},\mu)$ as $ L^{2}$, $ L^{\infty}(\mathcal{X},\mu)$ as $ L^{\infty}$ for brevity throughout the proof.  We use $a_n \asymp b_n$ to denote that there exist constants $c$ and $C$ such that $c a_{n} \le b_{n} \le C a_{n}, \forall n=1,2,\cdots$; use $ a_n \lesssim b_n$ to denote that there exists an constant $C$ such that $a_{n} \le C b_{n}, \forall n=1,2,\cdots$ 

In addition, denote the effective dimension as
\begin{align}
    \mathcal{N}(\nu) = \text{tr}\big(T (T + \nu^{-1})^{-1}\big) = \sum\limits_{i \in N} \frac{\lambda_{i}}{\lambda_{i} + \nu^{-1}}. \notag
\end{align}
Since the EDR of $\mathcal{H}$ is $\beta$, Lemma \ref{lemma of effect} shows that $\mathcal{N}(\nu) \asymp \nu^{\frac{1}{\beta}} $. 

Recall that we have define the sample basis function $ g_{Z}$ and the spectral algorithm $ \hat{f}_{\nu}$ in Section \ref{section spectral algorithms}. We also need the following notations:
define the expectation of $g_{Z}$ as
\begin{displaymath}
    g = \mathbb{E} g_{Z} = \int_{\mathcal{X}} k(x,\cdot) f_{\rho}^{*}(x) d\mu(x) = S_{k}^{*} f_{\rho}^* \in \mathcal{H},
\end{displaymath}
and
\begin{displaymath}
    f_{\nu} = \varphi_{\nu}(T) g = \varphi_{\nu}(T) S_{k}^{*} f_{\rho}^{*}.
\end{displaymath}

The following theorem bounds the $[\mathcal{H}]^{\gamma}$-norm of $f_\nu -  f_{\rho}^*$ when $0 \le \gamma \le s $. 
\begin{theorem}\label{theorem of approximation error}
   Suppose that Assumption \ref{ass source condition} holds for $ 0 < s \le 2 \tau$. Then for any $\nu >0$ and $0 \le \gamma \le s $, we have 
   \begin{equation}\label{prop appr 1}
       \left\|f_\nu-f_{\rho}^*\right\|_{[\mathcal{H}]^{\gamma}} \le F_{\tau} R \nu^{-\frac{s-\gamma}{2}}.
   \end{equation}
\end{theorem}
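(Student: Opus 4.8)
The plan is to express $f_\nu - f_\rho^*$ directly through spectral calculus on the population integral operator and then use the filter-function bound \eqref{prop2} together with the source condition. First I would recall that $f_\rho^* \in [\mathcal{H}]^s$ means $f_\rho^* = L_k^{s/2} h$ for some $h \in L^2$ with $\|h\|_{L^2} = \|f_\rho^*\|_{[\mathcal{H}]^s} \le R$; equivalently, writing $f_\rho^* = \sum_{i} a_i \lambda_i^{s/2} e_i$ with $(a_i) \in \ell^2$ and $\|(a_i)\|_{\ell^2} \le R$. The key algebraic identity is that $f_\nu = \varphi_\nu(T) S_k^* f_\rho^*$ satisfies, via the intertwining relations $S_k T = L_k S_k$ and $S_k^* L_k = T S_k^*$ (so that $S_k \varphi_\nu(T) S_k^* = \varphi_\nu(L_k) L_k = \mathrm{id} - \psi_\nu(L_k)$ on $\overline{\operatorname{ran} S_k}$), one gets
\begin{displaymath}
  f_\nu - f_\rho^* = S_k \varphi_\nu(T) S_k^* f_\rho^* - f_\rho^* = -\psi_\nu(L_k) f_\rho^*.
\end{displaymath}
So the whole problem reduces to bounding $\|\psi_\nu(L_k) f_\rho^*\|_{[\mathcal{H}]^\gamma}$.

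Next I would compute this norm in the eigenbasis. Since $f_\rho^* = \sum_i a_i \lambda_i^{s/2} e_i$ and $[\mathcal{H}]^\gamma$ has the orthogonal basis $\{\lambda_i^{\gamma/2} e_i\}$ with $\|\lambda_i^{\gamma/2} e_i\|_{[\mathcal{H}]^\gamma} = 1$, we have
\begin{displaymath}
  \psi_\nu(L_k) f_\rho^* = \sum_i a_i \psi_\nu(\lambda_i) \lambda_i^{s/2} e_i = \sum_i \left( a_i \psi_\nu(\lambda_i) \lambda_i^{(s-\gamma)/2} \right) \lambda_i^{\gamma/2} e_i,
\end{displaymath}
hence
\begin{displaymath}
  \|\psi_\nu(L_k) f_\rho^*\|_{[\mathcal{H}]^\gamma}^2 = \sum_i a_i^2 \, \psi_\nu(\lambda_i)^2 \, \lambda_i^{\,s-\gamma}.
\end{displaymath}
Now apply the qualification bound: since $0 \le \frac{s-\gamma}{2} \le \tau$ (which holds because $\gamma \le s \le 2\tau$ and $\gamma \ge 0$), property \eqref{prop2} with $\alpha = \frac{s-\gamma}{2}$ gives $|\psi_\nu(\lambda_i)| \lambda_i^{(s-\gamma)/2} \le F_\tau \nu^{-(s-\gamma)/2}$ for every $i$, since all $\lambda_i \in [0,\kappa^2]$. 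Therefore each term is bounded by $a_i^2 F_\tau^2 \nu^{-(s-\gamma)}$, and summing over $i$ with $\sum_i a_i^2 \le R^2$ yields $\|\psi_\nu(L_k) f_\rho^*\|_{[\mathcal{H}]^\gamma}^2 \le F_\tau^2 R^2 \nu^{-(s-\gamma)}$, which is exactly \eqref{prop appr 1} after taking square roots.

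I do not anticipate a serious obstacle here; this is essentially a bias-term estimate of the classical kind. The one point that needs a little care is the reduction $f_\nu - f_\rho^* = -\psi_\nu(L_k) f_\rho^*$: one must justify that $f_\rho^* \in \overline{\operatorname{ran} S_k}$ (true since $f_\rho^* \in [\mathcal{H}]^s = \operatorname{Ran} L_k^{s/2} \subseteq \overline{\operatorname{ran} S_k}$) so that the identity $S_k \varphi_\nu(T) S_k^* = \mathrm{id} - \psi_\nu(L_k)$ can be applied to it, and that the spectral-calculus manipulations with $\varphi_\nu, \psi_\nu$ on the compact operators $T$ and $L_k$ are legitimate — this follows from the definition $\psi_\nu(z) = 1 - z\varphi_\nu(z)$ and the intertwining relation $\varphi_\nu(L_k) S_k = S_k \varphi_\nu(T)$, which holds for polynomials and extends to $\varphi_\nu$ by approximation since $\varphi_\nu$ is bounded on $[0,\kappa^2]$. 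Alternatively, one can avoid operator gymnastics entirely and argue purely coordinate-wise from the start, defining everything through the Mercer expansion; I would probably present it that way for cleanliness, since then the only inputs are the definitions and the single inequality \eqref{prop2}.
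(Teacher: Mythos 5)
Your proposal is correct and follows essentially the same route as the paper: both write $f_\rho^* = L_k^{s/2}g_0$ with $\|g_0\|_{L^2}\le R$, reduce $f_\nu - f_\rho^*$ to $-\psi_\nu(L_k)f_\rho^*$ via the intertwining relation $S_k\varphi_\nu(T)S_k^* = \varphi_\nu(L_k)L_k$, and then apply the qualification bound \eqref{prop2} with $\alpha=\frac{s-\gamma}{2}\le\tau$. The only cosmetic difference is that you evaluate the resulting bound coordinate-wise in the eigenbasis, whereas the paper bounds the operator norm $\|L_k^{\frac{s-\gamma}{2}}\psi_\nu(L_k)\|$ directly, which amounts to the same estimate.
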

\begin{proof}
  Suppose that $f_{\rho}^{*} = L_{k}^{\frac{s}{2}} g_{0}$ for some $g_{0} \in L^{2}$. Note that 
\begin{align}
    \left\| f_\nu-f_{\rho}^* \right\|_{[\mathcal{H}]^{\gamma}} &= \left\|L_{k}^{-\frac{\gamma}{2}} \left( S_{k} \varphi_{\nu}(T) S_{k}^{*} f_{\rho}^{*} - f_{\rho}^* \right) \right\|_{L^{2}} \notag \\
    &= \left\|L_{k}^{-\frac{\gamma}{2}} \left( \varphi_{\nu}(L_{k}) L_{k} - I \right) L_{k}^{\frac{s}{2}} g_{0} \right\|_{L^{2}} \notag \\
    &\le \left\|L_{k}^{\frac{s - \gamma}{2}} \psi_{\nu}(L_{k})\right\| R \notag \\
    &\le F_{\tau} R \nu^{-\frac{s-\gamma}{2}}, \notag
\end{align}
where we use the property of the filter function (\ref{prop2}) and $\left\| g_{0} \right\|_{L^{2}} = \| f_{\rho}^{*} \|_{[\mathcal{H}]^{s}} \le R$ for the last inequality.
\end{proof}

The following lemma bounds the $L^{\infty}$-norm of $f_{\nu} $ when $s \le \alpha_{0}$.
\begin{lemma}\label{prop infty norm}
   Suppose that Assumption \ref{ass EDR}, \ref{assumption embedding} and \ref{ass source condition} hold for $ 0 < s \le \alpha_{0}$ and $\frac{1}{\beta} \le \alpha_{0} < 1$. Then for any $\nu >0$ and any $\alpha_{0} < \alpha \le 1$, we have 
   \begin{equation}\label{prop appr 2}
       \| f_{\nu} \|_{L^{\infty}} \le M_{\alpha} E R \nu^{\frac{\alpha - s}{2}}.
   \end{equation}
\end{lemma}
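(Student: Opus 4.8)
The plan is to bound $\|f_\nu\|_{L^\infty}$ by factoring through the embedding $[\mathcal{H}]^\alpha \hookrightarrow L^\infty$ for $\alpha_0 < \alpha \le 1$, and then controlling the $[\mathcal{H}]^\alpha$-norm of $f_\nu$ using the source condition on $f_\rho^*$ and property \eqref{prop1} of the filter function. Recall $f_\nu = \varphi_\nu(T) S_k^* f_\rho^*$ and write $f_\rho^* = L_k^{s/2} g_0$ with $\|g_0\|_{L^2} = \|f_\rho^*\|_{[\mathcal{H}]^s} \le R$. First I would pass to the $L^2$ side: since $S_k \varphi_\nu(T) S_k^* = \varphi_\nu(L_k) L_k$ on $L^2$, we have $f_\nu = \varphi_\nu(L_k) L_k L_k^{s/2} g_0$ as an element of $L^2$, and the $[\mathcal{H}]^\alpha$-norm is $\|L_k^{-\alpha/2} f_\nu\|_{L^2}$.

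The key computation is then
\begin{align}
  \|f_\nu\|_{[\mathcal{H}]^\alpha} &= \left\| L_k^{-\frac{\alpha}{2}} \varphi_\nu(L_k) L_k^{1+\frac{s}{2}} g_0 \right\|_{L^2} = \left\| L_k^{1 - \frac{\alpha}{2} + \frac{s}{2}} \varphi_\nu(L_k) \right\| \cdot \|g_0\|_{L^2} \notag \\
  &\le \left\| L_k^{1 - \frac{\alpha - s}{2}} \varphi_\nu(L_k) \right\| R. \notag
\end{align}
Now apply property \eqref{prop1} with exponent $\alpha' = 1 - \frac{\alpha-s}{2}$: since $0 < s \le \alpha_0 < \alpha \le 1$ we have $0 \le \frac{\alpha - s}{2} < 1$, hence $\alpha' \in (0,1] \subseteq [0,1]$, so $\sup_{z\in[0,\kappa^2]} z^{\alpha'}\varphi_\nu(z) \le E\nu^{1-\alpha'} = E\nu^{\frac{\alpha-s}{2}}$. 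This gives $\|f_\nu\|_{[\mathcal{H}]^\alpha} \le E R \nu^{\frac{\alpha-s}{2}}$. Finally, since $\alpha > \alpha_0$, the RKHS has the embedding property of order $\alpha$, so $\|f_\nu\|_{L^\infty} \le M_\alpha \|f_\nu\|_{[\mathcal{H}]^\alpha} \le M_\alpha E R \nu^{\frac{\alpha-s}{2}}$, which is the claim.

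I do not expect any serious obstacle here; the only points requiring a little care are (i) checking that the exponent $1 - \frac{\alpha-s}{2}$ genuinely lies in $[0,1]$ so that \eqref{prop1} is applicable — this is where the hypotheses $s \le \alpha_0$, $\alpha_0 < 1$, and $\alpha \le 1$ get used — and (ii) justifying the operator identity $S_k \varphi_\nu(T) S_k^* = \varphi_\nu(L_k) L_k$, which follows from the spectral representation \eqref{spectral representation} together with $T = S_k^* S_k$, $L_k = S_k S_k^*$, and the fact that $S_k$ maps $\lambda_i^{1/2} e_i$ to $\lambda_i^{1/2} e_i$ viewed in $L^2$. Both are routine, so the lemma is essentially an application of the filter-function bound combined with the definition of the embedding index.
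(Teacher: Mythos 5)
Your proposal is correct and follows essentially the same route as the paper's proof: express $f_\nu = \varphi_\nu(L_k)L_k L_k^{s/2} g_0$, bound $\|f_\nu\|_{[\mathcal{H}]^\alpha} \le \|L_k^{1-\frac{\alpha-s}{2}}\varphi_\nu(L_k)\|\,R \le E R \nu^{\frac{\alpha-s}{2}}$ via the filter property \eqref{prop1}, and then apply the embedding $\|[\mathcal{H}]^\alpha \hookrightarrow L^\infty\| = M_\alpha$. The two points you flag as needing care (the exponent lying in $[0,1]$ and the identity $S_k\varphi_\nu(T)S_k^* = \varphi_\nu(L_k)L_k$) are handled implicitly in the paper in exactly the same way.
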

\begin{proof}
    Since $s \le \alpha_{0}$ and $\alpha > \alpha_{0} $, we have 
    \begin{align}
      \left\| f_{\nu} \right\|_{[\mathcal{H}]^\alpha} &= \left\| L_{k}^{-\frac{\alpha}{2}} S_{k} \varphi_{\nu}(T) S_{k}^{*} f_{\rho}^{*}  \right\|_{[\mathcal{H}]^\alpha} \notag \\
      &= \left\| L_{k}^{-\frac{\alpha}{2}}  \varphi_{\nu}(L_{k}) L_{k} L_{k}^{\frac{s}{2}} g_{0}  \right\|_{L^{2}} \notag \\
      &= \left\| L_{k}^{1-\frac{\alpha -s}{2}}  \varphi_{\nu}(L_{k})  g_{0}  \right\|_{L^{2}} \notag \\
      &= \left\| L_{k}^{1-\frac{\alpha -s}{2}}  \varphi_{\nu}(L_{k}) \right\| \left\| g_{0}  \right\|_{L^{2}} \notag \\
      &\le E R  \nu^{\frac{\alpha - s}{2}}, \notag
\end{align}
where we use the property of the filter function (\ref{prop1}) for the last inequality. Further, using $\| [\mathcal{H}]^{\alpha} \hookrightarrow L^{\infty}(\mathcal{X}, \mu) \| = M_{\alpha} $ by Assumption \ref{assumption embedding}, we have $ \| f_{\nu} \|_{L^{\infty}} \le M_{\alpha} \| f_{\nu} \|_{[\mathcal{H}]^\alpha} \le M_{\alpha} E R  \nu^{\frac{\alpha - s}{2}}$.
\end{proof}

The following lemma will be frequently used in our proof.
\begin{lemma}\label{due embedding bound}
  Suppose that the RKHS $\mathcal{H}$ has embedding index $\alpha_{0}$. For any $\alpha_{0} < \alpha \le 1$, we have 
   \begin{align}\label{bound of Tk}
      \|T_{\nu}^{-\frac{1}{2}} k(x,\cdot) \|_{\mathcal{H}}^{2} \le M_{\alpha}^{2} \nu^{\alpha}, \quad \mu \text {-a.e. } x \in \mathcal{X}.
  \end{align}
\end{lemma}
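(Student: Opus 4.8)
The plan is to reduce the bound on $\|T_{\nu}^{-1/2} k(x,\cdot)\|_{\mathcal H}^2$ to the eigenvalue-sum condition \eqref{eq:EMB_Eigenvalues}, which by \citet[Theorem 9]{fischer2020_SobolevNorm} equals $M_\alpha^2$. First I would use the Mercer decomposition $k(x,\cdot) = \sum_{i\in N}\lambda_i e_i(x)\,e_i(\cdot)$ and the fact that $\{\lambda_i^{1/2}e_i\}_{i\in N}$ is an ONB of $\mathcal H$ on which $T$ acts diagonally as $T(\lambda_i^{1/2}e_i) = \lambda_i\,\lambda_i^{1/2}e_i$. Writing $k(x,\cdot) = \sum_{i\in N}\lambda_i^{1/2}e_i(x)\cdot(\lambda_i^{1/2}e_i)$, the operator $T_\nu^{-1/2} = (T+\nu^{-1})^{-1/2}$ acts on the basis vector $\lambda_i^{1/2}e_i$ by the scalar $(\lambda_i+\nu^{-1})^{-1/2}$, so that
\begin{displaymath}
  \|T_\nu^{-1/2} k(x,\cdot)\|_{\mathcal H}^2 = \sum_{i\in N} \frac{\lambda_i\, e_i^2(x)}{\lambda_i + \nu^{-1}}.
\end{displaymath}

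Next I would bound each summand by splitting off a factor $\lambda_i^\alpha$. The elementary numerical inequality $\frac{\lambda}{\lambda+\nu^{-1}} \le \lambda^\alpha \nu^{1-\alpha}\cdot\frac{1}{1}$ — more precisely, $\frac{\lambda}{\lambda+\nu^{-1}} = \frac{\lambda^\alpha \cdot \lambda^{1-\alpha}}{\lambda+\nu^{-1}} \le \lambda^\alpha \nu^{\alpha-1}\cdot \nu^{1-\alpha} = \lambda^\alpha \nu^\alpha$? — needs to be stated carefully: for $\alpha\in(0,1]$ one has $\frac{t}{t+\nu^{-1}} \le \min\{1,\, t\nu\} \le (t\nu)^\alpha = t^\alpha\nu^\alpha$ for all $t\ge 0$, since $\frac{t}{t+\nu^{-1}}\le t\nu$ and $\frac{t}{t+\nu^{-1}}\le 1$, and the geometric-mean bound $\min\{1,u\}\le u^\alpha$ holds for $u\ge 0$, $\alpha\in[0,1]$. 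Applying this with $t=\lambda_i$ and summing gives
\begin{displaymath}
  \sum_{i\in N}\frac{\lambda_i e_i^2(x)}{\lambda_i+\nu^{-1}} \le \nu^\alpha \sum_{i\in N}\lambda_i^\alpha e_i^2(x) \le \nu^\alpha M_\alpha^2 \quad \mu\text{-a.e. }x,
\end{displaymath}
where the last step is exactly the definition of $M_\alpha$ in \eqref{eq:EMB_Eigenvalues}. Since $\alpha>\alpha_0$, the embedding property gives $M_\alpha = \|[\mathcal H]^\alpha\hookrightarrow L^\infty\| < \infty$, so the bound is finite and we obtain \eqref{bound of Tk}.

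The only genuine subtlety — hardly an obstacle — is justifying the spectral-calculus manipulation on the (possibly countably infinite) basis and checking that $k(x,\cdot)$ indeed lies in $\mathcal H$ with the claimed coordinates, which follows from the reproducing property $\langle k(x,\cdot),\lambda_i^{1/2}e_i\rangle_{\mathcal H} = \lambda_i^{1/2}e_i(x)$ together with $\sum_i \lambda_i e_i^2(x) = k(x,x)\le\kappa^2<\infty$, guaranteeing convergence in $\mathcal H$. One should also note that the $\mu$-a.e. qualifier is inherited from \eqref{eq:EMB_Eigenvalues} and that the inequality $\min\{1,u\}\le u^\alpha$ is the same elementary fact underlying the interpolation-type estimates used elsewhere in the paper, so no new machinery is needed.
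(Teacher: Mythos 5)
Your proposal is correct and takes essentially the same route as the paper: both expand $\|T_{\nu}^{-\frac{1}{2}}k(x,\cdot)\|_{\mathcal{H}}^{2}=\sum_{i\in N}\frac{\lambda_{i}}{\lambda_{i}+\nu^{-1}}e_{i}^{2}(x)$ in the eigenbasis, split off the factor $\lambda_{i}^{\alpha}e_{i}^{2}(x)$, bound the remaining ratio by $\nu^{\alpha}$ via the same elementary estimate (the paper invokes Lemma \ref{basic ineq} with $s=1-\alpha$, which is your $\min\{1,\lambda_{i}\nu\}\le(\lambda_{i}\nu)^{\alpha}$ in a different guise), and conclude with the definition of $M_{\alpha}$ in \eqref{eq:EMB_Eigenvalues}.
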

\begin{proof}
   Recalling the definition of the embedding index, for any $\alpha_{0} < \alpha \le 1$, 
   \begin{displaymath}
    \sum_{i \in N} \lambda_i^\alpha e_i^2(x) \leq M_{\alpha}, \quad \mu \text {-a.e. } x \in \mathcal{X}.
   \end{displaymath}
   So, we have
   \begin{align}
      \|T_{\nu}^{-\frac{1}{2}} k(x,\cdot) \|_{\mathcal{H}}^{2} &= \Big\| \sum\limits_{i \in N} ( \frac{1}{\lambda_{i} + \nu^{-1}})^{\frac{1}{2}} \lambda_{i} e_{i}(x) e_{i}(\cdot)  \Big\|_{\mathcal{H}}^{2} \notag \\
      &=  \sum\limits_{i \in N}  \frac{\lambda_{i}}{\lambda_{i} + \nu^{-1}} e_{i}^{2}(x) \notag \\
      &= \big[ \sum\limits_{i \in N}  \lambda_{i}^{\alpha} e_{i}^{2}(x) \big] \sup\limits_{i \in N} \frac{\lambda_{i}^{1-\alpha}}{\lambda_{i} + \nu^{-1}} \notag \\
      & \le M_{\alpha}^{2} \nu^{\alpha}, \quad \mu \text {-a.e. } x \in \mathcal{X}. \notag
  \end{align}
  where we use Lemma \ref{basic ineq} for the last inequality and we finish the proof.
\end{proof}

Lemma \ref{due embedding bound} has a direct corollary.
\begin{lemma}\label{emb norm}
   Suppose that the RKHS $\mathcal{H}$ has embedding index $\alpha_{0}$. For any $\alpha_{0} < \alpha \le 1$, we have
\begin{displaymath}
    \| T_{\nu}^{-\frac{1}{2}} T_{x} T_{\nu}^{-\frac{1}{2}}\| \le M_{\alpha}^{2} \nu^{\alpha}, \quad \mu \text {-a.e. } x \in \mathcal{X}.
\end{displaymath}
\end{lemma}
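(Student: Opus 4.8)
The plan is to reduce the operator-norm bound for $T_\nu^{-1/2} T_x T_\nu^{-1/2}$ to the vector bound already proved in Lemma~\ref{due embedding bound}. Recall that $T_x = K_x K_x^*$ where $K_x : \mathbb{R} \to \mathcal{H}$ sends $y \mapsto y\, k(x,\cdot)$ and $K_x^* : \mathcal{H} \to \mathbb{R}$ sends $f \mapsto f(x)$. Hence $T_x$ is the rank-one operator $T_x = k(x,\cdot) \otimes k(x,\cdot)$ on $\mathcal{H}$, i.e.\ $T_x f = \langle f, k(x,\cdot)\rangle_{\mathcal{H}}\, k(x,\cdot)$. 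Conjugating by the self-adjoint operator $T_\nu^{-1/2}$ preserves the rank-one structure: $T_\nu^{-1/2} T_x T_\nu^{-1/2} = \big(T_\nu^{-1/2} k(x,\cdot)\big) \otimes \big(T_\nu^{-1/2} k(x,\cdot)\big)$.

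First I would record the elementary fact that for any vector $v$ in a Hilbert space, the operator norm of the rank-one operator $v \otimes v$ equals $\|v\|^2$. Applying this with $v = T_\nu^{-1/2} k(x,\cdot) \in \mathcal{H}$ gives, for $\mu$-a.e.\ $x$,
\begin{displaymath}
  \big\| T_\nu^{-\frac{1}{2}} T_x T_\nu^{-\frac{1}{2}} \big\| = \big\| T_\nu^{-\frac{1}{2}} k(x,\cdot) \big\|_{\mathcal{H}}^2.
\end{displaymath}
Then I would simply invoke Lemma~\ref{due embedding bound}, which states that for any $\alpha_0 < \alpha \le 1$ the right-hand side is bounded by $M_\alpha^2 \nu^\alpha$ for $\mu$-a.e.\ $x \in \mathcal{X}$. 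This yields the claimed inequality immediately.

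There is essentially no obstacle here; this lemma is explicitly flagged in the text as "a direct corollary" of Lemma~\ref{due embedding bound}. The only points requiring a word of care are (i) correctly identifying $T_x$ as the rank-one operator $k(x,\cdot)\otimes k(x,\cdot)$ from the definitions of $K_x$ and $K_x^*$, and (ii) noting that $T_\nu^{-1/2}$ is well-defined, bounded, and self-adjoint (since $T_\nu = T + \nu^{-1}$ is self-adjoint and bounded below by $\nu^{-1} > 0$), so that the conjugation identity and the passage to $\|v\|_{\mathcal{H}}^2$ are legitimate. I would also mention that the exceptional $\mu$-null set is inherited verbatim from Lemma~\ref{due embedding bound}, so no new measurability issue arises.
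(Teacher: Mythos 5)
Your proposal is correct and follows essentially the same route as the paper: the paper also identifies $T_\nu^{-1/2} T_x T_\nu^{-1/2}$ as the rank-one operator built from $T_\nu^{-1/2}k(x,\cdot)$ (by computing its action on an arbitrary $f\in\mathcal{H}$ and taking the supremum over the unit ball), concludes that its operator norm equals $\|T_\nu^{-1/2}k(x,\cdot)\|_{\mathcal{H}}^2$, and then invokes Lemma~\ref{due embedding bound}. Your phrasing via $v\otimes v$ is just a more abstract packaging of the same computation.
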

\begin{proof}
  Note that for any $f \in \mathcal{H}$,
  \begin{align}
      T_{\nu}^{-\frac{1}{2}} T_{x} T_{\nu}^{-\frac{1}{2}} f &= T_{\nu}^{-\frac{1}{2}} K_{x} K_{x}^{*}  T_{\nu}^{-\frac{1}{2}} f \notag \\
      &= T_{\nu}^{-\frac{1}{2}} K_{x} \langle k(x,\cdot), T_{\nu}^{-\frac{1}{2}} f \rangle_{\mathcal{H}} \notag \\
      &= T_{\nu}^{-\frac{1}{2}} K_{x} \langle T_{\nu}^{-\frac{1}{2}} k(x,\cdot),  f \rangle_{\mathcal{H}} \notag \\
      &=  \langle T_{\nu}^{-\frac{1}{2}} k(x,\cdot),  f \rangle_{\mathcal{H}} \cdot T_{\nu}^{-\frac{1}{2}} k(x,\cdot). \notag
  \end{align}
  So $\| T_{\nu}^{-\frac{1}{2}} T_{x} T_{\nu}^{-\frac{1}{2}} \| = \sup\limits_{\| f\|_{\mathcal{H}}=1} \| T_{\nu}^{-\frac{1}{2}} T_{x} T_{\nu}^{-\frac{1}{2}} f\|_{\mathcal{H}} = \sup\limits_{\| f\|_{\mathcal{H}}=1} \langle T_{\nu}^{-\frac{1}{2}} k(x,\cdot),  f \rangle_{\mathcal{H}} \cdot \|T_{\nu}^{-\frac{1}{2}} k(x,\cdot) \|_{\mathcal{H}} = \|T_{\nu}^{-\frac{1}{2}} k(x,\cdot) \|_{\mathcal{H}}^{2}$. 
  Using Lemma \ref{due embedding bound}, we finish the proof.
\end{proof}

The following lemma is a corollary of Lemma \ref{bernstein}, which is also used in \citet[Lemma 5.5]{lin2018_OptimalRates} and \citet{Smale2007LearningTE}. 
\begin{lemma}\label{concentra of operator}
  Let $ 0 <\delta < \frac{1}{2}$, it holds with probability at least $1-\delta$
  \begin{displaymath}
    \left\|T_X-T\right\| \le \left\|T_X-T\right\|_2 \le \frac{8 \sqrt{2} \kappa^2}{\sqrt{n}} \ln \frac{2}{\delta},
  \end{displaymath}
  where $ \| \cdot \|$ denotes the operator norm and $ \| \cdot \|_{2}$ denotes the Hilbert-Schmidt norm.
\end{lemma}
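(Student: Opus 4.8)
\emph{Proof strategy.} The plan is to regard $T_X - T$ as a normalised sum of i.i.d.\ random operators in the Hilbert space $\mathrm{HS}(\mathcal{H})$ of Hilbert--Schmidt operators on $\mathcal{H}$, and then to apply the Bernstein-type concentration inequality of Lemma~\ref{bernstein}. First I would observe that for $f\in\mathcal{H}$ one has $T_x f = K_x K_x^* f = f(x)\,k(x,\cdot)$, and using the reproducing property this gives $\mathbb{E}_{x\sim\mu}[T_x] = S_k^* S_k = T$. Consequently $T_X = \frac1n\sum_{i=1}^n T_{x_i}$ is exactly the empirical average of the i.i.d.\ $\mathrm{HS}(\mathcal{H})$-valued random variables $T_{x_i}$, so that the summands $\xi_i := T_{x_i} - T$ are i.i.d., centred, and Hilbert--Schmidt.

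Next I would check the moment conditions needed by Lemma~\ref{bernstein}. Since $T_x = k(x,\cdot)\otimes k(x,\cdot)$ has rank one, $\|T_x\|_2 = \|k(x,\cdot)\|_{\mathcal{H}}^2 = k(x,x) \le \kappa^2$ for $\mu$-a.e.\ $x$, while $\|T\|_2 \le \|T\|_1 = \operatorname{tr}T \le \kappa^2$; hence $\|\xi_i\|_2 \le 2\kappa^2$ almost surely, and, using $\mathbb{E}\|\xi_i\|_2^2 \le \mathbb{E}\|T_{x_i}\|_2^2 = \mathbb{E}[k(x,x)^2] \le \kappa^4$, also $\mathbb{E}\|\xi_i\|_2^m \le (2\kappa^2)^{m-2}\,\mathbb{E}\|\xi_i\|_2^2 \le \tfrac12 m!\,\kappa^4 (2\kappa^2)^{m-2}$ for every $m\ge 2$. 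Thus Lemma~\ref{bernstein} applies with variance proxy $\sigma = \kappa^2$ and scale parameter $L = 2\kappa^2$, yielding, with probability at least $1-\delta$,
\[
  \|T_X - T\|_2 = \Big\| \tfrac1n \sum_{i=1}^n \xi_i \Big\|_2 \;\le\; 2\Big( \frac{2\kappa^2}{n} + \frac{\kappa^2}{\sqrt n} \Big)\ln\frac{2}{\delta}.
\]

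It then remains only to simplify: for $n\ge 1$ we have $n^{-1}\le n^{-1/2}$, so the right-hand side is at most $6\kappa^2 n^{-1/2}\ln(2/\delta) \le 8\sqrt2\,\kappa^2 n^{-1/2}\ln(2/\delta)$, and the stated bound for $\|T_X-T\|$ follows from the elementary inequality $\|T_X-T\| \le \|T_X-T\|_2$ (the hypothesis $\delta<\tfrac12$ is only used, if at all, to keep $\ln(2/\delta)$ bounded below and is not essential). The only delicate point is the bookkeeping of absolute constants — pinning down which normalisation of the vector-valued Bernstein inequality Lemma~\ref{bernstein} provides and checking that it produces a constant no larger than $8\sqrt2$ — but this is routine, and the argument is essentially that of \citet[Lemma~5.5]{lin2018_OptimalRates} and \citet{Smale2007LearningTE}.
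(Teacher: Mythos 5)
Your proposal follows essentially the same route as the paper: view $T_X-T$ as an empirical average of i.i.d.\ Hilbert--Schmidt--valued random variables, check the moment condition, and invoke the vector-valued Bernstein inequality of Lemma~\ref{bernstein}, then use $\|\cdot\|\le\|\cdot\|_2$. The one point that does not close as written is exactly the constant bookkeeping you flagged: Lemma~\ref{bernstein} gives the bound $4\sqrt{2}\,\ln\tfrac{2}{\delta}\left(\tfrac{L}{n}+\tfrac{\sigma}{\sqrt n}\right)$, not the factor $2$ you used, and because you pre-centred the summands you were forced to take $L=2\kappa^2$; plugging $L=2\kappa^2$, $\sigma=\kappa^2$ into the actual inequality yields $4\sqrt{2}\,\kappa^2(2/n+1/\sqrt n)\ln\tfrac{2}{\delta}\le 12\sqrt{2}\,\kappa^2 n^{-1/2}\ln\tfrac{2}{\delta}$, which overshoots the stated $8\sqrt{2}$. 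The pre-centering is unnecessary: the moment hypothesis of Lemma~\ref{bernstein} is on the \emph{uncentred} variable and its conclusion already concerns $\frac1n\sum_i\xi_i-\mathbb{E}\xi$, so the paper simply takes $\xi(x)=T_x$ with $\|T_x\|_2=k(x,x)\le\kappa^2$, hence $L=\sigma=\kappa^2$, and gets $4\sqrt{2}\,\kappa^2(1/n+1/\sqrt n)\ln\tfrac{2}{\delta}\le 8\sqrt{2}\,\kappa^2 n^{-1/2}\ln\tfrac{2}{\delta}$ directly. Alternatively, your centred version can be salvaged by noting $2^{m-2}\le\tfrac12 m!$ for $m\ge 2$, so $\mathbb{E}\|T_{x_i}-T\|_2^m\le(2\kappa^2)^{m-2}\kappa^4\le\tfrac12 m!\,\kappa^4(\kappa^2)^{m-2}$ and one may still take $L=\sigma=\kappa^2$; with either repair the argument is complete.
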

\begin{proof}
  Define $\xi(x) = T_{x}$, then we have 
  \begin{displaymath}
     T_{X} - T  = \frac{1}{n} \sum\limits_{i=1}^{n} \xi(x_{i}) - \mathbb{E}\xi(x).
  \end{displaymath}
  Since $\sup\limits_{x \in \mathcal{X}} k(x,x) \le \kappa^{2}$, the Hilbert-Schmidt norm of $ \xi(x)$ satisfies that 
  \begin{displaymath}
      \left\| \xi(x) \right\|_{2} \le \kappa^{2}, \quad \forall x \in \mathcal{X}. 
  \end{displaymath}
  Applying Lemma \ref{bernstein} with $ L = \sigma = \kappa^{2}$, with probability at least $1-\delta$, we have
  \begin{displaymath}
      \left\| T_{X} - T \right\|_{2} \le 4 \sqrt{2} \ln{\frac{2}{\delta}} \left( \frac{\kappa^{2}}{n} + \frac{\kappa^{2}}{\sqrt{n}} \right) \le \frac{8 \sqrt{2} \kappa^2}{\sqrt{n}} \ln \frac{2}{\delta}.
  \end{displaymath}
  The first inequality follows from the fact that $ \left\|T_X-T\right\| \le \left\|T_X-T\right\|_2 $.
\end{proof}

\subsection{Upper bound}
\begin{lemma}\label{lemma4.6}
   Suppose that the RKHS $\mathcal{H}$ has embedding index $\alpha_{0}$. Then for any $\alpha_{0} < \alpha \le 1$ and all $\delta \in (0,1)$, with probability at least $1 - \delta$, we have
   \begin{displaymath}
        \Vert T_\nu^{-\frac{1}{2}} (T - T_X) T_\nu^{-\frac{1}{2}} \Vert
        \le \frac{4 M_{\alpha}^{2} \nu^{\alpha}}{3n} B + \sqrt {\frac{2 M_{\alpha}^{2} \nu^{\alpha}}{n} B},
   \end{displaymath}
   where 
   \begin{displaymath}
       B = \ln{\frac{4 \mathcal{N}(\nu) (\|T\| + \nu^{-1}) }{\delta \|T\|}}.
   \end{displaymath}
\end{lemma}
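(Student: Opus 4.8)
The plan is to apply a Bernstein-type concentration inequality for self-adjoint (or Hilbert--Schmidt) operator-valued random variables, in the normalized geometry induced by $T_\nu^{-1/2}$. Concretely, I would set
\[
\zeta(x) \;=\; T_\nu^{-\frac{1}{2}} T_x T_\nu^{-\frac{1}{2}}, \qquad x \in \mathcal{X},
\]
so that $\mathbb{E}_{x\sim\mu}\,\zeta(x) = T_\nu^{-\frac{1}{2}} T T_\nu^{-\frac{1}{2}}$ and
\[
T_\nu^{-\frac{1}{2}} (T - T_X) T_\nu^{-\frac{1}{2}} \;=\; \mathbb{E}\zeta(x) - \frac{1}{n}\sum_{i=1}^n \zeta(x_i).
\]
Thus the quantity to be bounded is exactly the deviation of an empirical average of i.i.d. self-adjoint operators from its mean, and the stated bound has the familiar Bernstein shape $\tfrac{2}{3n}\mathcal{B}\,(\text{a.s.\ bound}) + \sqrt{\tfrac{2}{n}\mathcal{B}\,(\text{variance proxy})}$ with an extra factor coming from the logarithm being an intrinsic-dimension term rather than a crude ambient one.

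The key steps, in order, are: (i) verify the almost-sure operator-norm bound on each summand. By Lemma \ref{emb norm}, for any $\alpha_0 < \alpha \le 1$ we have $\|\zeta(x)\| = \|T_\nu^{-\frac{1}{2}} T_x T_\nu^{-\frac{1}{2}}\| \le M_\alpha^2 \nu^{\alpha}$ $\mu$-a.e., which after centering gives an a.s.\ bound of the same order (up to the constant absorbed into $\tfrac{4}{3}$ versus $\tfrac{2}{3}$). (ii) Bound the variance parameter. Since $\zeta(x) \succeq 0$, one has $\zeta(x)^2 \preceq \|\zeta(x)\| \, \zeta(x) \preceq M_\alpha^2 \nu^{\alpha}\,\zeta(x)$, hence
\[
\bigl\| \mathbb{E}\,\zeta(x)^2 \bigr\| \;\le\; M_\alpha^2 \nu^{\alpha}\,\bigl\| \mathbb{E}\,\zeta(x)\bigr\| \;=\; M_\alpha^2 \nu^{\alpha}\,\bigl\| T_\nu^{-\frac{1}{2}} T T_\nu^{-\frac{1}{2}}\bigr\| \;\le\; M_\alpha^2 \nu^{\alpha},
\]
using $\|T_\nu^{-1} T\| \le 1$; this is what produces the $M_\alpha^2\nu^{\alpha}$ inside the square root. (iii) Identify the effective-dimension logarithm. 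The refinement from a dimension-free Bernstein inequality to one with $\ln \mathcal{N}(\nu)(\|T\|+\nu^{-1})/(\delta\|T\|)$ comes from using the "intrinsic dimension" version of the matrix/operator Bernstein inequality, where the relevant quantity is $\operatorname{tr}\bigl(\mathbb{E}\zeta(x)\bigr)/\|\mathbb{E}\zeta(x)\|$; here $\operatorname{tr}\mathbb{E}\zeta(x) = \operatorname{tr}\bigl(T_\nu^{-1}T\bigr) = \mathcal{N}(\nu)$ and $\|\mathbb{E}\zeta(x)\| = \|T\|/(\|T\|+\nu^{-1})$, whose ratio is precisely $\mathcal{N}(\nu)(\|T\|+\nu^{-1})/\|T\|$. (iv) Plug these three ingredients into the cited concentration lemma (Lemma \ref{bernstein} or its intrinsic-dimension variant referenced in the "Some bounds" section) and simplify the constants to the stated form.

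I expect the main obstacle to be step (iii): making the effective-dimension logarithm appear rigorously. A naive application of the Hilbert--Schmidt Bernstein inequality (as in Lemma \ref{concentra of operator}) would only give a $\ln(2/\delta)$ term but with a worse dependence on $\nu$ (through $\|\zeta\|_2$ rather than $\|\zeta\|$), so one genuinely needs the intrinsic-dimension refinement, and one must check that the version available in the paper's toolbox (whatever Lemma \ref{bernstein} states) is strong enough — or else reduce to it by a standard truncation/covering argument on the spectrum of $T_\nu^{-1}T$. The a.s.\ bound and variance bound in steps (i)--(ii) are essentially immediate from Lemma \ref{emb norm} and positivity, so the real content is routing everything through the right concentration inequality and bookkeeping the constants $\tfrac{4}{3}$ and $\sqrt{2}$.
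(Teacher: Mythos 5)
Your proposal matches the paper's proof essentially step for step: it centers $\zeta(x)=T_\nu^{-\frac{1}{2}}T_xT_\nu^{-\frac{1}{2}}$, bounds each summand by $2M_\alpha^2\nu^\alpha$ via Lemma \ref{emb norm}, dominates the variance by $V=M_\alpha^2\nu^\alpha\,T_\nu^{-1}T$ using $\mathbb{E}(B-\mathbb{E}B)^2\preceq\mathbb{E}B^2$ and positivity, and applies the intrinsic-dimension operator Bernstein inequality with $\operatorname{tr}V/\|V\|=\mathcal{N}(\nu)(\|T\|+\nu^{-1})/\|T\|$, exactly as the paper does. The obstacle you flag in step (iii) does not arise: the required intrinsic-dimension concentration inequality is already in the paper's toolbox (Lemma \ref{lemma concentration of operator} in Appendix B), so no truncation or covering argument is needed.
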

\begin{proof}
  Denote $A_{i} = T_\nu^{-\frac{1}{2}} (T - T_{x_{i}}) T_\nu^{-\frac{1}{2}} $, using Lemma \ref{emb norm}, we have 
  \begin{displaymath}
      \| A_{i} \| = \| T_\nu^{-\frac{1}{2}} T T_\nu^{-\frac{1}{2}} \| + \| T_\nu^{-\frac{1}{2}} T_{x_{i}} T_\nu^{-\frac{1}{2}} \| \le 2 M_{\alpha}^{2} \nu^{\alpha}, \quad \mu \text {-a.e. } x \in \mathcal{X}.
  \end{displaymath}
  We use $ A \preceq B$ to denote that $ A-B$ is a positive semi-definite operator. Using the fact that $\mathbb{E}(B-\mathbb{E} B)^2 \preceq \mathbb{E} B^2$ for a self-adjoint operator $B$, we have
  \begin{displaymath}
    \mathbb{E} A_{i}^{2} \preceq \mathbb{E}\left[T_\nu^{-\frac{1}{2}} T_{x_{i}} T_\nu^{-\frac{1}{2}}\right]^2.
    \end{displaymath}
  In addition, Lemma \ref{emb norm} shows that $0 \preceq T_\nu^{-\frac{1}{2}} T_{x_{i}} T_\nu^{-\frac{1}{2}} \preceq M_{\alpha}^{2}\nu^{\alpha}, \mu \text {-a.e. } x \in \mathcal{X}$. So we have
  \begin{displaymath}
     \mathbb{E} A_{i}^{2}  \preceq \mathbb{E} \left[T_\nu^{-\frac{1}{2}} T_{x_{i}} T_\nu^{-\frac{1}{2}}\right]^{2} \preceq \mathbb{E}\left[ M_{\alpha}^{2} \nu^{\alpha} \cdot T_\nu^{-\frac{1}{2}} T_{x_{i}} T_\nu^{-\frac{1}{2}}\right] = M_{\alpha}^{2}\nu^{\alpha} T_{\nu}^{-1} T,
  \end{displaymath}
  Define an operator $V := M_{\alpha}^{2} \nu^{\alpha} T_{\nu}^{-1} T$, we have  
  \begin{align}
      \| V \| &= M_{\alpha}^{2} \nu^{\alpha} \frac{\lambda_{1}}{\lambda_{1} + \nu^{-1}} = M_{\alpha}^{2} \nu^{\alpha} \frac{\|T\|}{\|T\| + \nu^{-1}} \le M_{\alpha}^{2} \nu^{\alpha}; \notag \\
      \text{tr}V &= M_{\alpha}^{2} \nu^{\alpha} \mathcal{N}(\nu); \notag \\
      \frac{\text{tr}V}{ \| V \|} &= \frac{\mathcal{N}(\nu) (\|T\| + \nu^{-1})}{\|T\|}. \notag
  \end{align}
  Use Lemma \ref{lemma concentration of operator} to $A_{i}$, $V$ and we finish the proof.
\end{proof}

\begin{lemma}\label{lemma 4.7}
   Suppose that the RKHS $\mathcal{H}$ has embedding index $\alpha_{0}$. For any $\alpha_{0} < \alpha \le 1$, if $\nu $ and $n$ satisfy that
   \begin{equation}\label{require 3.10}
    \frac{M_{\alpha}^{2} \nu^{\alpha}}{n} \ln \frac{4 \kappa^2 \mathcal{N}(\nu)\left(\|T\|+\nu^{-1}\right)}{\delta\|T\|} \leq \frac{1}{8},
    \end{equation}
   then for all $\delta \in (0,1)$, with probability at least $1 - \delta$, we have
   \begin{displaymath}
   \left\|T_\nu^{-\frac{1}{2}} T_{X \nu}^{\frac{1}{2}}\right\|^2 \leq 2, \quad\left\|T_\nu^{\frac{1}{2}} T_{X \nu}^{-\frac{1}{2}}\right\|^2 \leq 3.
   \end{displaymath}
\end{lemma}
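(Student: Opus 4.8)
The plan is to derive both inequalities from the perturbation bound of Lemma~\ref{lemma4.6} by a short deterministic argument. First I would record the identity coming from $T_{X\nu}=T_X+\nu^{-1}=T_\nu-(T-T_X)$: sandwiching both sides with $T_\nu^{-1/2}$ gives
\begin{displaymath}
  T_\nu^{-\frac12}T_{X\nu}T_\nu^{-\frac12}=I-T_\nu^{-\frac12}(T-T_X)T_\nu^{-\frac12},
\end{displaymath}
so the entire statement reduces to controlling $\Delta:=\bigl\|T_\nu^{-1/2}(T-T_X)T_\nu^{-1/2}\bigr\|$. Applying Lemma~\ref{lemma4.6} with the same $\alpha$ and $\delta$, on an event of probability at least $1-\delta$ we get $\Delta\le\tfrac{4}{3}\tfrac{M_\alpha^2\nu^\alpha}{n}B+\bigl(2\tfrac{M_\alpha^2\nu^\alpha}{n}B\bigr)^{1/2}$, where $B=\ln\frac{4\mathcal N(\nu)(\|T\|+\nu^{-1})}{\delta\|T\|}$. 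Since $B$ is at most the logarithmic factor in~\eqref{require 3.10} (after the harmless normalization $\kappa\ge 1$, which is exactly why the extra $\kappa^2$ sits inside that logarithm), the hypothesis~\eqref{require 3.10} forces $\tfrac{M_\alpha^2\nu^\alpha}{n}B\le\tfrac18$, and therefore $\Delta\le\tfrac43\cdot\tfrac18+\bigl(2\cdot\tfrac18\bigr)^{1/2}=\tfrac16+\tfrac12=\tfrac23$.

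The second step is to convert $\Delta\le\tfrac23$ into the two operator-norm bounds. On that event the displayed identity yields the Loewner sandwich $\tfrac13 I\preceq T_\nu^{-1/2}T_{X\nu}T_\nu^{-1/2}\preceq\tfrac53 I$. For the first bound I would use $\bigl\|T_\nu^{-1/2}T_{X\nu}^{1/2}\bigr\|^2=\bigl\|T_\nu^{-1/2}T_{X\nu}T_\nu^{-1/2}\bigr\|\le\tfrac53\le 2$ (writing $\|A\|^2=\|AA^{*}\|$ with $A=T_\nu^{-1/2}T_{X\nu}^{1/2}$). For the second, $\bigl\|T_\nu^{1/2}T_{X\nu}^{-1/2}\bigr\|^2=\bigl\|T_{X\nu}^{-1/2}T_\nu T_{X\nu}^{-1/2}\bigr\|$, and the lower half of the sandwich, $\tfrac13 I\preceq T_\nu^{-1/2}T_{X\nu}T_\nu^{-1/2}$, is equivalent to $\tfrac13 T_\nu\preceq T_{X\nu}$, i.e.\ $T_\nu\preceq 3T_{X\nu}$, which rearranges to $T_{X\nu}^{-1/2}T_\nu T_{X\nu}^{-1/2}\preceq 3I$; this gives the constant $3$.

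I do not expect a genuine obstacle here: all the probabilistic content lives inside Lemma~\ref{lemma4.6} (and, underneath it, the operator Bernstein inequality and the embedding bound of Lemma~\ref{emb norm}), so the probability $1-\delta$ is inherited verbatim with no union bound, and everything else is elementary manipulation of positive self-adjoint operators. The only two points that deserve care are (i) checking that $B$ is dominated by the logarithm appearing in~\eqref{require 3.10}, which pins down why $\kappa^2$ must appear inside it, and (ii) confirming that the crude estimate $\Delta\le\tfrac23$ still leaves enough slack for the claimed constants $2$ and $3$ — it does, producing $\tfrac53$ and $3$ respectively.
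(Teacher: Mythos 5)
Your proposal is correct and follows essentially the same route as the paper: invoke Lemma \ref{lemma4.6}, use the hypothesis \eqref{require 3.10} (with the harmless normalization $\kappa\ge 1$ absorbing the extra $\kappa^2$ in the logarithm) to get $\Delta\le\tfrac43\cdot\tfrac18+\sqrt{2\cdot\tfrac18}=\tfrac23$, and then finish by deterministic operator algebra. The only cosmetic difference is the last step for the second bound, where the paper expands $\bigl(I-T_\nu^{-1/2}(T_X-T)T_\nu^{-1/2}\bigr)^{-1}$ in a Neumann series $\sum_k(2/3)^k\le 3$ while you conjugate the Loewner bound $\tfrac13 T_\nu\preceq T_{X\nu}$; both yield the constant $3$.
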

\begin{proof}
   Define 
   \begin{displaymath}
      u = \frac{M_{\alpha}^{2} \nu^{\alpha}}{n} \ln \frac{4 \kappa^2 \mathcal{N}(\nu)\left(\|T\|+\nu^{-1}\right)}{\delta\|T\|} \le \frac{1}{8}.
   \end{displaymath}
   Using Lemma \ref{lemma4.6}, with probability at least $1 - \delta$, we have
   \begin{displaymath}
       a := \Vert T_\nu^{-\frac{1}{2}} (T - T_X) T_\nu^{-\frac{1}{2}} \Vert \le \frac{4}{3} u + \sqrt{2 u} \le \frac{2}{3}.
   \end{displaymath}
   So we have
   \begin{align}
   \left\|T_\nu^{-\frac{1}{2}} T_{X \nu}^{\frac{1}{2}}\right\|^2 & =\left\|T_\nu^{-\frac{1}{2}} T_{X \nu} T_\nu^{-\frac{1}{2}}\right\|=\left\|T_\nu^{-\frac{1}{2}}\left(T_X+\nu^{-1}\right) T_\nu^{-\frac{1}{2}}\right\| \notag \\ & =\left\|T_\nu^{-\frac{1}{2}}\left(T_X-T+T+\nu^{-1}\right) T_\nu^{-\frac{1}{2}}\right\| \notag \\ 
   & =\left\|T_\nu^{-\frac{1}{2}}\left(T_X-T\right) T_\nu^{-\frac{1}{2}}+I\right\| \notag\\ 
   & \leq a+1 \leq 2; \notag
   \end{align}
   and
   \begin{align}
    \left\|T_\nu^{\frac{1}{2}} T_{X \nu}^{-\frac{1}{2}}\right\|^2 & =\left\|T_\nu^{\frac{1}{2}} T_{X \nu}^{-1} T_\nu^{\frac{1}{2}}\right\|=\left\|\left(T_\nu^{-\frac{1}{2}} T_{X \nu} T_\nu^{-\frac{1}{2}}\right)^{-1}\right\| \notag \\
    &= \left\|\left(I - T_{\nu}^{-\frac{1}{2}} (T_{X}-T) T_{\nu}^{-\frac{1}{2}} \right)^{-1}\right\| \notag \\
    &\le \sum\limits_{k=0}^{\infty} \left\| T_{\nu}^{-\frac{1}{2}} (T_{X}-T) T_{\nu}^{-\frac{1}{2}} \right\|^{k}  \notag \\
    &\le \sum\limits_{k=0}^{\infty} \left(\frac{2}{3}\right)^{k} \le 3. \notag
    \end{align}
\end{proof}

The following theorem is an application of the classical Bernstein inequality but considering a truncation version of $f_{\rho}^{*}$, which will bring refined analysis when handling those $f_{\rho}^{*} \notin L^{\infty} $.
\begin{theorem}\label{theorem 4.9 boundness}
  Suppose that Assumption \ref{ass EDR}, \ref{assumption embedding}, \ref{ass source condition} and \ref{ass mom of error} hold for $ 0 < s \le 2 \tau$ and $\frac{1}{\beta} \le \alpha_{0} < 1$. Denote $\xi_{i} = \xi(x_{i},y_{i}) =  T_{\nu}^{-\frac{1}{2}}(K_{x_{i}} y_{i} - T_{x_{i}} f_{\nu}) $ and $ \Omega_{0} = \{x \in \mathcal{X}: |f_{\rho}^{*}(x)| \le t \}$. Then for any $\alpha_{0} < \alpha \le 1$ and all $\delta \in (0,1)$, with probability at least $1 - \delta$, we have
  \begin{displaymath}
    \left\| \frac{1}{n} \sum\limits_{i=1}^{n} \xi_{i}I_{x_{i} \in \Omega_{0}} - \mathbb{E}\xi_{x}I_{x \in \Omega_{0}} \right\|_{\mathcal{H}} \leq \ln \frac{2}{\delta}\left(\frac{C_1 \nu^{\frac{\alpha}{2}}}{n} \cdot \tilde{M} + \frac{C_2 \mathcal{N}^{\frac{1}{2}}(\nu)}{\sqrt{n}}+\frac{C_3 \nu^{\frac{\alpha-s}{2}}}{\sqrt{n}}\right),
    \end{displaymath}
    where $\tilde{M} = M_{\alpha} (E + F_{\tau}) R \nu^{\frac{\alpha - s}{2}} + t + L$ and $L$ is the constant in (\ref{ass mom of error}). $C_{1} = 8\sqrt{2} M_{\alpha}, C_{2} = 8\sigma, C_{3} = 8\sqrt{2} M_{\alpha} F_{\tau} R$.
\end{theorem}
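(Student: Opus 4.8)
The plan is to read the left-hand side as the deviation of an empirical mean of i.i.d.\ $\mathcal{H}$-valued random variables from its expectation and to apply the vector-valued Bernstein inequality (Lemma~\ref{bernstein}); the only real departure from the classical argument is that the truncation to $\Omega_0$ lets us bypass the (unavailable) bound $\|f_\rho^*\|_{L^\infty}$. First I would rewrite $\xi_i$ transparently: since $T_{x_i}f_\nu = K_{x_i}K_{x_i}^*f_\nu = f_\nu(x_i)\,k(x_i,\cdot)$, one has $\xi_i = T_\nu^{-1/2}k(x_i,\cdot)\bigl(y_i - f_\nu(x_i)\bigr)$, so that $\zeta_i \coloneqq \xi_i I_{x_i\in\Omega_0}$ are i.i.d.\ with $\mathbb{E}\zeta_i = \mathbb{E}\,\xi_x I_{x\in\Omega_0}$ and, $\mu$-a.e., $\|\zeta_i\|_{\mathcal{H}} \le \|T_\nu^{-1/2}k(x_i,\cdot)\|_{\mathcal{H}}\,|y_i - f_\nu(x_i)|\,I_{x_i\in\Omega_0}$. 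Two bounds control the two factors: $\|T_\nu^{-1/2}k(x,\cdot)\|_{\mathcal{H}}\le M_\alpha\nu^{\alpha/2}$ by Lemma~\ref{due embedding bound}, and on $\Omega_0$ one has $|y-f_\nu(x)|\le|\epsilon|+|f_\rho^*(x)|+|f_\nu(x)|\le|\epsilon|+t+M_\alpha E R\nu^{(\alpha-s)/2}$ by the definition of $\Omega_0$ together with the bound $\|f_\nu\|_{L^\infty}\le M_\alpha E R\nu^{(\alpha-s)/2}$ from Lemma~\ref{prop infty norm}.

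Next I would verify the Bernstein moment hypothesis $\mathbb{E}\|\zeta_i\|_{\mathcal{H}}^m\le\tfrac12 m!\,\tilde\sigma^2\,\tilde L^{m-2}$ for all $m\ge2$. For the scale $\tilde L$, extracting $m-2$ copies of the two pointwise bounds and collecting the truncation level $t$, the $L^\infty$-bound on $f_\nu$, and the noise constant $L$ of Assumption~\ref{ass mom of error} gives $\tilde L\asymp M_\alpha\nu^{\alpha/2}\tilde M$ (the extra summand $M_\alpha F_\tau R\nu^{(\alpha-s)/2}$ in $\tilde M$ is just a harmless nonnegative enlargement). For the variance $\tilde\sigma^2$ the point is \emph{not} to take a product of suprema but to integrate in $x$ first: writing $y-f_\nu(x)=\epsilon+(f_\rho^*-f_\nu)(x)$ and using $\mathbb{E}(\epsilon\mid x)=0$, $\mathbb{E}(\epsilon^2\mid x)\le\sigma^2$, one gets $\mathbb{E}\|\zeta_i\|_{\mathcal{H}}^2\le\sigma^2\,\mathbb{E}\|T_\nu^{-1/2}k(x,\cdot)\|_{\mathcal{H}}^2+(M_\alpha\nu^{\alpha/2})^2\|f_\rho^*-f_\nu\|_{L^2}^2\le\sigma^2\mathcal{N}(\nu)+M_\alpha^2 F_\tau^2 R^2\nu^{\alpha-s}$, where the first term uses $\mathbb{E}\|T_\nu^{-1/2}k(x,\cdot)\|_{\mathcal{H}}^2=\operatorname{tr}(T_\nu^{-1}T)=\mathcal{N}(\nu)$ and the second uses $\|f_\rho^*-f_\nu\|_{L^2}\le F_\tau R\nu^{-s/2}$, i.e.\ Theorem~\ref{theorem of approximation error} with $\gamma=0$ (which needs only Assumption~\ref{ass source condition}). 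The higher moments are handled by the routine scalar estimate for $\mathbb{E}(|\epsilon+a|^m\mid x)$ under the Bernstein moment condition, which keeps the variance proxy $\tilde\sigma^2\asymp\sigma^2\mathcal{N}(\nu)+M_\alpha^2 F_\tau^2 R^2\nu^{\alpha-s}$.

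Finally I would apply Lemma~\ref{bernstein} to get, with probability at least $1-\delta$, a bound of the form $\ln\frac{2}{\delta}\bigl(c\,\tilde L/n+c\,\tilde\sigma/\sqrt n\bigr)$ and split $\tilde\sigma\le\sigma\mathcal{N}^{1/2}(\nu)+M_\alpha F_\tau R\nu^{(\alpha-s)/2}$ via $\sqrt{a+b}\le\sqrt a+\sqrt b$; tracking the absolute constants produces exactly the three stated terms with $C_1=8\sqrt2 M_\alpha$, $C_2=8\sigma$, $C_3=8\sqrt2 M_\alpha F_\tau R$. The one genuine obstacle is the absence of an almost-sure bound on $\|\xi_i\|_{\mathcal{H}}$ when $f_\rho^*\notin L^\infty$: this is precisely what the set $\Omega_0=\{|f_\rho^*|\le t\}$ is for, since it trades the unavailable $\|f_\rho^*\|_{L^\infty}$ for the free parameter $t$ (to be optimised later against the $\mu$-mass of $\mathcal{X}\setminus\Omega_0$), leaving only the genuinely unbounded noise $\epsilon$, which the moment condition of Assumption~\ref{ass mom of error} absorbs. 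A secondary subtlety is keeping the $1/\sqrt n$ term sharp: one must evaluate $\mathbb{E}\|\zeta_i\|_{\mathcal{H}}^2$ exactly — recovering the effective dimension $\mathcal{N}(\nu)$ and the $L^2$-norm of the approximation error — rather than bounding it by $\tilde L^2$, since the cruder estimate would be too lossy for the downstream rate.
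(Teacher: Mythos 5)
Your overall strategy is exactly the paper's: truncate to $\Omega_0$, verify the Bernstein moment condition for the $\mathcal{H}$-valued variables $\xi_i I_{x_i\in\Omega_0}$ by separating a pointwise scale factor from an exactly-computed second moment (recovering $\mathcal{N}(\nu)$ via $\mathbb{E}\|T_\nu^{-1/2}k(x,\cdot)\|_{\mathcal{H}}^2=\operatorname{tr}(T_\nu^{-1}T)$ and the $L^2$ approximation error $F_\tau R\nu^{-s/2}$ from Theorem~\ref{theorem of approximation error} with $\gamma=0$), and then apply Lemma~\ref{bernstein}; the constants also come out as you say. However, there is a genuine gap in your pointwise bound on $\Omega_0$. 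You control $|y-f_\nu(x)|\le|\epsilon|+|f_\rho^*(x)|+|f_\nu(x)|$ and invoke Lemma~\ref{prop infty norm} to get $\|f_\nu\|_{L^\infty}\le M_\alpha E R\,\nu^{(\alpha-s)/2}$, but that lemma is stated (and its proof via property \eqref{prop1} only works) when the exponent $1-\frac{\alpha-s}{2}$ lies in $[0,1]$, i.e.\ when $s\le\alpha$; the theorem you are proving covers all $0<s\le 2\tau$ with $\alpha\le 1$, so the range $\alpha<s\le 2\tau$ is not covered by your argument. Worse, in that range the claimed estimate is simply false in general: $\nu^{(\alpha-s)/2}\to 0$ as $\nu\to\infty$ while $f_\nu\to f_\rho^*\ne 0$, so $\|f_\nu\|_{L^\infty}$ cannot be bounded by a quantity tending to zero, and replacing it by a constant would not reproduce the stated $\tilde M$.

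The paper closes exactly this case differently: for $s>\alpha_0$ one may assume without loss of generality $\alpha_0<\alpha\le s$ and bound the \emph{difference} rather than the two terms separately, namely $\|(f_\nu-f_\rho^*)I_{x\in\Omega_0}\|_{L^\infty}\le M_\alpha\|f_\nu-f_\rho^*\|_{[\mathcal{H}]^\alpha}\le M_\alpha F_\tau R\,\nu^{(\alpha-s)/2}$, using Theorem~\ref{theorem of approximation error} with $\gamma=\alpha$ (this is where the $F_\tau$ inside $\tilde M=M_\alpha(E+F_\tau)R\nu^{(\alpha-s)/2}+t+L$ comes from), while your route via Lemma~\ref{prop infty norm} is reserved for $s\le\alpha_0$, where it is valid. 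With this case distinction inserted, the rest of your argument (the $2^{m-1}$-type splitting of higher moments, the scale $\tilde L\asymp M_\alpha\nu^{\alpha/2}\tilde M$, the variance proxy $\tilde\sigma\asymp\sigma\mathcal{N}^{1/2}(\nu)+M_\alpha F_\tau R\nu^{(\alpha-s)/2}$, and the final application of Lemma~\ref{bernstein}) coincides with the paper's proof and yields the stated constants.
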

\begin{proof}
   Note that $f_{\rho}^{*}$ can represent a $\mu$-equivalence class in $L^{2}(\mathcal{X},\mu)$. When defining the set $ \Omega_{0}$, we actually denote $f_{\rho}^{*}$ as the representative $f_{\rho}^{*}(x) = \int_{\mathcal{Y}} y \mathrm{d} \rho(y|x).$ 
   
   To use Lemma \ref{bernstein}, we need to bound the m-th moment of $ \xi(x,y) I_{x \in \Omega_{0}} $.
   \begin{align}\label{proof of 4.9-1}
       \mathbb{E} \left\| \xi(x,y) I_{x \in \Omega_{0}} \right\|_{\mathcal{H}}^{m} &= \mathbb{E} \left\| T_{\nu}^{-\frac{1}{2}} K_{x}(y - f_{\nu}(x))I_{x \in \Omega_{0}} \right\|_{\mathcal{H}}^{m} \notag \\
       &\le \mathbb{E} \Big( \left\| T_{\nu}^{-\frac{1}{2}} k(x,\cdot)\right\|_{\mathcal{H}}^{m}  \mathbb{E} \big( \left|(y - f_{\nu}(x)) I_{x \in \Omega_{0}} \right|^{m} ~\big|~ x\big) \Big). 
   \end{align}
Using the inequality $(a+b)^m \leq 2^{m-1}\left(a^m+b^m\right)$, we have 
\begin{align}\label{proof-plug}
    \left|y - f_\nu(x)\right|^m & \leq 2^{m-1}\left(\left|f_\nu(x)-f_{\rho}^{*}(x)\right|^m+\left|f_{\rho}^{*}(x)-y\right|^m\right) \notag \\
    & =2^{m-1}\left(\left|f_\nu(x)-f_{\rho}^{*}(x)\right|^m+|\epsilon|^m\right).
\end{align}
Plugging \eqref{proof-plug} into \eqref{proof of 4.9-1}, we have
\begin{align}
  \mathbb{E} \left\| \xi(x,y)I_{x \in \Omega_{0}} \right\|_{\mathcal{H}}^{m} ~\le~ &2^{m-1} \mathbb{E} \Big( \left\| T_{\nu}^{-\frac{1}{2}} k(x,\cdot)\right\|_{\mathcal{H}}^{m} \left|(f_{\nu}(x) - f_{\rho}^{*}(x)) I_{x \in \Omega_{0}} \right|^{m}\Big) \label{proof of 4.9-2} \\
  &+ 2^{m-1} \mathbb{E} \Big( \left\| T_{\nu}^{-\frac{1}{2}} k(x,\cdot)\right\|_{\mathcal{H}}^{m}  \mathbb{E} \big( \left| \epsilon~ I_{x \in \Omega_{0}} \right|^{m} ~\big|~ x \big) \Big) \label{proof of 4.9-3}
\end{align}

Now we begin to bound (\ref{proof of 4.9-3}). Note that we have proved in Lemma \ref{due embedding bound} that for $\mu$-almost $x \in \mathcal{X}$,
\begin{displaymath}
  \left\| T_{\nu}^{-\frac{1}{2}} k(x,\cdot)\right\|_{\mathcal{H}} \le M_{\alpha} \nu^{\frac{\alpha}{2}};  
\end{displaymath}
In addition, we also have
\begin{align}
    \mathbb{E} \left\|T_\nu^{-\frac{1}{2}} k(x,\cdot)\right\|_\mathcal{H}^{2} &= \mathbb{E}
    \Big\| \sum\limits_{i \in N} ( \frac{1}{\lambda_{i} + \nu^{-1}})^{\frac{1}{2}} \lambda_{i} e_{i}(x) e_{i}(\cdot)  \Big\|_{\mathcal{H}}^{2} \notag \\
    &= \mathbb{E} \Big( \sum\limits_{i \in N}  \frac{\lambda_{i}}{\lambda_{i} + \nu^{-1}} e_{i}^{2}(x) \Big) \notag \\
    &= \sum\limits_{i \in N}  \frac{\lambda_{i}}{\lambda_{i} + \nu^{-1}} \notag \\
    &=\mathcal{N}(\nu). \notag
\end{align}
So we have 
\begin{displaymath}
    \mathbb{E}\left\|T_\nu^{-\frac{1}{2}} k(x,\cdot) \right\|_{\mathcal{H}}^m \leq \left(M_{\alpha} \nu^{\frac{\alpha}{2}}\right)^{m-2} \cdot \mathbb{E}\left\|T_\nu^{-\frac{1}{2}} k(x,\cdot)\right\|_{\mathcal{H}}^2 = \big( M_{\alpha} \nu^{\frac{\alpha}{2}} \big)^{m-2} \mathcal{N}(\nu).
\end{displaymath}
Using Assumption \ref{ass mom of error}, we have
\begin{displaymath}
    \mathbb{E}\left(|\epsilon I_{x \in \Omega_{0}}|^m  \mid x\right) \le \mathbb{E}\left(|\epsilon|^m \mid x\right) \leq \frac{1}{2} m ! \sigma^2 L^{m-2}, \quad \mu \text {-a.e. } x \in \mathcal{X},
\end{displaymath}
so we get the upper bound of (\ref{proof of 4.9-3}), i.e., 
\begin{displaymath}
     (\ref{proof of 4.9-3}) \le \frac{1}{2} m !\left(\sqrt{2} \sigma \mathcal{N}^{\frac{1}{2}}(\nu)\right)^2(2 M_{\alpha} \nu^{\frac{\alpha}{2}} L)^{m-2}.
\end{displaymath}

Now we begin to bound (\ref{proof of 4.9-2}).
\begin{itemize}
    \item[(1)] When $s \le \alpha_{0}$, using the definition of $\Omega_{0}$ and Lemma \ref{prop infty norm}, we have 
    \begin{equation}\label{M-1}
    \| (f_{\nu} - f_{\rho}^{*}) I_{x \in \Omega_{0}} \|_{L^{\infty}} \le \| f_{\nu}  \|_{L^{\infty}} + \| f_{\rho}^{*} I_{x \in \Omega_{0}} \|_{L^{\infty}} \le M_{\alpha} E R \nu^{\frac{\alpha - s}{2}} + t.
    \end{equation}
    \item[(2)] When $ s > \alpha_{0} $, without loss of generality, we assume $\alpha_{0} < \alpha \le s$. using Theorem \ref{theorem of approximation error} for $\gamma = \alpha$, we have 
    \begin{equation}\label{M-2}
    \| (f_{\nu} - f_{\rho}^{*}) I_{x \in \Omega_{0}} \|_{L^{\infty}} \le M_{\alpha} \| f_{\nu}  - f_{\rho}^{*} \|_{[\mathcal{H}]^{\alpha}} \le M_{\alpha} F_{\tau} R \nu^{\frac{\alpha - s}{2}}.
    \end{equation}
\end{itemize}
Therefore, \eqref{M-1} and \eqref{M-2} imply that for all $0 < s \le 2$ we have 
\begin{align}\label{def of M}
    \| (f_{\nu} - f_{\rho}^{*}) I_{x \in \Omega_{0}} \|_{L^{\infty}} \le M_{\alpha} (E + F_{\tau}) R \nu^{\frac{\alpha - s}{2}} + t := M.
\end{align}
In addition, using Theorem \ref{theorem of approximation error} for $\gamma=0$, we also have 
\begin{displaymath}
    \mathbb{E} | (f_{\nu}(x) - f_{\rho}^{*}(x)) I_{x\in \Omega_{0}} |^{2} \le \mathbb{E} | f_{\nu}(x) - f_{\rho}^{*}(x)|^{2} \le (F_{\tau} R \nu^{-\frac{s}{2}})^{2}.
\end{displaymath}
So we get the upper bound of (\ref{proof of 4.9-2}), i.e.,
\begin{align}
    (\ref{proof of 4.9-2}) &\le 2^{m-1} (M_{\alpha} \nu^{\frac{\alpha}{2}})^{m} \cdot  \| (f_{\nu} - f_{\rho}^{*}) I_{x \in \Omega_{0}} \|_{L^{\infty}}^{m-2} \cdot \mathbb{E} | (f_{\nu}(x) - f_{\rho}^{*}(x)) I_{x\in \Omega_{0}} |^{2} \notag \\
    &\le 2^{m-1} (M_{\alpha} \nu^{\frac{\alpha}{2}})^{m} \cdot M^{m-2} \cdot (F_{\tau} R \nu^{-\frac{s}{2}})^{2} \notag \\
    &\le \frac{1}{2} m! \big( 2 M_{\alpha} \nu^{\frac{\alpha}{2}} M \big)^{m-2} \big( 2 M_{\alpha} F_{\tau} R \nu^{\frac{\alpha - s}{2}}\big)^{2}. \notag
\end{align}
Denote 
\begin{align}
    \tilde{L} &= 2 M_{\alpha} (M + L) \nu^{\frac{\alpha}{2}} \notag \\
    \tilde{\sigma} &= 2 M_{\alpha} F_{\tau} R \nu^{\frac{\alpha-s}{2}} + \sqrt{2} \sigma \mathcal{N}^{\frac{1}{2}}(\nu), \notag 
\end{align}
then the bounds of \eqref{proof of 4.9-2} and \eqref{proof of 4.9-3} show that $\mathbb{E} \left\| \xi(x,y) I_{x\in \Omega_{0}} \right\|_{\mathcal{H}}^{m} \le \frac{1}{2} m! \tilde{\sigma}^{2} \tilde{L}^{m-2} $. Using Lemma \ref{bernstein}, we finish the proof.
\end{proof}
\begin{remark}
  In fact, when we later applying Theorem \ref{theorem 4.9 boundness} in the proof of Theorem \ref{theorem 4.9}, the truncation in this theorem is necessary only in the $ s \le \alpha_{0}$ case. But for the completeness and consistency of our proof, we also include $s > \alpha_{0}$ in this theorem.
\end{remark}

Based on Theorem \ref{theorem 4.9 boundness}, the following theorem will give an upper bound of
\begin{displaymath}
    \left\|T_\nu^{-\frac{1}{2}}\left[\left(g_Z-T_X f_\nu\right)-\left(g-T f_\nu\right)\right]\right\|_{\mathcal{H}}.
\end{displaymath}

\begin{theorem}\label{theorem 4.9}
  Suppose that Assumption \ref{ass EDR}, \ref{assumption embedding}, \ref{ass source condition} and \ref{ass mom of error} hold for $ 0 < s \le 2 \tau$ and $\frac{1}{\beta} \le \alpha_{0} < 1$.
  \begin{itemize}
      \item In the case of $s + \frac{1}{\beta} > \alpha_{0}$, by choosing $ \nu \asymp n^{\frac{ \beta}{s \beta + 1}}$, for any fixed $\delta \in (0,1)$, when $n$ is sufficiently large, with probability at least $ 1- \delta$ , we have    
      \begin{equation}\label{goal of theorem 4.9}
          \left\|T_\nu^{-\frac{1}{2}}\left[\left(g_Z-T_X f_\nu\right)-\left(g-T f_\nu\right)\right]\right\|_{\mathcal{H}} \le \ln{\frac{2}{\delta}} C \frac{\nu^{\frac{1}{2 \beta}}}{\sqrt{n}} = \ln{\frac{2}{\delta}} C n^{-\frac{1}{2} \frac{s \beta}{s \beta +1}},
      \end{equation}
      where $C$ is a constant independent of $n$ and $\delta$.
      \item In the case of $s + \frac{1}{\beta} \le \alpha_{0} $, for any $ \alpha_{0} < \alpha \le 1$, by choosing $\nu \asymp (\frac{n}{\ln^{r}(n)})^{\frac{1}{\alpha}}$, for some $r > 1$, for any fixed $\delta \in (0,1)$, when $n$ is sufficiently large, with probability at least $1 - \delta$, we have
      \begin{equation}\label{goal of theorem 4.9-2}
          \left\|T_\nu^{-\frac{1}{2}}\left[\left(g_Z-T_X f_\nu\right)-\left(g-T f_\nu\right)\right]\right\|_{\mathcal{H}} \le \ln{\frac{2}{\delta}} C \frac{\nu^{\frac{\alpha-s}{2}}}{\sqrt{n}} \le \ln{\frac{2}{\delta}} C \left(\frac{n}{\ln^{r}(n)}\right)^{-\frac{1}{2} \frac{s}{\alpha}},
      \end{equation}
      where $C$ is a constant independent of $n$ and $\delta$.
  \end{itemize}
\end{theorem}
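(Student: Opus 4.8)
The plan is to deduce Theorem~\ref{theorem 4.9} from Theorem~\ref{theorem 4.9 boundness} by a truncation argument, controlling the truncation error with the $L^{q}$-embedding property (Theorem~\ref{integrability of Hs}). Writing $\xi_i = \xi(x_i,y_i) = T_\nu^{-1/2}(K_{x_i}y_i - T_{x_i}f_\nu)$ as in Theorem~\ref{theorem 4.9 boundness}, one has $\mathbb{E}\xi = T_\nu^{-1/2}(g - Tf_\nu)$ and $\frac1n\sum_i \xi_i = T_\nu^{-1/2}(g_Z - T_X f_\nu)$, so the target quantity equals $\big\|\frac1n\sum_{i=1}^n \xi_i - \mathbb{E}\xi\big\|_{\mathcal{H}}$. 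Fix a truncation level $t = t_n$ (to be chosen below as a power of $n$, times a power of $\ln n$ in the second case) and put $\Omega_0 = \{x : |f_\rho^*(x)| \le t\}$. Then split
\begin{align*}
  \frac1n\sum_{i=1}^n \xi_i - \mathbb{E}\xi
  = \Big(\frac1n\sum_{i=1}^n \xi_i I_{x_i\in\Omega_0} - \mathbb{E}\xi I_{x\in\Omega_0}\Big)
  + \frac1n\sum_{i=1}^n \xi_i I_{x_i\notin\Omega_0}
  - \mathbb{E}\xi I_{x\notin\Omega_0},
\end{align*}
and estimate the three pieces separately. (If $s > \alpha_0$ then $f_\rho^*$ is already bounded, $t$ can be taken constant and $\Omega_0 = \mathcal{X}$, so the last two pieces vanish; the substance is the regime $s \le \alpha_0$.)

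For the first piece I would apply Theorem~\ref{theorem 4.9 boundness} directly, but with an embedding order $\alpha' \in (\alpha_0,1]$ chosen carefully: when $s + 1/\beta > \alpha_0$ the relation $\beta(\alpha_0 - s) < 1$ holds, so by continuity I may take $\alpha' > \alpha_0$ with $\beta(\alpha' - s) < 1$; when $s + 1/\beta \le \alpha_0$ I take $\alpha'$ strictly below the exponent $\alpha$ governing the prescribed $\nu$. Plugging in the prescribed $\nu$ and using $\mathcal{N}(\nu) \asymp \nu^{1/\beta}$, a routine check shows that every term in the bound of Theorem~\ref{theorem 4.9 boundness} \emph{other than} the part coming from the $+t$ summand of $\tilde M = M_{\alpha'}(E+F_\tau)R\,\nu^{(\alpha'-s)/2} + t + L$ is $\lesssim$ the claimed rate; this uses $\beta(\alpha'-s) < 1$ in the first case and $\alpha - s > 1/\beta$ in the second. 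The $+t$ summand contributes a term $\asymp \tfrac{\nu^{\alpha'/2}}{n}\,t$, giving an \emph{upper} bound on the admissible size of $t$.

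For the tail pieces I would use Theorem~\ref{integrability of Hs}: $f_\rho^* \in L^{q_s}(\mathcal{X},\mu)$ with $q_s = \tfrac{2\alpha}{\alpha-s}$ and $\|f_\rho^*\|_{L^{q_s}} \le C\|f_\rho^*\|_{[\mathcal{H}]^s} \le CR$, so Markov's inequality yields $\mu(\Omega_0^c) = \mu(|f_\rho^*| > t) \le (CR/t)^{q_s}$. Hence $n\,\mu(\Omega_0^c) \to 0$ once $t$ grows a touch faster than $n^{1/q_s}$, so for fixed $\delta$ and $n$ large, with probability at least $1-\delta$ no sample lies in $\Omega_0^c$ and the empirical tail $\frac1n\sum_i \xi_i I_{x_i\notin\Omega_0}$ vanishes identically. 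For the deterministic term I would use $\mathbb{E}\xi I_{x\notin\Omega_0} = T_\nu^{-1/2}S_k^*\big[(f_\rho^* - f_\nu) I_{\Omega_0^c}\big]$ together with $\|T_\nu^{-1/2}S_k^*\|_{\mathscr{B}(L^2,\mathcal{H})} \le 1$ (immediate from the spectral representation), whence by Hölder's inequality
\begin{align*}
  \big\|\mathbb{E}\xi I_{x\notin\Omega_0}\big\|_{\mathcal{H}}
  \le \big\|(f_\rho^* - f_\nu)I_{\Omega_0^c}\big\|_{L^2}
  \le \|f_\rho^*\|_{L^{q_s}}\,\mu(\Omega_0^c)^{\frac12 - \frac1{q_s}} + \|f_\nu\|_{L^\infty}\,\mu(\Omega_0^c)^{\frac12}.
\end{align*}
Bounding $\|f_\nu\|_{L^\infty}$ via Lemma~\ref{prop infty norm} (once more with an embedding order near $\alpha_0$) and using $\frac12 - \frac1{q_s} = \frac{s}{2\alpha}$ and $\frac{q_s}2 = \frac{\alpha}{\alpha-s}$, this term is $\lesssim t^{-s/(\alpha-s)} + \nu^{(\alpha'-s)/2}\,t^{-\alpha/(\alpha-s)}$, which imposes a \emph{lower} bound on the admissible size of $t$.

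It remains to choose $t$ — a power of $n$, times an appropriate power of $\ln n$ in the second case — in the window between the upper bound from the $+t$ term and the lower bounds from the tails, to check that this window is nonempty, and to combine the two probability-$(1-\delta)$ events by a union bound (rescaling $\delta$); the $\ln\frac2\delta$ factor and the $\delta$-free constant in the conclusion come from Theorem~\ref{theorem 4.9 boundness}, the tails contributing only harmless $\delta$-dependent lower-order terms since we need only ``$n$ sufficiently large'' for fixed $\delta$. \textbf{The main obstacle is exactly this balancing of $t$}: it must be small enough that the $+t$ term of Theorem~\ref{theorem 4.9 boundness} stays below the target rate, yet large enough to annihilate the empirical tail and make the expectation tail negligible. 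Verifying that these competing demands are compatible is precisely what forces the two-case split of the theorem and the device of applying Theorem~\ref{theorem 4.9 boundness} and Lemma~\ref{prop infty norm} with an embedding order strictly smaller than the one controlling the rate.
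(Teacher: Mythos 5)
Your proposal follows essentially the same route as the paper's proof: truncate $f_\rho^*$ at level $t$, apply Theorem \ref{theorem 4.9 boundness} to the truncated part with an embedding order chosen strictly below $s+\frac{1}{\beta}$ (resp.\ below $\alpha$), kill the empirical tail via Markov and the $L^{q_s}$-embedding of Theorem \ref{integrability of Hs}, bound the expectation tail, and balance $t$ between the resulting upper and lower restrictions --- and this balancing does close, exactly as in the paper's restrictions (R1)/(R2) and its final check that $q_s>\frac{2(s\beta+1)}{2+(s-\alpha^{\prime})\beta}$. The only (harmless) deviation is your treatment of the expectation tail, where you let the noise integrate out and use $\bigl\|T_\nu^{-\frac{1}{2}}S_k^*\bigr\|\le 1$ together with H\"older, whereas the paper keeps the noise term and uses the pointwise bound $\|T_\nu^{-\frac{1}{2}}k(x,\cdot)\|_{\mathcal{H}}\le M_{\alpha^{\prime}}\nu^{\frac{\alpha^{\prime}}{2}}$ with Cauchy--Schwarz; both yield compatible constraints on $t$.
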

\begin{proof}
\paragraph{The $ \bm{s + \frac{1}{\beta} > \alpha_{0}} $ case:}

Denote $ \xi_{i} = \xi(x_{i},y_{i}) = T_{\nu}^{- \frac{1}{2}} (K_{x_{i}} y_{i} - T_{x_i}f_{\nu} )$, (\ref{goal of theorem 4.9}) is equivalent to
   \begin{equation}\label{equivalent goal}
       \left\| \frac{1}{n} \sum\limits_{i=1}^{n} \xi_{i} - \mathbb{E}\xi_{x} \right\|_{\mathcal{H}} \le \ln{\frac{2}{\delta}} C \frac{\nu^{\frac{1}{2 \beta}}}{\sqrt{n}} = \ln{\frac{2}{\delta}} C n^{-\frac{1}{2} \frac{s \beta}{s \beta +1}}.
   \end{equation}
Consider the subset $\Omega_{1} = \{x \in \mathcal{X}: |f_{\rho}^{*}(x)| \le t \}$ and $\Omega_{2} = \mathcal{X} \backslash \Omega_{1}$, where $t$ will be chosen appropriately later. Assume that for some $ q \ge 2$, 
\begin{displaymath}
    [\mathcal{H}]^{s} \hookrightarrow L^{q}(\mathcal{X},\mu).
\end{displaymath}
Then Assumption \ref{ass source condition} shows that there exists $0 < C_{q} < \infty$ such that $\| f_{\rho}^{*} \|_{L^{q}(\mathcal{X},\mu)} \le C_{q}$. Using the Markov inequality, we have
\begin{displaymath}
       P(x \in \Omega_{2}) = P\Big(|f_{\rho}^{*}(x)| > t \Big) \le \frac{\mathbb{E} |f_{\rho}^{*}(x)|^{q}}{t^{q}} \le \frac{(C_{q})^{q}}{t^{q}}.
\end{displaymath}
Decompose $\xi_{i}$ as $\xi_{i} I_{x_{i} \in \Omega_{1} } +  \xi_{i} I_{x_{i} \in \Omega_{2} }$ and we have
\begin{align}\label{decomposition}
    \left\|\frac{1}{n} \sum_{i=1}^n \xi_i-\mathbb{E} \xi_x\right\|_\mathcal{H} \le \left\|\frac{1}{n} \sum_{i=1}^n \xi_i I_{x_{i} \in \Omega_{1}}-\mathbb{E} \xi_x I_{x \in \Omega_{1}} \right\|_\mathcal{H} + \| \frac{1}{n} \sum_{i=1}^n \xi_i I_{x_{i} \in \Omega_{2}} \|_{_\mathcal{H}} + \| \mathbb{E} \xi_x I_{x \in \Omega_{2}} \|_{_\mathcal{H}}.
\end{align}
 For the first term in (\ref{decomposition}), denoted as \text{\uppercase\expandafter{\romannumeral1}}, Theorem \ref{theorem 4.9 boundness} shows that there exists $ \alpha_{0} < \alpha^{\prime} < s + \frac{1}{\beta}$ such that with probability at least $1-\delta$, we have
\begin{equation}\label{lead terms 2}
  \text{\uppercase\expandafter{\romannumeral1}} \leq \ln \frac{2}{\delta}\left(\frac{C_1 \nu^{\frac{\alpha^{\prime}}{2}}}{n} \cdot \tilde{M} +\frac{C_2 \nu^{\frac{1}{2 \beta}}}{\sqrt{n}}+\frac{C_3 \nu^{\frac{\alpha^{\prime} - s}{2}}}{\sqrt{n}}\right),
\end{equation}
where $\tilde{M} = M_{\alpha^{\prime}} (E + F_{\tau}) R \nu^{\frac{ \alpha^{\prime} - s}{2}} + t + L$. Recalling that $ \mathcal{N}(\nu) \asymp \nu^{\frac{1}{\beta}}$, simple calculation shows that by choosing $ \nu = n^{\frac{\beta}{s \beta + 1}}$,
\begin{itemize}
    \item the second term in \eqref{lead terms 2}:
    \begin{align}\label{1.2-4}
        \frac{C_2 \mathcal{N}^{\frac{1}{2}}(\nu)}{\sqrt{n}} \asymp \frac{\nu^{\frac{1}{2 \beta}}}{\sqrt{n}} = n^{-\frac{1}{2} \frac{s \beta}{s \beta +1}};  
    \end{align}
    \item the third term in \eqref{lead terms 2}:
    \begin{equation}\label{1.2-5}
        \frac{C_3 \nu^{\frac{\alpha^{\prime} - s}{2}}}{\sqrt{n}} \asymp  n^{\frac{1}{2}(\frac{\alpha^{\prime}}{s + 1 / \beta} - 1)} \cdot n^{-\frac{1}{2} \frac{s \beta}{s \beta +1}} \lesssim  n^{-\frac{1}{2} \frac{s \beta}{s \beta +1}};
    \end{equation}
    \item the first term in \eqref{lead terms 2}:
    \begin{align}\label{1.2-6}
        \frac{C_1 \nu^{\frac{\alpha^{\prime}}{2}}}{n}\cdot \tilde{M} &\asymp \frac{\nu^{\frac{\alpha^{\prime}}{2}}}{n} \nu^{\frac{\alpha^{\prime} - s}{2}} + \frac{\nu^{\frac{\alpha^{\prime}}{2}}}{n} \cdot t + \frac{\nu^{\frac{\alpha^{\prime}}{2}}}{n} \cdot L.
    \end{align}
\end{itemize}
Further calculations show that
\begin{displaymath}
    \frac{\nu^{\frac{\alpha^{\prime}}{2}}}{n} \nu^{\frac{\alpha^{\prime} - s}{2}} = n^{\frac{\alpha^{\prime}}{s + 1 / \beta} - 1} \cdot n^{-\frac{1}{2} \frac{s \beta}{s \beta +1}} \lesssim  n^{-\frac{1}{2} \frac{s \beta}{s \beta +1}},
\end{displaymath}
and 
\begin{displaymath}
    \frac{\nu^{\frac{\alpha^{\prime}}{2}}}{n} = n^{\frac{1}{2} \frac{\alpha^{\prime}\beta - s \beta -2}{s \beta +1}} \cdot n^{-\frac{1}{2}  \frac{s \beta}{s \beta +1}} \lesssim  n^{-\frac{1}{2} \frac{s \beta}{s \beta +1}}.
\end{displaymath}
To make $\eqref{1.2-6} \lesssim n^{-\frac{1}{2} \frac{s \beta}{s \beta +1}}$ when $ \nu = n^{\frac{\beta}{s \beta + 1}}$, letting $ \frac{\nu^{\frac{\alpha^{\prime}}{2}}}{n} \cdot t \le n^{-\frac{1}{2} \frac{s \beta}{s \beta +1}}$, we have the first restriction of $t$:
\begin{equation}\label{restrict1}
    \textbf{(R1)}:\quad t \le n^{\frac{1}{2} (1+\frac{1-\alpha^{\prime} \beta}{s \beta + 1})}.
\end{equation}
That is to say, if we choose $ t \le n^{\frac{1}{2} (1+\frac{1-\alpha^{\prime} \beta}{s \beta + 1})} $, we have 
\begin{displaymath}
    \text{\uppercase\expandafter{\romannumeral1}} \le \ln{\frac{2}{\delta}} C \frac{\nu^{\frac{1}{2 \beta}}}{\sqrt{n}} = \ln{\frac{2}{\delta}} C n^{-\frac{1}{2} \frac{s \beta}{s \beta +1}}.
\end{displaymath}
For the second term in (\ref{decomposition}), denoted as $\text{\uppercase\expandafter{\romannumeral2}}$, we have 
\begin{align}
    \tau_{n} := P(\text{\uppercase\expandafter{\romannumeral2}} > \frac{ \nu^{\frac{1}{2 \beta}}}{\sqrt{n}}) 
    &\le P\Big( ~\exists x_{i} ~\text{s.t.}~ x_{i} \in \Omega_{2}, \Big) = 1 - P\Big(x_{i} \notin \Omega_{2}, \forall x_{i},i=1,2,\cdots,n \Big) \notag \\
    &= 1 - P\Big(x \notin \Omega_{2}\Big)^{n} \notag \\
    &= 1 - P\Big( |f_{\rho}^{*}(x)| \le t\Big)^{n} \notag \\
    & \le 1 - \Big( 1 - \frac{(C_q)^{q}}{t^{q}}\Big)^{n}. \notag
\end{align}
Letting $\tau_{n} := P(\text{\uppercase\expandafter{\romannumeral2}} > \frac{ \nu^{\frac{1}{2 \beta}}}{\sqrt{n}}) \to 0$, we have $ \displaystyle{t^{q}} \gg n$, i.e. $t \gg n^{\frac{1}{q}} $. This gives the second restriction of $t$, i.e., 
\begin{equation}\label{restrict2}
   \textbf{(R2)}:\quad t \gg n^{\frac{1}{q}}, ~\text{or}~ n^{\frac{1}{q}} = o(t).
\end{equation}
For the third term in (\ref{decomposition}), denoted as $\text{\uppercase\expandafter{\romannumeral3}}$. Since Lemma \ref{due embedding bound} implies that $\| T_{\nu}^{-\frac{1}{2}} k(x,\cdot)\|_{\mathcal{H}} \le M_{\alpha^{\prime}} \nu^{\frac{\alpha^{\prime}}{2}}, \mu \text {-a.e. } x \in \mathcal{X},$  so
\begin{align}\label{third term}
    \text{\uppercase\expandafter{\romannumeral3}} &\le \mathbb{E}\| \xi_{x} I_{x\in\Omega_{2}} \|_{\mathcal{H}} \le \mathbb{E}\Big[ \| T_{\nu}^{-\frac{1}{2}} k(x,\cdot) \|_{\mathcal{H}} \cdot \big| \big(y-f_{\nu}(x) \big) I_{x\in\Omega_{2}}\big| \Big] \notag \\
    &\le M_{\alpha^{\prime}} \nu^{\frac{\alpha^{\prime}}{2}} \mathbb{E} \big| \big(y-f_{\nu}(x) \big) I_{x\in\Omega_{2}}\big| \notag \\
    &\le M_{\alpha^{\prime}} \nu^{\frac{\alpha^{\prime}}{2}} \Big( \mathbb{E} \big| \big(f_{\rho}^{*}(x)-f_{\nu}(x) \big) I_{x\in\Omega_{2}}\big| +  \mathbb{E} \big| \big(f_{\rho}^{*}(x)-y \big) I_{x\in\Omega_{2}}\big| \Big) \notag \\
    &\le M_{\alpha^{\prime}} \nu^{\frac{\alpha^{\prime}}{2}} \Big( \mathbb{E} \big| \big(f_{\rho}^{*}(x)-f_{\nu}(x) \big) I_{x\in\Omega_{2}}\big| +  \mathbb{E} \big| \epsilon \cdot I_{x\in\Omega_{2}}\big| \Big) .
\end{align}
Using Cauchy-Schwarz and the bound of approximation error (Theorem \ref{theorem of approximation error}), we have
\begin{align}\label{1.2-1}
    \mathbb{E} \big| \big(f_{\rho}^{*}(x)-f_{\nu}(x) \big) I_{x\in\Omega_{2}}\big| \le \left( \left\|f_{\rho}^*-f_\nu\right\|_{L^{2}}\right)^{\frac{1}{2}} \cdot \left(P(x \in \Omega_{2})\right)^{\frac{1}{2}} \le R \nu^{-\frac{s}{2}} C_{q}^{\frac{q}{2}} t^{-\frac{q}{2}}.
\end{align}
In addition, we have
\begin{align}\label{1.2-2}
    \mathbb{E} \big| \epsilon \cdot I_{x\in\Omega_{2}}\big| = \mathbb{E} \left( \mathbb{E} \big| \epsilon \cdot I_{x\in\Omega_{2}}\big| ~\Big|~ x\right) \le \sigma \mathbb{E} \left| I_{x\in\Omega_{2}}\right| \le \sigma (C_{q})^{q} t^{-q}.
\end{align}
Plugging \eqref{1.2-1} and \eqref{1.2-2} into \eqref{third term}, we have
\begin{equation}\label{1.2-3}
    \text{\uppercase\expandafter{\romannumeral3}} \le M_{\alpha^{\prime}} R C_{q}^{\frac{q}{2}} \nu^{\frac{\alpha^{\prime} - s }{2}} t^{-\frac{q}{2}} + M_{\alpha^{\prime}} \sigma (C_{q})^{q}  \nu^{\frac{\alpha^{\prime}}{2}} t^{-q}.
\end{equation}
Comparing (\ref{1.2-3}) with $C_3 \frac{ \nu^{\frac{\alpha^{\prime} - s}{2}}}{\sqrt{n}}$ and $C_1 \frac{\nu^{\frac{\alpha^{\prime}}{2}}}{n}$ in (\ref{lead terms 2}). We know that if $ t \ge n^{\frac{1}{q}}$, (\ref{third term}) $\le C \frac{\nu^{\frac{1}{2 \beta}}}{\sqrt{n}} = C n^{-\frac{1}{2} \frac{s \beta}{s \beta +1}}$. So the third term will not give further restriction of $t$.

To sum up, if we choose $t$ such that restrictions (\ref{restrict1}) and (\ref{restrict2}) are satisfied, then we can prove that (\ref{equivalent goal}) is satisfied with probability at least $ 1 - \delta - \tau_{n}, (\tau_{n} \to 0)$. Since for a fixed $\delta \in (0,1)$, when $n$ is sufficiently large, $\tau_{n}$ is sufficiently small such that, e.g., $\tau_{n} < \frac{\delta}{10}$. Without loss of generality, we say \eqref{equivalent goal} is satisfied with probability at least $ 1 - \delta$.  

Recalling restrictions (\ref{restrict1}) and (\ref{restrict2}), such $t$ exists if and only if $ [\mathcal{H}]^{s} \hookrightarrow L^{q}(\mathcal{X}, \mu)$ for some $q$ satisfying
\begin{equation}\label{require of q}
    \frac{1}{q} < \frac{1}{2} (1+\frac{1-\alpha^{\prime} \beta}{s \beta + 1}) \Longleftrightarrow q > \frac{2(s \beta + 1)}{2 + (s-\alpha^{\prime}) \beta}.
\end{equation}
If $s > \alpha_{0}$, $[\mathcal{H}]^{s} \hookrightarrow L^{\infty}(\mathcal{X}, \mu) $, hence \eqref{require of q} holds naturally. If $s \le \alpha_{0}$, Theorem \ref{integrability of Hs} shows that there exists $ \alpha_{0} < \alpha^{\prime \prime} < \alpha^{\prime} < s + \frac{1}{\beta}$ such that
\begin{displaymath}
    [\mathcal{H}]^{s} \hookrightarrow L^{q_{s}}(\mathcal{X}, \mu),\quad q_{s} = \frac{2 \alpha^{\prime \prime}}{\alpha^{\prime \prime} - s}.
\end{displaymath}
Further, $ \alpha^{\prime} > \alpha^{\prime \prime}$ and $ s + \frac{1}{\beta} > \alpha^{\prime}$ imply that
\begin{displaymath}
    \frac{2 \alpha^{\prime \prime}}{\alpha^{\prime \prime} - s} > \frac{2 \alpha^{\prime }}{\alpha^{\prime} - s} > \frac{2(s \beta + 1)}{2 + (s-\alpha^{\prime}) \beta}.
\end{displaymath}
So \eqref{require of q} holds for all $s + \frac{1}{\beta} > \alpha_{0}$ and we finish the proof of this case.

\vspace{15pt}
\paragraph{The $ \bm{s + \frac{1}{\beta} \le \alpha_{0}} $ case:}
Denote $ \xi_{i} = \xi(x_{i},y_{i}) = T_{\nu}^{- \frac{1}{2}} (K_{x_{i}} y_{i} - T_{x_i}f_{\nu} )$, for any fixed $\alpha_{0} < \alpha \le 1$, (\ref{goal of theorem 4.9-2}) is equivalent to
   \begin{equation}\label{equivalent goal-2}
       \left\| \frac{1}{n} \sum\limits_{i=1}^{n} \xi_{i} - \mathbb{E}\xi_{x} \right\|_{\mathcal{H}} \le \ln{\frac{2}{\delta}} C \frac{\nu^{\frac{\alpha-s}{2}}}{\sqrt{n}} \le \ln{\frac{2}{\delta}} C \left(\frac{n}{\ln^{r}(n)}\right)^{-\frac{1}{2} \frac{s}{\alpha}}, 
   \end{equation}
We also consider the subset $\Omega_{1} = \{x \in \Omega: |f_{\rho}^{*}(x)| \le t \}$ and $\Omega_{2} = \mathcal{X} \backslash \Omega_{1}$. Assume that for some $ q \ge 2$, 
\begin{displaymath}
    [\mathcal{H}]^{s} \hookrightarrow L^{q}(\mathcal{X},\mu).
\end{displaymath}
Similarly, decompose $\xi_{i}$ as $\xi_{i} I_{x_{i} \in \Omega_{1} } +  \xi_{i} I_{x_{i} \in \Omega_{2} }$ and we have
\begin{align}\label{decomposition-2}
    \left\|\frac{1}{n} \sum_{i=1}^n \xi_i-\mathbb{E} \xi_x\right\|_\mathcal{H} \le \left\|\frac{1}{n} \sum_{i=1}^n \xi_i I_{x_{i} \in \Omega_{1}}-\mathbb{E} \xi_x I_{x \in \Omega_{1}} \right\|_\mathcal{H} + \| \frac{1}{n} \sum_{i=1}^n \xi_i I_{x_{i} \in \Omega_{2}} \|_{_\mathcal{H}} + \| \mathbb{E} \xi_x I_{x \in \Omega_{2}} \|_{_\mathcal{H}}.
\end{align}
For the first term in (\ref{decomposition-2}), denoted as \text{\uppercase\expandafter{\romannumeral1}}, Theorem \ref{theorem 4.9 boundness} shows that there for this $\alpha > \alpha_{0} $, with probability at least $1-\delta$, we have
\begin{equation}\label{lead terms 2-2}
  \text{\uppercase\expandafter{\romannumeral1}} \leq \ln \frac{2}{\delta}\left(\frac{C_1 \nu^{\frac{\alpha}{2}}}{n} \cdot \tilde{M} +\frac{C_2 \nu^{\frac{1}{2 \beta}}}{\sqrt{n}}+\frac{C_3 \nu^{\frac{\alpha - s}{2}}}{\sqrt{n}}\right),
\end{equation}
where $\tilde{M} = M_{\alpha} (E+F_{\tau}) R \nu^{\frac{ \alpha - s}{2}} + t + L$. Simple calculation shows that by choosing $ \nu = (\frac{n}{\ln^{r}(n)})^{\frac{1}{\alpha}}$,
\begin{itemize}
    \item the second term in \eqref{lead terms 2-2}:
    \begin{align}\label{1.2-4-2}
        \frac{C_2 \mathcal{N}^{\frac{1}{2}}(\nu)}{\sqrt{n}} \asymp \frac{\nu^{\frac{1}{2 \beta}}}{\sqrt{n}} \lesssim \frac{\nu^{\frac{\alpha - s}{2}}}{\sqrt{n}};  
    \end{align}
    \item the third term in \eqref{lead terms 2-2}:
    \begin{equation}\label{1.2-5-2}
        \frac{C_3 \nu^{\frac{\alpha - s}{2}}}{\sqrt{n}} \asymp  n^{-\frac{1}{2}} n^{\frac{1}{2}-\frac{s}{2\alpha}} \left(\frac{1}{\ln^{r}(n)}\right)^{\frac{1}{2} - \frac{s}{2 \alpha}} \lesssim \left(\frac{n}{\ln^{r}(n)}\right)^{-\frac{1}{2} \frac{s}{\alpha}}.
    \end{equation}
    \item the first term in \eqref{lead terms 2-2}:
    \begin{align}\label{1.2-6-2}
        \frac{C_1 \nu^{\frac{\alpha}{2}}}{n}\cdot \tilde{M} &\asymp \frac{\nu^{\frac{\alpha}{2}}}{n} \nu^{\frac{\alpha - s}{2}} + \frac{\nu^{\frac{\alpha}{2}}}{n} \cdot t + \frac{\nu^{\frac{\alpha}{2}}}{n} \cdot L.
    \end{align}
    
\end{itemize}
Further calculations show that
\begin{displaymath}
    \frac{\nu^{\frac{\alpha}{2}}}{n} \nu^{\frac{\alpha - s}{2}} = \frac{\nu^{\frac{\alpha - s}{2}}}{\sqrt{n}} \cdot \frac{\nu^{\frac{\alpha}{2}}}{\sqrt{n}} = \frac{\nu^{\frac{\alpha - s}{2}}}{\sqrt{n}} \cdot \left(\frac{1}{\ln^{r}(n)}\right)^{\frac{\alpha}{2}} \lesssim \frac{\nu^{\frac{\alpha-s}{2}}}{\sqrt{n}}.
\end{displaymath}
and 
\begin{displaymath}
    \frac{\nu^{\frac{\alpha}{2}}}{n} \lesssim \frac{\nu^{\frac{\alpha-s}{2}}}{\sqrt{n}},
\end{displaymath}
To make $\eqref{1.2-6-2} \lesssim  \left(\frac{n}{\ln^{r}(n)}\right)^{-\frac{1}{2} \frac{s}{\alpha}}$ when $ \nu = (\frac{n}{\ln^{r}(n)})^{\frac{1}{\alpha}}$, letting $\frac{\nu^{\frac{\alpha}{2}}}{n} \cdot t \le  \frac{\nu^{\frac{\alpha-s}{2}}}{\sqrt{n}}$, we have the first restriction of $t$ (ignoring the $\log$ term):
\begin{equation}\label{restrict1-less}
    \textbf{(R1-2)}:\quad t \le n^{\frac{1}{2}\left(1 - \frac{s}{\alpha} \right)}.
\end{equation}
For the second and third terms in \eqref{decomposition-2}, we repeat the procedure as the case $ s +\frac{1}{\beta} > \alpha_{0}$, therefore the other restriction of $t$ remains unchanged, i.e.,  
\begin{equation}\label{restrict2-less}
   \textbf{(R2)}:\quad t \gg n^{\frac{1}{q}}, ~\text{or}~ n^{\frac{1}{q}} = o(t).
\end{equation}
These restrictions (\ref{restrict1-less}) and (\ref{restrict2-less}) shows that such $t$ exists if and only if $ [\mathcal{H}]^{s} \hookrightarrow L^{q}(\mathcal{X}, \mu)$ for some $q$ satisfying
\begin{equation}\label{1.9-1}
    \frac{1}{q} < \frac{1}{2}\left(1 - \frac{s}{\alpha} \right) \Longleftrightarrow q > \frac{2\alpha}{\alpha-s}.
\end{equation}
Recalling that $ \alpha > \alpha_{0}$ and $s +\frac{1}{\beta} \le \alpha_{0} $ implies $ s \le \alpha_{0}$, Theorem \ref{integrability of Hs} shows that there exists $\alpha_{0} < \alpha^{\prime} < \alpha$ such that
\begin{displaymath}
    [\mathcal{H}]^{s} \hookrightarrow L^{q_{s}}(\mathcal{X}, \mu),\quad q_{s} = \frac{2 \alpha^{\prime}}{\alpha^{\prime} - s},
\end{displaymath}
and
\begin{displaymath}
    \frac{2 \alpha^{\prime}}{\alpha^{\prime} - s} > \frac{2 \alpha}{\alpha - s}.
\end{displaymath}
So \eqref{1.9-1} holds for all $ s + \frac{1}{\beta} \le \alpha_{0}$ and we finish the proof of this case.
\end{proof}

\begin{theorem}[bound of estimation error]\label{estimation error thm}
  Suppose that Assumption \ref{ass EDR},\ref{assumption embedding}, \ref{ass source condition} and \ref{ass mom of error} hold for $ 0 < s \le 2 \tau$ and $\frac{1}{\beta} \le \alpha_{0} < 1$. Let $\hat{f}_{\nu}$ be the estimator defined by \eqref{SA estimator}. Then for $0 \le \gamma \le 1$ with $\gamma \le s$:
  \begin{itemize}[leftmargin = 18pt]
      \item In the case of $s + \frac{1}{\beta} > \alpha_{0} $, by choosing $\nu \asymp n^{\frac{\beta }{s \beta + 1}}$, for any fixed $\delta \in (0,1)$, when $n$ is sufficiently large, with probability at least $1 - \delta$, we have
      \begin{equation}\label{3.10-2}
          \left\|\hat{f}_\nu-f_{\nu}\right\|_{[\mathcal{H}]^{\gamma}}^2 \leq\left(\ln \frac{6}{\delta}\right)^2 C n^{-\frac{(s-\gamma) \beta}{s \beta+1}},
      \end{equation}
      where $C$ is a constant independent of $n$ and $\delta$.
      
      \item In the case of $s + \frac{1}{\beta} \le \alpha_{0} $, for any $ \alpha_{0} < \alpha \le 1$, by choosing $\nu \asymp (\frac{n}{\ln^{r}(n)})^{\frac{1}{\alpha}}$, for some $r > 1$, for any fixed $\delta \in (0,1)$, when $n$ is sufficiently large, with probability at least $1 - \delta$, we have
      \begin{equation}\label{3.10-3}
          \left\|\hat{f}_\nu-f_{\nu}\right\|_{[\mathcal{H}]^{\gamma}}^2 \leq \left(\ln \frac{6}{\delta}\right)^2 C \left(\frac{n}{\ln ^r(n)}\right)^{-\frac{s-\gamma}{\alpha}},
      \end{equation}
      where $C$ is a constant independent of $n$ and $\delta$.
  \end{itemize}
\end{theorem}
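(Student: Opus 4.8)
The plan is to run the standard bias--variance decomposition for spectral algorithms, carried out in the $[\mathcal{H}]^{\gamma}$-norm through the integral operators, and to feed the genuinely stochastic piece to Theorem~\ref{theorem 4.9}. First, since $\hat f_{\nu}-f_{\nu}\in\mathcal{H}$ and $T\preceq T_{\nu}$, the identity $\|h\|_{[\mathcal{H}]^{\gamma}}=\|T^{(1-\gamma)/2}h\|_{\mathcal{H}}$ for $h\in\mathcal{H}$ together with operator monotonicity of $t\mapsto t^{1-\gamma}$ on $[0,1]$ gives $\|\hat f_{\nu}-f_{\nu}\|_{[\mathcal{H}]^{\gamma}}\le\|T_{\nu}^{(1-\gamma)/2}(\hat f_{\nu}-f_{\nu})\|_{\mathcal{H}}$, so it suffices to bound the latter on an event of probability $1-\delta$. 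Then I would use the exact identity $\hat f_{\nu}-f_{\nu}=\varphi_{\nu}(T_{X})(g_{Z}-T_{X}f_{\nu})-\psi_{\nu}(T_{X})f_{\nu}$ and center the empirical residual, $g_{Z}-T_{X}f_{\nu}=\big[(g_{Z}-T_{X}f_{\nu})-(g-Tf_{\nu})\big]+(g-Tf_{\nu})$, noting $g-Tf_{\nu}=\psi_{\nu}(T)g=S_{k}^{*}(f_{\rho}^{*}-f_{\nu})$. This splits $\hat f_{\nu}-f_{\nu}$ into a stochastic term $\varphi_{\nu}(T_{X})\big[(g_{Z}-T_{X}f_{\nu})-(g-Tf_{\nu})\big]$ and a deterministic remainder $R:=\varphi_{\nu}(T_{X})(g-Tf_{\nu})-\psi_{\nu}(T_{X})f_{\nu}$, the latter being identically $0$ for Tikhonov regularization (there $\psi_{\nu}=\varphi_{\nu}/\nu$) and small for a general filter.

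For the stochastic term I would insert $T_{X\nu}^{1/2}$ and write it as the product of $T_{\nu}^{(1-\gamma)/2}\varphi_{\nu}(T_{X})T_{X\nu}^{1/2}$, $T_{X\nu}^{-1/2}T_{\nu}^{1/2}$, and $T_{\nu}^{-1/2}\big[(g_{Z}-T_{X}f_{\nu})-(g-Tf_{\nu})\big]$. After using Lemma~\ref{lemma 4.7} to interchange $T_{\nu}$ and $T_{X\nu}$, the first factor reduces to $\sup_{z\in[0,\kappa^{2}]}(z+\nu^{-1})^{1-\gamma/2}\varphi_{\nu}(z)$, which is $\lesssim\nu^{\gamma/2}$ by the filter property~\eqref{prop1} (split at $z=\nu^{-1}$); the second factor is $\le\sqrt{2}$ by Lemma~\ref{lemma 4.7}; and the third factor is precisely the quantity bounded in Theorem~\ref{theorem 4.9}. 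Hence this term is $\lesssim(\ln\tfrac{2}{\delta})\,\nu^{\gamma/2}\nu^{1/(2\beta)}/\sqrt{n}$ when $s+\tfrac1\beta>\alpha_{0}$ and $\lesssim(\ln\tfrac{2}{\delta})\,\nu^{\gamma/2}\nu^{(\alpha-s)/2}/\sqrt{n}$ when $s+\tfrac1\beta\le\alpha_{0}$; for the prescribed $\nu$ these square to the right-hand sides of~\eqref{3.10-2} and~\eqref{3.10-3}.

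The remainder $R$ is the core of the argument. Using $f_{\nu}=\varphi_{\nu}(T)g$ one rewrites $R=\big(\varphi_{\nu}(T_{X})-\varphi_{\nu}(T)\big)(g-Tf_{\nu})-\big(\psi_{\nu}(T_{X})-\psi_{\nu}(T)\big)f_{\nu}$, exhibiting $R$ as a first-order operator perturbation (vanishing when $T_{X}=T$) multiplied into quantities already known to be small: $\|T_{\nu}^{-1/2}(g-Tf_{\nu})\|_{\mathcal{H}}=\|L_{k}^{1/2}(L_{k}+\nu^{-1})^{-1/2}(f_{\rho}^{*}-f_{\nu})\|_{L^{2}}\le\|f_{\rho}^{*}-f_{\nu}\|_{L^{2}}\le F_{\tau}R\nu^{-s/2}$ by Theorem~\ref{theorem of approximation error}, and $\|f_{\nu}\|_{[\mathcal{H}]^{s}}\le ER$ (directly from~\eqref{prop1}, since $f_{\nu}=L_{k}\varphi_{\nu}(L_{k})f_{\rho}^{*}$ as an $L^{2}$-element, so $\|f_{\nu}\|_{[\mathcal{H}]^{s}}=\|L_{k}\varphi_{\nu}(L_{k})g_{0}\|_{L^{2}}$ with $g_{0}=L_{k}^{-s/2}f_{\rho}^{*}$). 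I would control the perturbations $\varphi_{\nu}(T_{X})-\varphi_{\nu}(T)$ and $\psi_{\nu}(T_{X})-\psi_{\nu}(T)$ by the standard filter-function perturbation estimates (the telescoping/resolvent identity, exact for Tikhonov and valid up to the qualification $\tau$ for a general filter), which yield factors $\|T_{\nu}^{-1/2}(T_{X}-T)T_{\nu}^{-1/2}\|$ (controlled by Lemma~\ref{lemma4.6}) and $\|T_{X}-T\|$ (controlled by Lemma~\ref{concentra of operator}) times powers of $\nu$ drawn from~\eqref{prop1}--\eqref{prop2}; the delicate point is to keep track of which regularized operator ($T_{\nu}$ or $T_{X\nu}$) appears in each sandwich and to pass between them with operator-monotonicity / Cordes-type inequalities, using~\eqref{prop2} only at exponents $\le s/2+\varepsilon\le\tau$. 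The conclusion should be $\|T_{\nu}^{(1-\gamma)/2}R\|_{\mathcal{H}}\lesssim\nu^{(\gamma-s)/2}$ up to lower-order terms, again matching~\eqref{3.10-2}--\eqref{3.10-3}.

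Finally I would collect terms: substitute $\nu\asymp n^{\beta/(s\beta+1)}$ (resp.\ $\nu\asymp(n/\ln^{r}n)^{1/\alpha}$ for the fixed $\alpha\in(\alpha_{0},1]$), use $\mathcal{N}(\nu)\asymp\nu^{1/\beta}$ (Lemma~\ref{lemma of effect}), verify that the hypotheses of Lemmas~\ref{lemma 4.7},~\ref{lemma4.6} and Theorem~\ref{theorem 4.9} hold once $n$ is large --- in the first regime by choosing $\alpha\in(\alpha_{0},s+\tfrac1\beta)$ so that $\nu^{\alpha}/n\to0$ --- absorb the $\ln\tfrac2\delta$ factors into a $(\ln\tfrac6\delta)^{2}$, and check that every contribution is at most $(\ln\tfrac6\delta)^{2}C\,n^{-(s-\gamma)\beta/(s\beta+1)}$ (resp.\ $(\ln\tfrac6\delta)^{2}C\,(n/\ln^{r}n)^{-(s-\gamma)/\alpha}$). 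I expect the main obstacle to be exactly the remainder $R$: for non-Tikhonov filters it does not cancel exactly, and bounding it without losing powers of $\nu$ requires the full interplay of the filter qualification, the operator-concentration bounds, and the exponent bookkeeping above; everything else is assembly of results already in hand.
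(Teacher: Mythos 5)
Your scaffolding matches the paper's proof: the same identity $\hat f_\nu-f_\nu=\varphi_\nu(T_X)(g_Z-T_Xf_\nu)-\psi_\nu(T_X)f_\nu$, the same three-factor sandwich producing the $\nu^{\gamma/2}$ prefactor via Lemma \ref{lemma 4.7} (and Cordes-type interchanges), and the same use of Theorem \ref{theorem 4.9} for the centered empirical term. The divergence, and the genuine gap, is in your treatment of the remainder $R=\varphi_\nu(T_X)(g-Tf_\nu)-\psi_\nu(T_X)f_\nu$. You propose to rewrite it as $\bigl(\varphi_\nu(T_X)-\varphi_\nu(T)\bigr)(g-Tf_\nu)-\bigl(\psi_\nu(T_X)-\psi_\nu(T)\bigr)f_\nu$ and control the differences by ``standard filter-function perturbation estimates.'' No such estimates follow from Definition \ref{def filter}: the axioms \eqref{prop1}--\eqref{prop2} are pure sup-bounds and give no modulus of continuity of $z\mapsto\varphi_\nu(z)$, so there is no bound of the form $\|\varphi_\nu(T_X)-\varphi_\nu(T)\|\lesssim\|T_X-T\|$ (sandwiched or not) at this level of generality. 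Spectral cut-off is a concrete obstruction: $\varphi_\nu^{\mathrm{cut}}$ is discontinuous at $z=\nu^{-1}$, and an arbitrarily small perturbation moving an eigenvalue across the threshold makes $\|\varphi_\nu(T_X)-\varphi_\nu(T)\|$ of order $\nu$ independently of $\|T_X-T\|$. So your plan either fails outright for general spectral algorithms or collapses into filter-specific arguments (it is exact only for Tikhonov), and the exponent bookkeeping you defer is precisely where the proof would break.

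The paper avoids forming any difference of filter functions. It keeps the uncentered $g-Tf_\nu$ inside the $\varphi_\nu(T_X)$ term: after the sandwich, its contribution is $\|T_\nu^{-1/2}(g-Tf_\nu)\|_{\mathcal H}\le\|f_\rho^*-f_\nu\|_{L^2}\le F_\tau R\nu^{-s/2}$ by Theorem \ref{theorem of approximation error}, which already matches the target rate after multiplication by $\nu^{\gamma/2}$ --- so no cancellation against $\psi_\nu(T_X)f_\nu$ is needed. The term $\|T_{X\nu}^{1/2}\psi_\nu(T_X)f_\nu\|_{\mathcal H}$ is then bounded directly using the representation $f_\nu=\varphi_\nu(T)T^{s/2}S_k^*g_0$: for $0<s<1$ via $\|T_{X\nu}^{1/2}\psi_\nu(T_X)\|\le 2F_\tau\nu^{-1/2}$ and $\|T^{(1+s)/2}\varphi_\nu(T)\|\le E\nu^{(1-s)/2}$; for $1\le s\le 2$ by distributing the power $T^{(s-1)/2}$ onto $T_{X\nu}$ through Lemma \ref{lemma 4.7} and the Cordes inequality (here the qualification enters through $\|T_{X\nu}^{s/2}\psi_\nu(T_X)\|\le 2F_\tau\nu^{-s/2}$, needing $s/2\le\tau$); and for $s>2$ the only perturbation used is of the \emph{power function}, $\|T^{(s-1)/2}-T_X^{(s-1)/2}\|$, which does admit a Lipschitz/H\"older bound (Lemma \ref{lemma phi operator}) combined with Lemma \ref{concentra of operator}. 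If you replace your remainder argument by this direct bound of $\psi_\nu(T_X)f_\nu$ (and keep $g-Tf_\nu$ with the $\varphi_\nu(T_X)$ factor), the rest of your outline assembles into the paper's proof.
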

\begin{proof}
Using Lemma \ref{lemma 4.7}, Theorem \ref{theorem 4.9} and Lemma \ref{concentra of operator} for $ \frac{\delta}{3} \in (0,\frac{1}{3})$, with probability at least $1-\delta $, we have the following results hold simultaneously
\begin{equation}\label{3bounds-1}
   \left\|T_\nu^{-\frac{1}{2}} T_{X \nu}^{\frac{1}{2}}\right\|^2 \leq 2, \quad\left\|T_\nu^{\frac{1}{2}} T_{X \nu}^{-\frac{1}{2}}\right\|^2 \leq 3;
\end{equation}
\begin{displaymath}
    \eqref{goal of theorem 4.9} ~\text{and}~ \eqref{goal of theorem 4.9-2};
\end{displaymath}
\begin{equation}\label{3bounds-3}
    \left\|T_X-T\right\| \le \frac{8 \sqrt{2} \kappa^2}{\sqrt{n}} \ln \frac{6}{\delta}.
\end{equation}
Note that when choosing $\nu$ as in \eqref{3.10-2} or \eqref{3.10-3}, the condition \eqref{require 3.10} required in Lemma \ref{lemma 4.7} is always satisfied when $ n $ is sufficiently large. 

$ $\\
\textbf{Step 1}:
First, we rewrite the estimation error as follows,
\begin{align}\label{proof of 3.1-1}
    \left\|\hat{f}_\nu-f_\nu\right\|_{[\mathcal{H}]^{\gamma}} & =\left\|L_{k}^{-\frac{\gamma}{2}} S_{k} \left(\hat{f}_\nu-f_\nu\right)\right\|_{L^{2}} \notag \\
    & =\left\|L_{k}^{-\frac{\gamma}{2}} S_{k} T_\nu^{-\frac{1}{2}} \cdot T_\nu^{\frac{1}{2}} T_{X \nu}^{-\frac{1}{2}} \cdot T_{X \nu}^{\frac{1}{2}}\left(\hat{f}_\nu-f_\nu\right)\right\|_{L^{2}} \notag \\
    &\leq \left\|L_{k}^{-\frac{\gamma}{2}} S_{k} T_\nu^{-\frac{1}{2}}\right\|_{\mathscr{B}\left(\mathcal{H},L^{2}\right)} \cdot \left\|T_\nu^{\frac{1}{2}} T_{X \nu}^{-\frac{1}{2}}\right\|_{\mathscr{B}(\mathcal{H})} \cdot \left\|T_{X \nu}^{\frac{1}{2}}\left(\hat{f}_\nu-f_\nu\right)\right\|_{\mathcal{H}} .
\end{align}
For any $f \in \mathcal{H}$ and $\|f\|_{\mathcal{H}}=1$, suppose that $f = \sum\limits_{i \in N} a_{i} \lambda_{i}^{1/2} e_{i}$ satisfying that $ \sum\limits_{i \in N} a_{i}^{2} = 1$. So for the first term in \eqref{proof of 3.1-1}, we have
\begin{align}
    \left\|L_{k}^{-\frac{\gamma}{2}} S_{k} T_\nu^{-\frac{1}{2}}\right\|_{\mathscr{B}\left(\mathcal{H},L^{2}\right)} &= \sup_{\|f\|_{\mathcal{H}}=1} \left\|L_{k}^{-\frac{\gamma}{2}} S_{k} T_\nu^{-\frac{1}{2}} f\right\|_{L^{2}} \notag \\
    &\le \sup_{\|f\|_{\mathcal{H}}=1} \left\|\sum\limits_{i \in N} \frac{\lambda_{i}^{\frac{1-\gamma}{2}}}{(\lambda_{i}+\nu^{-1})^{\frac{1}{2}}} a_{i} e_{i} \right\|_{L^{2}} \notag \\
    &\le  \sup_{i \in N} \frac{\lambda_{i}^{\frac{1-\gamma}{2}}}{(\lambda_{i}+\nu^{-1})^{\frac{1}{2}}} \cdot \sup_{\|f\|_{\mathcal{H}}=1} \left\|\sum\limits_{i \in N} a_{i} e_{i} \right\|_{L^{2}} \notag \\
    &\le \nu^{\frac{\gamma}{2}}, \notag 
\end{align}
where we use Lemma \ref{basic ineq} for the last inequality. For the second term in (\ref{proof of 3.1-1}), \eqref{3bounds-1} shows that
\begin{displaymath}
  \left\|T_\nu^{\frac{1}{2}} T_{X \nu}^{-\frac{1}{2}}\right\|_{\mathscr{B}(\mathcal{H})} \le 3.
\end{displaymath}
For the third term in (\ref{proof of 3.1-1}), noticing that $z \varphi_\nu + \psi_\nu=1$, we have
  \begin{align}
    \hat{f}_\nu-f_\nu & =\varphi_\nu\left(T_X\right) g_Z-\left(T_X \varphi_\nu\left(T_X\right)+\psi_\nu\left(T_X\right)\right) f_\nu \notag \\
    & =\varphi_\nu\left(T_X\right)\left(g_Z-T_X f_\nu\right)-\psi_\nu\left(T_X\right) f_\nu \notag 
  \end{align}
So for the third term in (\ref{proof of 3.1-1}),
  \begin{equation}\label{third decompose}
      \left\|T_{X \nu}^{\frac{1}{2}}\left(\hat{f}_\nu-f_\nu\right)\right\|_{\mathcal{H}} \le \left\|T_{X \nu}^{\frac{1}{2}} \varphi_\nu\left(T_X\right)\left(g_Z-T_X f_\nu\right)\right\|_{\mathcal{H}} + \left\|T_{X \nu}^{\frac{1}{2}} \psi_\nu\left(T_X\right) f_\nu\right\|_{\mathcal{H}}.
  \end{equation}
$ $\\
\textbf{Step 2}: Now we begin to bound the first term in (\ref{third decompose}), i.e., 
\begin{align}\label{third decompose rewrite}
    \left\|T_{X \nu}^{\frac{1}{2}} \varphi_\nu\left(T_X\right)\left(g_Z-T_X f_\nu\right)\right\|_{\mathcal{H}}= & \left\|T_{X \nu}^{\frac{1}{2}} \varphi_\nu\left(T_X\right) T_{X \nu}^{\frac{1}{2}} \cdot T_{X \nu}^{-\frac{1}{2}} T_\nu^{\frac{1}{2}} \cdot T_\nu^{-\frac{1}{2}}\left(g_Z-T_X f_\nu\right)\right\|_{\mathcal{H}} \notag \\
    \leq & \left\|T_{X \nu}^{\frac{1}{2}} \varphi_\nu\left(T_X\right) T_{X \nu}^{\frac{1}{2}}\right\|_{\mathscr{B}(\mathcal{H})} \cdot \left\|T_{X \nu}^{-\frac{1}{2}} T_\nu^{\frac{1}{2}}\right\|_{\mathscr{B}(\mathcal{H})} \cdot \left\|T_\nu^{-\frac{1}{2}}\left(g_Z-T_X f_\nu\right)\right\|_{\mathcal{H}}.
  \end{align}
   The property of filter function (\ref{prop1}) shows that $z \varphi_\nu(z) \leq E$ and $\varphi_\nu(z) \leq E \nu$. So we have 
   \begin{equation}\label{proof 3.1 mid-1}
       \left\|T_{X \nu}^{\frac{1}{2}} \varphi_\nu\left(T_X\right) T_{X \nu}^{\frac{1}{2}}\right\|_{\mathscr{B}(\mathcal{H})} = \left\|\left(T_X+\nu^{-1}\right) \varphi_\nu\left(T_X\right)\right\|_{\mathscr{B}(\mathcal{H})} \leq 2 E;
   \end{equation}
   \eqref{3bounds-1} shows that 
   \begin{equation}\label{proof 3.1 mid-2}
       \left\|T_{X \nu}^{-\frac{1}{2}} T_\nu^{\frac{1}{2}}\right\|_{\mathscr{B}(\mathcal{H})} \le 2;
   \end{equation}
   In addition, recalling that at the beginning we have assumed that \eqref{goal of theorem 4.9} and \eqref{goal of theorem 4.9-2} hold, therefore we have
   \begin{itemize}
      \item In the case of $s + \frac{1}{\beta} > \alpha_{0}$, by choosing $ \nu \asymp n^{\frac{ \beta}{s \beta + 1}}$, we have
      \begin{align}\label{proof 3.1 mid-3}
       \left\|T_\nu^{-\frac{1}{2}}\left(g_Z-T_X f_\nu\right)\right\|_{\mathcal{H}} &\le \left\|T_\nu^{-\frac{1}{2}}\left[\left(g_Z-T_X f_\nu\right)-\left(g-T f_\nu\right)\right]\right\|_{\mathcal{H}} + \left\|T_\nu^{-\frac{1}{2}}\left(g -T f_\nu\right)\right\|_{\mathcal{H}} \notag \\
       &\le \ln(\frac{6}{\delta}) C n^{-\frac{1}{2} \frac{s \beta}{s \beta +1}} + \left\|T_\nu^{-\frac{1}{2}} \left( S_{k}^{*} f_{\rho}^{*} - S_{k}^{*} S_{k} f_\nu\right)\right\|_{\mathcal{H}} \notag \\
       &\le \ln(\frac{6}{\delta}) C n^{-\frac{1}{2} \frac{s \beta}{s \beta +1}} + \left\|T_\nu^{-\frac{1}{2}} S_{k}^{*}\right\|_{\mathscr{B}(L^{2},\mathcal{H})} \|  f_{\rho}^{*} - f_{\nu}  \|_{L^{2}} \notag \\
       &\le \ln(\frac{6}{\delta}) C n^{-\frac{1}{2} \frac{s \beta}{s \beta +1}} + \|  f_{\rho}^{*} - f_{\nu}  \|_{L^{2}} \notag \\ 
       &\le \ln(\frac{6}{\delta}) C n^{-\frac{1}{2} \frac{s \beta}{s \beta +1}} + F_{\tau} R n^{-\frac{1}{2} \frac{s \beta}{s \beta +1}}. \notag \\
       &\le \ln(\frac{6}{\delta}) C n^{-\frac{1}{2} \frac{s \beta}{s \beta +1}},
   \end{align}
   where we use the fact that $\left\|T_\nu^{-\frac{1}{2}} S_{k}^{*}\right\|_{\mathscr{B}(L^{2},\mathcal{H})} \le 1$ and use Theorem \ref{theorem of approximation error} with $\gamma = 0$ to bound $\|  f_{\rho}^{*} - f_{\nu}  \|_{L^{2}}$.
      \item In the case of $s + \frac{1}{\beta} \le \alpha_{0} $, for any $ \alpha_{0} < \alpha \le 1$, by choosing $\nu \asymp (\frac{n}{\ln^{r}(n)})^{\frac{1}{\alpha}}$, for some $r > 1$, we have
      \begin{align}\label{proof 3.1 mid-4}
           \left\|T_\nu^{-\frac{1}{2}}\left(g_Z-T_X f_\nu\right)\right\|_{\mathcal{H}} &\le \left\|T_\nu^{-\frac{1}{2}}\left[\left(g_Z-T_X f_\nu\right)-\left(g-T f_\nu\right)\right]\right\|_{\mathcal{H}} + \left\|T_\nu^{-\frac{1}{2}}\left(g -T f_\nu\right)\right\|_{\mathcal{H}} \notag \\
           &\le \ln{\frac{6}{\delta}} C \left(\frac{n}{\ln^{r}(n)}\right)^{-\frac{1}{2} \frac{s}{\alpha}} + F_{\tau} R \left(\frac{n}{\ln^{r}(n)}\right)^{-\frac{1}{2} \frac{s}{\alpha}} \notag \\
           &\le \ln{\frac{6}{\delta}} C \left(\frac{n}{\ln^{r}(n)}\right)^{-\frac{1}{2} \frac{s}{\alpha}}.
      \end{align}
  \end{itemize}
Therefore, plugging \eqref{proof 3.1 mid-1} \eqref{proof 3.1 mid-2} \eqref{proof 3.1 mid-3} \eqref{proof 3.1 mid-4} into \eqref{third decompose rewrite}, we get the desired upper bounds of the first term in \eqref{third decompose}. Specifically, the bound in \eqref{proof 3.1 mid-3} determines the bound of \eqref{third decompose rewrite} in the the case of $s + \frac{1}{\beta} > \alpha_{0}$; and \eqref{proof 3.1 mid-4} determines the case of $s + \frac{1}{\beta} \le \alpha_{0}$.

$ $\\
\textbf{Step 3}: Now we begin to bound the second term in (\ref{third decompose}), i.e., 
\begin{equation}\label{step3 goal}
    \left\|T_{X \nu}^{\frac{1}{2}} \psi_\nu\left(T_X\right) f_\nu\right\|_{\mathcal{H}}.
\end{equation}
We discuss three conditions of $s$.
\begin{itemize}[leftmargin = 18pt]
    \item $0 < s < 1$: Since $ (a+b)^{p} \le a^{p} + b^{p}$ for $ p \in [0,1]$, we have
    \begin{displaymath}
        \left\|T_{X \nu}^{\frac{1}{2}} \psi_\nu\left(T_X\right)\right\|_{\mathscr{B}(\mathcal{H})} \le \sup_{z \in [0, \kappa^{2}]} (z + \nu^{-1})^{\frac{1}{2}} \psi_{\nu}(z) \le \sup_{z \in [0, \kappa^{2}]} (z^{\frac{1}{2}} + \nu^{-\frac{1}{2}}) \psi_{\nu}(z).
    \end{displaymath}
    Using the property of filter function (\ref{prop2}), we have 
    \begin{equation}\label{1.8-8}
        \sup_{z \in [0, \kappa^{2}]} (z^{\frac{1}{2}} + \nu^{-\frac{1}{2}}) \psi_{\nu}(z) \le F_{\tau}\nu^{-\frac{1}{2}} + \nu^{-\frac{1}{2}} F_{\tau} \le 2 F_{\tau}\nu^{-\frac{1}{2}}.
    \end{equation}
    Furthermore, using the property of filter function \eqref{prop1} and recalling that
    \begin{displaymath}
      f_{\nu} = \varphi_\nu(T) S_{k}^{*} L_{k}^{\frac{s}{2}} g_{0} = \varphi_\nu(T) T^{\frac{s}{2}} S_{k}^{*} g_{0},
  \end{displaymath}
  for some $g_{0} \in L^{2}$ with $\|g_{0}\|_{L^{2}} \le R$, we have 
  \begin{align}\label{58-2}
    \left\|T_{X \nu}^{\frac{1}{2}} \psi_\nu\left(T_X\right) f_\nu\right\|_{\mathcal{H}} & \leq\left\|T_{X \nu}^{\frac{1}{2}} \psi_\nu\left(T_X\right)\right\|_{\mathscr{B}(\mathcal{H})}  \left\|\varphi_\nu(T) T^{\frac{s}{2}} S_{k}^{*} g_{0}\right\|_{\mathcal{H}} \notag \\
    &\le 2 F_\tau \nu^{-\frac{1}{2}} \cdot \left\|\varphi_\nu(T) T^{\frac{s}{2}} S_{k}^{*} \right\| \left\|g_{0}  \right\|_{L^{2}} \notag \\
    &= 2 F_\tau \nu^{-\frac{1}{2}} \cdot \left\|\varphi_\nu(T) T^{\frac{s}{2}} T^{\frac{1}{2}} \right\| \left\|g_{0}  \right\|_{L^{2}} \notag \\
    &\le 2 F_\tau \nu^{-\frac{1}{2}} \cdot \left\|T^{\frac{1 + s}{2}} \varphi_{\nu}(T)\right\|_{\mathscr{B}(\mathcal{H})}  \left\|g_{0}  \right\|_{L^{2}}  \notag \\
    &\le 2 F_\tau \nu^{-\frac{1}{2}} E \nu^{\frac{1-s}{2}} R \notag \\
    &= 2 F_{\tau} E R \nu^{-\frac{s}{2}}.
  \end{align}
  
  \item $1 \le s \le 2$: We can rewrite \eqref{step3 goal} as follows,
    \begin{align}\label{1.8-1}
        \left\|T_{X \nu}^{\frac{1}{2}} \psi_\nu\left(T_X\right) f_\nu\right\|_{\mathcal{H}} &= \left\|T_{X \nu}^{\frac{1}{2}} \psi_\nu\left(T_X\right) \varphi_\nu(T) T^{\frac{s}{2}} S_{k}^{*} g_{0} \right\|_{\mathcal{H}} \notag \\
        &= \left\|T_{X \nu}^{\frac{1}{2}} \psi_\nu\left(T_X\right) \varphi_\nu(T) T^{\frac{s}{2}} S_{k}^{*}\right\| \left\| g_{0} \right\|_{L^{2}} \notag \\
        &= \left\|T_{X \nu}^{\frac{1}{2}} \psi_\nu\left(T_X\right) \varphi_\nu(T) T^{\frac{s}{2}} T^{\frac{1}{2}}\right\| \left\| g_{0} \right\|_{L^{2}} \notag \\
        & \leq\left\|T_{X \nu}^{\frac{1}{2}} \psi_\nu\left(T_X\right)  \varphi_\nu(T) T^{\frac{s+1}{2}} \right\| R.
    \end{align}
    Next, we can further decompose \eqref{1.8-1} as follows
    \begin{align}\label{1.8-2}
        \left\|T_{X \nu}^{\frac{1}{2}} \psi_\nu\left(T_X\right) \varphi_\nu(T) T^{\frac{s+1}{2} }\right\| &=\left\|T_{X \nu}^{\frac{1}{2}} \psi_\nu\left(T_X\right) T_{X \nu}^{\frac{s-1}{2}} \cdot T_{X \nu}^{-\frac{s-1}{2}} T_\nu^{\frac{s-1}{2}} \cdot T_\nu^{-\frac{s-1}{2}} T^{\frac{s-1}{2}} \cdot T^{-\frac{s-1}{2}} \varphi_\nu(T) T^{\frac{s+1}{2}}\right\| \notag \\
        &= \left\|T_{X \nu}^{\frac{1}{2}} \psi_\nu\left(T_X\right) T_{X \nu}^{\frac{s-1}{2}}\right\|\left\|T_{X \nu}^{-\frac{s-1}{2}} T_\nu^{\frac{s-1}{2}}\right\|\left\|T_\nu^{-\frac{s-1}{2}} T^{\frac{s-1}{2}}\right\|\left\|T^{-\frac{s-1}{2}} \varphi_\nu(T) T^{\frac{s+1}{2}}\right\| \notag \\
        &= \left\|T_{X \nu}^{\frac{s}{2}} \psi_\nu\left(T_X\right) \right\| \left\|T_{X \nu}^{-\frac{s-1}{2}} T_\nu^{\frac{s-1}{2}}\right \|  \left\|T_\nu^{-\frac{s-1}{2}} T^{\frac{s-1}{2}}\right\|\left\|T \varphi_\nu(T) \right\|.
    \end{align}
    Next, we need to bound the four terms in \eqref{1.8-2}. For the first term in \eqref{1.8-2}, using the inequality $ (a+b)^{p} \le a^{p} + b^{p}$ for $ p \in [0,1]$ again, we have
    \begin{equation}\label{1.8-3}
        \left\|T_{X \nu}^{\frac{s}{2}} \psi_\nu\left(T_X\right)\right\|_{\mathscr{B}(\mathcal{H})} \le \sup_{z \in [0, \kappa^{2}]} (z + \nu^{-1})^{\frac{s}{2}} \psi_{\nu}(z) \le \sup_{z \in [0, \kappa^{2}]} (z^{\frac{s}{2}} + \nu^{-\frac{s}{2}}) \psi_{\nu}(z) \le 2 F_{\tau} \nu^{-\frac{s}{2}}.
    \end{equation}
    For the second term in \eqref{1.8-2}, using Lemma \ref{cordes} and \eqref{3bounds-1}, we have,
    \begin{equation}\label{1.8-4}
        \left\|T_{X \nu}^{-\frac{s-1}{2}} T_\nu^{\frac{s-1}{2}}\right\| \le \left\|T_{X \nu}^{-\frac{1}{2}} T_\nu^{\frac{1}{2}}\right\|^{s-1} \le 3^{s-1} \le 3.
    \end{equation}
    For the third term in \eqref{1.8-2}, 
    \begin{equation}\label{1.8-5}
        \left\|T_\nu^{-\frac{s-1}{2}} T^{\frac{s-1}{2}}\right\| = \sup_{i \in N} \left(\frac{\lambda_{i}}{\lambda_{i} + \nu^{-1}}\right)^{\frac{s-1}{2}} \le 1.
    \end{equation}
    For the fourth term in \eqref{1.8-2}, using the property of filter function \eqref{prop1}, we have
    \begin{equation}\label{1.8-6}
        \left\|T \varphi_\nu(T) \right\| \le E.
    \end{equation}
    Plugging \eqref{1.8-3} \eqref{1.8-4} \eqref{1.8-5} \eqref{1.8-6} into \eqref{1.8-2}, we obtain the bound
    \begin{align}\label{condition s 2}
        \left\|T_{X \nu}^{\frac{1}{2}} \psi_\nu\left(T_X\right) f_\nu\right\|_{\mathcal{H}} \le 6 E F_{\tau} R \nu^{-\frac{s}{2}}.
    \end{align}
    
    \item $s > 2$: Recalling \eqref{1.8-1}, we have
    \begin{align}\label{3.10-4}
        \left\|T_{X \nu}^{\frac{1}{2}} \psi_\nu\left(T_X\right) f_\nu\right\|_{\mathcal{H}} & \leq\left\|T_{X \nu}^{\frac{1}{2}} \psi_\nu\left(T_X\right)  \varphi_\nu(T) T^{\frac{s+1}{2}} \right\| R \notag \\
        &\le \left\|T_{X \nu}^{\frac{1}{2}} \psi_\nu\left(T_X\right) T^{\frac{s-1}{2}} \right\| \left\|T \varphi_\nu(T)  \right\| R\notag \\
        &\le \left\|T_{X \nu}^{\frac{1}{2}} \psi_\nu\left(T_X\right) T^{\frac{s-1}{2}} \right\| E R. 
    \end{align}
    Further, we can have the following decomposition
    \begin{displaymath}
        T_{X \nu}^{\frac{1}{2}} \psi_\nu\left(T_X\right) T^{\frac{s-1}{2}}=T_{X \nu}^{\frac{1}{2}} \psi_\nu\left(T_X\right)\left(T^{\frac{s-1}{2}}-T_X^{\frac{s-1}{2}}\right) + T_{X \nu}^{\frac{1}{2}} \psi_\nu\left(T_X\right) T_X^{\frac{s-1}{2}}.
    \end{displaymath}
    So we have 
    \begin{equation}\label{1.8-7}
        \left\|T_{X \nu}^{\frac{1}{2}} \psi_\nu\left(T_X\right) T^{\frac{s-1}{2}} \right\| \le \left\| T_{X \nu}^{\frac{1}{2}} \psi_\nu\left(T_X\right) \right\| \left\| T^{\frac{s-1}{2}}-T_X^{\frac{s-1}{2}} \right\| + \left\| T_{X \nu}^{\frac{1}{2}} \psi_\nu\left(T_X\right) T_X^{\frac{s-1}{2}} \right\|.
    \end{equation}
    For the first term in \eqref{1.8-7}, using Lemma \ref{lemma phi operator} and the fact that $ \left\|T_X\right\|,\|T\| \leq \kappa^2$, we have
    \begin{equation}\label{1.8-9}
      \left\|T^{\frac{s-1}{2}}-T_X^{\frac{s-1}{2}}\right\| \leq \begin{cases}\left\|T-T_X\right\|^{\frac{s-1}{2}} & s \in(2,3], \\ \frac{s-1}{2} \kappa^{s-3}\left\|T-T_X\right\| & s \geq 3 .\end{cases}
    \end{equation}
    In addition, \eqref{3bounds-3} shows that
    \begin{equation}\label{1.8-10}
      \left\|T_X-T\right\| \leq\left\|T_X-T\right\|_2 \leq \frac{8 \sqrt{2} \kappa^2}{\sqrt{n}} \ln \frac{6}{\delta}.
    \end{equation}
    Further, recalling \eqref{1.8-8}, we have 
    \begin{equation}\label{1.8-11}
        \left\| T_{X \nu}^{\frac{1}{2}} \psi_\nu\left(T_X\right) \right\| \le 2 F_{\tau} \nu^{-\frac{1}{2}}.
    \end{equation}
    In addition, similarly as \eqref{1.8-3}, we have
    \begin{align}\label{1.8-12}
       \left\| T_{X \nu}^{\frac{1}{2}} \psi_\nu\left(T_X\right) T_X^{\frac{s-1}{2}} \right\| &\le \left\| T_{X}^{\frac{1}{2}} \psi_\nu\left(T_X\right) T_X^{\frac{s-1}{2}} \right\| + \nu^{-\frac{1}{2}} \left\| \psi_\nu\left(T_X\right) T_X^{\frac{s-1}{2}} \right\| \notag \\
       &= \left\| T_{X}^{\frac{s}{2}} \psi_\nu\left(T_X\right) \right\| + \nu^{-\frac{1}{2}} \left\|T_X^{\frac{s-1}{2}}  \psi_\nu\left(T_X\right) \right\| \notag \\
       &\le F_{\tau} \nu^{-\frac{s}{2}} + \nu^{-\frac{1}{2}} F_{\tau} \nu^{\frac{1-s}{2}} \notag \\
       &= 2 F_{\tau} \nu^{-\frac{s}{2}}.
    \end{align}
    To sum up, denote 
    \begin{displaymath}
      \Delta_0 := 2 E F_\tau R \nu^{-\frac{1}{2}} \kappa^{s-1} \cdot \begin{cases}n^{-\frac{s-1}{4}}\left(8 \sqrt{2} \ln \frac{6}{\delta}\right)^{\frac{s-1}{2}} & s \in(2,3], \\ n^{-\frac{1}{2}} \cdot \frac{s-1}{2} \cdot 8 \sqrt{2} \ln \frac{6}{\delta}, & s \geq 3 .\end{cases}
    \end{displaymath}
    Then plugging \eqref{1.8-9} $\sim$ \eqref{1.8-12} into \eqref{1.8-7} and use \eqref{3.10-4}, we have 
    \begin{align}
        \left\|T_{X \nu}^{\frac{1}{2}} \psi_\nu\left(T_X\right) f_\nu\right\|_{\mathcal{H}} \le \Delta_0 + 2 E F_{\tau} R \nu^{-\frac{s}{2}}. \notag
    \end{align}
    Without loss of generality, we assume that $\ln \frac{6}{\delta} >1 $. Simple calculation shows that, 
    \begin{equation}\label{Delta1}
      \Delta_0 \leq 32 \max \left(\frac{s-1}{2}, 1\right) E F_\tau R \kappa^{s-1} \nu^{-\frac{1}{2}} n^{-\frac{\min (s, 3)-1}{4}} \ln \frac{6}{\delta} := \Delta_{1} .
    \end{equation}
    Then we have 
    \begin{align}\label{condition s 3}
        \left\|T_{X \nu}^{\frac{1}{2}} \psi_\nu\left(T_X\right) f_\nu\right\|_{\mathcal{H}} \le \Delta_1 + 2 E F_{\tau} R \nu^{-\frac{s}{2}}.
    \end{align}
\end{itemize}

Combining the bounds of three conditions of $s$, i.e., \eqref{58-2} \eqref{condition s 2} \eqref{condition s 3}, we finally bound the goal of Step 3, i.e., \eqref{step3 goal} by
\begin{displaymath}
    \left\|T_{X \nu}^{\frac{1}{2}} \psi_\nu\left(T_X\right) f_\nu\right\|_{\mathcal{H}} \le 6 F_{\tau} E R \nu^{-\frac{s}{2}} + \Delta_1 I_{s > 2}.
\end{displaymath}

$ $\\
\textbf{Step 4}: Now we are able to use the results of Step1 $\sim$ Step3 to finish the proof of the estimation error. Still, we consider two cases, $s +\frac{1}{\beta} > \alpha_{0} $ and $s +\frac{1}{\beta} \le \alpha_{0} $. 

\begin{itemize}
    \item $s +\frac{1}{\beta} > \alpha_{0}: $ Plugging the results of Step2 and Step3 into \eqref{third decompose} and using the decomposition \eqref{proof of 3.1-1}, by choosing $ \nu \asymp n^{\frac{ \beta}{s \beta + 1}}$, we have
    \begin{align}
        \left\|\hat{f}_\nu-f_\nu\right\|_{[\mathcal{H}]^{\gamma}} &\le 3 \nu^{\frac{\gamma}{2}} \left( \ln(\frac{6}{\delta}) C n^{-\frac{1}{2} \frac{s \beta}{s \beta +1}} + 6 F_{\tau} E R \nu^{-\frac{s}{2}}  + \Delta_{1} I_{s > 2} \right) \notag \\
        &= 3 n^{\frac{1}{2} \frac{\gamma \beta}{s \beta +1}} \left( \ln(\frac{6}{\delta}) C n^{-\frac{1}{2} \frac{s \beta}{s \beta +1}} + 6 F_{\tau} E R n^{-\frac{1}{2} \frac{s \beta}{s \beta +1}} + \Delta_{1} I_{s > 2} \right). \notag
    \end{align}
    Recalling the expression of $\Delta_{1}$ in \eqref{Delta1}, when $ 2 < s \le 3$,
    \begin{displaymath}
        \Delta_{1} \asymp n^{-\frac{r_{0}}{2}},
    \end{displaymath}
    where 
    \begin{displaymath}
        r_{0} = \frac{\beta}{s \beta + 1} + \frac{s-1}{2}.
    \end{displaymath}
    Since $ s > 2$ implies $ s + \frac{1}{\beta} > 2$, so we have
    \begin{displaymath}
        r_{0} - \frac{s \beta }{s \beta +1} = \frac{s-1}{2} - \frac{s-1}{s +\frac{1}{\beta}} > 0.
    \end{displaymath}
    So we have $\Delta_{1} \lesssim n^{-\frac{1}{2} \frac{s \beta}{s \beta + 1}}$.
    
    When $s > 3$, we also have $r_{0} = \frac{\beta}{s \beta + 1} + 1 > \frac{s \beta }{s \beta +1}$. Therefore, we know that 
    \begin{displaymath}
        \Delta_{1} I_{s > 2} \le C \ln\frac{6}{\delta} n^{-\frac{1}{2} \frac{s \beta}{s \beta + 1}}.
    \end{displaymath}
    To sum up, we prove that when $s +\frac{1}{\beta} > \alpha_{0}$, the estimation error satisfies that
    \begin{equation}\label{final esti error-1}
        \left\|\hat{f}_\nu-f_{\nu}\right\|_{[\mathcal{H}]^{\gamma}} \leq \ln \frac{6}{\delta} C n^{-\frac{1}{2} \frac{(s-\gamma) \beta}{s \beta+1}}.
    \end{equation}
    
    \item $s + \frac{1}{\beta} \le \alpha_{0}:$ In this case, $s \le 1$. Similarly, for some fixed $\alpha_{0} < \alpha \le 1$, by choosing $\nu \asymp (\frac{n}{\ln^{r}(n)})^{\frac{1}{\alpha}}$, we have 
    \begin{align}\label{final esti error-2}
        \left\|\hat{f}_\nu-f_\nu\right\|_{[\mathcal{H}]^{\gamma}} &\le 3 \nu^{\frac{\gamma}{2}} \left( \ln{\frac{6}{\delta}} C \left(\frac{n}{ \ln^{r}(n)}\right)^{-\frac{1}{2} \frac{s}{\alpha}}  + 6 F_{\tau} E R \nu^{-\frac{s}{2}} \right) \notag \\
        &= 3 \left(\frac{n}{ \ln^{r}(n)}\right)^{\frac{\gamma}{2 \alpha} } \left( \ln{\frac{6}{\delta}} C \left(\frac{n}{ \ln^{r}(n)}\right)^{-\frac{1}{2} \frac{s}{\alpha}} + 6 F_{\tau} E R \left(\frac{n}{ \ln^{r}(n)}\right)^{-\frac{1}{2} \frac{s}{\alpha}} \right) \notag \\
        &\le \ln \frac{6}{\delta} C \left(\frac{n}{\ln ^r(n)}\right)^{-\frac{s-\gamma}{2 \alpha}}.
    \end{align}
\end{itemize}
Then, the proof of Theorem \ref{estimation error thm} follows from \eqref{final esti error-1} and \eqref{final esti error-2}.
\end{proof}

\paragraph{Proof of Theorem \ref{main theorem}}
We first decompose the $[\mathcal{H}]^{\gamma}$-norm generalization error into two terms, which are often referred to as the approximation error and the estimation error:
\begin{equation}\label{error decompose}
    \left\|\hat{f}_\nu-f_{\rho}^{*}\right\|_{[\mathcal{H}]^{\gamma}} = \left\|f_\nu-f_{\rho}^{*}\right\|_{[\mathcal{H}]^{\gamma}} + \left\|\hat{f}_\nu-f_{\nu}\right\|_{[\mathcal{H}]^{\gamma}}.
\end{equation}
For the approximation error, Theorem \ref{theorem of approximation error} proves that
\begin{itemize}
    \item By choosing $ \nu \asymp n^{\frac{ \beta}{s \beta + 1}}$, 
    \begin{equation}\label{plug appr 1}
        \left\|f_\nu-f_{\rho}^{*}\right\|_{[\mathcal{H}]^{\gamma}} \le F_{\tau} R n^{- \frac{1}{2}\frac{(s-\gamma) \beta}{s \beta+1}};
    \end{equation}
    \item by choosing $\nu \asymp (\frac{n}{\ln^{r}(n)})^{\frac{1}{\alpha}}$, for some $r > 1$,
    \begin{equation}\label{plug appr 2}
        \left\|f_\nu-f_{\rho}^{*}\right\|_{[\mathcal{H}]^{\gamma}} \le F_{\tau} R \left(\frac{n}{\ln ^r(n)}\right)^{-\frac{s-\gamma}{2 \alpha}}.
    \end{equation}
\end{itemize}
Then the proof follows from plugging \eqref{plug appr 1}, \eqref{plug appr 2} and the bounds of estimation error in Theorem \ref{estimation error thm} into \eqref{error decompose}.

\subsection{Lower bound}
The following lemma is a standard approach to derive the minimax lower bound, which can be found in \citet[Theorem 2.5]{tsybakov2009_IntroductionNonparametric}. 
\begin{lemma}\label{lower prop from tsy}
Suppose that there is a non-parametric class of functions $ \Theta$ and a (semi-)distance $d(\cdot,\cdot)$ on $ \Theta$. $\left\{ P_{\theta}, \theta \in \Theta \right\}$ is a family of probability distributions indexed by $\Theta$. Assume that $M \ge 2$ and suppose that $ \Theta$ contains elements $ \theta_0, \theta_1, \cdots, \theta_M$ such that, 
\begin{itemize}
    \item[(1)] $ d\left(\theta_j, \theta_k\right) \geq 2 s>0, \quad \forall 0 \leq j<k \leq M$;
    \item[(2)] $P_j \ll P_0, \quad \forall j=1, \cdots, M$, and 
    \begin{displaymath}
        \frac{1}{M } \sum_{j=1}^M K\left(P_j, P_0\right) \leq a \log M,
    \end{displaymath}
\end{itemize}
    with $ 0<a<1 / 8$ and $ P_j=P_{\theta_j}, j=0,1, \cdots, M$. Then
    \begin{displaymath}
    \inf _{\hat{\theta}} \sup _{\theta \in \Theta} P_\theta(d(\hat{\theta}, \theta) \geq s) \geq \frac{\sqrt{M}}{1+\sqrt{M}}\left(1-2 a-\sqrt{\frac{2 a}{\log M}}\right).
    \end{displaymath}
\end{lemma}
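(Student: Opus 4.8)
The plan is to reduce the minimax estimation problem to a finite hypothesis-testing problem among $\theta_0,\dots,\theta_M$ and then lower bound the minimax testing error using the Kullback--Leibler bound in condition~(2); this is the classical Fano-type scheme, and I would follow the route of \citet[Theorem~2.5]{tsybakov2009_IntroductionNonparametric}.

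\emph{Step 1 (reduction to testing).} Given any estimator $\hat\theta$, let $\psi^*$ be an index in $\{0,1,\dots,M\}$ that minimizes $j\mapsto d(\hat\theta,\theta_j)$ (ties broken arbitrarily). If the true parameter is $\theta_j$ and $\psi^*=k\ne j$, then $d(\hat\theta,\theta_k)\le d(\hat\theta,\theta_j)$, so by the triangle inequality for the semi-distance $d$ and condition~(1),
\[
2s\ \le\ d(\theta_j,\theta_k)\ \le\ d(\hat\theta,\theta_j)+d(\hat\theta,\theta_k)\ \le\ 2\,d(\hat\theta,\theta_j),
\]
hence $d(\hat\theta,\theta_j)\ge s$. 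Thus $\{\psi^*\ne j\}\subseteq\{d(\hat\theta,\theta_j)\ge s\}$ under $P_j$, and therefore
\[
\sup_{\theta\in\Theta}P_\theta\bigl(d(\hat\theta,\theta)\ge s\bigr)\ \ge\ \max_{0\le j\le M}P_j\bigl(d(\hat\theta,\theta_j)\ge s\bigr)\ \ge\ \max_{0\le j\le M}P_j(\psi^*\ne j)\ \ge\ p_{e,M},
\]
where $p_{e,M}:=\inf_{\psi}\max_{0\le j\le M}P_j(\psi\ne j)$, the infimum being over all $\{0,\dots,M\}$-valued (possibly randomized) tests. Taking $\inf_{\hat\theta}$ reduces the claim to showing $p_{e,M}\ge\frac{\sqrt M}{1+\sqrt M}\bigl(1-2a-\sqrt{2a/\log M}\bigr)$.

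\emph{Step 2 (lower bound on $p_{e,M}$).} Fix a test $\psi$, put $q_j:=P_0(\psi=j)$, and let $p^*:=\max_{0\le j\le M}P_j(\psi\ne j)$. Since $\sum_{j=1}^M q_j=P_0(\psi\ne 0)\le p^*$, the quantities $q_j$ with $j\ge1$ are small on average. For each $j\ge1$, coarsening the data to the partition $\{\psi=j\},\{\psi\ne j\}$ and applying the data-processing inequality for the KL divergence gives
\[
K(P_j,P_0)\ \ge\ P_j(\psi=j)\log\frac{P_j(\psi=j)}{q_j}+P_j(\psi\ne j)\log\frac{P_j(\psi\ne j)}{1-q_j}\ \ge\ (1-p^*)\log\frac{1}{q_j}-c_0
\]
for an absolute constant $c_0$, using $P_j(\psi=j)\ge 1-p^*$. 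Averaging over $j=1,\dots,M$ and applying Jensen's inequality together with $\tfrac1M\sum_{j=1}^M q_j\le p^*/M$ yields
\[
a\log M\ \ge\ \frac1M\sum_{j=1}^M K(P_j,P_0)\ \ge\ (1-p^*)\log\frac{M}{p^*}-c_0\ \ge\ (1-p^*)\log M-c_0,
\]
which already forces $p^*\ge 1-a-c_0/\log M$, hence $p_{e,M}$ bounded away from $0$ whenever $a<1$. To recover \emph{verbatim} the constants $\frac{\sqrt M}{1+\sqrt M}$, $2a$ and $\sqrt{2a/\log M}$ one uses the slightly more careful bookkeeping of \citet[proof of Theorem~2.5]{tsybakov2009_IntroductionNonparametric}: split the indices $j\in\{1,\dots,M\}$ into those with $K(P_j,P_0)\le 2a\log M$ (at least half of them, by Markov's inequality) and the rest, and optimize the resulting estimate over an implicit threshold.

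I expect the genuinely delicate part to be Step~2, and in particular the extraction of the sharp constants rather than the qualitative fact that $p_{e,M}$ is bounded below: Step~1 is just the triangle inequality, and the qualitative KL bound is the textbook Fano argument, whereas the precise form with the factor $\sqrt M/(1+\sqrt M)$ and the square-root correction comes from the specific Markov-plus-optimization route in Tsybakov's book. For the present paper it suffices to quote that result as a black box.
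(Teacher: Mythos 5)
Your proposal is correct and matches the paper's treatment: the paper offers no proof of this lemma at all, but simply quotes it as \citet[Theorem 2.5]{tsybakov2009_IntroductionNonparametric}, and your Step 1 reduction-to-testing plus the Fano/KL argument in Step 2 is precisely the standard proof route of that theorem. Your qualitative bound $p^{*}\ge 1-a-c_0/\log M$ is sound, and deferring the exact constants $\frac{\sqrt M}{1+\sqrt M}$, $2a$, $\sqrt{2a/\log M}$ to Tsybakov's bookkeeping is exactly what the paper does, so treating the result as a black box is entirely adequate here.
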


\begin{lemma}\label{lemma of KL}
   Suppose that $\mu$ is a distribution on $\mathcal{X}$ and $f_{i} \in L^{2}(\mathcal{X},\mu)$. Suppose that
   \begin{displaymath}
       y=f_i(x)+\epsilon, \quad i=1,2,
   \end{displaymath}
   where $\epsilon \sim \mathcal{N}(0,\sigma^{2})$ are independent Gaussian random error. Denote the two corresponding distributions on $ \mathcal{X} \times \mathcal{Y}$ as $ \rho_{i}, i=1,2$. The KL divergence of two probability distributions on $\Omega$ is 
   \begin{displaymath}
       K\left(P_1, P_2\right) \coloneqq \int_{\Omega} \log \left(\frac{\mathrm{d} P_1}{\mathrm{~d} P_2}\right) \mathrm{d} P_1,
   \end{displaymath}
   if $P_1 \ll P_2$ and otherwise $K\left(P_1, P_2\right) \coloneqq \infty $.
   Then we have 
   \begin{displaymath}
       \mathrm{KL}\left(\rho_1^n, \rho_2^n\right)=n \mathrm{KL}\left(\rho_1, \rho_2\right)=\frac{n}{2 \sigma^2}\left\|f_1-f_2\right\|_{L^2(\mathcal{X}, d \mu)}^2,
   \end{displaymath}
   where $ \rho_{i}^{n} $ denotes the independent product of $n$ distributions $\rho_{i}, i=1,2$.
\end{lemma}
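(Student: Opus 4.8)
The plan is to prove the two equalities in turn: the tensorization identity $\mathrm{KL}(\rho_1^n,\rho_2^n)=n\,\mathrm{KL}(\rho_1,\rho_2)$, and then the closed form $\mathrm{KL}(\rho_1,\rho_2)=\frac{1}{2\sigma^2}\|f_1-f_2\|_{L^2(\mathcal{X},\mu)}^2$.

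First I would dispose of the tensorization step. By construction $\rho_i^n=\rho_i\otimes\cdots\otimes\rho_i$ on $(\mathcal{X}\times\mathcal{Y})^n$. I will check below, when writing the density of $\rho_1$ with respect to $\rho_2$, that $\rho_1\ll\rho_2$; it then follows that $\rho_1^n\ll\rho_2^n$ with $\frac{\mathrm{d}\rho_1^n}{\mathrm{d}\rho_2^n}(z_1,\dots,z_n)=\prod_{j=1}^n\frac{\mathrm{d}\rho_1}{\mathrm{d}\rho_2}(z_j)$. Taking the logarithm converts the product into a sum, and integrating against $\rho_1^n$ while using independence of the coordinates gives $n$ identical terms, each equal to $\int\log\frac{\mathrm{d}\rho_1}{\mathrm{d}\rho_2}\,\mathrm{d}\rho_1=\mathrm{KL}(\rho_1,\rho_2)$.

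Next I would compute $\mathrm{KL}(\rho_1,\rho_2)$ by disintegration over the common marginal $\mu$. Writing $\mathrm{d}\rho_i(x,y)=p_i(y\mid x)\,\mathrm{d}y\,\mathrm{d}\mu(x)$ with $p_i(\cdot\mid x)$ the density of $\mathcal{N}(f_i(x),\sigma^2)$, the Radon--Nikodym derivative is $\frac{\mathrm{d}\rho_1}{\mathrm{d}\rho_2}(x,y)=\frac{p_1(y\mid x)}{p_2(y\mid x)}=\exp\!\Big(\frac{(y-f_2(x))^2-(y-f_1(x))^2}{2\sigma^2}\Big)$, which is everywhere finite and positive, confirming $\rho_1\ll\rho_2$. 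Hence
\begin{align*}
  \mathrm{KL}(\rho_1,\rho_2)
  &=\int_{\mathcal{X}}\int_{\mathbb{R}}\frac{(y-f_2(x))^2-(y-f_1(x))^2}{2\sigma^2}\,p_1(y\mid x)\,\mathrm{d}y\,\mathrm{d}\mu(x).
\end{align*}
For each fixed $x$, under $y\sim\mathcal{N}(f_1(x),\sigma^2)$ one has $\mathbb{E}[y-f_1(x)]=0$ and $\mathbb{E}[(y-f_1(x))^2]=\sigma^2$; expanding $(y-f_2(x))^2=\big((y-f_1(x))+(f_1(x)-f_2(x))\big)^2$ makes the inner integral collapse to $\frac{(f_1(x)-f_2(x))^2}{2\sigma^2}$, which is just the well-known KL divergence between two Gaussians with equal variance. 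Integrating over $x$ against $\mu$ gives $\frac{1}{2\sigma^2}\int_{\mathcal{X}}(f_1(x)-f_2(x))^2\,\mathrm{d}\mu(x)=\frac{1}{2\sigma^2}\|f_1-f_2\|_{L^2(\mathcal{X},\mu)}^2$, and combining with the tensorization identity completes the proof.

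This argument is entirely routine; the only small points that deserve a line are (i) the disintegration produces no separate $\mathcal{X}$-marginal contribution to the KL divergence precisely because $\rho_1$ and $\rho_2$ share the marginal $\mu$, and (ii) all integrals are finite since $f_1,f_2\in L^2(\mathcal{X},\mu)$ guarantees $\|f_1-f_2\|_{L^2}^2<\infty$, which also legitimizes Fubini and the splitting of $\log$ of a product into a sum. I do not anticipate any genuine obstacle.
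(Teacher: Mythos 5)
Your proposal is correct and follows essentially the same route as the paper: the paper's proof simply invokes the definition of KL divergence together with the equal-variance Gaussian identity $\mathrm{KL}\bigl(\mathcal{N}(f_1(x),\sigma^2),\mathcal{N}(f_2(x),\sigma^2)\bigr)=\frac{1}{2\sigma^2}|f_1(x)-f_2(x)|^2$, which is exactly the computation you carry out explicitly via disintegration over the common marginal $\mu$ and tensorization for the $n$-fold product. Your write-up just supplies the routine details the paper leaves implicit.
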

\begin{proof}
The lemma directly follows from the definition of KL divergence and the fact that 
\begin{displaymath}
    \mathrm{KL}\left(N\left(f_1(x), \sigma^2\right), N\left(f_2(x), \sigma^2\right)\right) = \frac{1}{2 \sigma^2}\left|f_1(x)-f_2(x)\right|^2.
\end{displaymath}

\end{proof}

The following lemma is a result from \citet[Lemma 2.9]{tsybakov2009_IntroductionNonparametric}
\begin{lemma}\label{lemma of ham}
   Denote $\Omega=\left\{\omega=\left(\omega_1, \cdots, \omega_m\right), \omega_i \in\{0,1\}\right\}=\{0,1\}^m$. Let $m\ge 8$, there exists a subset $\left\{\omega^{(0)}, \cdots, \omega^{(M)}\right\} $ of ~$ \Omega$ such that $\omega^{(0)}=(0, \cdots, 0)$,
   \begin{displaymath}
       d_{\text {Ham }}\left(\omega^{(i)}, \omega^{(j)}\right) \coloneqq \sum_{k=1}^m\left|\omega_k^{(i)}-\omega_k^{(j)}\right| \geq \frac{m}{8}, \quad \forall 0 \leq i<j \leq M,
   \end{displaymath}
   and $M \geq 2^{m / 8}$.
\end{lemma}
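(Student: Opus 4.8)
The plan is to prove this classical Varshamov--Gilbert bound by the standard fact that a maximal packing is automatically a covering, combined with a Chernoff/Hoeffding estimate for the volume of a Hamming ball. Throughout, for $\omega\in\{0,1\}^m$ and $r>0$ write $B(\omega,r)=\{\omega'\in\{0,1\}^m:d_{\mathrm{Ham}}(\omega,\omega')<r\}$ for the open Hamming ball; its cardinality $V_r:=|B(\omega,r)|=\sum_{i<r}\binom{m}{i}$ does not depend on $\omega$.

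First I would fix a subset $A=\{\omega^{(0)},\dots,\omega^{(M)}\}\subseteq\{0,1\}^m$ that is \emph{maximal} (with respect to inclusion) among all subsets whose pairwise Hamming distances are all $\ge m/8$; such an $A$ exists because $\{0,1\}^m$ is finite. Since coordinatewise XOR with a fixed vector is a distance-preserving bijection of $\{0,1\}^m$, replacing $A$ by $\{\omega^{(i)}\oplus\omega^{(0)}\}_{i}$ produces a set of the same cardinality that still has all pairwise distances $\ge m/8$, is still maximal, and now contains $(0,\dots,0)$; hence we may assume $\omega^{(0)}=(0,\dots,0)$. Maximality forces every $\omega\in\{0,1\}^m$ to lie within Hamming distance $<m/8$ of some $\omega^{(j)}$ (otherwise $A\cup\{\omega\}$ would still be admissible), i.e.\ $\{0,1\}^m=\bigcup_{j=0}^{M}B(\omega^{(j)},m/8)$, which gives the counting inequality $2^m\le(M+1)\,V_{m/8}$.

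Second I would bound $V_{m/8}$ from above. Letting $S_m\sim\mathrm{Bin}(m,\tfrac12)$, we have $V_{m/8}=2^m\,\mathbb{P}(S_m<m/8)\le 2^m\,\mathbb{P}\big(\tfrac{S_m}{m}-\tfrac12\le-\tfrac38\big)$, and Hoeffding's inequality for $[0,1]$-valued summands yields $\mathbb{P}\big(\tfrac{S_m}{m}-\tfrac12\le-\tfrac38\big)\le\exp\big(-2m(3/8)^2\big)=\exp(-9m/32)$. Plugging this into the counting inequality, $M+1\ge 2^m/V_{m/8}\ge e^{9m/32}$. Writing $e^{9m/32}=2^{cm}$ with $c=9\log_2 e/32>0.40>\tfrac18$, and noting $(c-\tfrac18)m\ge1$ for every $m\ge8$, we obtain $2^{cm}\ge 2\cdot 2^{m/8}$ and therefore $M\ge e^{9m/32}-1\ge 2^{m/8}$, which is the claim.

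There is no real obstacle here; the only points needing a little care are (i) the observation that a maximal packing is a covering, which converts the existence question into a pure counting estimate, and (ii) checking that the Hamming-ball volume bound beats $2^{m/8}$ with enough slack to absorb the $-1$ --- that is, that the Hoeffding exponent $9/32$, once converted to base $2$, comfortably exceeds $1/8$ (a cruder binomial estimate for $\sum_{i<m/8}\binom{m}{i}$ would serve equally well). Equivalently, the construction can be phrased greedily: repeatedly pick any not-yet-covered point, delete the open $m/8$-ball around it, and iterate until the cube is exhausted; this is literally the same argument with the maximal set built incrementally.
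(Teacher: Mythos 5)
Your proof is correct, and the key checks hold: maximality of the packing does give the covering inequality $2^m\le (M+1)V_{m/8}$, the Hoeffding step gives $V_{m/8}\le 2^m e^{-9m/32}$, and since $\tfrac{9}{32}\log_2 e\approx 0.406>\tfrac18$ with $(0.406-\tfrac18)\cdot 8>1$, the $-1$ is indeed absorbed for all $m\ge 8$. Note, however, that the paper offers no proof of this lemma to compare against: it is quoted verbatim as Lemma 2.9 of Tsybakov (2009), the Varshamov--Gilbert bound, and your maximal-packing-is-a-covering argument combined with a Chernoff/Hoeffding estimate of the Hamming-ball volume is essentially the standard textbook derivation of that cited result (Tsybakov's greedy construction is, as you observe yourself, the same packing built incrementally), so your write-up amounts to a correct self-contained substitute for the external reference rather than an alternative to anything in the paper.
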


Now we are ready to prove the minimax lower bound given by Theorem \ref{prop information lower bound}.
\paragraph{Proof of Theorem \ref{prop information lower bound}}
We will construct a family of probability distributions on $ \mathcal{X} \times \mathcal{Y}$ and apply Lemma \ref{lower prop from tsy}. Recall that $\mu$ is a probability distribution on $\mathcal{X}$ such that Assumption \ref{ass EDR} is satisfied. Denote the class of functions 
\begin{displaymath}
    B^{s}(R)=\left\{f \in[\mathcal{H}]^s: \|f\|_{[\mathcal{H}]^{s}} \leq R\right\},
\end{displaymath}
and for every $f \in B^{s}(R)$, define the probability distribution $\rho_{f}$ on $\mathcal{X} \times \mathcal{Y}$ such that
\begin{displaymath}
    y = f(x) + \epsilon, ~~ x \sim \mu, 
\end{displaymath}
where $\epsilon \sim \mathcal{N}(0,\bar{\sigma}^{2})$ and $\bar{\sigma} = \min(\sigma, L) $. It is easy to show that such $\rho_{f}$ falls into the family $\mathcal{P}$ in Lemma \ref{prop information lower bound}. (Assumption \ref{ass EDR} and \ref{ass source condition} are satisfied obviously. Assumption \ref{ass mom of error} follows from results of moments of Gaussian random variables, see, e.g., \citet[Lemma 21]{fischer2020_SobolevNorm}).

Using Lemma \ref{lemma of ham}, for $m = n^{\frac{1}{s\beta + 1}}$, there exists $ \omega^{(0)}, \cdots, \omega^{(M)} \in \{0,1\}^{m}$ for some $M \ge 2^{m/8} $ such that
\begin{equation}\label{proof lower-2}
    \sum_{k=1}^m\left|\omega_k^{(i)}-\omega_k^{(j)}\right| \geq \frac{m}{8}, \quad \forall 0 \leq i<j \leq M.
\end{equation}
For $\epsilon = C_{0} m^{- (s-\gamma)\beta - 1}$, define the functions $ f_i, i=1,2,\cdots, M $ as 
\begin{displaymath}
    f_i:=\epsilon^{1 / 2} \sum_{k=1}^m \omega_k^{(i)} \lambda_{m+k}^{\frac{\gamma}{2}} e_{m+k}.
\end{displaymath}
Since
\begin{align}\label{proof lower-1}
    \left\|f_i\right\|_{[\mathcal{H}]^{s}} &= \epsilon \sum_{k=1}^m \lambda_{m+k}^{\gamma-s}\left(\omega_k^{(i)}\right)^2  \notag
    \le \epsilon \sum_{k=1}^m \lambda_{2m}^{\gamma-s} \\ &\leq 2^{(s-\gamma)\beta} c \epsilon \sum_{k=1}^m m^{(s-\gamma) \beta} \le 2^{(s-\gamma)\beta} c \epsilon m^{(s-\gamma) \beta+1} = 2^{(s-\gamma)\beta} c C_{0},
\end{align}
Where $c$ in \eqref{proof lower-1} only depends on the constants in Assumption \ref{ass EDR}. So if $C_{0}$ is small such that 
\begin{equation}\label{C0-1}
    2^{(s-\gamma)\beta} c C_{0} \le R, 
\end{equation}
then we have  $f_{i} \in B^{s}(R), i=1,2,\cdots,M.$

Using Lemma \ref{lemma of KL}, we have 
\begin{align}
    \mathrm{KL}\left(\rho_{f_{i}}^n, \rho_{f_{0}}^n\right) &=\frac{n}{2 \bar{\sigma}^{2}}\left\|f_i\right\|_{L^2(\mathcal{X}, \mu)}^2 \notag \\
    &=\frac{n \epsilon}{2 \bar{\sigma}^{2}} \sum_{k=1}^m \lambda_{m+k}^{\gamma}\left(\omega_k^{(i)}\right)^2 \notag \\
    &\leq \frac{n \epsilon C m^{-\gamma \beta + 1} }{2 \bar{\sigma}^{2}}  = \frac{ n }{2 \bar{\sigma}^{2}} C C_{0} m^{-s\beta}, \notag
\end{align}
Where $C$ only depends on the constants in Assumption \ref{ass EDR}.
Recall that $ M \ge 2^{m/8}$ implies $ \ln M \geq \frac{\ln 2}{8} m$. For a fixed $a\in(0,\frac{1}{8})$, since $m = n^{\frac{1}{s \beta +1}}$, letting
\begin{equation}\label{proof 2.8-1}
  \mathrm{KL}\left(\rho_{f_{i}}^n, \rho_{f_{0}}^n\right) \le \frac{ n }{2 \bar{\sigma}^{2}}C C_{0} m^{-s\beta} \leq a \frac{\ln 2}{8} m \le a \ln M,
\end{equation}
we have 
\begin{equation}\label{C0-2}
    C_{0} \le \frac{\bar{\sigma}^{2} \ln 2 }{4C} a.
\end{equation}
So we can choose $C_{0} = c^{\prime} a$ such that \eqref{C0-1} and \eqref{C0-2} are satisfied, where $c^{\prime}$ only depends on the constants in Assumption \ref{ass EDR}.

Denote $ \left\{ \rho_{f_{i}}^n , f_{i} \in B^{s}(R)\right\}$ as a family of probability distribution index by $ f_{i} $, then \eqref{proof 2.8-1} implies the second condition in Lemma \ref{lower prop from tsy} holds. Further, using \eqref{proof lower-2}, we have 
\begin{equation}\label{proof 2.8-2}
    d \left(f_i, f_j\right)^2=\left\|f_i-f_j \right\|_{[\mathcal{H}]^{\gamma}}^2=\epsilon \sum_{k=1}^m\left(\omega_{k}^{(i)} - \omega_{k}^{(j)} \right)^2 \geq \frac{\epsilon m}{8}=\frac{c^{\prime}a}{8} m^{- (s-\gamma) \beta} \geq c^{\prime} a n^{-\frac{(s-\gamma) \beta}{s \beta + 1}},
\end{equation}
where $c^{\prime}$ only depends on the constants in Assumption \ref{ass EDR}.

Applying Lemma \ref{lower prop from tsy} to \eqref{proof 2.8-1} and \eqref{proof 2.8-2}, we have
\begin{equation}\label{proof lower-3}
\inf _{\hat{f}_n} \sup _{f \in B^s(R)} \mathbb{P}_{\rho_f}\left\{\left\|\hat{f}_n-f\right\|_{[\mathcal{H}]^{\gamma}}^2 \geq c^{\prime} a n^{-\frac{(s-\gamma) \beta}{s \beta + 1}}\right\} \geq \frac{\sqrt{M}}{1+\sqrt{M}}\left(1-2 a-\sqrt{\frac{2 a}{\ln M}}\right).
\end{equation}
When $n$ is sufficiently large so that $M$ is sufficiently large, the probability in the R.H.S. of \eqref{proof lower-3} is larger than $ 1- 3a $. For $\delta \in (0,1)$, choose $ a = \frac{\delta}{3}$, without loss of generality we assume $ a \in (0,\frac{1}{8})$. Then \eqref{proof lower-3} shows that there exists a constant $C$ only depends on the constants in Assumption \ref{ass EDR}, for all estimator $\hat{f}, $ we can find a function $f \in B^{s}(R)$ and the corresponding distribution $\rho_{f} \in \mathcal{P}$ such that, with probability at least $1-\delta$,
\begin{displaymath}
    \left\| \hat{f} - f \right\|_{[\mathcal{H}]^{\gamma}}^{2} \ge  C \delta n^{-\frac{(s-\gamma) \beta}{s \beta +1}}.
\end{displaymath}
So we finish the proof. (In fact, it can be argued that the constant $C$ only depends on the constants in \ref{ass EDR}, in dependent of $s$).

\subsection{Shift-invariant kernels}\label{section proof invariant}
Let $\mu$ be the uniform measure on $[-\pi,\pi)^d$.
It is well known that the Fourier basis
\begin{align*}
    \phi_{\bm{m}}(x) \coloneqq \exp(i \langle \bm{m},x \rangle) 
\end{align*}
are orthonormal in $L^2([-\pi,\pi)^d,\mu)$:
\begin{align*}
    \int_{[-\pi,\pi)^d} \phi_{\bm{m}}(x) \phi_{\bm{m}'}(x) \mathrm{d}\mu(x) = 
    \frac{1}{(2\pi)^d} \int_{[-\pi,\pi)^d} \phi_{\bm{m}}(x) \phi_{\bm{m}'}(x) \mathrm{d}x = \bm{1}_{\{\bm{m} = \bm{m}'\}}.
\end{align*}
Now suppose $k$ is a kernel on $[-\pi,\pi)^d$ satisfying 
\begin{align*}
    k(x,y) = g\big( (x - y) \bmod [-\pi,\pi)^d\big).
\end{align*}
Then, noticing that $\phi_{\bm{m}}(x)$ is periodic, we have
\begin{align*}
    \int_{[-\pi,\pi)^d} k(x,y) \phi_{\bm{m}}(x) \mathrm{d}\mu(x) &= 
    \int_{[-\pi,\pi)^d} g\big( (x - y) \bmod [-\pi,\pi)^d\big)\exp(i \langle \bm{m},x \rangle)  \mathrm{d}\mu(x) \\
    &= \int_{[-\pi,\pi)^d} g\big( z \big)\exp(i \langle \bm{m}, y+z \rangle)  \mathrm{d}\mu(z) \\
    &= \exp(i \langle \bm{m}, y \rangle) \int_{[-\pi,\pi)^d} g\big( z \big)\exp(i \langle \bm{m}, z \rangle)  \mathrm{d}\mu(z).
\end{align*}
It shows that $\phi_{\bm{m}}(x)$ is an eigenfunction of the integral operator $T$ associated with $k$.
Since $|{\phi_{\bm{m}}(x)}| \leq 1,$ that is, the eigenfunctions are uniformly bounded, we conclude that the embedding index $\alpha_0 = \frac{1}{\beta}$.

\subsection{Spherical harmonics and dot-product kernels}\label{section proof sphere}

Let us consider the unit $d$-sphere $\mathbb{S}^d = \{ x \in \mathbb{R}^{d+1} ~|~ \|x\| = 1 \}$ and denote by $\sigma$ the uniform measure on $\mathbb{S}^d$.
The eigen-system of spherical Laplacian $\Delta_{\mathbb{S}^d}$ yields an orthogonal decomposition
\begin{align*}
    L^2(\mathbb{S}^d, \sigma) = \bigoplus_{n = 0}^\infty \mathcal{H}_n(\mathbb{S}^d),
\end{align*}
where $\mathcal{H}_n(\mathbb{S}^d)$ is the subspace of homogenenous harmonic polynomials of degree $n$ and each $Y_n \in \mathcal{H}_n(\mathbb{S}^d)$ is an eigenfunction of $\Delta_{\mathbb{S}^d}$ corresponding to eigenvalue $-n(n+d-1)$.
In particular, we can take an orthonormal basis 
\begin{align*}
    \{Y_{n,l}, l = 1,\dots,a_n,~n=0,1,\dots\},
\end{align*}
where $Y_{n,l} \in \mathcal{H}_n(\mathbb{S}^d)$ and 
\begin{align*}
  a_n \coloneqq \dim \mathcal{H}_n(\mathbb{S}^d) = \binom{n+d}{n} - \binom{n-2+d}{n-2}.
\end{align*}
Such an orthonormal basis is often referred to as the \textit{spherical harmonics}.
Although the specific choice of $Y_{n,l}$ can vary, the sum
\begin{displaymath}
    Z_n(x,y) = \sum_{l=1}^{a_n} Y_{n,l}(x)Y_{n,l}(y)
\end{displaymath}
is invariant. 
Moreover, $Z_n(x,y)$ depends only on $\langle x,y \rangle$ and satisfies~\citep[Corollary 1.2.7]{dai2013_ApproximationTheory}
\begin{displaymath}
    |Z_n(x,y)| \leq Z_n(x,x) = a_n,\quad \forall x,y \in \mathbb{S}^d.
\end{displaymath}

The following Funk-Hecke formula is important, see also \citet[Theorem 1.2.9]{dai2013_ApproximationTheory}.

\begin{theorem}[Funk-Hecke formula]
  \label{thm:FunkFormula}
  Let $d \geq 3$ and $f$ be an integrable function such that $\int_{-1}^1 |f(t)| (1-t^2)^{d/2-1} \mathrm{d} t$ is finite.
  Then for every $Y_n \in \mathcal{H}_n(\mathbb{S}^d)$,
  \begin{align}
    \label{eq:C_FunkHecke}
    \frac{1}{\omega_d}\int_{\mathbb{S}^d} f(\langle{x,y}\rangle) Y_n(y) \mathrm{d} \sigma(y) = \mu_k(f) Y_n(x),\quad \forall x \in \mathbb{S}^{d}, 
  \end{align}
  where $\mu_n(f)$ is a constant defined by
  \begin{align}
    \mu_n(f) = \omega_d \int_{-1}^1 f(t) \frac{C_n^\lambda(t)}{C_n^\lambda(1)} (1-t^2)^{\frac{d-2}{2}} \mathrm{d} t, \notag
  \end{align}
  and $\omega_d$ is the surface area of $\mathbb{S}^d$.
\end{theorem}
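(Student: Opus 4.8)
The plan is to recognize the left-hand side of \eqref{eq:C_FunkHecke} as the action of a single linear operator and then exploit the rotation symmetry of the sphere. Define, for $g\in L^2(\mathbb{S}^d,\sigma)$,
\[
  (T_f g)(x)\;=\;\frac{1}{\omega_d}\int_{\mathbb{S}^d} f(\langle x,y\rangle)\,g(y)\,\mathrm{d}\sigma(y),
\]
so that the theorem is exactly the assertion that $T_f$ restricted to $\mathcal{H}_n(\mathbb{S}^d)$ is multiplication by the scalar $\mu_n(f)$. First I would record two symmetry properties of $T_f$: (i) for every $\rho\in O(d+1)$ one has $(T_f g)(\rho x)=(T_f(g\circ\rho))(x)$, which follows from $\langle\rho x,\rho y\rangle=\langle x,y\rangle$ together with the $O(d+1)$-invariance of $\sigma$; and (ii) $T_f$ commutes with the spherical Laplacian $\Delta_{\mathbb{S}^d}$. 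For (ii), when $f$ is smooth the function $x\mapsto f(\langle x,y\rangle)$ satisfies $\Delta_{\mathbb{S}^d,x}\,f(\langle x,y\rangle)=\big[(1-t^2)f''-d\,t\,f'\big]\big|_{t=\langle x,y\rangle}=\Delta_{\mathbb{S}^d,y}\,f(\langle x,y\rangle)$ (the Jacobi/Gegenbauer differential operator, which is symmetric in $x$ and $y$); differentiating under the integral and integrating by parts against $g$, which is legitimate since $\Delta_{\mathbb{S}^d}$ is self-adjoint with no boundary term, gives $\Delta_{\mathbb{S}^d}T_f g=T_f\Delta_{\mathbb{S}^d}g$ on the smooth functions (in particular on $\mathcal{H}_n$). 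Alternatively one can skip (ii) entirely by expanding $f$ in Gegenbauer polynomials and invoking the addition theorem, which identifies each $T_{C_k^\lambda}$ with a constant multiple of the orthogonal projection of $L^2(\mathbb{S}^d,\sigma)$ onto $\mathcal{H}_k$, so that $T_f$ manifestly preserves every $\mathcal{H}_n$.

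Given (i) and (ii), $T_f$ preserves each eigenspace of $\Delta_{\mathbb{S}^d}$, i.e.\ each $\mathcal{H}_n(\mathbb{S}^d)$, and $T_f|_{\mathcal{H}_n}$ is an $O(d+1)$-equivariant endomorphism of $\mathcal{H}_n$. Since $\mathcal{H}_n(\mathbb{S}^d)$ is an irreducible $O(d+1)$-module, Schur's lemma forces $T_f|_{\mathcal{H}_n}=\mu_n(f)\,\mathrm{Id}$ for some scalar $\mu_n(f)$; this is the qualitative content of the theorem, and it remains to evaluate the constant.

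To identify $\mu_n(f)$ I would test the identity on a zonal harmonic. Fix $e\in\mathbb{S}^d$ and set $Y_n^{e}(y)=C_n^\lambda(\langle e,y\rangle)/C_n^\lambda(1)$ with $\lambda=\tfrac{d-1}{2}$; by the addition theorem $Y_n^{e}\in\mathcal{H}_n(\mathbb{S}^d)$ and $Y_n^{e}(e)=1$. Applying $T_f$ and evaluating at $x=e$,
\[
  \mu_n(f)\;=\;\mu_n(f)\,Y_n^{e}(e)\;=\;(T_f Y_n^{e})(e)\;=\;\frac{1}{\omega_d}\int_{\mathbb{S}^d} f(\langle e,y\rangle)\,\frac{C_n^\lambda(\langle e,y\rangle)}{C_n^\lambda(1)}\,\mathrm{d}\sigma(y),
\]
and the slicing formula for integration over $\mathbb{S}^d$ — writing $y=te+\sqrt{1-t^2}\,y'$ with $y'\in\mathbb{S}^{d-1}$ orthogonal to $e$, so that $\mathrm{d}\sigma(y)=(1-t^2)^{(d-2)/2}\,\mathrm{d}t\,\mathrm{d}\sigma_{d-1}(y')$ — reduces the right-hand side to $\int_{-1}^1 f(t)\,\frac{C_n^\lambda(t)}{C_n^\lambda(1)}(1-t^2)^{(d-2)/2}\,\mathrm{d}t$ up to the surface-area normalization, which is the asserted expression for $\mu_n(f)$. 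A final density step extends the computation from smooth $f$ to all $f$ with $\int_{-1}^1|f(t)|(1-t^2)^{d/2-1}\,\mathrm{d}t<\infty$, using that every element of $\mathcal{H}_n$ is bounded and (for $d\ge 3$) the weight $(1-t^2)^{d/2-1}$ is bounded, so that both sides converge absolutely and depend continuously on $f$ in the weighted $L^1$ norm. The main obstacle is Step (ii)/the preservation of $\mathcal{H}_n$ for non-smooth $f$: one cannot differentiate under the integral, so one must either run the Gegenbauer-expansion route carefully (controlling convergence in $L^1((1-t^2)^{d/2-1}\,\mathrm{d}t)$) or invoke the addition theorem directly; once $T_f$ is known to act as a scalar on $\mathcal{H}_n$, pinning down that scalar by the sphere-slicing formula is routine bookkeeping.
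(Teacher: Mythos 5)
The paper does not prove this statement at all: it is quoted as background and attributed to \citet[Theorem 1.2.9]{dai2013_ApproximationTheory}, so there is no internal proof to compare against. Judged on its own, your argument is the classical proof of the Funk--Hecke formula and is essentially sound: the rotation-equivariance of $T_f$, the identification $\Delta_{\mathbb{S}^d,x} f(\langle x,y\rangle)=\bigl[(1-t^2)f''-d\,t\,f'\bigr]\big|_{t=\langle x,y\rangle}$ (correct for $\mathbb{S}^d$, and symmetric in $x,y$), the resulting invariance of each eigenspace $\mathcal{H}_n$, the reduction to a scalar on each $\mathcal{H}_n$, the evaluation of that scalar on the zonal harmonic $C_n^\lambda(\langle e,\cdot\rangle)/C_n^\lambda(1)$ via the slicing formula, and the density step in $L^1\bigl((-1,1),(1-t^2)^{(d-2)/2}\,\mathrm{d}t\bigr)$ all work; this is in substance the same mechanism as the textbook proof (which runs the Gegenbauer-expansion/addition-theorem route you mention as an alternative). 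Two small points deserve care. First, Schur's lemma over $\mathbb{R}$ only gives that the commutant is a division algebra; to conclude that an equivariant endomorphism of $\mathcal{H}_n$ is scalar you should either invoke absolute irreducibility of the spherical-harmonic representation or, more simply, note that $T_f$ is self-adjoint (symmetric kernel), so an eigenspace of $T_f|_{\mathcal{H}_n}$ is a nonzero invariant subspace and hence all of $\mathcal{H}_n$. Second, the constant: your slicing computation, done with the normalizations literally as displayed in the statement, produces $\mu_n(f)=\tfrac{\omega_{d-1}}{\omega_d}\int_{-1}^1 f(t)\tfrac{C_n^\lambda(t)}{C_n^\lambda(1)}(1-t^2)^{\frac{d-2}{2}}\mathrm{d}t$ (with $\omega_{d-1}$ the area of the equatorial $\mathbb{S}^{d-1}$), not the prefactor $\omega_d$ written in the theorem; the $f\equiv1$, $n=0$ check shows the displayed prefactor cannot be right as stated, so your ``up to the surface-area normalization'' gloss is hiding a genuine bookkeeping step that you should carry out explicitly rather than match blindly to the quoted constant. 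Neither point affects the structure of the argument, and the normalization issue is immaterial to how the theorem is used in the paper (only the asymptotics $\mu_n\asymp n^{-d\beta}$ enter Proposition \ref{prop:EMBIdx_Sphere}).
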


Suppose $k$ is a dot-product kernel. 
Recalling the definition of the integral operator $T$ associated with $k$, 
\eqref{eq:C_FunkHecke} shows that elements in $\mathcal{H}_n(\mathbb{S}^d)$, in particular $Y_{n,l}$, are eigenfunctions of $T$.
Therefore, we obtain the following Mercer's decomposition:
\begin{align}
\label{eq:MercerSphere}
  k(x,y) = \sum_{n=0}^{\infty} \mu_n \sum_{l=1}^{a_n} Y_{n,l}(x)Y_{n,l}(y).
\end{align}

\begin{proposition}
    \label{prop:EMBIdx_Sphere}
    Let $k$ be an dot-product kernel satisfying $\mu_n \asymp n^{-d\beta}$ for some $\beta>1$, where $\mu_n$ is defined in \eqref{eq:MercerSphere}.
    Then, the EDR of the corresponding RKHS is $\beta$ and the embedding index $\alpha_0 = \beta^{-1}$.
\end{proposition}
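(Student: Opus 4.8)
The plan is to read the eigenvalues and eigenfunctions of $T$ off the Mercer decomposition \eqref{eq:MercerSphere} and then apply the eigenvalue criterion \eqref{eq:EMB_Eigenvalues} together with the identity $\|[\mathcal{H}]^\alpha \hookrightarrow L^\infty(\mathcal{X},\mu)\| = M_\alpha$. First I would record the elementary asymptotics: since $a_n = \binom{n+d}{n} - \binom{n+d-2}{n-2}$ is a polynomial in $n$ of degree $d-1$, one has $a_n \asymp n^{d-1}$ for $n \ge 1$, and hence $N_n := \sum_{j=0}^n a_j \asymp n^d$. From \eqref{eq:MercerSphere} the eigenvalues of $T$ are the numbers $\mu_n$, each with multiplicity $a_n$, and the eigenfunctions are the spherical harmonics $Y_{n,l}$. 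Ordering the eigenvalues non-increasingly, $\lambda_i = \mu_n$ for all $i$ in the block $(N_{n-1}, N_n]$; on this block $i \asymp n^d$, so $n \asymp i^{1/d}$ and therefore $\lambda_i \asymp n^{-d\beta} \asymp i^{-\beta}$. This verifies Assumption \ref{ass EDR} with EDR $\beta$.

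For the embedding index, the crucial point is the addition formula $\sum_{l=1}^{a_n} Y_{n,l}^2(x) = Z_n(x,x) = a_n$, which holds for every $x \in \mathbb{S}^d$ and makes the left-hand side of \eqref{eq:EMB_Eigenvalues} independent of $x$:
\[ \sum_{i} \lambda_i^\alpha e_i^2(x) = \sum_{n=0}^\infty \mu_n^\alpha \sum_{l=1}^{a_n} Y_{n,l}^2(x) = \sum_{n=0}^\infty \mu_n^\alpha a_n . \]
Plugging in $\mu_n \asymp n^{-d\beta}$ and $a_n \asymp n^{d-1}$, the series behaves like $\sum_{n} n^{d-1-d\beta\alpha}$, which converges if and only if $d-1-d\beta\alpha < -1$, i.e.\ if and only if $\alpha > 1/\beta$. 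Hence $M_\alpha < \infty$ exactly when $\alpha > 1/\beta$, and since $\|[\mathcal{H}]^\alpha \hookrightarrow L^\infty\| = M_\alpha$, the embedding index of Assumption \ref{assumption embedding} is $\alpha_0 = 1/\beta$.

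I do not expect a genuine obstacle; the argument is essentially a computation. The two places that need a little care are the bookkeeping that converts the degree-indexed decay $\mu_n \asymp n^{-d\beta}$ into the eigenvalue-indexed decay $\lambda_i \asymp i^{-\beta}$ (which rests entirely on $N_n \asymp n^d$), and the use of the addition formula to eliminate the supremum over $x$ — this is what lets us compute $M_\alpha$ up to constants rather than merely bound it. I would also note that for $d \in \{1,2\}$, where the Funk--Hecke statement in Theorem \ref{thm:FunkFormula} is given only for $d \ge 3$, the two facts actually used — that the $Y_{n,l}$ diagonalize $T$ and that $\sum_l Y_{n,l}^2 \equiv a_n$ — remain valid, so the conclusion is unchanged.
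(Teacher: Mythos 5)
Your proof is correct and follows essentially the same route as the paper: read the eigenvalues $\mu_n$ (with multiplicity $a_n$) and eigenfunctions $Y_{n,l}$ off the Mercer decomposition, use $a_n \asymp n^{d-1}$ and $\sum_{j\le n} a_j \asymp n^d$ for the EDR, and use $Z_n(x,x) = a_n$ to reduce \eqref{eq:EMB_Eigenvalues} to the scalar series $\sum_n \mu_n^\alpha a_n$, which converges precisely for $\alpha > 1/\beta$. The only difference is cosmetic: you verify divergence for $\alpha \le 1/\beta$ directly via the exact addition formula, whereas the paper only proves convergence for $\alpha > 1/\beta$ and gets the matching lower bound $\alpha_0 \ge 1/\beta$ from the general fact that the embedding order can never be below $1/\beta$; both are valid.
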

\begin{proof}
    Notice that $\mu_n$ is an eigenvalue of multiplicity $a_n$.
    Then, the eigenvalue decay rate is easily obtained by the estimation $a_n \asymp n^{d-1}$ and $\sum_{r=0}^n a_r \asymp n^d$.
    Considering the equivalent definition of the embedding property~\eqref{eq:EMB_Eigenvalues}, we have
    \begin{align*}
        \sum_{n=0}^{\infty} \mu_n^\alpha \sum_{l=1}^{a_n} Y_{n,l}(x)^2 & = \sum_{n=0}^{\infty} \mu_n^\alpha Z_n(x,x) 
        \leq \sum_{n=0}^{\infty} a_n \mu_n^\alpha \\
        & \leq \sum_{n=0}^{\infty} C n^{d-1} n^{- \alpha d \beta} =  C \sum_{n=0}^{\infty} n^{-1 - d (\alpha \beta - 1)} \\
        & < \infty \quad \text{if} \quad \alpha > \frac{1}{\beta}.
    \end{align*}
\end{proof}




\acks{Lin’s research was supported in part by the National Natural Science Foundation of China (Grant 92370122, Grant 11971257). The authors are very grateful to the anonymous reviewers for the suggestions on improving the
	presentation of this work.}

\newpage

\appendix
\section*{Appendix A.}\label{appendix A}
In this appendix, we introduce some useful results of real interpolation and Lorentz spaces \cite[Chapter 22-26]{tartar2007introduction}.  

\subsection*{\textbf{A.1} Real interpolation and the Reiteration theorem}
We first introduce the definition of real interpolation through the K-method. For two normed spaces $ E_{i}, i=0,1$, denote their norms as $\| \cdot \|_{i}, i=0,1$.

\begin{definition}[K-functional]
   Let $E_{0}$ and $E_{1}$ be two normed spaces, continuously embedded into a topological vector space $\mathcal{E}$ (($E_{0}$, $E_{1}$) is a compatible couple). For $a \in E_{0} + E_{1}$ and $ t >0$, define the K-functional by
   \begin{displaymath}
       K(t ; a)=\inf _{a=a_0+a_1}\left(\left\|a_0\right\|_0+t\left\|a_1\right\|_1\right).
   \end{displaymath}
\end{definition}

\begin{definition}[Real interpolation]\label{def real interpolation}
   Let $E_{0}$ and $E_{1}$ be two normed spaces, continuously embedded into a topological vector space $\mathcal{E}$ (($E_{0}$, $E_{1}$) is a compatible couple). For $0 < \theta < 1$ and $1 \le  p \le \infty$ (or for $\theta =0, 1$ with $p = \infty$), the real interpolation space is defined by
   \begin{displaymath}
      \left(E_0, E_1\right)_{\theta, p}=\left\{a \in E_0+E_1 \mid t^{-\theta} K(t ; a) \in L^p\left(\mathbb{R}^{+} ; \frac{\mathrm{d} t}{t}\right)\right\},
   \end{displaymath}
   with the norm
   \begin{displaymath}
     \|a\|_{\left(E_0, E_1\right)_{\theta, p}}=\left\|t^{-\theta} K(t ; a)\right\|_{L^p(\mathbb{R}^{+} ; \mathrm{d} t / t)}.
   \end{displaymath}
   
\end{definition}

\begin{lemma}\label{mono of RI}
  If $ 0 < \theta < 1$ and $ 1\le p \le q \le \infty $, we have 
  \begin{displaymath}
      \left(E_0, E_1\right)_{\theta, p} \subset \left(E_0, E_1\right)_{\theta, q},~~ \text{with continuous embedding}.
  \end{displaymath}
\end{lemma}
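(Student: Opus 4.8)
The plan is to reduce everything to the endpoint case $q=\infty$ and then recover $p<q<\infty$ by a one-line interpolation of the integral defining the $(\theta,q)$-norm. Write $\varphi_a(t) := t^{-\theta}K(t;a)$, so that $\|a\|_{(E_0,E_1)_{\theta,q}} = \|\varphi_a\|_{L^q(\mathbb{R}^+;\,\mathrm{d}t/t)}$. The only structural inputs I would use are the elementary properties of the $K$-functional that follow directly from its definition as an infimum: $t\mapsto K(t;a)$ is non-negative, non-decreasing and concave on $(0,\infty)$, and consequently $t\mapsto K(t;a)/t$ is non-increasing.

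First I would treat $q=\infty$. Fix $a\in(E_0,E_1)_{\theta,p}$ and $t_0>0$. Since $K(\cdot;a)$ is non-decreasing, $K(t;a)\ge K(t_0;a)$ for all $t\ge t_0$, hence
\begin{align*}
  \|a\|_{(E_0,E_1)_{\theta,p}}^p
  \;=\; \int_0^\infty \bigl(t^{-\theta}K(t;a)\bigr)^p\,\frac{\mathrm{d}t}{t}
  \;\ge\; K(t_0;a)^p\int_{t_0}^\infty t^{-\theta p}\,\frac{\mathrm{d}t}{t}
  \;=\; \frac{1}{\theta p}\,\bigl(t_0^{-\theta}K(t_0;a)\bigr)^p ,
\end{align*}
where the last integral converges precisely because $\theta p>0$. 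Taking the supremum over $t_0>0$ gives $\|a\|_{(E_0,E_1)_{\theta,\infty}}\le(\theta p)^{1/p}\,\|a\|_{(E_0,E_1)_{\theta,p}}$, the desired continuous embedding when $q=\infty$. (Equivalently one could use that $K(t;a)/t$ is non-increasing and integrate over $(0,t_0)$, which needs $(1-\theta)p>0$; in either route the hypothesis $\theta\in(0,1)$ is exactly what makes the relevant tail integral finite.)

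Next, for $1\le p<q<\infty$ I would split the exponent and use the endpoint bound just obtained:
\begin{align*}
  \|a\|_{(E_0,E_1)_{\theta,q}}^q
  \;=\; \int_0^\infty \varphi_a(t)^{\,q-p}\,\varphi_a(t)^{p}\,\frac{\mathrm{d}t}{t}
  \;\le\; \Bigl(\sup_{t>0}\varphi_a(t)\Bigr)^{q-p}\!\int_0^\infty \varphi_a(t)^p\,\frac{\mathrm{d}t}{t}
  \;=\; \|a\|_{(E_0,E_1)_{\theta,\infty}}^{\,q-p}\,\|a\|_{(E_0,E_1)_{\theta,p}}^{p} .
\end{align*}
Inserting $\|a\|_{(E_0,E_1)_{\theta,\infty}}\le(\theta p)^{1/p}\|a\|_{(E_0,E_1)_{\theta,p}}$ and taking the $q$-th root yields $\|a\|_{(E_0,E_1)_{\theta,q}}\le(\theta p)^{(q-p)/(pq)}\,\|a\|_{(E_0,E_1)_{\theta,p}}$, which completes the proof with a constant depending only on $\theta,p,q$.

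The point worth flagging — and essentially the only nontrivial one — is that on the infinite-measure space $(\mathbb{R}^+,\mathrm{d}t/t)$ one does \emph{not} have $L^p\subset L^q$ for general functions when $p\le q$; the inclusion here holds only because $\varphi_a$ inherits the monotonicity of the $K$-functional (non-decreasing $K$, non-increasing $K(t)/t$) together with $\theta\in(0,1)$, which forces decay of $\varphi_a$ at both ends of $(0,\infty)$ and makes the endpoint estimate $\|a\|_{\theta,\infty}\lesssim\|a\|_{\theta,p}$ possible. Once that estimate is in hand, the remaining steps are routine.
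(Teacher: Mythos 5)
Your proof is correct. The paper does not prove this lemma at all — it is stated in Appendix A as a standard background fact cited to Tartar's book — so the only meaningful comparison is with the classical argument, and yours is exactly that: use the monotonicity of $t\mapsto K(t;a)$ to get the endpoint bound $\sup_{t_0>0}t_0^{-\theta}K(t_0;a)\le(\theta p)^{1/p}\|a\|_{(E_0,E_1)_{\theta,p}}$ (the tail integral converging precisely because $\theta p>0$), then split the exponent $\varphi_a^q=\varphi_a^{q-p}\varphi_a^p$ to handle $p<q<\infty$, the cases $p=q$ and $p=\infty$ being trivial. The constants and the remark that the inclusion is not a generic $L^p\subset L^q$ fact on $(\mathbb{R}^+,\mathrm{d}t/t)$ but hinges on the $K$-functional's monotonicity are both accurate; no gaps.
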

  
The following lemma gives the result of exchanging the two spaces $E_{0},E_{1}$.
\begin{lemma}\label{exchanging e0e1}
  One has $(E_{1},E_{0})_{\theta,p} = (E_{0},E_{1})_{1 - \theta, p}$ for $0 < \theta <1$ and $1 \le p \le \infty$; the same result holds for $\theta =0 ~\text{or}~1$, and $ p =1$ or $ p = \infty$.
\end{lemma}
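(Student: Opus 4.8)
The plan is to exploit the elementary symmetry of the K-functional under the inversion $t \mapsto 1/t$ together with a change of variables in the defining integral. Write $K(t;a;E_0,E_1)$ for the K-functional of the couple $(E_0,E_1)$, keeping track of the order of the spaces. First I would observe that, directly from the definition,
\[
  K(t;a;E_1,E_0) = \inf_{a=a_1+a_0}\bigl(\|a_1\|_1 + t\|a_0\|_0\bigr) = t\inf_{a=a_0+a_1}\bigl(t^{-1}\|a_0\|_0 + \|a_1\|_1\bigr) = t\,K(t^{-1};a;E_0,E_1),
\]
for every $a \in E_0+E_1$ and $t>0$; since $E_0+E_1 = E_1+E_0$ as sets, the two interpolation spaces at least sit inside the same ambient space, so it suffices to compare norms.

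Next I would plug this identity into the norm on $(E_1,E_0)_{\theta,p}$. By Definition \ref{def real interpolation},
\[
  \|a\|_{(E_1,E_0)_{\theta,p}} = \bigl\| t^{-\theta} K(t;a;E_1,E_0) \bigr\|_{L^p(\mathbb{R}^+;\,\mathrm{d}t/t)} = \bigl\| t^{1-\theta} K(t^{-1};a;E_0,E_1) \bigr\|_{L^p(\mathbb{R}^+;\,\mathrm{d}t/t)}.
\]
Then I would substitute $s = t^{-1}$: the multiplicative Haar measure $\mathrm{d}t/t$ on $\mathbb{R}^+$ is invariant under this inversion, and $t^{1-\theta} = s^{-(1-\theta)}$, so the right-hand side becomes $\bigl\| s^{-(1-\theta)} K(s;a;E_0,E_1)\bigr\|_{L^p(\mathbb{R}^+;\,\mathrm{d}s/s)} = \|a\|_{(E_0,E_1)_{1-\theta,p}}$. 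This yields both directions of membership and the equality (in fact isometry) of norms, i.e. $(E_1,E_0)_{\theta,p} = (E_0,E_1)_{1-\theta,p}$ for $0<\theta<1$ and $1 \le p \le \infty$.

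For the endpoint cases $\theta \in \{0,1\}$, the $L^p$ norm in the definition becomes the essential supremum (the case $p=\infty$ in Definition \ref{def real interpolation}), which is likewise invariant under $s=t^{-1}$; the same chain of equalities then gives $(E_1,E_0)_{0,\infty} = (E_0,E_1)_{1,\infty}$ and $(E_1,E_0)_{1,\infty}=(E_0,E_1)_{0,\infty}$, with the obvious interpretations, and the remaining endpoint $p$ claims follow identically once the corresponding $(\theta,p)$ pair is admissible. I do not expect a real obstacle here: the only point requiring a careful line is the verification that $\mathrm{d}t/t$ is inversion-invariant and that the substitution is valid uniformly in $p$, including the degenerate case $p=\infty$, where it reduces to the trivial statement that $\sup_{t>0}$ is unchanged under the reparametrization $s=t^{-1}$.
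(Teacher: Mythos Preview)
Your argument is correct and is the standard textbook proof: the K-functional symmetry $K(t;a;E_1,E_0)=t\,K(t^{-1};a;E_0,E_1)$ together with the inversion-invariance of the Haar measure $\mathrm{d}t/t$ gives the isometric identification directly, uniformly in $p$ (including $p=\infty$). The paper does not supply its own proof of this lemma; it is quoted as a basic fact from real interpolation theory (the appendix attributes these results to \cite{tartar2007introduction}), so there is nothing to compare against beyond noting that your proof is precisely the classical one found in such references.
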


The following Lions–Peetre Reiteration Theorem is an important property of real interpolation spaces. 
\begin{theorem}[Reiteration theorem]\label{reiteration theorem}
  If $ 0 \le \theta_{0} \neq \theta_{1} \le 1$, and the two normed spaces $F_{0}, F_{1}$ satisfy that
  \begin{align}
      \left(E_0, E_1\right)_{\theta_0, 1} &\subset F_0 \subset\left(E_0, E_1\right)_{\theta_0, \infty}; \notag \\
      \left(E_0, E_1\right)_{\theta_1, 1} &\subset F_1 \subset\left(E_0, E_1\right)_{\theta_1, \infty}. \notag
  \end{align}
  Then for $ 0<\theta<1$ and $1 \le p \le \infty$, denote $ \eta = (1-\theta) \theta_{0} + \theta \theta_{1}$, we have 
  \begin{displaymath}
    \left(F_0, F_1\right)_{\theta, p}=\left(E_0, E_1\right)_{\eta, p}, ~~\text{with equivalent norms}.
  \end{displaymath}
\end{theorem}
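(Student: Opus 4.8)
The plan is to reduce the reiteration identity to the special case where the intermediate spaces $F_0,F_1$ are themselves real‑interpolation spaces of the base couple $(E_0,E_1)$, and then to settle that case by a quantitative comparison of $K$‑functionals (Holmstedt's formula) followed by Hardy‑inequality bookkeeping. Throughout write $A_{\vartheta,q}:=(E_0,E_1)_{\vartheta,q}$ and let $K(t,a)$ denote the $K$‑functional of $(E_0,E_1)$. By Lemma~\ref{exchanging e0e1} we may relabel the couple so that $\theta_0<\theta_1$; put $\lambda:=\theta_1-\theta_0>0$, so that $\eta=\theta_0+\lambda\theta$. First I would record the elementary \emph{space‑monotonicity} of real interpolation: a continuous embedding $F_j\hookrightarrow G_j$ forces $(F_0,F_1)_{\theta,p}\hookrightarrow(G_0,G_1)_{\theta,p}$, since an admissible decomposition for the $F$‑couple is one for the $G$‑couple with controlled norms. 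Combined with the hypotheses $A_{\theta_j,1}\hookrightarrow F_j\hookrightarrow A_{\theta_j,\infty}$ this gives
\[
 \bigl(A_{\theta_0,1},A_{\theta_1,1}\bigr)_{\theta,p}\ \hookrightarrow\ (F_0,F_1)_{\theta,p}\ \hookrightarrow\ \bigl(A_{\theta_0,\infty},A_{\theta_1,\infty}\bigr)_{\theta,p},
\]
so it is enough to prove $\bigl(A_{\theta_0,q_0},A_{\theta_1,q_1}\bigr)_{\theta,p}=A_{\eta,p}$ with equivalent norms for $q_0,q_1\in\{1,\infty\}$ (the argument in fact works for any $q_0,q_1\in[1,\infty]$); the statement for the sandwiched couple then follows. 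The boundary cases $\theta_j\in\{0,1\}$ are to be read with the usual $J$‑method convention for $A_{0,1}$ and $A_{1,1}$.

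The technical core is Holmstedt's formula: for $a\in A_{\theta_0,q_0}+A_{\theta_1,q_1}$ and $t>0$,
\[
 K\bigl(t,a;A_{\theta_0,q_0},A_{\theta_1,q_1}\bigr)\ \asymp\ \Bigl(\int_0^{t^{1/\lambda}}\!\bigl(s^{-\theta_0}K(s,a)\bigr)^{q_0}\tfrac{ds}{s}\Bigr)^{1/q_0}+\ t\Bigl(\int_{t^{1/\lambda}}^{\infty}\!\bigl(s^{-\theta_1}K(s,a)\bigr)^{q_1}\tfrac{ds}{s}\Bigr)^{1/q_1},
\]
with the $\sup$ replacing the integral when $q_j=\infty$ and implied constants depending only on $\theta_0,\theta_1,q_0,q_1$. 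The "$\gtrsim$" half is the easy one: for any splitting $a=a_0+a_1$ with $a_j\in A_{\theta_j,q_j}$, combine $K(s,a)\le K(s,a_0)+K(s,a_1)$ with the crude bound $K(s,a_j)\lesssim s^{\theta_j}\|a_j\|_{A_{\theta_j,\infty}}$ and a complementary integrated bound coming straight from the definition of $\|\cdot\|_{A_{\theta_j,q_j}}$, then optimise over $a_0,a_1$. The "$\lesssim$" half is the substantive one: given $t$, set $\sigma=t^{1/\lambda}$, take near‑optimal decompositions of $a$ in $E_0+E_1$ at the dyadic scales $s=2^k$, and assemble from their consecutive differences a single simultaneous decomposition $a=a_0+a_1$ with $a_0\in A_{\theta_0,q_0}$, $a_1\in A_{\theta_1,q_1}$ whose two norms are controlled, respectively, by the two integrals on the right; this is the discrete $J$‑method construction.

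Granting Holmstedt's formula, I would finish by writing
\[
 \|a\|_{(A_{\theta_0,q_0},A_{\theta_1,q_1})_{\theta,p}}^{\,p}=\int_0^\infty\bigl(t^{-\theta}K(t,a;A_{\theta_0,q_0},A_{\theta_1,q_1})\bigr)^p\tfrac{dt}{t},
\]
substituting $u=t^{1/\lambda}$ (so $t=u^\lambda$, $\tfrac{dt}{t}=\lambda\tfrac{du}{u}$, $t^{-\theta}=u^{-\lambda\theta}$) and inserting the two‑term bound. The first term contributes $\asymp\int_0^\infty\bigl(u^{-\lambda\theta}(\int_0^u(s^{-\theta_0}K(s,a))^{q_0}\tfrac{ds}{s})^{1/q_0}\bigr)^p\tfrac{du}{u}$, and Hardy's inequality (applicable because the exponent $\lambda\theta$ is strictly positive) bounds it by $\lesssim\int_0^\infty\bigl(u^{-(\theta_0+\lambda\theta)}K(u,a)\bigr)^p\tfrac{du}{u}=\|a\|_{A_{\eta,p}}^p$; the second term is symmetric, using $\lambda(1-\theta)>0$ and the dual Hardy inequality. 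This gives the embedding $(A_{\theta_0,q_0},A_{\theta_1,q_1})_{\theta,p}\hookrightarrow A_{\eta,p}$. For the reverse embedding one uses the "$\gtrsim$" half of Holmstedt together with the elementary fact that, since $K(t,a)$ is non‑decreasing and $K(t,a)/t$ is non‑increasing in $t$, each iterated integral is itself $\gtrsim$ a single dyadic piece of the $A_{\eta,p}$‑integrand; this yields $\|a\|_{A_{\eta,p}}\lesssim\|a\|_{(A_{\theta_0,q_0},A_{\theta_1,q_1})_{\theta,p}}$. Combining the two embeddings with the squeeze from the first paragraph completes the proof, and tracking constants shows the norms are equivalent.

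The main obstacle is the "$\lesssim$" half of Holmstedt's formula: manufacturing one decomposition $a=a_0+a_1$ that is near‑optimal for the new couple out of a family of decompositions near‑optimal for $(E_0,E_1)$ at all dyadic scales, and controlling the $A_{\theta_0,q_0}$‑ and $A_{\theta_1,q_1}$‑norms of the pieces. This is exactly the place where the equivalence of the $J$‑ and $K$‑descriptions of $A_{\vartheta,q}$ (the "fundamental lemma" of interpolation theory) enters; the remaining ingredients — space‑monotonicity and the Hardy‑inequality computations — are routine.
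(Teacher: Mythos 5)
The paper itself offers no proof of this statement: Appendix~A quotes the Reiteration theorem directly from Tartar's book, so there is no internal argument to compare yours against. Judged on its own, your sketch is the standard textbook route (Holmstedt's formula plus Hardy-type estimates, combined with monotonicity of the $K$-method under continuous embeddings and the sandwich $A_{\theta_j,1}\hookrightarrow F_j\hookrightarrow A_{\theta_j,\infty}$), as opposed to the Lions--Peetre style argument via the classes $\mathcal{K}(\theta_j)$, $\mathcal{J}(\theta_j)$ and the fundamental lemma, and it does work; the deferred ``$\lesssim$'' half of Holmstedt is indeed the only substantive ingredient and your description of its proof (dyadic near-optimal decompositions assembled into one simultaneous splitting) is the correct one. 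Three small caveats. First, the two embedding labels in your final paragraph are swapped: bounding $\|a\|_{(A_{\theta_0,q_0},A_{\theta_1,q_1})_{\theta,p}}$ by $\|a\|_{A_{\eta,p}}$ yields $A_{\eta,p}\hookrightarrow (A_{\theta_0,q_0},A_{\theta_1,q_1})_{\theta,p}$, while the ``$\gtrsim$'' half of Holmstedt yields the opposite inclusion; since you derive both inequalities, this is only a mislabeling. Second, for $q_j=\infty$ the classical integral Hardy inequality does not literally apply to the supremum; one either needs its sup-form (provable here from the quasi-concavity of $K$, e.g. $s^{-\theta_0}K(s,a)\le 2^{\theta_0}\,(2^{-k}u)^{-\theta_0}K(2^{-k}u,a)$ on dyadic blocks, then sum geometrically using $\lambda\theta p>0$), or, more economically, one observes that the squeeze only requires the case $q_0=q_1=1$ on the lower end (plain Hardy) and $q_0=q_1=\infty$ on the upper end, where only the easy half of Holmstedt (equivalently the direct estimate $K(s,a)\le s^{\theta_0}\|a_0\|_{A_{\theta_0,\infty}}+s^{\theta_1}\|a_1\|_{A_{\theta_1,\infty}}$) is needed and no Hardy inequality at all. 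Third, the boundary cases $\theta_j\in\{0,1\}$ do need the conventions you flag, since the paper's own definition of $(E_0,E_1)_{\theta,p}$ admits $\theta\in\{0,1\}$ only with $p=\infty$; with those conventions fixed, your argument goes through.
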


\begin{remark}
  This theorem implies that, if we replace $F_{0}$ with any space $\widetilde{F_{0}}$ satisfying $ \left(E_0, E_1\right)_{\theta_0, 1} \subset \widetilde{F_0} \subset\left(E_0, E_1\right)_{\theta_0, \infty} $, the real interpolation space remains `unchanged', i.e., $\left(F_0, F_1\right)_{\theta, p} \cong \left(\widetilde{F_0}, F_1\right)_{\theta, p}$. 
\end{remark}

\subsection*{\textbf{A.2} Lorentz space}
\begin{definition}[Lorentz space]
   For $ 1 < p < \infty$ and $1 \le q \le \infty$, the Lorentz space $ L^{p,q}(\mathcal{X},\mu)$ is defined as 
   \begin{displaymath}
       L^{p,q}(\mathcal{X},\mu) = \left( L^{1}(\mathcal{X},\mu), L^{\infty}(\mathcal{X},\mu)\right)_{\frac{1}{p^{\prime}}, q}, 
   \end{displaymath}
   where $\frac{1}{p^{\prime}} + \frac{1}{p} = 1$.
\end{definition}

Using Lemma \ref{mono of RI}, it is easy to show that $L^{p,q_{1}}(\mathcal{X},\mu) \subseteq L^{p,q_{2}}(\mathcal{X},\mu) $ for $ 1 \le q_{1} \le q_{2} \le \infty$. In addition, the following lemma gives the relation between Lorentz space and $L^{p}$ space.
\begin{lemma}\label{cong of lorentz}
  For $ 1 < p < \infty$, we have
  \begin{displaymath}
      L^{p,p}(\mathcal{X},\mu) \cong L^{p}(\mathcal{X},\mu); \quad L^{p,\infty}(\mathcal{X},\mu) \cong L^{p,w}(\mathcal{X},\mu),
  \end{displaymath}
  where $L^{p,w}(\mathcal{X},\mu)$ denotes the weak $L^{p}$ space.
\end{lemma}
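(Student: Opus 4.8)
The plan is to reduce both identifications to the classical description of the $K$-functional for the couple $(L^{1},L^{\infty})$ in terms of the non-increasing rearrangement, after which each statement follows from one or two elementary one-variable estimates. Write $f^{*}$ for the non-increasing rearrangement of $|f|$ on $(0,\mu(\mathcal{X}))$, extended by $0$ to $(0,\infty)$ (recall $\mu$ is a probability measure here, so this causes no difficulty). The starting point is Peetre's formula
\begin{displaymath}
  K(t;f)=K\big(t;f;L^{1},L^{\infty}\big)=\int_{0}^{t}f^{*}(s)\,\mathrm{d}s,\qquad t>0,
\end{displaymath}
which is part of the Hardy--Littlewood--P\'olya rearrangement theory and is proved, e.g., in \cite[Chapter 22-26]{tartar2007introduction}. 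Together with equimeasurability of $f$ and $f^{*}$, namely $\|f\|_{L^{p}}^{p}=\int_{0}^{\infty}f^{*}(t)^{p}\,\mathrm{d}t$ and $\|f\|_{L^{p,w}}=\sup_{\lambda>0}\lambda\,\mu(\{|f|>\lambda\})^{1/p}=\sup_{t>0}t^{1/p}f^{*}(t)$, the task becomes: with $\theta=1/p'=1-1/p$, compare $\big\|t^{-\theta}\int_{0}^{t}f^{*}\big\|_{L^{q}(\mathrm{d}t/t)}$ against $\|f^{*}\|_{L^{p}(\mathrm{d}t)}$ when $q=p$, and against $\sup_{t>0}t^{1/p}f^{*}(t)$ when $q=\infty$.

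For the lower bounds, valid for both $q=p$ and $q=\infty$, use the monotonicity of $f^{*}$: since $f^{*}$ is non-increasing, $\int_{0}^{t}f^{*}(s)\,\mathrm{d}s\ge t\,f^{*}(t)$, hence $t^{-\theta}\int_{0}^{t}f^{*}\ge t^{1-\theta}f^{*}(t)=t^{1/p}f^{*}(t)$. Taking the $L^{p}(\mathbb{R}^{+};\mathrm{d}t/t)$-norm and using $(t^{1/p}f^{*}(t))^{p}\,\mathrm{d}t/t=f^{*}(t)^{p}\,\mathrm{d}t$ gives $\|f\|_{(L^{1},L^{\infty})_{\theta,p}}\ge\|f\|_{L^{p}}$; taking the supremum over $t$ gives $\|f\|_{(L^{1},L^{\infty})_{\theta,\infty}}\ge\|f\|_{L^{p,w}}$.

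For the matching upper bound when $q=p$, apply the Hardy inequality for the averaging operator: for $\theta\in(0,1)$ and nonnegative $g$,
\begin{displaymath}
  \Big(\int_{0}^{\infty}\Big(t^{-\theta}\int_{0}^{t}g(s)\,\mathrm{d}s\Big)^{p}\frac{\mathrm{d}t}{t}\Big)^{1/p}\le\frac{1}{\theta}\Big(\int_{0}^{\infty}\big(t^{1-\theta}g(t)\big)^{p}\frac{\mathrm{d}t}{t}\Big)^{1/p};
\end{displaymath}
with $g=f^{*}$ and $\theta=1/p'$ (so $1/\theta=p'$ and $t^{1-\theta}=t^{1/p}$) this yields $\|f\|_{(L^{1},L^{\infty})_{\theta,p}}\le p'\|f\|_{L^{p}}$, and combined with the lower bound this proves $L^{p,p}\cong L^{p}$ with equivalent norms. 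For the upper bound when $q=\infty$, set $M=\sup_{s>0}s^{1/p}f^{*}(s)=\|f\|_{L^{p,w}}$, so $f^{*}(s)\le M s^{-1/p}$; since $1/p<1$ this is integrable near $0$ and $\int_{0}^{t}f^{*}(s)\,\mathrm{d}s\le M\int_{0}^{t}s^{-1/p}\,\mathrm{d}s=p'\,M\,t^{1/p'}=p'\,M\,t^{\theta}$, hence $t^{-\theta}\int_{0}^{t}f^{*}\le p'M$ uniformly in $t$; taking the supremum gives $\|f\|_{(L^{1},L^{\infty})_{\theta,\infty}}\le p'\|f\|_{L^{p,w}}$, proving $L^{p,\infty}\cong L^{p,w}$. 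The only genuinely nontrivial ingredient is the $K$-functional identity, which rests on the rearrangement theory cited above; everything after it is the two monotonicity estimates plus Hardy's inequality, all of which are standard.
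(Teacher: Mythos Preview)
Your argument is correct and is precisely the standard route: Peetre's formula $K(t;f;L^{1},L^{\infty})=\int_{0}^{t}f^{*}$, then the monotonicity lower bound $\int_{0}^{t}f^{*}\ge t f^{*}(t)$, Hardy's inequality for the $q=p$ upper bound, and the direct integration of $f^{*}(s)\le M s^{-1/p}$ for the $q=\infty$ upper bound. The constants you obtain ($1$ below, $p'$ above) are the classical ones.

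The paper itself does not supply a proof of this lemma; it is simply recorded in Appendix~A.2 as a known fact from the cited reference \cite[Chapters 22--26]{tartar2007introduction}. Your write-up is exactly the argument one finds there (or in Bergh--L\"ofstr\"om, Bennett--Sharpley, etc.), so there is nothing to compare: you have reconstructed the textbook proof that the paper is quoting.
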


\section*{Appendix B. Auxiliary results} \label{appendix B}
\begin{lemma}\label{basic ineq}
  For any $\lambda > 0$ and $ s \in [0,1]$, we have
  \begin{displaymath}
      \sup _{t \geq 0} \frac{t^s}{t+\lambda} \leq \lambda^{s-1}.
  \end{displaymath}
\end{lemma}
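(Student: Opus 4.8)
The plan is to reduce the claimed supremum bound to the weighted arithmetic–geometric mean inequality. Concretely, it suffices to show that for every $t \ge 0$ one has $t^{s} \le \lambda^{s-1}(t+\lambda)$, since dividing both sides by $t+\lambda > 0$ then gives $\frac{t^{s}}{t+\lambda} \le \lambda^{s-1}$ uniformly in $t$, hence the same bound for the supremum.

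First I would dispose of the endpoints $s=0$ and $s=1$ by inspection: for $s=0$ the claim is $\frac{1}{t+\lambda}\le \lambda^{-1}$, which holds since $t+\lambda \ge \lambda$; for $s=1$ it is $\frac{t}{t+\lambda}\le 1$, which is immediate. For $s\in(0,1)$ I would invoke the weighted AM–GM inequality in the form $a^{s}b^{1-s} \le s\,a + (1-s)\,b$ for $a,b\ge 0$, applied with $a=t$ and $b=\lambda$. This yields
\begin{displaymath}
  t^{s}\lambda^{1-s} \le s\,t + (1-s)\,\lambda \le t+\lambda,
\end{displaymath}
and multiplying through by $\lambda^{s-1}$ (equivalently, rearranging) gives exactly $t^{s}\le \lambda^{s-1}(t+\lambda)$, as needed.

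An equivalent route, if one prefers to avoid quoting weighted AM–GM, is elementary single-variable calculus: the function $f(t)=t^{s}/(t+\lambda)$ has derivative with numerator proportional to $t^{s-1}\big((s-1)t+s\lambda\big)$, so the unique interior critical point is $t^{*}=\frac{s\lambda}{1-s}$, at which $f(t^{*})=s^{s}(1-s)^{1-s}\lambda^{s-1}\le \lambda^{s-1}$ because $s^{s}(1-s)^{1-s}\le 1$; one then checks $f$ tends to $0$ at $0$ and $\infty$ to conclude this is the global maximum.

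There is no real obstacle here — the statement is a one-line consequence of convexity of the exponential (which is what AM–GM encodes). The only thing to be slightly careful about is handling the boundary values $s\in\{0,1\}$ separately so that the strict positivity of exponents used in AM–GM is not needed there, and noting that all quantities are well defined since $\lambda>0$ guarantees $t+\lambda>0$ on $t\ge 0$.
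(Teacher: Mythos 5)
Your proof is correct and follows essentially the same route as the paper: both reduce the claim to the pointwise bound $t^{s} \le \lambda^{s-1}(t+\lambda)$, which the paper gets from the elementary inequality $a^{s} \le a+1$ applied to $a = t/\lambda$, while you get it from the (slightly sharper) weighted AM--GM bound $t^{s}\lambda^{1-s} \le s\,t+(1-s)\,\lambda \le t+\lambda$. The endpoint cases and the calculus alternative you mention are fine but not needed beyond this one-line argument.
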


\begin{proof}
  Since $ a^{s} \le a + 1$ for any $a \ge 0$ and $s \in [0,1] $, the lemma follows from
  \begin{displaymath}
      \left(\frac{t}{\lambda}\right)^{s} \le \frac{t}{\lambda} + 1 = \frac{t + \lambda}{\lambda}.
  \end{displaymath}
\end{proof}
\begin{lemma}\label{lemma of effect}
    If $\lambda_i \asymp i^{-\beta}$, we have
    \begin{align}
        \mathcal{N}(\nu) \asymp \nu^{\frac{1}{\beta}}. \notag
    \end{align}

\end{lemma}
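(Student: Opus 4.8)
The plan is a straightforward integral comparison. First I would reduce the summand $\lambda_i/(\lambda_i+\nu^{-1})$ to $\nu/(\nu+i^{\beta})$ up to multiplicative constants: rewriting $\lambda_i/(\lambda_i+\nu^{-1})=\nu\lambda_i/(\nu\lambda_i+1)$ and using $c\,i^{-\beta}\le\lambda_i\le C\,i^{-\beta}$ from Assumption~\ref{ass EDR} together with the monotonicity of $t\mapsto t/(t+1)$, one obtains constants $c',C'>0$ depending only on $c,C$ with
\[
\frac{c'\,\nu}{\nu+i^{\beta}}\ \le\ \frac{\lambda_i}{\lambda_i+\nu^{-1}}\ \le\ \frac{C'\,\nu}{\nu+i^{\beta}},\qquad i\in N .
\]
Summing over $i$ gives $\mathcal N(\nu)\asymp\sum_{i\ge1}\nu/(\nu+i^{\beta})$, so it suffices to show the latter sum is $\asymp\nu^{1/\beta}$.

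Next I would apply the integral test to $f(x):=\nu/(\nu+x^{\beta})$, which is positive and strictly decreasing on $[0,\infty)$:
\[
\int_{1}^{\infty} f(x)\,\mathrm d x\ \le\ \sum_{i\ge1} f(i)\ \le\ \int_{0}^{\infty} f(x)\,\mathrm d x .
\]
The change of variables $x=\nu^{1/\beta}u$ yields $\int_{0}^{\infty} f(x)\,\mathrm d x=\nu^{1/\beta}\int_{0}^{\infty}(1+u^{\beta})^{-1}\,\mathrm d u=:C_{\beta}\,\nu^{1/\beta}$, where $C_{\beta}$ is finite precisely because $\beta>1$. This already gives the upper bound $\mathcal N(\nu)\lesssim\nu^{1/\beta}$ for all $\nu>0$. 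For the lower bound, $\int_{1}^{\infty} f=C_{\beta}\nu^{1/\beta}-\int_{0}^{1} f\ge C_{\beta}\nu^{1/\beta}-1$ since $0\le f\le1$, whence $\sum_{i\ge1} f(i)\ge\tfrac12 C_{\beta}\nu^{1/\beta}$ as soon as $\nu\ge(2/C_{\beta})^{\beta}$; alternatively one may note directly that every index $i\le\nu^{1/\beta}$ contributes at least $1/2$ to the sum and there are $\lfloor\nu^{1/\beta}\rfloor$ such indices. Either way, combined with the first step this yields $\mathcal N(\nu)\gtrsim\nu^{1/\beta}$ for $\nu$ large.

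There is essentially no genuine obstacle here; the only point worth a remark is that the $O(1)$ loss in the sum-to-integral passage for the lower bound is negligible against $\nu^{1/\beta}$ only when $\nu$ is bounded away from $0$. This is harmless in every application of the lemma, since there $\nu\asymp n^{\beta/(s\beta+1)}\to\infty$; moreover, on any compact subset of $(0,\infty)$ the continuous positive functions $\mathcal N(\nu)$ and $\nu^{1/\beta}$ stay within a fixed ratio, so $\mathcal N(\nu)\asymp\nu^{1/\beta}$ in fact holds on $[\nu_0,\infty)$ for any fixed $\nu_0>0$. (As $\nu\to0^{+}$ one has instead $\mathcal N(\nu)\asymp\nu$.)
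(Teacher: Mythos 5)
Your proof is correct and takes essentially the same route as the paper's: replace $\lambda_i$ by $i^{-\beta}$ up to the constants from Assumption \ref{ass EDR} and compare the resulting sum with $\int \nu/(\nu+x^{\beta})\,\mathrm{d}x$ via the substitution $x=\nu^{1/\beta}u$, using $\beta>1$ for convergence. The paper writes out only the upper bound and dismisses the lower bound with ``similarly''; your explicit lower-bound argument, including the observation that it needs $\nu$ bounded away from $0$ (harmless, since $\nu\to\infty$ in every application of the lemma), is if anything slightly more careful than the paper's own proof.
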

\begin{proof}
    Since $c ~i^{-\beta} \leq \lambda_i \leq C i^{-\beta}$, we have
    \begin{align}
        \mathcal{N}(\nu) &= \sum_{i = 1}^{\infty}  \frac{\lambda_i}{\lambda_i + \nu^{-1}}  
        \leq \sum_{i = 1}^{\infty} \frac{C i^{-\beta}}{C i^{-\beta} + \nu^{-1}}  = \sum_{i = 1}^{\infty}  \frac{C }{C+ \nu^{-1} i^{\beta}} \notag \\
        &\leq \int_{0}^{\infty}  \frac{C }{\nu^{-1} x^{\beta} + C}  \mathrm{d} x
        = \nu^{\frac{1}{\beta}} \int_{0}^{\infty}  \frac{C }{y^{\beta} + C} \mathrm{d} y \leq C_{1} \nu^{\frac{1}{\beta}}. \notag
    \end{align}
    for some constant $C_{1}$. Similarly, we can prove 
    \begin{displaymath}
    \mathcal{N}(\nu) \geq C_{2} \nu^{\frac{1}{\beta}},
    \end{displaymath}
    for some constant $C_{2}$.
\end{proof}

The following concentration inequality about self-adjoint Hilbert-Schmidt operator
valued random variables is frequently used in related literature, e.g., \citet[Theorem 27]{fischer2020_SobolevNorm} and \citet[Lemma 26]{lin2020_OptimalConvergence}.
\begin{lemma}\label{lemma concentration of operator}
   Let $(\Omega, \mathcal{B}, P)$ be a probability space, $\mathcal{H}$ be a separable Hilbert space. Suppose that $ A_{1}, \cdots, A_{n}$ are i.i.d. random variables with values in the set of self-adjoint Hilbert-Schmidt operators. If  $\mathbb{E} A_{i} = 0$, and the operator norm $ \| A_{i} \| \le L,P \text{-a.e.}$, and there exists a self-adjoint positive semi-definite trace class operator $V$ with $\mathbb{E} A_{i}^{2} \preceq V $. Then for $\delta \in (0,1)$, with probability at least $1 - \delta$, we have 
   \begin{align}
        \left\| \frac{1}{n}\sum_{i=1}^n A_i \right\|
        \leq \frac{2L\beta}{3n} + \sqrt {\frac{2 \| V \| \beta}{n}},\quad
        \beta = \ln \frac{4 \rm{tr} V}{\delta \| V \|}. \notag
   \end{align}
\end{lemma}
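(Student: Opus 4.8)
The strategy is to prove a dimension-free Bernstein tail bound for $S\coloneqq\sum_{i=1}^{n}A_i$ and then invert it. Write $V_n\coloneqq\sum_{i=1}^{n}\mathbb{E}A_i^{2}$, so $V_n\preceq nV$, $\|V_n\|\le n\|V\|$ and — the whole point of the argument — the \emph{intrinsic dimension} $\operatorname{tr}V_n/\|V_n\|=\operatorname{tr}V/\|V\|$ is independent of $n$. I would aim at the estimate
\begin{equation*}
  \mathbb{P}\bigl(\|S\|\ge t\bigr)\;\le\;4\,\frac{\operatorname{tr}V}{\|V\|}\,\exp\!\left(-\frac{t^{2}/2}{n\|V\|+Lt/3}\right),
\end{equation*}
valid once $t$ exceeds a mild threshold; matching the right-hand side to $\delta$ then yields the statement.

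The first step is a matrix Laplace-transform argument adapted to infinite dimensions, where the naive obstruction $\operatorname{tr}e^{\theta S}=\infty$ must be avoided. For $\theta>0$ the operator $\cosh(\theta S)-I=\tfrac12\bigl(e^{\theta S}+e^{-\theta S}\bigr)-I$ is \emph{trace class}: $S$ is Hilbert--Schmidt, $|\cosh x-1|$ is $O(x^{2})$ on bounded sets, and $S^{2}$ is trace class. Since $\cosh$ is even and increasing on $[0,\infty)$, one has $\{\|\theta S\|\ge\theta t\}\subseteq\{\operatorname{tr}(\cosh(\theta S)-I)\ge\cosh(\theta t)-1\}$, so Markov's inequality gives
\begin{equation*}
  \mathbb{P}\bigl(\|S\|\ge t\bigr)\;\le\;\frac{\mathbb{E}\operatorname{tr}\bigl(\cosh(\theta S)-I\bigr)}{\cosh(\theta t)-1},
\end{equation*}
which treats both tails simultaneously.

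Next I would bound the trace moment. From $\mathbb{E}A_i=0$ and $\|A_i\|\le L$ one has the scalar-type estimate $\log\mathbb{E}e^{\pm\theta A_i}\preceq g(\theta)\,\mathbb{E}A_i^{2}$ with $g(\theta)=(e^{\theta L}-\theta L-1)/L^{2}$, and cumulant subadditivity for Hilbert--Schmidt operators (Golden--Thompson / Lieb's concavity theorem) yields $\mathbb{E}\operatorname{tr}(\cosh(\theta S)-I)\le\operatorname{tr}(e^{g(\theta)V_n}-I)$. Because $x\mapsto e^{ax}-1$ is convex and vanishes at $x=0$, it lies below its chord on $[0,\|V_n\|]$; applying this to each eigenvalue of $V_n$ gives $\operatorname{tr}(e^{g(\theta)V_n}-I)\le(\operatorname{tr}V_n/\|V_n\|)\,(e^{g(\theta)\|V_n\|}-1)$, which is finite precisely because $V$ (hence $V_n$) is trace class, and carries the intrinsic-dimension factor. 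Plugging back, using $\cosh(\theta t)-1\ge\tfrac14 e^{\theta t}$ for $\theta t\ge\ln 4$, the bound $g(\theta)\le\theta^{2}/\bigl(2(1-\theta L/3)\bigr)$, $\|V_n\|\le n\|V\|$, and the choice $\theta=t/(n\|V\|+Lt/3)$ produces the displayed tail bound. Setting it equal to $\delta$ forces $\tfrac{t^{2}/2}{n\|V\|+Lt/3}\ge\beta$ with $\beta=\ln\bigl(4\operatorname{tr}V/(\delta\|V\|)\bigr)$; the quadratic $t^{2}-\tfrac{2L\beta}{3}t-2\beta n\|V\|=0$ has largest root $\le\tfrac{2L\beta}{3}+\sqrt{2n\|V\|\beta}$ by $\sqrt{a^{2}+b}\le a+\sqrt b$, and dividing by $n$ gives $\|\tfrac1n\sum_iA_i\|\le\tfrac{2L\beta}{3n}+\sqrt{2\|V\|\beta/n}$. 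Finally $\operatorname{tr}V\ge\|V\|$ and $\delta<1$ force $\beta\ge\ln 4$, so this $t$ exceeds both $\sqrt{n\|V\|}\ (\ge\sqrt{\|V_n\|})$ and $L/3$, which is what the range condition requires.

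\textbf{Main obstacle.} The only non-routine ingredient is the infinite-dimensional passage: the textbook matrix Chernoff method collapses since $\operatorname{tr}e^{\theta S}$ diverges, so one must instead work with the trace-class operator $\cosh(\theta S)-I$ and use convexity to bound $\operatorname{tr}(e^{g(\theta)V_n}-I)$ by a multiple of $\operatorname{tr}V_n/\|V_n\|$ rather than by an ambient dimension — this is exactly where the trace-class hypothesis on $V$ is used and where the prefactor $4\operatorname{tr}V/\|V\|$ originates. Making cumulant subadditivity (Golden--Thompson / Lieb) rigorous for Hilbert--Schmidt operators is the technical heart; everything after the moment bound is scalar Bernstein bookkeeping.
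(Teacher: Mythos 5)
This lemma is not proved in the paper at all: it is imported as an auxiliary result, with the proof delegated to the cited literature (Fischer--Steinwart, Theorem 27, which rests on Minsker's Bernstein inequality for self-adjoint Hilbert--Schmidt operators). Your proposal reconstructs exactly that literature argument — Markov applied to the trace-class operator $\cosh(\theta S)-I$, the Bernstein moment-generating bound $\mathbb{E}e^{\theta A_i}\preceq\exp\bigl(g(\theta)\,\mathbb{E}A_i^2\bigr)$ combined with Lieb/Golden--Thompson subadditivity, the intrinsic-dimension chord bound, and the quadratic inversion with $\sqrt{a^2+b}\le a+\sqrt b$ — and the bookkeeping (the threshold $\theta t\ge\ln 4$, $\beta\ge\ln 4$, the choice $\theta=t/(n\|V\|+Lt/3)$) is consistent and does deliver the stated constants. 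You also correctly identify the genuine technical heart, namely justifying the cumulant-subadditivity step for Hilbert--Schmidt operators in infinite dimensions, which is precisely what the cited references supply and which your sketch only gestures at.

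One concrete slip: you assert that $\operatorname{tr}V_n/\|V_n\|=\operatorname{tr}V/\|V\|$, but this is false in general. The hypothesis only gives $\mathbb{E}A_i^2\preceq V$, so $V_n=n\,\mathbb{E}A_1^2$ need not be proportional to $V$, and its intrinsic dimension can be strictly larger than that of $V$ (take $\mathbb{E}A_1^2$ a small multiple of a high-rank projection sitting under a $V$ whose mass is concentrated on one eigenvalue). As written, your chord bound produces the prefactor $\operatorname{tr}V_n/\|V_n\|$, not $\operatorname{tr}V/\|V\|$, so the displayed tail bound does not follow. The fix is standard and cheap: before applying the chord argument, use $V_n\preceq nV$ together with monotonicity of eigenvalues (min--max) to get $\operatorname{tr}\bigl(e^{g(\theta)V_n}-I\bigr)\le\operatorname{tr}\bigl(e^{g(\theta)nV}-I\bigr)$, and then apply the convexity/chord estimate to $nV$; this yields the prefactor $\operatorname{tr}(nV)/\|nV\|=\operatorname{tr}V/\|V\|$ directly and the rest of your inversion goes through unchanged.
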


The following Bernstein inequality about vector-valued random variables is frequently used, e.g., \citet[Proposition 2]{Caponnetto2007OptimalRF} and \citet[Theorem 26]{fischer2020_SobolevNorm}.
\begin{lemma}[Bernstein inequality]\label{bernstein}
   Let $(\Omega,\mathcal{B},P)$ be a probability space, $H$ be a separable Hilbert space, and $\xi: \Omega \to H$ be a random variable with 
   \begin{displaymath}
     \mathbb{E}\|\xi\|_H^m \leq \frac{1}{2} m ! \sigma^2 L^{m-2},
   \end{displaymath}
   for all $m>2$. Then for $\delta \in (0,1)$, $\xi_{i}$ are i.i.d. random variables, with probability at least $1 - \delta$, we have
   \begin{displaymath}
       \left\|\frac{1}{n} \sum_{i=1}^n \xi_{i} - \mathbb{E} \xi\right\|_H \le 4\sqrt{2} \ln{\frac{2}{\delta}} \left(\frac{L}{n} + \frac{\sigma}{\sqrt{n}}\right).
   \end{displaymath}
\end{lemma}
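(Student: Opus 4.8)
This is the classical Bernstein concentration inequality for Hilbert-space--valued random variables, and the plan is to deduce it from an abstract exponential tail bound of Yurinsky / Pinelis--Sakhanenko type and then invert that bound at confidence level $\delta$. Writing $\bar S_n \coloneqq \tfrac1n\sum_{i=1}^n\xi_i - \mathbb{E}\xi = \sum_{i=1}^n\tfrac1n(\xi_i-\mathbb{E}\xi)$, the quantity of interest is a normalized sum of i.i.d. mean-zero $H$-valued summands. First I would record the moment bookkeeping needed for the centered summands: from the hypothesis at $m=2$ we get $\mathbb{E}\|\xi-\mathbb{E}\xi\|_H^2 = \mathbb{E}\|\xi\|_H^2 - \|\mathbb{E}\xi\|_H^2 \le \sigma^2$, and by a symmetrization/Jensen estimate $\mathbb{E}\|\xi-\mathbb{E}\xi\|_H^m \le 2^m\,\mathbb{E}\|\xi\|_H^m \le \tfrac12 m!\,(2\sigma)^2(2L)^{m-2}$ for $m\ge 3$, so $\tfrac1n(\xi_i-\mathbb{E}\xi)$ again satisfies a Bernstein moment condition, now with variance proxy $\asymp \sigma^2/n$ and scale $\asymp L/n$; the exact numerical constants here are irrelevant since the target constant $4\sqrt2$ is generous.

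The key input is then the abstract inequality (in the form used by \citealt[Prop.~2]{Caponnetto2007OptimalRF} and \citealt[Thm.~26]{fischer2020_SobolevNorm}): for a mean-zero $H$-valued summand with such a Bernstein moment bound one has, for every $x>0$,
\begin{displaymath}
  P\!\left(\big\|\bar S_n\big\|_H \ge \sqrt{\frac{2\sigma^2 x}{n}} + \frac{2Lx}{n}\right) \le 2e^{-x}.
\end{displaymath}
Establishing this tail bound in an \emph{infinite-dimensional} Hilbert space is the only nontrivial ingredient and the step I expect to be the main obstacle: the naive reduction $\|\bar S_n\|_H = \sup_{\|v\|_H\le1}\langle v,\bar S_n\rangle_H$ to scalar Bernstein fails, because the supremum ranges over a non-compact unit ball and a union bound over an $\varepsilon$-net would cost a dimension factor. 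The standard remedy is Yurinsky's mollification of the norm, combined with the exact $2$-smoothness identity $\|x+y\|_H^2 = \|x\|_H^2 + 2\langle x,y\rangle_H + \|y\|_H^2$, which allows an exponential-moment martingale recursion over partial sums followed by removing the smoothing parameter; Pinelis' martingale inequalities give an alternative derivation. I would quote this result rather than reprove it, exactly as the present paper treats it as a citation.

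Given the tail bound, only bookkeeping remains. Taking $x = \ln(2/\delta)$ makes the right-hand side equal to $\delta$, so with probability at least $1-\delta$,
\begin{displaymath}
  \big\|\bar S_n\big\|_H \le \sqrt{\frac{2\sigma^2}{n}\ln\frac2\delta} + \frac{2L}{n}\ln\frac2\delta = \sqrt2\,\frac{\sigma}{\sqrt n}\sqrt{\ln\tfrac2\delta} + 2\,\frac Ln\ln\tfrac2\delta .
\end{displaymath}
Since $\delta\in(0,1)$ forces $x = \ln(2/\delta) > \ln 2 > \tfrac14$, we have $\sqrt x \le 4x$ (because $4x/\sqrt x = 4\sqrt x \ge 2 \ge 1$), hence $\sqrt2\,\tfrac{\sigma}{\sqrt n}\sqrt x \le 4\sqrt2\,\tfrac{\sigma}{\sqrt n}x$, while $2\tfrac Ln x \le 4\sqrt2\,\tfrac Ln x$ trivially. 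Adding the two estimates gives
\begin{displaymath}
  \big\|\bar S_n\big\|_H \le 4\sqrt2\,\ln\frac2\delta\left(\frac Ln + \frac{\sigma}{\sqrt n}\right),
\end{displaymath}
which is the asserted bound. In summary: check the Bernstein moment condition for the centered summands, apply the Yurinsky/Pinelis exponential tail bound to $\bar S_n$, substitute $x=\ln(2/\delta)$, and absorb the square root into the constant $4\sqrt2$ using $\ln(2/\delta)>\tfrac14$.
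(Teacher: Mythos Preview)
Your proposal is correct and in fact goes beyond what the paper does: the paper states this lemma without proof, merely citing \citet[Proposition~2]{Caponnetto2007OptimalRF} and \citet[Theorem~26]{fischer2020_SobolevNorm} as standard references. Your sketch identifies exactly these sources, outlines the Yurinsky/Pinelis route to the exponential tail bound, and supplies the explicit inversion step (set $x=\ln(2/\delta)$ and use $\ln(2/\delta)>\ln 2>1/16$ to absorb $\sqrt{x}$ into $x$) that recovers the constant $4\sqrt2$; so the two are fully aligned, with your version being more detailed.
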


\begin{lemma}[Cordes inequality]\label{cordes}
Let $A$ and $B$ be two positive bounded linear operators on a separable Hilbert space. Then we have
\begin{displaymath}
\left\|A^s B^s\right\| \leq\|A B\|^s, \quad \text { when } 0 \leq s \leq 1.
\end{displaymath}
\end{lemma}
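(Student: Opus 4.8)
The plan is to prove the Cordes inequality by combining a routine symmetrization with the Löwner--Heinz operator monotonicity of the power map, after first perturbing $B$ to make it invertible. First I would reduce to the case where $B$ is strictly positive (hence boundedly invertible): for $\varepsilon>0$ set $B_\varepsilon = B+\varepsilon I$, and suppose the inequality holds for the pair $(A,B_\varepsilon)$, i.e. $\|A^s B_\varepsilon^s\|\le \|A B_\varepsilon\|^s$. Letting $\varepsilon\to 0$, the right-hand side tends to $\|AB\|^s$ because $\|AB_\varepsilon - AB\|\le \varepsilon\|A\|\to 0$, while the left-hand side tends to $\|A^s B^s\|$ by norm-continuity of the continuous functional calculus $t\mapsto t^s$ on the common bounded spectral interval $[0,\|B\|+1]$ (uniform polynomial approximation of $t^s$ there). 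So it suffices to assume $B$ invertible.

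Next comes the symmetrization. Writing $T=A^s B^s$ we have $\|T\|^2=\|T^*T\|=\|B^s A^{2s} B^s\|$, and similarly $\|AB\|^2=\|(AB)^*(AB)\|=\|B A^2 B\|$. Hence the claim is equivalent to the operator inequality $B^s A^{2s} B^s \le \|AB\|^{2s}\, I$. From $\|BA^2B\|=\|AB\|^2$ and positivity of $BA^2B$ we get $BA^2B \le \|AB\|^2 I$; multiplying on the left and right by the positive operator $B^{-1}$ yields $A^2 \le \|AB\|^2\, B^{-2}$.

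Now I would invoke the Löwner--Heinz inequality: for $0\le s\le 1$ the map $X\mapsto X^s$ is operator monotone on positive operators. (If this needs to be supplied, it follows from the integral representation $X^s=\tfrac{\sin\pi s}{\pi}\int_0^\infty \lambda^{s-1} X(X+\lambda I)^{-1}\,\mathrm d\lambda$ for $0<s<1$, since $X\mapsto X(X+\lambda I)^{-1}=I-\lambda(X+\lambda I)^{-1}$ is monotone and integration preserves monotonicity.) Applying it to $A^2 \le \|AB\|^2 B^{-2}$ gives $A^{2s}\le \|AB\|^{2s}\, B^{-2s}$, and conjugating by $B^s\ge 0$ gives $B^s A^{2s} B^s \le \|AB\|^{2s}\, B^s B^{-2s} B^s = \|AB\|^{2s}\, I$. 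Taking operator norms and then square roots yields $\|A^sB^s\|\le\|AB\|^s$, which together with the reduction step of the first paragraph completes the proof.

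The only genuine obstacle is the appeal to Löwner--Heinz operator monotonicity of $t\mapsto t^s$; everything else is elementary bookkeeping with $C^*$-algebra identities and positivity. The secondary technical point is the norm-continuity of $\varepsilon\mapsto (B+\varepsilon I)^s$ used in the perturbation reduction, which is immediate from uniform approximation of $t^s$ by polynomials on a bounded interval. No capacity/eigenvalue assumptions from the paper are needed here; the statement is purely operator-theoretic.
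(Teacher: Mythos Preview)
Your argument is correct and is in fact one of the standard proofs of the Cordes inequality: reduce to invertible $B$ by perturbation, symmetrize to rewrite the claim as the operator inequality $B^s A^{2s} B^s \le \|AB\|^{2s} I$, and then deduce this from $A^2 \le \|AB\|^2 B^{-2}$ via L\"owner--Heinz monotonicity of $t\mapsto t^s$ for $0\le s\le 1$. Each step checks out, including the limiting argument for $\varepsilon\to 0$.

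There is nothing to compare against here: the paper states the Cordes inequality as an auxiliary lemma in Appendix~B without proof, treating it as a known result from operator theory. Your write-up therefore supplies strictly more than the paper does for this particular statement.
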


The following lemma is a corollary of \citet[Lemma 5.8]{lin2018_OptimalRates}.
\begin{lemma}\label{lemma phi operator}
  Suppose that $A$ and $B$ are two positive self-adjoint operators on some Hilbert space, then 
  \begin{itemize}
      \item for $ r \in (0,1]$, we have
      \begin{displaymath}
        \left\|A^r-B^r\right\| \leq\|A-B\|^r.
      \end{displaymath}
      \item for $ r \ge 1 $, denote $c=\max (\|A\|,\|B\|)$, we have
      \begin{displaymath}
        \left\|A^r-B^r\right\| \leq r c^{r-1}\|A-B\|.
      \end{displaymath}
  \end{itemize}
\end{lemma}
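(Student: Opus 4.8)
The claim is \citet[Lemma 5.8]{lin2018_OptimalRates} restated in the present notation, and the plan is to handle the two ranges of $r$ by different devices. For $0<r\le 1$ the argument is short. Since $A-B$ is self-adjoint, $\|A-B\|\,I-(A-B)$ is positive semidefinite, so $A\le B+\|A-B\|\,I$ in the operator order, both sides being positive operators; operator monotonicity of $t\mapsto t^{r}$ on $[0,\infty)$ (the L\"owner--Heinz inequality) then gives $A^{r}\le(B+\|A-B\|\,I)^{r}$. Because $B$ commutes with $\|A-B\|\,I$, the right-hand side is evaluated by functional calculus from the scalar inequality $(x+c)^{r}\le x^{r}+c^{r}$ (valid for $x,c\ge 0$ and $r\le 1$), giving $(B+\|A-B\|\,I)^{r}\le B^{r}+\|A-B\|^{r}I$. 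Hence $A^{r}-B^{r}\le\|A-B\|^{r}I$; swapping the roles of $A$ and $B$ yields the opposite inequality, and since $A^{r}-B^{r}$ is self-adjoint this proves $\|A^{r}-B^{r}\|\le\|A-B\|^{r}$.

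For $r\ge 1$ I would first settle integer $r=n$ by telescoping: from $A^{n}-B^{n}=\sum_{k=0}^{n-1}A^{k}(A-B)B^{n-1-k}$ and $\|A\|,\|B\|\le c$ one reads off $\|A^{n}-B^{n}\|\le nc^{n-1}\|A-B\|$. For non-integer $r$ write $r=m+\theta$ with $m=\lfloor r\rfloor\ge 1$ and $\theta\in(0,1)$, and start from the resolvent representation
\begin{displaymath}
  x^{r}=\frac{\sin\pi\theta}{\pi}\int_{0}^{\infty}t^{\theta-1}\,x^{m+1}(x+t)^{-1}\,\mathrm{d}t,\qquad x\ge 0,
\end{displaymath}
which by functional calculus gives $A^{r}-B^{r}=\frac{\sin\pi\theta}{\pi}\int_{0}^{\infty}t^{\theta-1}\big[A^{m+1}(A+t)^{-1}-B^{m+1}(B+t)^{-1}\big]\,\mathrm{d}t$. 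I would then estimate the integrand with the resolvent identity $(A+t)^{-1}-(B+t)^{-1}=(A+t)^{-1}(B-A)(B+t)^{-1}$, the bounds $\|(A+t)^{-1}\|,\|(B+t)^{-1}\|\le t^{-1}$ and $\|A^{j}(A+t)^{-1}\|\le c^{j-1}$ for $j\ge 1$, and the integer bounds for $\|A^{m}-B^{m}\|$ and $\|A^{m+1}-B^{m+1}\|$ just obtained. This yields an estimate that is uniformly linear in $\|A-B\|$ for $t\le c$ and of order $c^{m}t^{-1}\|A-B\|$ for $t>c$; the integral therefore converges, and after the substitution $t=cu$ it produces $\|A^{r}-B^{r}\|\le C_{r}c^{r-1}\|A-B\|$ with $C_{r}$ depending only on $r$. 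The constant can be tightened to $r$, which is precisely the content of \citet[Lemma 5.8]{lin2018_OptimalRates}; in any case an $r$-dependent constant already suffices at every place where the lemma is invoked below.

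The $0<r\le 1$ half is essentially immediate from operator monotonicity, so the technical heart is the non-integer $r\ge 1$ case. The main obstacle there is that the obvious shortcut — factoring $A^{r}=A^{m}A^{\theta}$ and applying the first part to the fractional factor $A^{\theta}$ — only produces a H\"older modulus $\|A-B\|^{\theta}$, which is weaker than the claimed Lipschitz estimate when $\|A-B\|$ is small. Recovering genuine linear dependence on $\|A-B\|$, together with the sharp exponent $c^{r-1}$ in the norm bound, is exactly what forces the resolvent (double-operator-integral) representation above, and carrying out that estimate carefully is the bulk of the work.
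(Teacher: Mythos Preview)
The paper does not prove this lemma at all; it simply records it as a corollary of \citet[Lemma 5.8]{lin2018_OptimalRates}. Your proposal is therefore strictly more detailed than anything the paper provides. Your argument for $0<r\le 1$ via L\"owner--Heinz and the scalar subadditivity $(x+c)^{r}\le x^{r}+c^{r}$ is correct and standard, and the integer case $r=n\ge 1$ via telescoping is likewise fine. For non-integer $r=m+\theta>1$ the resolvent-integral route is the right idea, but one step deserves more care than your sketch indicates: if you decompose $A^{m+1}(A+t)^{-1}-B^{m+1}(B+t)^{-1}$ using the resolvent identity and the integer bound for $A^{m+1}-B^{m+1}$ separately, you are left with terms carrying a bare factor $(A+t)^{-1}$ or $(B+t)^{-1}$, whose norm $t^{-1}$ is not integrable against $t^{\theta-1}$ near $0$. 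The cure is to recombine those two ``bad'' terms and use precisely the bound you quote, $\|(B+t)^{-1}B^{m}\|\le c^{m-1}$ (valid because $m\ge 1$), which then yields the uniform-in-$t$ estimate you claim for $t\le c$; together with the $O(c^{m}t^{-1}\|A-B\|)$ bound for $t\ge c$ the integral converges and gives $\|A^{r}-B^{r}\|\le C_{r}\,c^{r-1}\|A-B\|$. Your closing remark---that the sharp constant $C_{r}=r$ is the content of the cited reference, while any $r$-dependent constant already suffices for the applications in this paper---is accurate.
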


\section*{Appendix C. Details of experiments}\label{appendix detail experiments}
First, we prove that the series in \eqref{series of sobolev} converges and $f^{*}(x)$ is continuous on $(0,1)$ for $0 < s < \frac{1}{\beta} = 0.5$. We begin with the computation of the sum of first $N$ terms of $\{ \sin 2 k \pi x + \cos 2 k \pi x \}$, note that 
\begin{align}
    &-2 \sin(\pi x) \left( \sin\left(2 \pi x \right) + \sin\left(4 \pi x\right) + \cdots + \sin\left(2 N \pi x\right) \right) \notag \\
    &= \left[ \cos\left(2 \pi + \pi \right)x - \cos\left(2 \pi - \pi \right)x \right] + \left[ \cos\left( 4 \pi + \pi \right)x - \cos\left(4 \pi - \pi\right)x \right] \notag \\
    &\quad \quad + \cdots + \left[ \cos\left( 2 N \pi + \pi \right)x - \cos\left(2 N \pi - \pi\right)x \right] \notag \\
    &= \cos\left( 2 N \pi + \pi \right)x - \cos \pi x. \notag
\end{align}
So we have 
\begin{align}\label{sin-1}
    \left| \left( \sin\left(2 \pi x \right) + \sin\left(4 \pi x\right) + \cdots + \sin\left(2 N \pi x\right) \right) \right| = \frac{\left| \cos\left( 2 N \pi + \pi \right)x - \cos \pi x \right|}{\left| 2 \sin(\pi x) \right|};
\end{align}
Similarly, we have
\begin{align}\label{cos-2}
    \left| \left( \cos\left(2 \pi x \right) + \cos\left(4 \pi x\right) + \cdots + \cos\left(2 N \pi x\right) \right) \right| = \frac{\left| \sin\left( 2 N \pi + \pi \right)x - \sin \pi x \right|}{\left| 2 \sin(\pi x) \right|}.
\end{align}
Note that \eqref{sin-1} and \eqref{cos-2} are uniformly bounded in $[\delta_{0}, 1-\delta_{0}]$ for any $\delta_{0} > 0$ and $N$. In addition, $\{ k ^{-(s + 0.5)} \}$ is monotone and decreases to zero. Use the Dirichlet criterion and we know that the series in \eqref{series of sobolev} is uniformly convergence in $[\delta_{0}, 1-\delta_{0}]$. Due to the arbitrariness of $\delta_{0}$, we know that the series converges and $f^{*}(x)$ is continuous on $(0,1)$.

Similarly, we can prove that the series in \eqref{series of min} converges and $f^{*}(x)$ is continuous on $(0,1)$ for $0 < s < \frac{1}{\beta} = 0.5$. 



In Figure \ref{figure appendix allc}, we present the results of different choices of $c$ for $\nu = c n^{\frac{\beta}{s \beta + 1}}$ in the experiment of Section \ref{section experiments}.

\begin{figure}[ht]
\vskip 0.05in
\centering
\subfigure[]{\includegraphics[width=0.3\columnwidth]{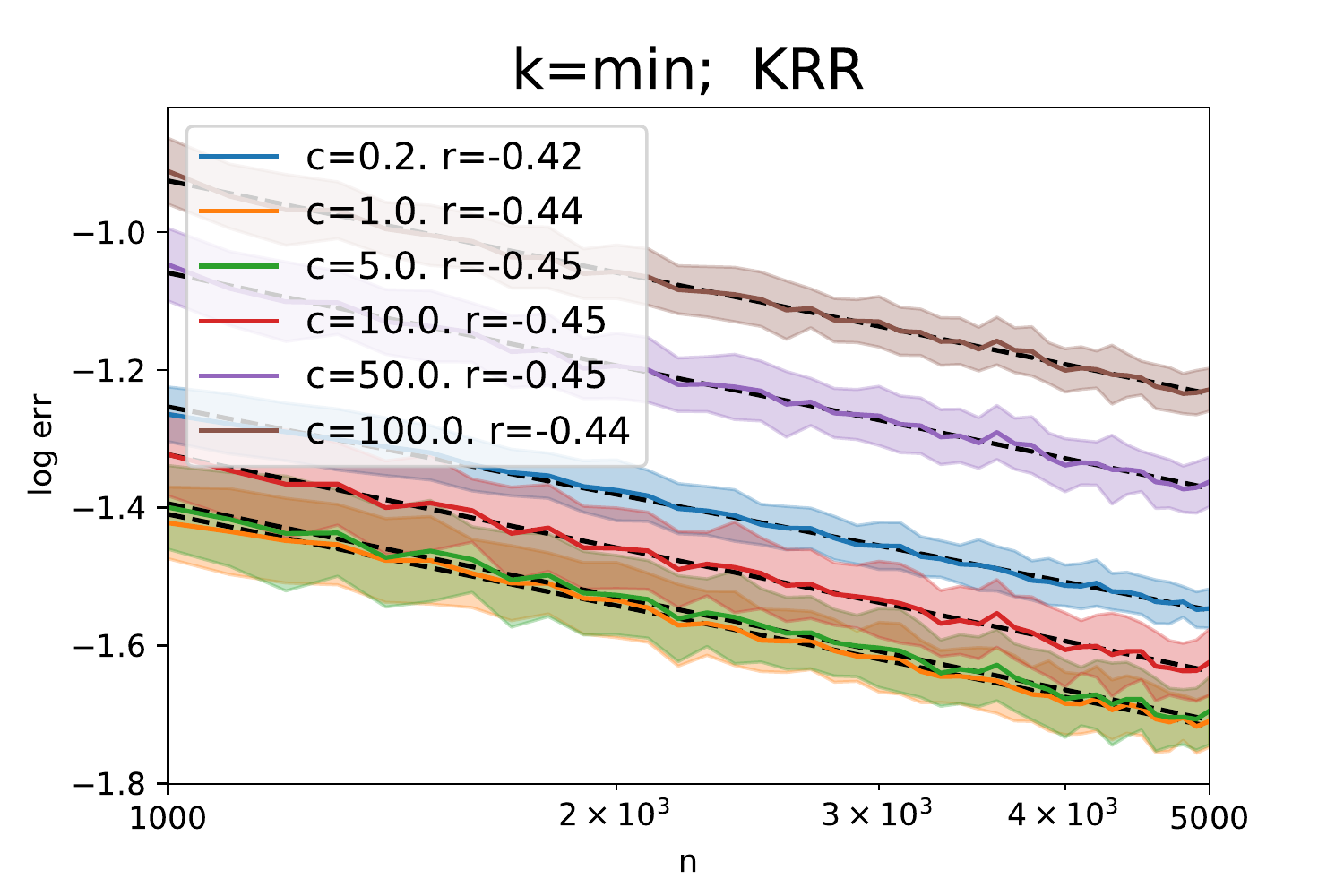}}
\subfigure[]{\includegraphics[width=0.3\columnwidth]{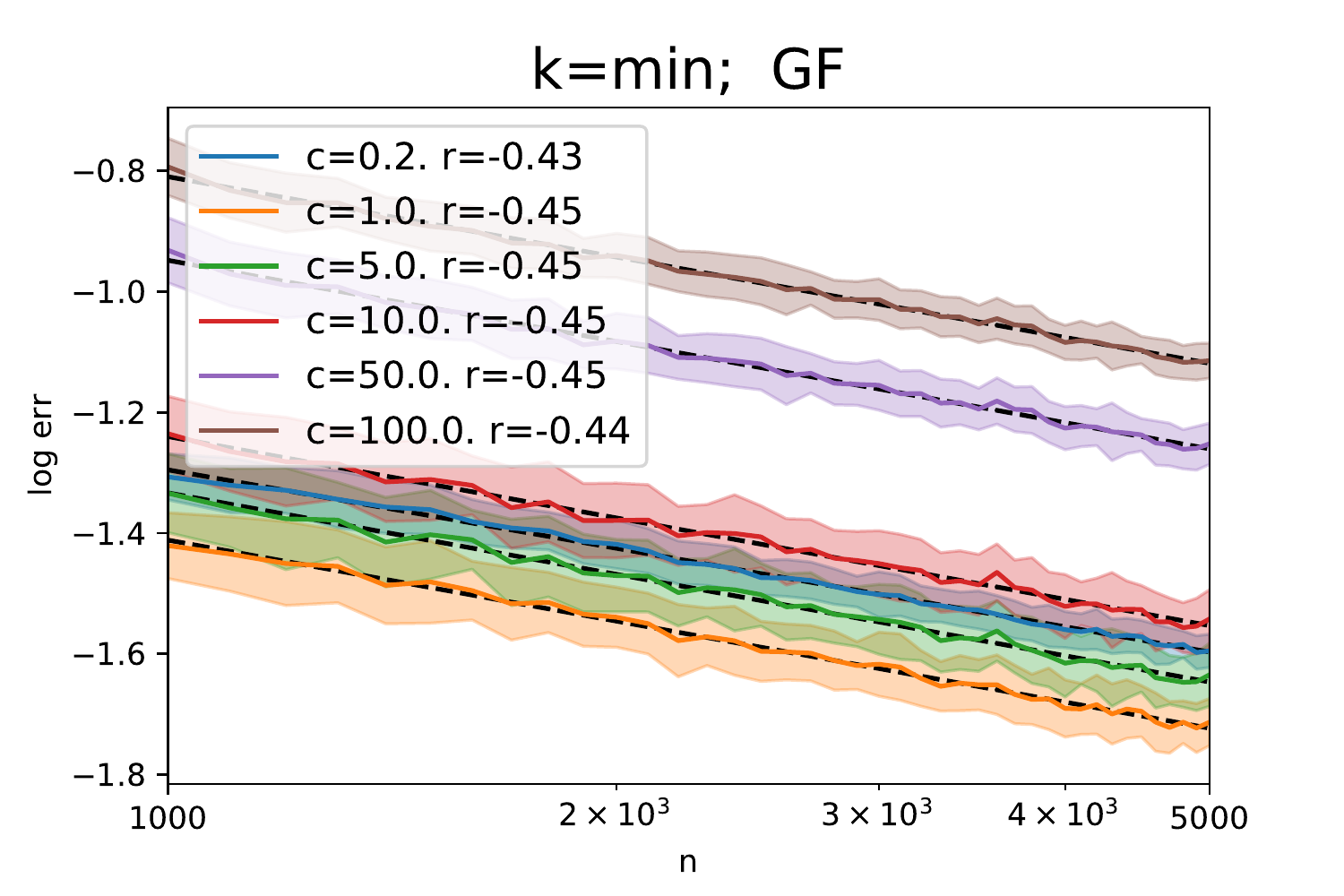}}
\subfigure[]{\includegraphics[width=0.3\columnwidth]{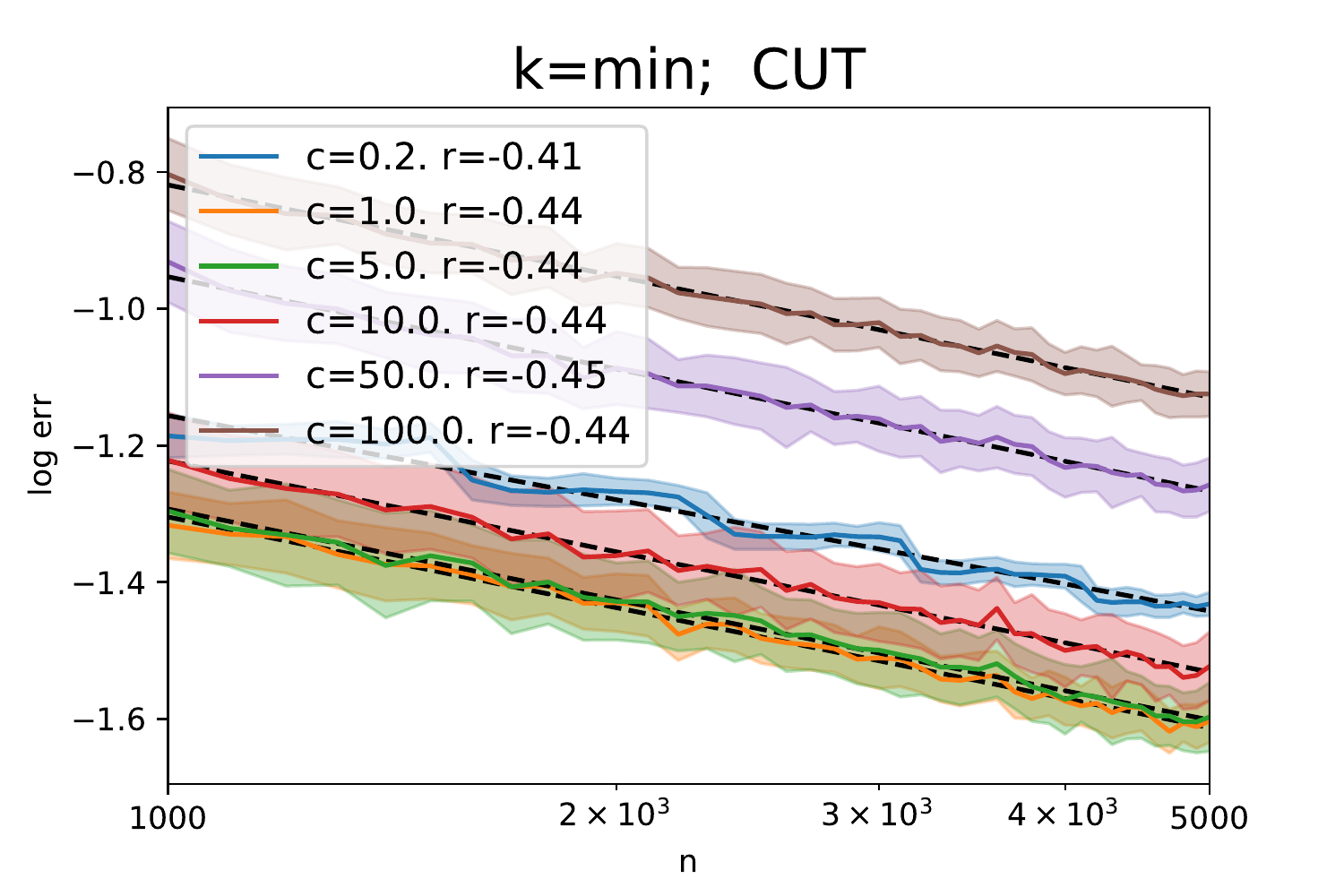}}

\subfigure[]{\includegraphics[width=0.3\columnwidth]{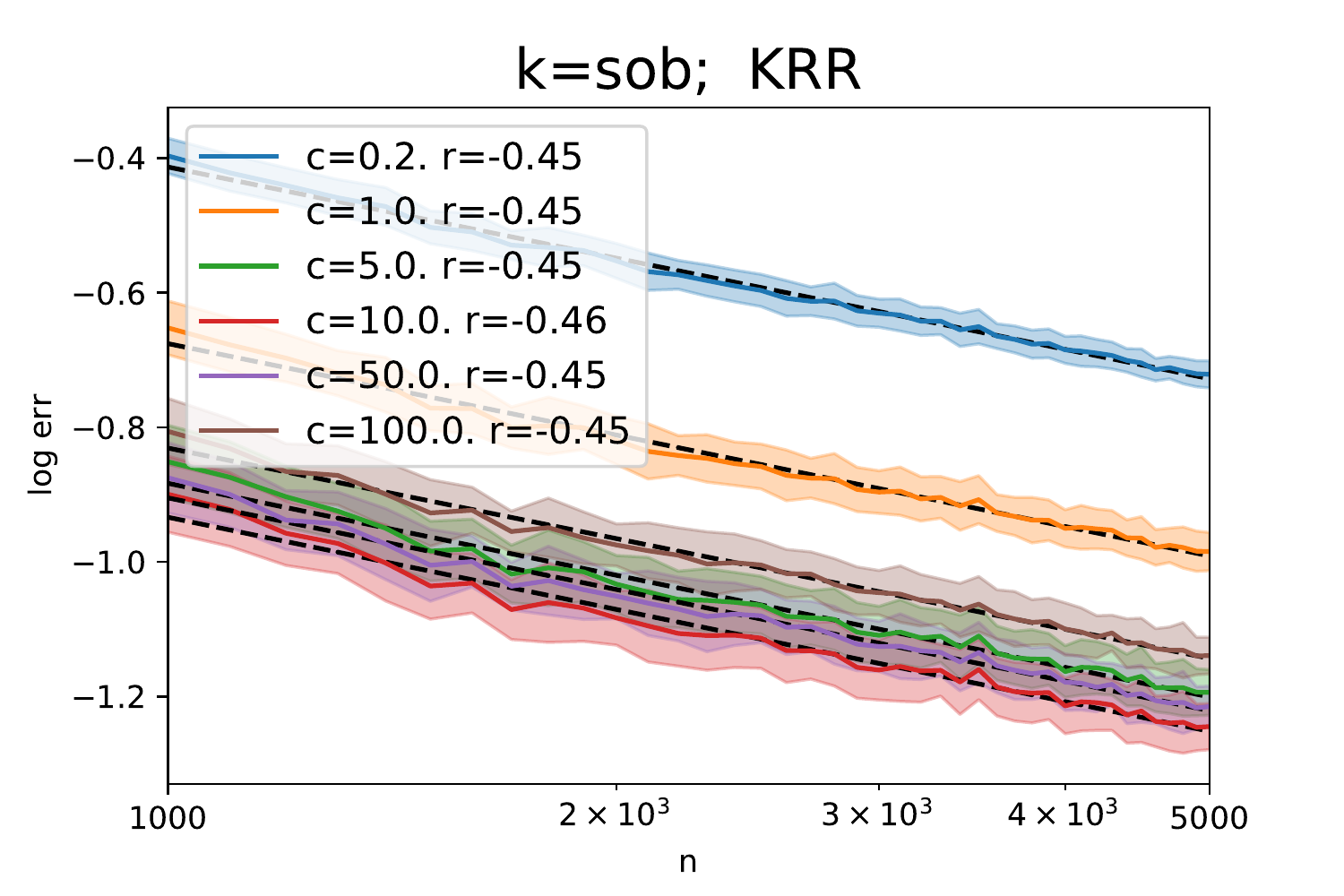}}
\subfigure[]{\includegraphics[width=0.3\columnwidth]{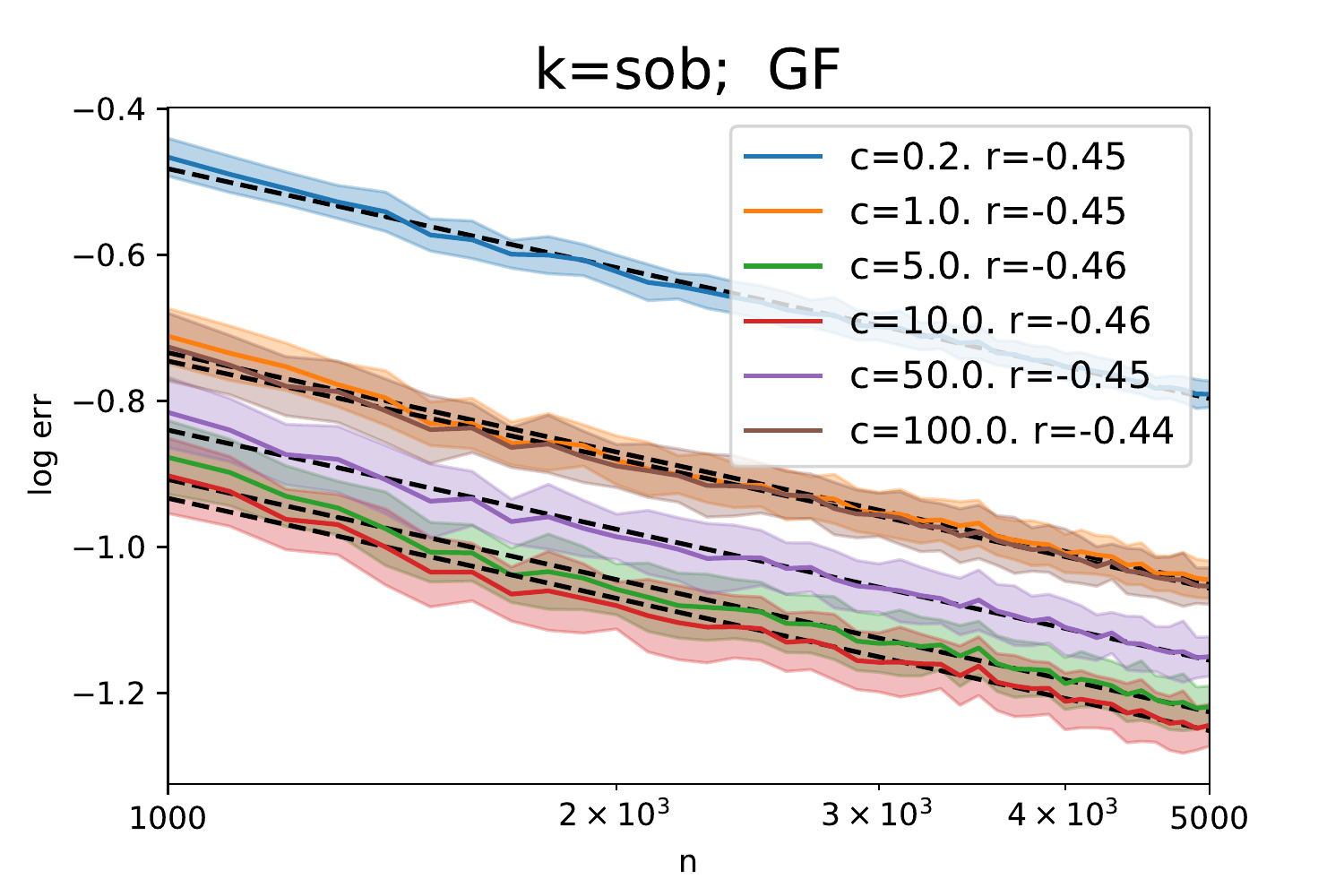}}
\subfigure[]{\includegraphics[width=0.3\columnwidth]{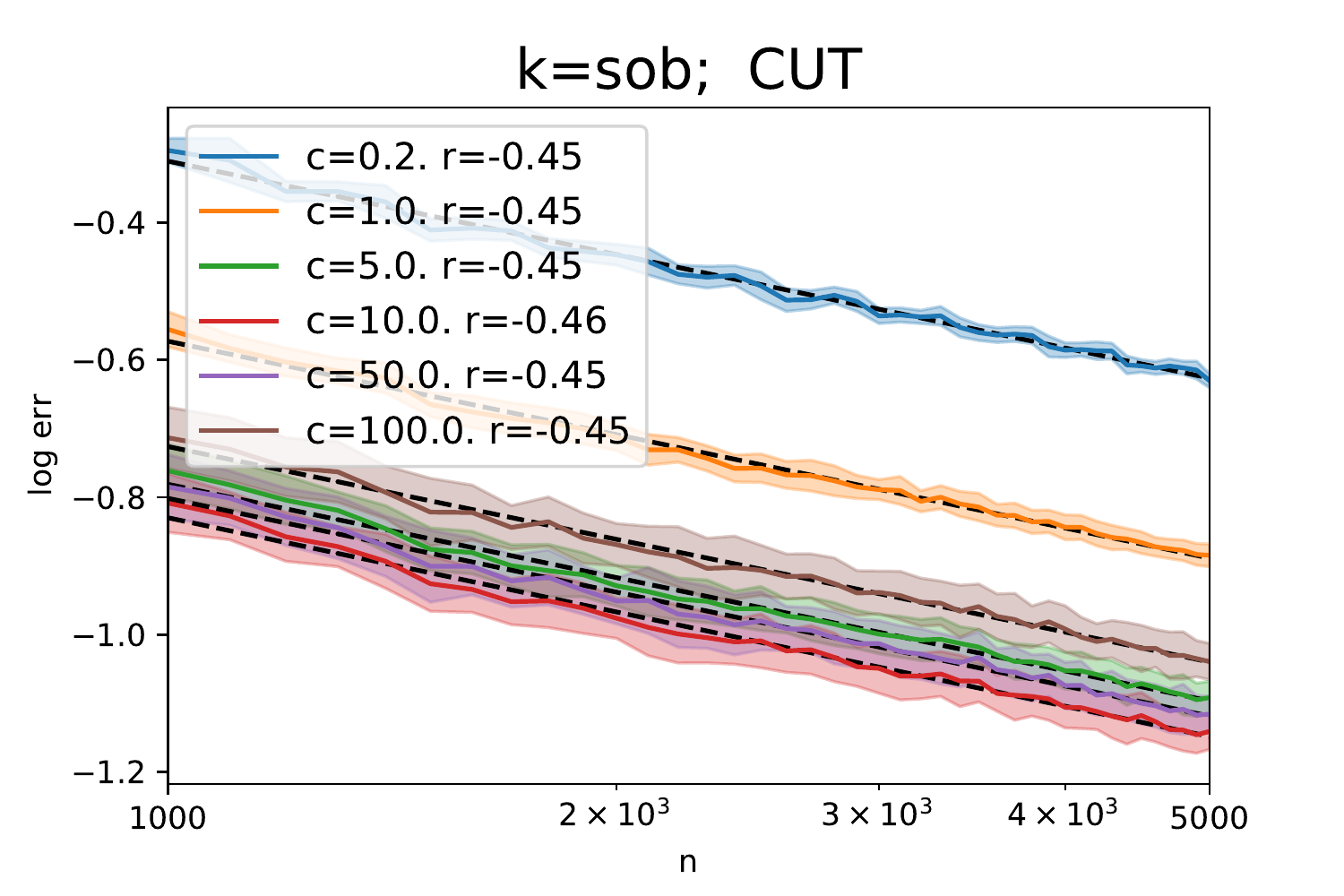}}
\caption{Error decay curves of two kinds of RKHSs and three kinds of spectral algorithms with different choices of $c$. Both axes are logarithmic.}
\label{figure appendix allc}
\vskip 0.05in
\end{figure}

\section*{Appendix D. A table of notations}\label{appendix notation table}



In this appendix, we provide a table of important notations frequently used throughout the main text.

\begin{table}[htbp]
\renewcommand{\arraystretch}{0.3}
\centering
\begin{tabular}{cc} 
\toprule
$\mathcal{X}, \mathcal{Y}$~     &  input space, output space  \\
\midrule
 $\{(x_{i},y_{i})\}_{i=1}^{n}$~    &  samples   \\ 

\midrule
 $\rho, \mu$~    &  distribution on $ \mathcal{X} \times \mathcal{Y}$, marginal distribution on $\mathcal{X}$   \\ 

\midrule
 $f_{\rho}^{*}$~    &  true function   \\ 
 
\midrule
 $\mathcal{H}$~    &  reproducing kernel Hilbert space (RKHS)   \\ 
 
\midrule
 $k(x,x^{\prime})$~    &  continuous kernel function   \\

\midrule
 $\kappa$~    &  upper bound of $k(x,x)$~ (Section \ref{section basic concept})   \\

\midrule
 $\tau$~    &  qualification of a filter function (spectral algorithm)   \\

\midrule
 $\beta$~    &  eigenvalue decay rate (EDR)   \\

\midrule
 $s$~    &  source condition   \\

\midrule
 $\alpha_{0}$~    &  embedding index   \\

\midrule
 $[\mathcal{H}]^{s}$~    &  interpolation space (power space)   \\

\midrule
 $L^{p}(\mathcal{X},\mu),~ L^{p}$~    &  $L^{p}$-space   \\
\midrule

 $S_{k}$~    &  natural embedding inclusion operator from $\mathcal{H}$ to $L^{2}$   \\
 
\midrule
 $S_{k}^{*}$~    &  adjoint operator of  $S_{k}$  \\

\midrule
 $L_{k}, T$~    &  integral operator given by $ S_{k} S_{k}^{*}, S_{k}^{*} S_{k}$  \\
  

\midrule
 $\text{tr}$, $\|\cdot\|_{1}$~    &  trace norm  \\
   
\midrule
 $\|\cdot\|_{2}$~    &  Hilbert-Schmidt norm  \\
    
\midrule
 $\|\cdot\|, ~\| \cdot \|_{\mathscr{B}(B_1,B_2)}$~    & operator norm  \\

\midrule
 $\{\lambda_{i}\}_{i=1}^{\infty},~\{e_{i}\}_{i=1}^{\infty}$~    & eigenvalues and eigenfunctions of $T$ (or $L_{k}, \mathcal{H}, k$) \\
     
      
\midrule
 $Z=\{(x_{i},y_{i})\}_{i=1}^{n}$~    & another expression of the samples \\
       
\midrule
 $K_{x}, K_{x}^{*}$~    & sampling operator and its adjoint operator (Section \ref{section spectral algorithms}) \\
       
\midrule
 $T_{x}$~    & $K_{x} K_{x}^{*} $ (Section \ref{section spectral algorithms}) \\
    
\midrule
 $T_{X}$~    & sample covariance operator (Section \ref{section spectral algorithms}) \\
     
\midrule
 $g_{Z}$~    & sample basis function (Section \ref{section spectral algorithms}) \\
 
\midrule
 $\nu$~    & regularization parameter \\
     
\midrule
 $\varphi_{\nu}(z), ~\psi_{\nu}(z)$~    & filter function, ~$ 1 - z \varphi(z)$ \\
     
      
\midrule
 $\hat{f}_{\nu}$~    & spectral algorithm estimator \\
      
\midrule
 $E, F_{\tau}$~    & constants in Definition \ref{def filter} \\
       
\midrule
 $M_{\alpha}$~    & the smallest constant $A>0$ such that \eqref{eq:EMB_Eigenvalues} is satisfied \\
       
\midrule
 $R$~    & constants in Assumption \ref{ass source condition} \\
       
\midrule
 $\sigma, L$~    & constants in Assumption \ref{ass mom of error} \\
        
\midrule
 $L^{p,q}(\mathcal{X},\mu),~L^{p,q}$~    & Lorentz space \\
         
\midrule
$T_{\nu},~T_{X\nu} $~    & $T+\nu^{-1},~T_{X}+\nu^{-1}$ ~(Section \ref{section some bounds}) \\
         
         
\midrule
$\mathcal{N}(\nu)$~    & $\text{tr}\big(T (T + \nu^{-1})^{-1}\big)$  \\
         
\midrule
$g$~    & $\mathbb{E} g_{Z} = S_{k}^{*} f_{\rho}^{*}$  \\
         
\midrule
$f_{\nu}$~    & $\varphi_{\nu}(T) g$  \\

\bottomrule
\end{tabular}
\caption{A table of important notations.}
\label{table2}
\end{table}

\vskip 0.2in
\bibliography{sample}

\begin{thebibliography}{50}
\providecommand{\natexlab}[1]{#1}
\providecommand{\url}[1]{\texttt{#1}}
\expandafter\ifx\csname urlstyle\endcsname\relax
  \providecommand{\doi}[1]{doi: #1}\else
  \providecommand{\doi}{doi: \begingroup \urlstyle{rm}\Url}\fi

\bibitem[Adams(1975)]{adams1975sobolev}
R.A. Adams.
\newblock \emph{Sobolev Spaces. Adams}.
\newblock Pure and applied mathematics. Academic Press, 1975.
\newblock URL \url{https://books.google.co.uk/books?id=JxzpSAAACAAJ}.

\bibitem[Adams and Fournier(2003)]{adams2003_SobolevSpaces}
Robert~A Adams and John~JF Fournier.
\newblock \emph{Sobolev Spaces}.
\newblock {Elsevier}, 2003.

\bibitem[Bach(2017)]{bach2017_BreakingCurse}
Francis Bach.
\newblock Breaking the curse of dimensionality with convex neural networks.
\newblock \emph{The Journal of Machine Learning Research}, 18\penalty0 (1):\penalty0 629--681, 2017.

\bibitem[Bauer et~al.(2007)Bauer, Pereverzyev, and Rosasco]{bauer2007_RegularizationAlgorithms}
F.~Bauer, S.~Pereverzyev, and L.~Rosasco.
\newblock On regularization algorithms in learning theory.
\newblock \emph{Journal of complexity}, 23\penalty0 (1):\penalty0 52--72, 2007.

\bibitem[Beaglehole et~al.(2022)Beaglehole, Belkin, and Pandit]{beaglehole2022_KernelRidgeless}
Daniel Beaglehole, Mikhail Belkin, and Parthe Pandit.
\newblock Kernel ridgeless regression is inconsistent in low dimensions, June 2022.

\bibitem[Blanchard and M{\"u}cke(2018)]{blanchard2018_OptimalRates}
G.~Blanchard and Nicole M{\"u}cke.
\newblock Optimal rates for regularization of statistical inverse learning problems.
\newblock \emph{Foundations of Computational Mathematics}, 18:\penalty0 971--1013, 2018.

\bibitem[Bordelon et~al.(2020)Bordelon, Canatar, and Pehlevan]{Bordelon2020SpectrumDL}
Blake Bordelon, Abdulkadir Canatar, and Cengiz Pehlevan.
\newblock Spectrum dependent learning curves in kernel regression and wide neural networks.
\newblock In \emph{ICML}, 2020.

\bibitem[Caponnetto(2006)]{caponnetto2006optimal}
Andrea Caponnetto.
\newblock Optimal rates for regularization operators in learning theory.
\newblock Technical report, MASSACHUSETTS INST OF TECH CAMBRIDGE COMPUTER SCIENCE AND ARTIFICIAL~…, 2006.

\bibitem[Caponnetto and de~Vito(2007)]{Caponnetto2007OptimalRF}
Andrea Caponnetto and Ernesto de~Vito.
\newblock Optimal rates for the regularized least-squares algorithm.
\newblock \emph{Foundations of Computational Mathematics}, 7:\penalty0 331--368, 2007.

\bibitem[Celisse and Wahl(2020)]{Celisse2020AnalyzingTD}
Alain Celisse and Martin Wahl.
\newblock Analyzing the discrepancy principle for kernelized spectral filter learning algorithms.
\newblock \emph{J. Mach. Learn. Res.}, 22:\penalty0 76:1--76:59, 2020.

\bibitem[Cho and Saul(2009)]{cho2009_KernelMethods}
Youngmin Cho and Lawrence Saul.
\newblock Kernel methods for deep learning.
\newblock In Y.~Bengio, D.~Schuurmans, J.~Lafferty, C.~Williams, and A.~Culotta, editors, \emph{Advances in Neural Information Processing Systems}, volume~22. {Curran Associates, Inc.}, 2009.

\bibitem[Cucker and Smale(2001)]{Cucker2001OnTM}
Felipe Cucker and Stephen Smale.
\newblock On the mathematical foundations of learning.
\newblock \emph{Bulletin of the American Mathematical Society}, 39:\penalty0 1--49, 2001.

\bibitem[Cui et~al.(2021)Cui, Loureiro, Krzakala, and Zdeborov'a]{Cui2021GeneralizationER}
Hugo Cui, Bruno Loureiro, Florent Krzakala, and Lenka Zdeborov'a.
\newblock Generalization error rates in kernel regression: The crossover from the noiseless to noisy regime.
\newblock In \emph{NeurIPS}, 2021.

\bibitem[Dai and Xu(2013)]{dai2013_ApproximationTheory}
Feng Dai and Yuan Xu.
\newblock \emph{Approximation Theory and Harmonic Analysis on Spheres and Balls}.
\newblock Springer {{Monographs}} in {{Mathematics}}. {Springer New York}, {New York, NY}, 2013.
\newblock ISBN 978-1-4614-6659-8 978-1-4614-6660-4.
\newblock \doi{10.1007/978-1-4614-6660-4}.

\bibitem[Dicker et~al.(2017)Dicker, Foster, and Hsu]{dicker2017_KernelRidge}
L.~Dicker, Dean~Phillips Foster, and Daniel~J. Hsu.
\newblock Kernel ridge vs. principal component regression: {{Minimax}} bounds and the qualification of regularization operators.
\newblock \emph{Electronic Journal of Statistics}, 11:\penalty0 1022--1047, 2017.

\bibitem[Dieuleveut and Bach(2016)]{dieuleveut2016nonparametric}
Aymeric Dieuleveut and Francis Bach.
\newblock Nonparametric stochastic approximation with large step-sizes1.
\newblock \emph{THE ANNALS}, 44\penalty0 (4):\penalty0 1363--1399, 2016.

\bibitem[Edmunds and Triebel(1996)]{edmunds_triebel_1996}
D.~E. Edmunds and H.~Triebel.
\newblock \emph{Function Spaces, Entropy Numbers, Differential Operators}.
\newblock Cambridge Tracts in Mathematics. Cambridge University Press, 1996.
\newblock \doi{10.1017/CBO9780511662201}.

\bibitem[Fischer and Steinwart(2020)]{fischer2020_SobolevNorm}
Simon-Raphael Fischer and Ingo Steinwart.
\newblock Sobolev norm learning rates for regularized least-squares algorithms.
\newblock \emph{Journal of Machine Learning Research}, 21:\penalty0 205:1--205:38, 2020.

\bibitem[Gerfo et~al.(2008)Gerfo, Rosasco, Odone, Vito, and Verri]{gerfo2008_SpectralAlgorithms}
L.~Lo Gerfo, Lorenzo Rosasco, Francesca Odone, E.~De Vito, and Alessandro Verri.
\newblock Spectral algorithms for supervised learning.
\newblock \emph{Neural Computation}, 20\penalty0 (7):\penalty0 1873--1897, 2008.

\bibitem[Guo et~al.(2017)Guo, Lin, and Zhou]{Guo2017LearningTO}
Zheng-Chu Guo, Shaobo Lin, and Ding-Xuan Zhou.
\newblock Learning theory of distributed spectral algorithms.
\newblock \emph{Inverse Problems}, 33, 2017.

\bibitem[Jacot et~al.(2018)Jacot, Gabriel, and Hongler]{jacot2018_NeuralTangent}
Arthur Jacot, Franck Gabriel, and Clement Hongler.
\newblock Neural tangent kernel: {{Convergence}} and generalization in neural networks.
\newblock In S.~Bengio, H.~Wallach, H.~Larochelle, K.~Grauman, N.~{Cesa-Bianchi}, and R.~Garnett, editors, \emph{Advances in Neural Information Processing Systems}, volume~31. {Curran Associates, Inc.}, 2018.

\bibitem[Jun et~al.(2019)Jun, Cutkosky, and Orabona]{jun2019kernel}
Kwang-Sung Jun, Ashok Cutkosky, and Francesco Orabona.
\newblock Kernel truncated randomized ridge regression: Optimal rates and low noise acceleration.
\newblock \emph{Advances in neural information processing systems}, 32, 2019.

\bibitem[Kohler and Krzyżak(2001)]{Kohler2001NonparametricRE}
Michael Kohler and Adam Krzyżak.
\newblock Nonparametric regression estimation using penalized least squares.
\newblock \emph{IEEE Trans. Inf. Theory}, 47:\penalty0 3054--3059, 2001.

\bibitem[Li et~al.(2022)Li, Meunier, Mollenhauer, and Gretton]{Li2022OptimalRF}
Zhu Li, Dimitri Meunier, Mattes Mollenhauer, and Arthur Gretton.
\newblock Optimal rates for regularized conditional mean embedding learning.
\newblock \emph{Advances in Neural Information Processing Systems}, 35:\penalty0 4433--4445, 2022.

\bibitem[Lin and Cevher(2020{\natexlab{a}})]{lin2020_OptimalConvergence}
Junhong Lin and Volkan Cevher.
\newblock Optimal convergence for distributed learning with stochastic gradient methods and spectral algorithms.
\newblock \emph{Journal of Machine Learning Research}, 21:\penalty0 147--1, 2020{\natexlab{a}}.

\bibitem[Lin and Cevher(2020{\natexlab{b}})]{lin2020convergences}
Junhong Lin and Volkan Cevher.
\newblock Convergences of regularized algorithms and stochastic gradient methods with random projections.
\newblock \emph{Journal of Machine Learning Research}, 21\penalty0 (20):\penalty0 1--44, 2020{\natexlab{b}}.

\bibitem[Lin and Rosasco(2017)]{lin2017optimal}
Junhong Lin and Lorenzo Rosasco.
\newblock Optimal rates for multi-pass stochastic gradient methods.
\newblock \emph{The Journal of Machine Learning Research}, 18\penalty0 (1):\penalty0 3375--3421, 2017.

\bibitem[Lin et~al.(2018)Lin, Rudi, Rosasco, and Cevher]{lin2018_OptimalRates}
Junhong Lin, Alessandro Rudi, L.~Rosasco, and V.~Cevher.
\newblock Optimal rates for spectral algorithms with least-squares regression over {{Hilbert}} spaces.
\newblock \emph{Applied and Computational Harmonic Analysis}, 48:\penalty0 868--890, 2018.

\bibitem[Lin et~al.(2017)Lin, Guo, and Zhou]{Lin2016DistributedLW}
Shao-Bo Lin, Xin Guo, and Ding-Xuan Zhou.
\newblock Distributed learning with regularized least squares.
\newblock \emph{Journal of Machine Learning Research}, 18\penalty0 (92):\penalty0 1--31, 2017.

\bibitem[Liu and Shi(2022)]{Liu2022StatisticalOO}
Jiading Liu and Lei Shi.
\newblock Statistical optimality of divide and conquer kernel-based functional linear regression.
\newblock \emph{ArXiv}, abs/2211.10968, 2022.

\bibitem[Mendelson and Neeman(2010)]{mendelson2010_RegularizationKernel}
Shahar Mendelson and Joseph Neeman.
\newblock Regularization in kernel learning.
\newblock \emph{The Annals of Statistics}, 38\penalty0 (1):\penalty0 526--565, February 2010.

\bibitem[M{\"u}cke and Blanchard(2018)]{Mcke2018ParallelizingSR}
Nicole M{\"u}cke and Gilles Blanchard.
\newblock Parallelizing spectrally regularized kernel algorithms.
\newblock \emph{J. Mach. Learn. Res.}, 19:\penalty0 30:1--30:29, 2018.

\bibitem[Pillaud-Vivien et~al.(2018)Pillaud-Vivien, Rudi, and Bach]{PillaudVivien2018StatisticalOO}
Loucas Pillaud-Vivien, Alessandro Rudi, and Francis Bach.
\newblock Statistical optimality of stochastic gradient descent on hard learning problems through multiple passes.
\newblock \emph{Advances in Neural Information Processing Systems}, 31, 2018.

\bibitem[Rastogi and Sampath(2017)]{rastogi2017_OptimalRates}
Abhishake Rastogi and Sivananthan Sampath.
\newblock Optimal rates for the regularized learning algorithms under general source condition.
\newblock \emph{Frontiers in Applied Mathematics and Statistics}, 3, 2017.

\bibitem[Rosasco et~al.(2005)Rosasco, De~Vito, and Verri]{rosasco2005_SpectralMethods}
Lorenzo Rosasco, Ernesto De~Vito, and Alessandro Verri.
\newblock Spectral methods for regularization in learning theory.
\newblock \emph{DISI, Universita degli Studi di Genova, Italy, Technical Report DISI-TR-05-18}, 2005.

\bibitem[Sawano(2018)]{sawano2018theory}
Yoshihiro Sawano.
\newblock \emph{Theory of Besov spaces}, volume~56.
\newblock Springer, 2018.

\bibitem[Smale and Zhou(2007)]{Smale2007LearningTE}
Stephen Smale and Ding-Xuan Zhou.
\newblock Learning theory estimates via integral operators and their approximations.
\newblock \emph{Constructive Approximation}, 26:\penalty0 153--172, 2007.

\bibitem[Smola et~al.(2000)Smola, Ov{\'a}ri, and Williamson]{smola2000_RegularizationDotproduct}
Alex Smola, Zolt{\'a}n Ov{\'a}ri, and Robert~C. Williamson.
\newblock Regularization with dot-product kernels.
\newblock \emph{Advances in neural information processing systems}, 13, 2000.

\bibitem[Spigler et~al.(2020)Spigler, Geiger, and Wyart]{Spigler2020AsymptoticLC}
Stefano Spigler, Mario Geiger, and Matthieu Wyart.
\newblock Asymptotic learning curves of kernel methods: empirical data versus teacher–student paradigm.
\newblock \emph{Journal of Statistical Mechanics: Theory and Experiment}, 2020, 2020.

\bibitem[Steinwart and Christmann(2008)]{Steinwart2008SupportVM}
Ingo Steinwart and Andreas Christmann.
\newblock Support vector machines.
\newblock In \emph{Information Science and Statistics}, 2008.

\bibitem[Steinwart and Scovel(2012)]{steinwart2012_MercerTheorem}
Ingo Steinwart and C.~Scovel.
\newblock Mercer's theorem on general domains: {{On}} the interaction between measures, kernels, and {{RKHSs}}.
\newblock \emph{Constructive Approximation}, 35\penalty0 (3):\penalty0 363--417, 2012.

\bibitem[Steinwart et~al.(2009)Steinwart, Hush, and Scovel]{steinwart2009_OptimalRates}
Ingo Steinwart, D.~Hush, and C.~Scovel.
\newblock Optimal rates for regularized least squares regression.
\newblock In \emph{{{COLT}}}, pages 79--93, 2009.

\bibitem[Talwai and Simchi-Levi(2022)]{Talwai2022OptimalLR}
Prem~M. Talwai and David Simchi-Levi.
\newblock Optimal learning rates for regularized least-squares with a fourier capacity condition.
\newblock 2022.

\bibitem[Tartar(2007)]{tartar2007introduction}
Luc Tartar.
\newblock \emph{An introduction to Sobolev spaces and interpolation spaces}, volume~3.
\newblock Springer Science \& Business Media, 2007.

\bibitem[Thomas-Agnan(1996)]{ThomasAgnan1996ComputingAF}
Christine Thomas-Agnan.
\newblock Computing a family of reproducing kernels for statistical applications.
\newblock \emph{Numerical Algorithms}, 13:\penalty0 21--32, 1996.

\bibitem[Tsybakov(2009)]{tsybakov2009_IntroductionNonparametric}
Alexandre~B. Tsybakov.
\newblock \emph{Introduction to Nonparametric Estimation}.
\newblock Springer Series in Statistics. {Springer}, {New York ; London}, 1st edition, 2009.

\bibitem[Wainwright(2019)]{wainwright2019_HighdimensionalStatistics}
Martin~J. Wainwright.
\newblock \emph{High-Dimensional Statistics: {{A}} Non-Asymptotic Viewpoint}.
\newblock Cambridge {{Series}} in {{Statistical}} and {{Probabilistic Mathematics}}. {Cambridge University Press}, 2019.

\bibitem[Wang and Jing(2022)]{JMLR:v23:21-0570}
Wenjia Wang and Bing-Yi Jing.
\newblock Gaussian process regression: Optimality, robustness, and relationship with kernel ridge regression.
\newblock \emph{Journal of Machine Learning Research}, 23\penalty0 (193):\penalty0 1--67, 2022.
\newblock URL \url{http://jmlr.org/papers/v23/21-0570.html}.

\bibitem[Yao et~al.(2007)Yao, Rosasco, and Caponnetto]{Yao2007OnES}
Y.~Yao, Lorenzo Rosasco, and Andrea Caponnetto.
\newblock On early stopping in gradient descent learning.
\newblock \emph{Constructive Approximation}, 26:\penalty0 289--315, 2007.

\bibitem[Zhang et~al.(2013)Zhang, Duchi, and Wainwright]{Zhang2013DivideAC}
Yuchen Zhang, John~C. Duchi, and Martin~J. Wainwright.
\newblock Divide and conquer kernel ridge regression: a distributed algorithm with minimax optimal rates.
\newblock \emph{J. Mach. Learn. Res.}, 16:\penalty0 3299--3340, 2013.

\end{thebibliography}

\end{document}